\newtheorem{thm}[equation]{Theorem}
\let\c@subsubsection\c@equation
\newtheorem{prop}[equation]{Proposition}
\newtheorem{lem}[equation]{Lemma}
\newtheorem{cor}[equation]{Corollary}
\theoremstyle{definition}
\newtheorem{defn}[equation]{Definition}
\theoremstyle{remark}
\newtheorem{remk}[equation]{Remark}
\newtheorem{remks}[equation]{Remarks}
\newtheorem{exm}[equation]{Example}
\newtheorem{exms}[equation]{Examples}
\newtheorem{notat}[equation]{Notation}
\numberwithin{equation}{subsection}
\newcommand{\CH}{{\rm CH}}
\newcommand{\BGH}{\mathbf{C}\mathbf{H}}
\newcommand{\red}{{\rm red}}
\newcommand{\Hom}{{\rm Hom}}
\newcommand{\Spec}{{\rm Spec \,}}
\newcommand{\Spf}{{\rm Spf \,}}
\newcommand{\Sch}{{\operatorname{\mathbf{Sch}}}}
\newcommand{\holim}{\mathop{{\rm holim}}}
\newcommand{\op}{{\text{\rm op}}}
\newcommand{\Spt}{{\mathbf{Spt}}}
\newcommand{\hocolim}{\mathop{{\rm hocolim}}}
\newcommand{\ds}{{/\kern-3pt/}}
\newcommand{\colim}{\mathop{\text{\rm colim}}}
\newcommand{\un}{\underline}
\renewcommand{\dim}{\text{\rm dim}}
\newcommand{\tuborg}{\left\{\begin{array}{ll}}
\newcommand{\sluttuborg}{\end{array}\right.}
\newcommand{\perf}{{\rm perf}}
\newcommand{\coh}{{\rm coh}}
\newcommand{\cnv}{{\rm cnv}}
\newcommand{\mcnv}{{\rm m.cnv}}
\begin{document}
\title[On the coniveau filtration on algebraic $K$-theory]{On the coniveau filtration on algebraic $K$-theory of singular schemes}
\author{Jinhyun Park and Pablo Pelaez}
\address{Department of Mathematical Sciences, KAIST, 291 Daehak-ro Yuseong-gu, Daejeon, 34141, Republic of Korea (South)}
\email{jinhyun@mathsci.kaist.ac.kr; jinhyun@kaist.edu}
\address{Instituto de Matem\'aticas, Universidad Nacional Auton\'oma de M\'exico, 04510, CDMX, M\'exico}
\email{pablo.pelaez@im.unam.mx}


\keywords{$K$-theory, singular scheme, formal neighborhood, motivic cohomology, coniveau filtration, moving lemma, perfect complex, pseudo-coherent complex, triangulated category, Milnor patching}

\subjclass[2020]{Primary 19E08; Secondary 19D10, 19E15, 14B20, 14F42, 55P42}

\begin{abstract}
We construct two functorial filtrations on the algebraic $K$-theory of schemes of finite type over a field $k$ that may admit arbitrary singularities and may be non-reduced, one called the coniveau filtration, and the other called the motivic coniveau filtration. 
Restricting to the subcategory of smooth $k$-schemes,
our coniveau filtration coincides with the classical coniveau (also known as the topological) filtration on algebraic $K$-theory of D. Quillen, whereas our motivic coniveau filtration coincides with the homotopy coniveau filtration for algebraic $K$-theory of M. Levine.
\end{abstract}

\maketitle

\setcounter{tocdepth}{1} 

\tableofcontents

\section{Introduction}

The objective of this article is to propose two functorial filtrations on higher algebraic $K$-theory \cite{Quillen} of schemes of finite type over an arbitrary field, where the schemes can be arbitrarily singular or non-reduced, extending two types of decreasing filtrations on the algebraic $K$-theory of smooth schemes.

\medskip

We know that (SGA VI \cite{SGA6}) when $Y$ is a smooth irreducible quasi-projective $k$-scheme, there is the natural isomorphism $K_0 (Y) \overset{\sim}{ \to} G_0 (Y)$ from the Grothendieck group of locally free sheaves of finite type to that of coherent sheaves. The key idea behind it is given by projective resolutions of finite lengths of coherent sheaves backed by the Auslander-Buchsbaum theorem \cite{AB}. Since the group $G_0 (Y)$ has the natural coniveau (also called topological, or codimension) filtration, the isomorphism induces a filtration on $K_0 (Y)$.

We have the cycle class map
\begin{equation}\label{eqn:cycle class intro}
\CH^q (Y) \to gr _{top} ^q K_0 (Y)
\end{equation}
from the Chow group of codimension $q$-cycles to the associated graded of $K_0 (Y)$ with respect to the filtration, and it is an isomorphism after tensoring with $\mathbb{Q}$. This result is often referred to as (a part of) the Grothendieck-Riemann-Roch theorem. It was further extended to higher Chow groups and higher algebraic $K$-theory of smooth $k$-schemes by S. Bloch combining \cite{Bloch HC} and \cite{Bloch moving}, or M. Levine \cite{Levine K}.

\medskip

When one tries to generalize the result to a singular $Y$, unfortunately a few things in the above go wrong. For instance, the Chow groups $\CH^q (Y)$ in \eqref{eqn:cycle class intro} are not contravariant functorial in $Y$, while $K_0 (Y)$ is. The homomorphism $K_0 (Y) \to G_0 (Y)$ is no longer an isomorphism so that the coniveau filtration on $G_0 (Y)$ does not induce a natural filtration on $K_0 (Y)$. Given these circumstances, so far an analogue of the Grothendieck-Riemann-Roch theorem for $K$-theory of singular schemes has not been available. 

This is where this article aims to make a contribution by building some stepping stones. In this article, we define two functorial decreasing filtrations on the algebraic $K$-theory on the category of $k$-schemes of finite type. 

\medskip

The following first main theorem of the article is on the first filtration (see \eqref{final uscnv fil}, Definition \ref{defn:cnv cech}, Proposition \ref{prop:sm cnv}, Theorem \ref{thm:cech functoriality}):

\begin{thm}\label{thm:main intro-0}
Let $n\geq 0$ be an integer. For a $k$-scheme $Y$ of finite type with possibly arbitrary singularities, there exists a tower in the 
homotopy category of spectra $\widehat{\mathcal{G}} ^\bullet _{Y,0} \rightarrow \mathcal{K}(Y)$, which induces
a decreasing filtration $F_{\cnv} ^{\bullet} K_n (Y)$ on $K_n(Y)$, such that
\begin{enumerate}
\item the tower and the filtration are functorial in $Y$, and
\item if $Y$ is smooth and equidimensional, the tower coincides with Quillen's classical coniveau tower (recalled in \eqref{Quillen tower}) up to weak-equivalence and the filtration coincides with the classical coniveau filtration on $K_n (Y)$. 
\end{enumerate}

We will call $F_{\cnv} ^{\bullet}$, the \emph{coniveau filtration} on $K_n (Y)$.
\end{thm}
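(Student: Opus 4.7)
The plan is to build the tower in three stages: first, a local model for the coniveau pieces using formal neighborhoods and Milnor patching; second, assembly of these local pieces into a tower via a $\check{\rm C}$ech-type homotopy limit indexed by closed subschemes of growing codimension; third, comparison with Quillen's tower in the smooth case.

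First I would associate, to each closed subscheme $Z \subset Y$ with $\codim_Y Z \geq q$, a support-spectrum built from the Milnor patching square relating $\mathcal{K}(Y)$, $\mathcal{K}(Y \setminus Z)$, $\mathcal{K}(\widehat{Y}_Z)$, and $\mathcal{K}(\widehat{Y}_Z \setminus Z)$, where $\widehat{Y}_Z$ is the formal neighborhood of $Z$ in $Y$. The completed local piece $\widehat{\mathcal{G}}_{Y,Z}$ should be the homotopy fiber computing pseudo-coherent complexes on $\widehat{Y}_Z$ with support on $Z$. Passing to the formal neighborhood is essential: pseudo-coherent complexes on $\widehat{Y}_Z$ are much better behaved with respect to resolutions by perfect complexes than coherent sheaves on $Y$ itself, which is exactly the obstruction caused by failure of Auslander-Buchsbaum in the singular setting.

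Next, I would define $\widehat{\mathcal{G}}^q_{Y,0}$ by assembling the $\widehat{\mathcal{G}}_{Y,Z}$ through a $\check{\rm C}$ech-style homotopy construction indexed over the poset of closed $Z\subset Y$ with $\codim_Y Z\geq q$, producing a decreasing tower
\[
\cdots \to \widehat{\mathcal{G}}^{q+1}_{Y,0} \to \widehat{\mathcal{G}}^q_{Y,0} \to \cdots \to \widehat{\mathcal{G}}^0_{Y,0} \to \mathcal{K}(Y),
\]
and defining $F_{\cnv}^q K_n(Y) := \im\bigl(\pi_n \widehat{\mathcal{G}}^q_{Y,0} \to K_n(Y)\bigr)$. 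The main obstacle I anticipate is the functoriality in $Y$: for an arbitrary $f \colon Y' \to Y$ with $Y$ possibly singular, $f^{-1}(Z)$ need not satisfy $\codim_{Y'} f^{-1}(Z) \geq q$, so naive index-poset pullback fails. The role of the $\check{\rm C}$ech formalism is precisely to bypass this by working with hypercovers whose components admit the desired flexibility, producing well-defined pullbacks up to coherent homotopy; I expect this moving-lemma-style cofinality and contractibility argument on the $\check{\rm C}$ech indexing system to be the most delicate part of the construction, and to require the full strength of the perfect/pseudo-coherent dictionary on formal neighborhoods.

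Finally, for the comparison under the smoothness and equidimensionality hypothesis in (2), I would argue that the formal-neighborhood correction degenerates: on a smooth equidimensional $Y$, pseudo-coherent complexes on $\widehat{Y}_Z$ admit finite resolutions by perfect complexes by Auslander-Buchsbaum, so the Milnor patching square becomes homotopy cartesian at the level of Quillen's $K$-theory with supports, and the local pieces $\widehat{\mathcal{G}}_{Y,Z}$ identify naturally with $\mathcal{K}_Z(Y)$. The $\check{\rm C}$ech homotopy colimit over $Z$ then collapses to the classical filtered colimit over closed subschemes of codimension $\geq q$, identifying $\widehat{\mathcal{G}}^\bullet_{Y,0}$ term-wise and naturally with Quillen's tower \eqref{Quillen tower}, which yields both the weak-equivalence of towers and the coincidence of filtrations on $K_n(Y)$.
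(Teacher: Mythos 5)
Your construction has a fundamental error at its core: you form the formal neighborhood $\widehat{Y}_Z$ of $Z$ \emph{inside $Y$ itself}. If $Y$ is singular, that formal scheme is still singular — completion along a closed subscheme does not improve regularity. Consequently your claim that pseudo-coherent complexes on $\widehat{Y}_Z$ admit nice perfect resolutions is false in exactly the singular situation the theorem is designed to handle; Auslander–Buchsbaum fails on $\widehat{Y}_Z$ just as it fails on $Y$. The paper's essential move is different: it locally embeds $Y$ as a closed subscheme of a \emph{smooth} ambient $X$ and completes $X$ along $Y$. Then Proposition \ref{prop:regularity0} shows the resulting formal scheme $\widehat{X}$ is regular, so by Lemma \ref{lem:coh=perfect} perfect and pseudo-coherent complexes agree on $\widehat{X} \times \square^n$, which is what lets the coniveau filtration transfer from $G$-theory to $K$-theory. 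Without the smooth ambient you have not addressed the obstruction at all: your local model at $Z$ is just $G$-theory with supports on a singular formal scheme, which again fails to map isomorphically to $K$-theory. Your square $\mathcal{K}(Y), \mathcal{K}(Y\setminus Z), \mathcal{K}(\widehat{Y}_Z), \mathcal{K}(\widehat{Y}_Z\setminus Z)$ is also a proper-base-change/excision type square, not what the paper calls Milnor patching — the paper's Milnor patching involves the double $D_{\widehat{X}} = \widehat{X}\coprod_Y\widehat{X}$ and the gluing of perfect complexes along $Y$ (Lemmas \ref{lem:double}, \ref{lem:derived Milnor}), which exists to track nilpotent structure (so that $Y$ and $Y_{\red}$ are distinguished), a problem you have not identified.

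The second gap is functoriality. You correctly note that for $g\colon Y' \to Y$ a closed subscheme $Z\subset Y$ of codimension $\geq q$ need not pull back to codimension $\geq q$, and you appeal to ``hypercovers whose components admit the desired flexibility'' and a ``moving-lemma-style cofinality argument.'' But you offer no mechanism for this. The paper's mechanism is concrete and essentially occupies all of Sections \ref{sec:cnv 1}–\ref{sec:Cech}: one trades the closed-subset poset for \emph{systems of local embeddings} $\{(U_i, X_i)\}$, completes along the diagonal to get regular formal schemes $\widehat{X}_I$, and proves a genuine moving lemma (Proposition \ref{prop:D ess}) asserting that any pseudo-coherent complex of codimension $\geq q$ on $\widehat{X}_2\times\square^n$ can be replaced, up to quasi-isomorphism, by one whose associated cycle meets a closed formal subscheme $\widehat{X}_1$ properly; this is proved by explicit power-series translations $t_i \mapsto t_i + c_i$ with $c_i$ in the ideal of definition (Lemmas \ref{lem:translation PP}–\ref{lem:translation}) combined with a Zariski-gluing cone argument. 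That moving lemma is what produces the zigzag $\mathcal{G}^q(\widehat{X}_2,n) \leftarrow \mathcal{G}^q_{\{\widehat{X}_1\}}(\widehat{X}_2,n) \to \mathcal{G}^q(\widehat{X}_1,n)$ with the left arrow a weak-equivalence (Corollary \ref{cor:Gysin cnv}), and hence the pullbacks $\widehat f^*$ of Theorems \ref{thm:moving pull-back} and \ref{thm:moving pull-back mod Y} on which the entire \v{C}ech cosimplicial object and all the functoriality in Theorem \ref{thm:cech functoriality} depend. You would need something playing this role, and indexing on closed subsets of the singular $Y$ with no ambient regular scheme gives you no room to move cycles. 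Finally, your smooth-case comparison sketch is closer to the paper's (take the trivial embedding $U_i = X_i$, observe $\widehat{U}_I = U_I$ and that the mod-$Y$ operation collapses by Remark \ref{rem. modY trivial}, then invoke \v{C}ech descent for Quillen's $K(M_q(-))$), but it only makes sense once the preceding construction is in place with an honest regular ambient.
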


Theorem \ref{thm:main intro-0} implies in particular that the pull-back $g^*: K_i (Y_2) \to K_i (Y_1)$ induced by a map between smooth $k$-schemes
$g:Y_1 \rightarrow Y_2$ respects the classical coniveau filtration. This is already known, e.g. see H. Gillet \cite[Theorem 83, Lemma 84, p.283]{Gillet}, which is based on the technique of deformation to the normal cone and $\mathbb{A}^1$-invariance of the algebraic $K$-theory of regular schemes. However, the proof given in this article offers a new argument even for the classical smooth case, not directly relying on the $\mathbb{A}^1$-invariance.

In a future work, we plan to investigate the ``Brown-Gersten-Quillen spectral sequence" determined by the tower in Theorem \ref{thm:main intro-0} and its potential application for a Bloch-Quillen-type formula for the cycle groups $\mathbf{CH}^q(Y,0)$ defined by the first author
\cite{P general}.

\medskip

The coniveau filtration, as in the above, is known to be unsuitable for constructing a motivic analogue of the Atiyah-Hirzebruch spectral sequence (\cite{AH}) even on smooth schemes. To resolve this issue on smooth schemes, different filtrations were considered by Friedlander-Suslin \cite{FS} and M. Levine \cite{Levine hcnv}, where the latter filtration is called the \emph{homotopy coniveau} filtration. 

The second filtration we define in this paper, is aimed to be an analogue of such filtrations on the category of schemes of finite type. This is a cubical enrichment of the above coniveau tower in a sense (see \eqref{final mcnv fil}, Definition \ref{defn:cnv cech}, Proposition \ref{prop:sm m.cnv}, Theorem \ref{thm:cech functoriality}):

\begin{thm}\label{thm:main intro}
Let $n\geq 0$ be an integer. For a $k$-scheme $Y$ of finite type with possibly arbitrary singularities, there exists a tower in the homotopy category of spectra 
$\widehat{\mathcal{G}} ^\bullet _{Y} \rightarrow \mathcal{K}(Y),$
 which induces a decreasing filtration $F_{\mcnv} ^{\bullet} K_n (Y)$ on $K_n(Y)$, such that 
 \begin{enumerate}
\item the tower and the filtration are functorial in $Y$, and
 \item if $Y$ is smooth and equidimensional, the tower coincides with Levine's homotopy coniveau tower  \cite[\S 2.1]{Levine hcnv}, up to weak-equivalence.
\end{enumerate}

We will call $F_{\mcnv} ^{\bullet}$, the \emph{motivic coniveau filtration} on $K_n (Y)$.

In particular, the spectral sequence associated to the tower $\widehat{\mathcal{G}} ^\bullet _{Y}$ restricts to the motivic Atiyah-Hirzebruch spectral sequence, when $Y$ is smooth and equidimensional.
\end{thm}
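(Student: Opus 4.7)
The plan is to build $\widehat{\mathcal{G}}^\bullet_Y$ as a cubical enrichment of the coniveau tower $\widehat{\mathcal{G}}^\bullet_{Y,0}$ produced in \thmref{thm:main intro-0}. For each pair of integers $q, n \geq 0$, let $\sq^n = (\mathbb{P}^1 \setminus \{1\})^n$, and apply the \v{C}ech/formal-neighborhood coniveau construction of \defref{defn:cnv cech} — not to $Y$, but to $Y \times \sq^n$ — restricted to closed subsets of codimension $\geq q$ whose traces on each face $Y \times F$ are either empty or remain of codimension $\geq q$ there (the ``meeting-faces-properly'' condition, rephrased through formal completions when $Y$ is singular). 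Face inclusions $\sq^{n-1} \hookrightarrow \sq^n$ and coordinate projections then turn $n \mapsto \widehat{\mathcal{G}}^q_{Y,n}$ into a cubical spectrum in the homotopy category, and its normalized total spectrum is $\widehat{\mathcal{G}}^q_Y$. Varying $q$ by relaxing the codimension bound yields the tower $\widehat{\mathcal{G}}^{q+1}_Y \to \widehat{\mathcal{G}}^q_Y \to \cdots \to \widehat{\mathcal{G}}^0_Y \to \mathcal{K}(Y)$, whose induced filtration on $\pi_n$ is declared to be $F_{\mcnv}^\bullet K_n(Y)$ as in \eqref{final mcnv fil}.

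For functoriality, any $g \colon Y_1 \to Y_2$ induces maps $g \times \id \colon Y_1 \times \sq^n \to Y_2 \times \sq^n$ for every $n$; applying \thmref{thm:cech functoriality} levelwise yields compatible maps $\widehat{\mathcal{G}}^q_{Y_2,n} \to \widehat{\mathcal{G}}^q_{Y_1,n}$, and naturality of that theorem in its input scheme gives compatibility with the cubical face and degeneracy maps, hence a well-defined map of towers $\widehat{\mathcal{G}}^\bullet_{Y_2} \to \widehat{\mathcal{G}}^\bullet_{Y_1}$. For the comparison on smooth equidimensional $Y$, note that $Y \times \sq^n$ is again smooth equidimensional; by \propref{prop:sm m.cnv} — equivalently, by the smooth case of \thmref{thm:main intro-0} applied to $Y \times \sq^n$ — each $\widehat{\mathcal{G}}^q_{Y,n}$ is weakly equivalent to Quillen's coniveau spectrum of $Y \times \sq^n$ with supports in codimension $\geq q$ meeting all faces properly. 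This is precisely the $n$-th level of Levine's homotopy coniveau tower \cite[\S 2.1]{Levine hcnv}, so taking the normalized realization identifies the two towers. The spectral sequence statement then follows, as Levine's tower is known to yield the motivic Atiyah-Hirzebruch spectral sequence.

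The principal obstacle lies in the cubical compatibility when $Y$ is singular: face restrictions need not preserve the codimension-with-support condition on the nose, so the cubical structure maps exist only up to coherent homotopies, which are provided by the Milnor patching together with a moving technique adapted to formal neighborhoods (already needed for \thmref{thm:main intro-0}). One must verify that these homotopies assemble coherently across all cubical levels, and that pull-back along $g \times \id$ commutes with them so that the resulting map of cubical spectra descends to a map of realizations in the stable homotopy category. Once this coherence is in place, the two theorems of the paper deliver the existence, functoriality and smooth-case comparison asserted.
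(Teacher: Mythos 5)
Your construction inverts the logical order the paper uses, and at one crucial step the argument becomes circular. You propose to produce $\widehat{\mathcal{G}}^q_{Y}$ by applying the $n=0$ coniveau machine of Theorem~\ref{thm:main intro-0} to $Y\times\square^n$ and then imposing, post hoc, a face-properness condition on supports. The paper does the opposite: it bakes the face conditions into the admissibility requirements of Definitions~\ref{defn:HCG} and \ref{defn:cnv higher} \emph{before} the triangulated categories $\mathcal{D}^q_{\coh}(\widehat{X}_I,n)$ and their $K$-spaces are formed, so that the cubical face and degeneracy maps exist as honest exact functors (Remark~\ref{cubicalfaces}, where proper intersection with faces forces $\mathrm{Tor}$-vanishing and hence underived pullbacks); the \v{C}ech machinery is then run once, on a single system of local embeddings for $Y$, not one system per $Y\times\square^n$. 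The ``principal obstacle'' you point to --- cubical structure maps defined only up to non-canonical homotopy --- is therefore something the paper \emph{avoids} rather than solves, and your proposal neither avoids nor resolves it: you would still owe a coherence argument to show the homotopy-category cubical object realizes to a well-defined spectrum and that the realizations assemble into a tower compatible with descent over varying covers of each $Y\times\square^n$.

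The smooth comparison is where the gap is unrepairable as written. You assert that Proposition~\ref{prop:sm m.cnv} is ``equivalently'' the smooth case of Theorem~\ref{thm:main intro-0} applied to $Y\times\square^n$. These are not equivalent, and the second does not deliver what you need. Theorem~\ref{thm:main intro-0} applied to $Y\times\square^n$ identifies the $n=0$ tower with Quillen's tower $K(M_q(Y\times\square^n))$, which has \emph{no} face conditions. Levine's level $K^{[q]}(Y,n)$ of Definition~\ref{defn:555} is taken over supports in $\mathcal{S}^{[q]}_Y(n)$, which \emph{do} carry face conditions, so it is an a priori strictly smaller spectrum. Bridging this is precisely the content of Lemmas~\ref{lem.compfin1} and \ref{lem.compfin} in the paper's proof of Proposition~\ref{prop:sm m.cnv}; the latter requires a genuine argument (a prime-avoidance induction, and the resolution of the pitfall recorded in Remark~\ref{remk:S^q z^q diff}, namely that codimension-$\ge q$ intersection with each face is weaker than admissibility of the associated cycle). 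You also never address Lemma~\ref{lem.simpcubcomp}, which compares the cubical tower with Levine's \emph{simplicial} homotopy coniveau tower; without it, ``this is precisely the $n$-th level of Levine's tower'' is unjustified.
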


For a general scheme of finite type, we plan to study the spectral sequence determined by the tower in Theorem \ref{thm:main intro} in a future work. We expect there may be natural isomorphisms from the $E_2$-terms to the new higher Chow groups $\BGH^q(Y,n)$ of \cite{P general}.

\medskip

A basic idea in our constructions of the filtrations of Theorems \ref{thm:main intro-0} and \ref{thm:main intro} is to use formal neighborhoods $\widehat{X}$ of $Y$ associated to a closed immersion $Y \hookrightarrow X$ into an equidimensional smooth $k$-scheme, if such an immersion exists. This idea was used in Grothendieck's studies of \'etale fundamental groups in SGA I \cite{SGA1} as well as Hartshorne's studies of the algebraic de Rham cohomology of singular varieties in \cite{Hartshorne DR}. This idea is also a cornerstone in \cite{Park Tate}, \cite{P general} on studies of the motivic cohomology of singular schemes.

\medskip

To illustrate part of the strategy of the construction, for the sake of discussion, for a while suppose $Y$ is affine so that it does admit such a closed immersion $Y \hookrightarrow X$. From the regularity of $X$, we deduce that $\widehat{X}$ is a regular formal scheme (Lemma \ref{lem:exoskeleton}). For such $\widehat{X}$, the natural functor $\mathcal{D}_{\perf} (\widehat{X}) \to \mathcal{D}_{\coh} (\widehat{X})$ from the perfect complexes to pseudo-coherent complexes is an equivalence (Lemma \ref{lem:coh=perfect}). To have a higher structure, we propose to replace $\widehat{X}$ by the co-cubical formal scheme $\widehat{X} \times \square^{\bullet}$, where $\square:= \mathbb{P}^1 \setminus \{ 0, \infty \}$, bringing in some flavors of cubical higher Chow cycles on formal schemes of \cite{Park Tate} and \cite{P general}. Each $\widehat{X} \times \square^n$ is still regular (see \cite[Theorem 8.10, p.39]{GS} or \cite[Th\'eor\`eme 7, p.398]{Salmon}).

For integers $q \geq 0$, we can consider the triangulated subcategory $\mathcal{D}^q_{\coh} (\widehat{X},n) \subset \mathcal{D}_{\coh} (\widehat{X} \times \square^n)$ generated by the coherent sheaves whose associated cycles are ``higher Chow cycles over $\widehat{X}$" in $z^{\geq q} (\widehat{X}, n)$ (see Definition \ref{defn:HCG}), i.e. they have the codimension $\geq q$ on $\widehat{X} \times \square^n$, intersect properly with all faces $\widehat{X} \times F$ for faces $F \subset \square^n$ as well as their ``special fibers" $\widehat{X}_{\red} \times F$, defined by the ideals of definition, being cycles over formal schemes. The associated Waldhausen $K$-spaces (\cite{TT} and \cite{Waldhausen}) 
$$
\cdots \to \mathcal{G}^q (\widehat{X},n) \to \cdots \to \mathcal{G}^0 (\widehat{X},n) \to \mathcal{G} (\widehat{X} \times \square^n)
$$
define a tower of the cubical spaces $(\un{n} \mapsto \mathcal{G}^{q} (\widehat{X}, n))$ over $q \geq 0$, where \emph{spaces} here mean spectra. The geometric realizations $ \mathcal{G}^q (\widehat{X}):= | \un{n} \mapsto \mathcal{G}^q (\widehat{X}, n) |$ give a tower of spaces
$$
\cdots \to \mathcal{G}^q (\widehat{X}) \to \cdots \to \cdots \mathcal{G}^0 (\widehat{X}) \to | \un{n} \mapsto \mathcal{G}(\widehat{X} \times \square^n)|,$$
where the last space is weak-equivalent to $\mathcal{G} (\widehat{X})$. 

Taking the images of their higher homotopy groups, we have a decreasing filtration $F^{\bullet}$ on $G_n (\widehat{X})= \pi_n \ \mathcal{G} (\widehat{X})$ for $n \geq 0$. This induces a filtration on $K_n (\widehat{X})$ via the isomorphism $K_n (\widehat{X}) \overset{\sim}{\to} G_n (\widehat{X})$.

\medskip

As the first attempt, consider the induced filtration on $K_n (Y)$ by taking the images under the natural map $K_n (\widehat{X}) \to K_n (Y)$. More precisely, let
\begin{equation}\label{eqn:primitive cnv}
\tuborg 
F_{X} ^q K_n (Y): = K_n (Y), & \mbox{ for } q \leq 0,
 \\
F_{ X} ^q K_n (Y) := {\rm Im} ( F ^q K_n (\widehat{X}) \to K_n (Y)), & \mbox{ for } q \geq 1.\sluttuborg
\end{equation}

There are some apparent problems in the filtration of \eqref{eqn:primitive cnv} on $K_n (Y)$. Firstly, in general a finite type $k$-scheme $Y$ may not admit a global closed immersion $Y \hookrightarrow X$ into an equidimensional smooth $k$-scheme. Secondly, even if one has it, the filtration described in \eqref{eqn:primitive cnv} depends on the embedding. An additional subtle problem is that the homotopy cofiber of $\mathcal{G}^{q+1} (\widehat{X}) \to \mathcal{G}^q (\widehat{X})$ fails to distinguish $Y$ and $Y_{\red}$, because both of them have the same formal neighborhood $\widehat{X}$.

\medskip

We get around these difficulties by generalizing the \v{C}ech hypercohomology machine of R. Thomason \cite[\S 1]{Thomason etale} via the structured \v{C}ech covers, called ``systems of local embeddings" $\mathcal{U}=\{ (U_i, X_i)\}_{i \in \Lambda}$ (see Definition \ref{defn:system}). This is a minor variant of the one first used by R. Hartshorne \cite{Hartshorne DR} in his studies of the algebraic de Rham cohomology of singular varieties. This $\mathcal{U}$ consists of a (quasi-)affine open cover $|\mathcal{U}|= \{ U_i \}_{i \in \Lambda}$ of $Y$ and closed immersions $U_i \hookrightarrow X_i$ into equidimensional smooth $k$-schemes. For each $I= (i_0, \cdots, i_p) \in \Lambda^{p+1}$, $p \geq 0$, let $U_I:= U_{i_0} \cap \cdots \cap U_{i_p}$ and $X_I:= X_{i_0} \times \cdots \times X_{i_p}$. Consider the diagonal closed immersion $U_I \hookrightarrow X_I$, and the completion $\widehat{X}_I$ of $X_I$ along $U_I$. 

The spaces $\mathcal{G}^q (\widehat{X}_I) $ over all $I\in \Lambda^{p+1}$ and $p \geq 0$ still do not tell the difference between $Y$ and $Y_{\red}$. Here, we devise a $K$-theoretic analogue of the mod equivalence for cycles developed in \cite{Park Tate} and \cite{P general}. Namely, we construct (see \S \ref{sec:mod Y spaces}) a space $\mathcal{S}^{q} (D_{\widehat{X}_I}, U_I, n)$, where $D_{\widehat{X}_I} = \widehat{X}_I \coprod_Y \widehat{X}_I$ with two morphisms
\begin{equation}\label{eqn:intro coeq 0}
\mathcal{S}^{q} (D_{\widehat{X}_I}, U_I, n) \rightrightarrows \mathcal{G}^q (\widehat{X}_I, n),
\end{equation}
arising from the derived Milnor patching of S. Landsburg \cite{Landsburg Duke}. 
Then take the homotopy coequalizer of \eqref{eqn:intro coeq 0} to define the space $\mathcal{G}^q (\widehat{X}_I, U_I, n)$, and take the geometric realization $\mathcal{G}^q (\widehat{X}_I, U_I):=|\un{n} \mapsto \mathcal{G} ^q (\widehat{X}_I, U_I, n)|$ of the cubical object. It can be seen as ``the $K$-space of $\widehat{X}_I$ mod $U_I$, with the coniveau $\geq q$."

Using the spaces $\mathcal{G}^q (\widehat{X}_I, U_I)$ over all $I\in \Lambda^{p+1}$ and $p \geq 0$, we want to apply a \v{C}ech-like machine. This requires a technical tool that is similar in spirit to moving lemmas for algebraic cycles, thus we also call it a moving lemma in this paper. The important case is the following, stated in Proposition \ref{prop:D ess} and Corollary \ref{cor:Gysin cnv}:

\begin{prop}\label{prop:moving intro}
Let $Y$ be a quasi-projective $k$-scheme. Suppose we have closed immersions $Y \hookrightarrow X_1 \hookrightarrow X_2$, where $X_1$ and $X_2$ are equidimensional smooth $k$-schemes. Let $\widehat{X}_i$ be the completion of $X_i$ along $Y$. 

Let $\mathcal{D}^q _{\coh, \{ \widehat{X}_1 \}} (\widehat{X}_2,n)$ be the triangulated subcategory of $\mathcal{D}^q _{\coh} (\widehat{X}_2,n)$ generated by coherent sheaves whose associated cycles intersect properly with $\widehat{X}_1$ (see Definition \ref{defn:cnv X1}).

Then the inclusion 
$$
\mathcal{D}^q _{\coh, \{ \widehat{X}_1 \}} (\widehat{X}_2,n) \hookrightarrow \mathcal{D}^q _{\coh} (\widehat{X}_2,n)
$$ is essentially surjective, and the induced map 
$$
\mathcal{G}^q _{\{\widehat{X}_1\}} (\widehat{X}_2,n) \to \mathcal{G}^q (\widehat{X}_2,n)
$$
of their Waldhausen $K$-spaces is a weak-equivalence.
\end{prop}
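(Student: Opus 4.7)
My plan is to reduce the weak-equivalence of Waldhausen $K$-spaces to essential surjectivity of the triangulated subcategory inclusion, and then to treat essential surjectivity via a cycle-theoretic moving lemma that is lifted to a cobordism at the level of coherent sheaves. For the reduction: since $\sD^q_{\coh}(\widehat{X}_2, n)$ is generated, as a triangulated subcategory, by coherent sheaves whose associated cycles lie in $z^{\geq q}(\widehat{X}_2, n)$, it is enough to show that every such generator $\sF$ already lies in $\sD^q_{\coh, \{\widehat{X}_1\}}(\widehat{X}_2, n)$.

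Given such a generator $\sF$, I would construct a coherent sheaf $\widetilde{\sF}$ on $\widehat{X}_2 \times \sq^{n+1}$ whose two extremal face restrictions recover $\sF$ on one side and some coherent sheaf $\sF'$ whose associated cycle lies in the ``good'' subgroup $z^{\geq q}_{\{\widehat{X}_1\}}(\widehat{X}_2, n)$ on the other, while the intermediate face restrictions carry the defect to strictly higher codimension or directly into the target subcategory. The construction should follow the philosophy of the Bloch--Levine moving lemma \cite{Bloch moving}, \cite{Levine K}, \cite{Levine hcnv}: use the $\mathbb{G}_m$-action on $\sq$ together with a translation built from the regular embedding $\widehat{X}_1 \hookrightarrow \widehat{X}_2$, which is available because both completions are regular formal schemes by \lemref{lem:exoskeleton}. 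The face restrictions of $\widetilde{\sF}$ then fit into a distinguished triangle in $\sD^q_{\coh}(\widehat{X}_2, n)$ whose third term has strictly better intersection properties with $\widehat{X}_1 \times \sq^n$; iterating on the codimension of the bad intersection locus, one eventually places $\sF$ itself in $\sD^q_{\coh, \{\widehat{X}_1\}}(\widehat{X}_2, n)$, which gives essential surjectivity.

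With essential surjectivity in hand, the weak-equivalence $\sG^q_{\{\widehat{X}_1\}}(\widehat{X}_2, n) \to \sG^q(\widehat{X}_2, n)$ would follow from Waldhausen's approximation theorem \cite{Waldhausen}, applied in the Thomason--Trobaugh complicial framework \cite{TT}, since both sides arise from full Waldhausen subcategories of $\sD_{\coh}(\widehat{X}_2 \times \sq^n) \simeq \sD_{\perf}(\widehat{X}_2 \times \sq^n)$ (\lemref{lem:coh=perfect}) that are closed under the same weak equivalences, mapping cones, and extensions. The main obstacle I anticipate is the cycle-level moving construction itself, since the definition of $z^{\geq q}(\widehat{X}_2, n)$ packages together several simultaneous proper-intersection requirements: against every face $\widehat{X}_2 \times F$, against each reduced-fiber face $\widehat{X}_{2,\red} \times F$, and now additionally against $\widehat{X}_1 \times \sq^n$ and its faces. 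Arranging a single rational equivalence that improves the last condition while preserving all the others, entirely within the formal-scheme setting where standard projective-geometric techniques are unavailable, is where the bulk of the technical work will lie.
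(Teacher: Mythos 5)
Your plan departs from the paper's proof in two significant ways, and both introduce gaps. First, you replace the paper's device---a translation in the formal parameters of the embedding---by a $\sq$-parametrized cobordism at the level of sheaves in the spirit of the Bloch--Levine moving lemma. But a coherent sheaf $\widetilde{\sF}$ on $\widehat{X}_2\times\sq^{n+1}$ with two extremal face restrictions $\sF$ and $\sF'$ does not furnish an isomorphism $\sF\cong\sF'$ in $\sD^q_{\coh}(\widehat{X}_2,n)$; that is what essential surjectivity of the full subcategory inclusion requires, and it is a statement at a \emph{single fixed} $n$, before any cubical realization. A cobordism only identifies $\sF$ and $\sF'$ in the homotopy of the realized cubical $K$-space, a strictly weaker assertion. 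Nor do the two face restrictions of a single sheaf on $\widehat{X}_2\times\sq^{n+1}$ sit naturally in a distinguished triangle over $\widehat{X}_2\times\sq^n$, so the iteration on the codimension of the bad locus you sketch does not get off the ground. The paper instead stays on $\widehat{X}_2\times\sq^n$ itself: writing $\widehat{X}_2=\Spf(A[[\un t]])$ with $\widehat{X}_1=\Spf(A)$ cut out by $t_1=\cdots=t_r=0$, it produces a single tuple $\un c$ by prime avoidance (Lemmas \ref{lem:prime avoid}--\ref{lem:translation general}) so that the translation $\psi_{\un c}\colon t_i\mapsto t_i+c_i$ simultaneously arranges proper intersection with all faces, all reduced-fiber faces, and $\widehat{X}_1\times F$ in one step, for every component of $[\sF]$ at once.

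Second, you omit the local-to-global step, which occupies an essential half of the argument. The translation device is available only when $\widehat{X}_1\hookrightarrow\widehat{X}_2$ is cut out by global coordinates, i.e.\ when $Y$ is connected affine and $X_1\hookrightarrow X_2$ is a complete intersection (Corollary \ref{cor:D ess local}). For a general quasi-projective $Y$ and a merely l.c.i.\ immersion $X_1\hookrightarrow X_2$, the paper covers $Y$ by affine opens over which the immersion becomes a complete intersection, applies the local result, and patches the locally constructed replacements into a global perfect complex via a Mayer--Vietoris style mapping-cone argument with $j_!$-extensions (the morphism of triangles \eqref{eqn:D ess triangles} in the proof of Proposition \ref{prop:D ess}). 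Your plan gives no indication of this step. Your reduction to generators, the identification $\sD_{\coh}\simeq\sD_{\perf}$ over the regular formal schemes (Lemma \ref{lem:coh=perfect}), and the Thomason--Trobaugh passage from essential surjectivity to a weak equivalence of $K$-spaces are all consistent with the paper.
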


One important consequence of Proposition \ref{prop:moving intro} is the following, stated in Theorems \ref{thm:moving pull-back} and \ref{thm:moving pull-back mod Y}:

\begin{thm}\label{thm:moving pull-back intro}
Suppose we have a commutative diagram
$$
\xymatrix{
X_1 \ar[r] ^f & X_2 \\
Y_1 \ar[r]^g \ar@{^{(}->}[u] & Y_2, \ar@{^{(}->}[u] }
$$
where $Y_1, Y_2$ are affine $k$-schemes of finite type and the vertical maps are closed immersions into equidimensional smooth $k$-schemes. Let $\widehat{X}_i$ be the completion of $X_i$ along $Y_i$ for $i=1,2$. Then for $q, n \geq 0$, we have the induced morphisms
$$\tuborg \widehat{f}^*:\mathcal{G}^q (\widehat{X}_2,n) \to \mathcal{G}^q (\widehat{X}_1,n), \\
\widehat{f}^*:\mathcal{G}^q (\widehat{X}_2, Y_2, n) \to \mathcal{G}^q (\widehat{X}_1,Y_1, n)\sluttuborg
$$
in the homotopy category. In particular, after geometric realizations of the cubical spaces over $n \geq 0$, we have the induced morphisms in the homotopy category
$$
\tuborg \widehat{f}^*:\mathcal{G}^q (\widehat{X}_2) \to \mathcal{G}^q (\widehat{X}_1), \\
\widehat{f}^*:\mathcal{G}^q (\widehat{X}_2, Y_2) \to \mathcal{G}^q (\widehat{X}_1, Y_1).\sluttuborg
$$
\end{thm}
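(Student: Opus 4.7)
The plan is to factor $f: X_1 \to X_2$ through its graph and combine a flat pullback with the moving lemma (Proposition~\ref{prop:moving intro}) for the resulting closed immersion. Factor
$$
X_1 \xrightarrow{\Gamma_f} Z := X_1 \times_k X_2 \xrightarrow{p_2} X_2,
$$
where $\Gamma_f$ is a regular closed immersion (because $X_1$ is smooth) and $p_2$ is smooth, hence flat. The map $Y_1 \to Z$ defined by $y_1 \mapsto (i_1(y_1), i_2(g(y_1)))$ is a closed immersion of $Y_1$ into the smooth equidimensional $k$-scheme $Z$; let $\widehat{Z}$ denote the completion of $Z$ along $Y_1$. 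Since $p_2(Y_1) \subseteq Y_2$, the projection extends to a flat morphism of formal schemes $\widehat{p}_2: \widehat{Z} \to \widehat{X}_2$, and since $\Gamma_f$ restricts to the identity on $Y_1$, it extends to a closed immersion $\widehat{\Gamma}_f: \widehat{X}_1 \hookrightarrow \widehat{Z}$.

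Now construct the pullback in two stages. For the smooth projection, flat pullback preserves codimension as well as proper intersection with the faces of $\square^n$ and with reduced fibers, yielding
$$
\widehat{p}_2^*: \mathcal{G}^q(\widehat{X}_2, n) \to \mathcal{G}^q(\widehat{Z}, n).
$$
For the closed immersion, Proposition~\ref{prop:moving intro} applied to $Y_1 \hookrightarrow X_1 \hookrightarrow Z$ provides a weak equivalence
$$
\iota: \mathcal{G}^q_{\{\widehat{X}_1\}}(\widehat{Z}, n) \xrightarrow{\sim} \mathcal{G}^q(\widehat{Z}, n).
$$
On the left-hand side the generating cycles meet $\widehat{X}_1 \times \square^n$ properly, so the derived restriction $L\widehat{\Gamma}_f^*$ applied to such a generator yields a pseudo-coherent complex of codimension $\geq q$ on $\widehat{X}_1 \times \square^n$ whose cycle still meets all faces and reduced fibers properly; this produces
$$
L\widehat{\Gamma}_f^*: \mathcal{G}^q_{\{\widehat{X}_1\}}(\widehat{Z}, n) \to \mathcal{G}^q(\widehat{X}_1, n).
$$
In the homotopy category, set
$$
\widehat{f}^* := L\widehat{\Gamma}_f^* \circ \iota^{-1} \circ \widehat{p}_2^*: \mathcal{G}^q(\widehat{X}_2, n) \to \mathcal{G}^q(\widehat{X}_1, n).
$$

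Since $f$ sends $Y_1$ into $Y_2$, the same construction applied to each of the two copies of $\widehat{X}_i$ in $D_{\widehat{X}_i} = \widehat{X}_i \coprod_{Y_i} \widehat{X}_i$ is compatible with the two Milnor-patching arrows of \eqref{eqn:intro coeq 0}, so passing to homotopy coequalizers produces $\widehat{f}^*: \mathcal{G}^q(\widehat{X}_2, Y_2, n) \to \mathcal{G}^q(\widehat{X}_1, Y_1, n)$. Each of the three ingredients (flat pullback, moving equivalence, and derived restriction) is natural with respect to the face and degeneracy maps of $\square^\bullet$, so taking geometric realizations over $n$ gives the remaining morphisms $\widehat{f}^*$ on $\mathcal{G}^q(\widehat{X}_i)$ and on $\mathcal{G}^q(\widehat{X}_i, Y_i)$.

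The main obstacle I anticipate is verifying that both $\widehat{p}_2^*$ and $L\widehat{\Gamma}_f^*$ preserve every condition defining $z^{\geq q}$: codimension, proper intersection with faces, and proper intersection with the reduced fibers $\widehat{Z}_{\red} \times F$ and $\widehat{X}_{1,\red} \times F$. For the flat pullback this is subtle because $\widehat{p}_2^{-1}(\widehat{X}_{2,\red})$ is in general strictly larger than $\widehat{Z}_{\red}$, so the reduced-fiber proper intersection is not automatic from flat base change. For the derived restriction, one must show that the higher $\Tor$-contributions in $L\widehat{\Gamma}_f^*$ are controlled by proper intersection with $\widehat{X}_1 \times \square^n$. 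Once these points are handled, the remaining compatibilities (with Milnor patching, cubical structure, and geometric realization) are formal consequences of the functoriality of completion and of flat base change.
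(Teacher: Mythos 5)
Your proposal is correct and takes essentially the same route as the paper's proofs of Theorem~\ref{thm:moving pull-back} and Theorem~\ref{thm:moving pull-back mod Y}: both factor $f$ through the graph immersion $X_1\hookrightarrow X_1\times X_2\to X_2$, complete along $Y_1$ to obtain $\widehat{Z}=\widehat{X}_{12}'$, and compose a flat pull-back with the Gysin pull-back supplied by the moving lemma (Proposition~\ref{prop:D ess} via Corollary~\ref{cor:Gysin cnv}), descending to the mod-$Y$ homotopy coequalizers afterward. The only presentational difference is that the paper splits your $\widehat{p}_2$ into the two flat steps $\alpha\colon\widehat{X}_{12}'\to\widehat{X}_{12}$ and $\widehat{pr}_2\colon\widehat{X}_{12}\to\widehat{X}_2$ (identifying $\widehat{X}_{12}\simeq\widehat{X}_1\times\widehat{X}_2$ via Lemma~\ref{lem:prodcomp}) so that flatness is manifest, and spells out the preservation of the mod-$Y$ equivalence by the flat functors explicitly rather than treating it as a formal patching compatibility.
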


\medskip

Coming back to the generalizations of the \v{C}ech-machine, how does Theorem \ref{thm:moving pull-back intro} help? For a $k$-scheme $Y$ of finite type and a system $\mathcal{U}$ of local embeddings for $Y$, we construct a cosimplicial object in the homotopy category
\begin{equation}\label{eqn:g-hat intro}
\widehat{\mathcal{G}} ^{q, \bullet} _{\mathcal{U}} := \left \{ \prod_{i_0 \in \Lambda} \mathcal{G}^q (\widehat{X}_{i_0}, U_{i_0}) \overset{\leftarrow}{ \mathrel{\substack{\textstyle\rightarrow\\[-0.6ex]
                      \textstyle\rightarrow }}}   \prod_{I \in \Lambda^2} \mathcal{G}^q (\widehat{X}_I, U_I)
 \overset{\leftarrow}{ \mathrel{\substack{\textstyle\rightarrow\\[-0.6ex]
                      \textstyle\rightarrow \\[-0.6ex]
                      \textstyle\rightarrow}} } \prod_{I \in \Lambda^3} \mathcal{G}^q (\widehat{X}_I, U_I) \cdots \right \},
\end{equation}
where the cofaces and the codegeneracies are not solid morphisms of spaces: they are defined with helps of Theorem \ref{thm:moving pull-back intro} in the homotopy category in general. Taking the homotopy limit over $\Delta$, we define a version of \v{C}ech hypercohomology space over the system $\mathcal{U}$
$$
\check{\mathbb{H}}^{\cdot} (\mathcal{U}, \widehat{\mathcal{G}} ^q):= \underset{\Delta}{\holim} \ \widehat{\mathcal{G}}_{\mathcal{U}} ^{q, \bullet}.
$$

For systems $\mathcal{U}$ of local embeddings, we introduce a notion of refinements (see Definition \ref{defn:refine}). This notion is more flexible than the one considered by R. Hartshorne \cite{Hartshorne DR}. With helps of Theorem \ref{thm:moving pull-back intro} again, we check that for each refinement system $\mathcal{V}$ of $\mathcal{U}$, there exists the induced morphism in the homotopy category (see Lemma \ref{lem:refinement induce})
$$
\check{\mathbb{H}}^{\cdot} (\mathcal{U} , \widehat{\mathcal{G}} ^q) \to \check{\mathbb{H}}^{\cdot} (\mathcal{V} , \widehat{\mathcal{G}} ^q),
$$
while for any two systems $\mathcal{U}_1$ and $\mathcal{U}_2$ for $Y$, we have a common refinement (see Lemma \ref{lem:comref}). Hence we have the homotopy colimit
$$
\widehat{\mathcal{G}} ^q_Y:= \underset{\mathcal{U}}{\hocolim} \ \check{\mathbb{H}}^{\cdot} (\mathcal{U}, \widehat{\mathcal{G}}^q).
$$

\medskip

On the other hand, for each $\mathcal{U}$, the closed immersions $U_I \hookrightarrow X_I $ induce the closed immersions $U_I \hookrightarrow \widehat{X}_I$. We deduce morphisms in the homotopy category
$$
\mathcal{G}^q (\widehat{X}_I, U_I) \to \mathcal{G} (\widehat{X}_I, U_I)  \to \mathcal{K} (U_I),
$$
which in turn induces a morphism in the homotopy category
$$
\check{\mathbb{H}}^{\cdot} (\mathcal{U}, \widehat{\mathcal{G}} ^q) \to \check{\mathbb{H}}^{\cdot} (|\mathcal{U}|, \mathcal{K}),
$$
where the right hand side is the original \v{C}ech hypercohomology space of $K$-theory in the sense of R. Thomason \cite[\S 1]{Thomason etale}. This is weak-equivalent to the $K$-space $\mathcal{K} (Y)$ of $Y$ (see Remark \ref{remk:Cech Zariski descent}). Thus, after taking the homotopy colimits over $\mathcal{U}$, we deduce a tower of morphisms in the homotopy category
$$
\cdots \to \widehat{\mathcal{G}}_Y ^q \to \widehat{\mathcal{G}} _Y ^{q-1}  \to \cdots \to \widehat{\mathcal{G}}_Y^0 \to  \mathcal{K} (Y),
$$
Taking the images of the higher homotopy groups, we finally obtain the desired motivic coniveau filtration on $K_n (Y)$ of the main result, Theorem \ref{thm:main intro}. 

On the other hand, if we used $\mathcal{G}^q (\widehat{X}_I, U_I, 0)$ instead of $\mathcal{G}^q (\widehat{X}_I, U_I)$ in \eqref{eqn:g-hat intro} and repeated the above, then we obtain the coniveau filtration on $K_n (Y)$ of Theorem \ref{thm:main intro-0}. 

\medskip

We remark that, in the article \cite{P general} concurrently developed by the first named author, there is a new cycle class group denoted by $\BGH^q (Y,n)$ in bold face letters for each $Y$, possibly arbitrarily singular. This is in general distinct from the higher Chow groups $\CH^q (Y, n)$ of S. Bloch \cite{Bloch HC}, while they coincide when $Y$ is smooth. This new cycle theory also utilizes a \v{C}ech machine via the systems of local embeddings and the regular formal neighborhoods $\widehat{X}_I$ of $U_I$. We wonder whether we have a cycle class map
\begin{equation}\label{eqn:cycle class}
\BGH^q (Y,n) \to gr_{\mcnv} ^q K_n (Y)
\end{equation}
and whether this map is an isomorphism up to tensoring with $\mathbb{Q}$. Should this hold, we may regard it as the generalization of the Grothendieck-Riemann-Roch theorem for singular schemes.

\medskip

\textbf{Conventions.}
In this paper, a given base field $k$ is arbitrary. All noetherian formal schemes are assumed to have finite Krull dimensions unless said otherwise. We let $\Sch_k$ be the category separated $k$-schemes of finite type, maybe singular, even non-reduced.

\section{Some recollections}\label{sec:recollection}

In \S \ref{sec:recollection}, we recall some basic notions of noetherian formal schemes, and discuss some materials needed for our studies of algebraic $K$-theory of noetherian formal schemes, such as perfect complexes and pseudo-coherent complexes. Most of the materials are from SGA VI  \cite{SGA6} and EGA I \cite{EGA1}, but one new observation (Proposition \ref{prop:regularity0}) on regularity of formal schemes is made.

\subsection{Formal schemes}

We recall the notion of adic rings. The basic references are EGA I \cite[Ch 0, \S 7, p.60]{EGA1} and \cite[\S 10, p.180]{EGA1}. 

\subsubsection{Adic rings}
Recall (\cite[Ch 0, D\'efinition (7.1.2), p.60]{EGA1}) that a topological ring $A$ is said to be \emph{linearly topologized} if there is a fundamental system of neighborhoods of $0$ in $A$ given by ideals. In a linearly topologized ring $A$, we say that an ideal $I \subset A$ is an \emph{ideal of definition} if $I$ is open, and for each neighborhood $V$ of $0$, there is an integer $n>0$ such that $I^n \subset V$. If an ideal of definition does exist, then we say $A$ is \emph{preadmissible}. An \emph{admissible ring} $A$ is a preadmissible ring that is also separated and complete.

When $A$ is a noetherian admissible ring, there exists the largest ideal of definition $I_0 \subset A$, such that $A/ I_0$ is reduced. See \cite[Ch 0, Corollaires (7.1.6), (7.1.7), pp.61-62]{EGA1}. 

Recall (\cite[Ch 0, D\'efinition (7.1.9), p.62]{EGA1}) that a preadmissible ring $A$ is called \emph{preadic} if there is an ideal $I$ of definition, and the powers $I^n$ for $n>0$ form a fundamental system of neighborhoods of $0$. It is called \emph{adic}, if this preadic ring is separated and complete.

 Let $A$ be an admissible ring. Let $J \subset A$ be an ideal contained in an ideal of definition. The topology given by the fundamental system $J^n$ for $n>0$ of neighborhoods of $0$ is called the $J$-preadic topology. In this case $A$ is separated and complete with respect to the $J$-preadic topology. (See \cite[Ch 0, (7.2.3),  Proposition (7.2.4), p.63]{EGA1}.)

\subsubsection{Completed rings of fractions}

Recall from \cite[Ch 0, (7.6.1), (7.6.5), pp.72-72]{EGA1} the following. Let $A$ be a linearly topologized ring with a fundamental system of neighborhoods of $0$ given by ideals $\{ I_{\lambda}\}$. Let $S \subset A$ be a multiplicative subset. Let $u_{\lambda}: A \to A_{\lambda}:= A/ I_{\lambda}$ be the natural map. If $I_{\mu} \subset I_{\lambda}$, let $u_{\lambda \mu}: A_{\mu} \to A_{\lambda}$ be the natural map. Let $S_{\lambda}:= u_{\lambda} (S)$.

The maps $u_{\lambda \mu}$ then canonically induce surjective homomorphisms $S_{\mu} ^{-1} A_{\mu} \to S_{\lambda} ^{-1} A_{\lambda}$, and the data form a projective system. We define 
$$
A\{ S^{-1} \} := \varprojlim_{\lambda} S_{\lambda}^{-1} A_{\lambda},
$$
called the \emph{completed ring of fractions} of $A$ with the denominators in $S$.

There is another version of localization from \cite[Ch 0, (7.6.15), p.74]{EGA1}, that we recall. Let $A$ be a linearly topologized ring and let $f \in A$. Let $S_f:= \{ 1, f, f^2, \cdots \}$, which is multiplicative in $A$. Then we let
\begin{equation}\label{eqn:comp loc f}
A_{\{ f \}} := A \{ S_f ^{-1} \}.
\end{equation}

If $g \in A$ is another element, then we have a canonical continuous homomorphism $A_{ \{ f \}} \to A_{ \{ fg \}}$.  (See \cite[Ch 0, (7.6.7), p.73]{EGA1}.) So, when $S \subset A$ is a multiplicative subset, and $f$ runs over $S$, the system $A_{\{ f \}}$ gives a filtered inductive system of rings. We define
$$
A_{\{ S \}} := \varinjlim_{f \in S} A_{ \{ f \}}.
$$
Since we have a natural homomorphism $A_{\{ f \}} \to A \{ S ^{-1} \}$, this induces a natural flat (\cite[Ch 0, Proposition (7.6.16), p.75]{EGA1}) homomorphism $A_{ \{S \}} \to A\{ S^{-1} \}$. 
We recall the following:

\begin{prop}[{\cite[Ch 0, Prop. (7.6.17), Cor. (7.6.18), p.75]{EGA1}}] \label{prop:two localizations}
Let $A$ be an admissible ring and let $P \subset A$ be an open prime ideal. Let $S:= A \setminus P$. Then
\begin{enumerate}
\item The rings $A_{\{ S \}}$ and $A\{ S^{-1} \}$ are local rings, and the homomorphism $A_{ \{ S \}} \to A \{ S^{-1} \}$ is a flat local homomorphism.
\item The residue fields of both rings are canonically isomorphic to ${\rm Frac} (A/P)$.
\end{enumerate}
Furthermore, in case $A$ is a noetherian adic ring, then the above local rings are both noetherian, and the homomorphism $A_{ \{ S \}} \to A \{ S^{-1} \}$ is faithfully flat.
\end{prop}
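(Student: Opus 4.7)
My approach hinges on the observation that openness of $P$ provides a common residue field $\kappa := \mathrm{Frac}(A/P)$ shared by every relevant finite quotient, so both localizations can be analyzed by tracking this residue field. First, since $P$ is open, I would fix an ideal of definition $I_0 \subseteq P$ and restrict attention to the cofinal subsystem of $\lambda$ with $I_\lambda \subseteq I_0$; this is harmless since cofinal subsystems compute the same projective limit. For each such $\lambda$, $P_\lambda := P/I_\lambda$ is a prime of $A_\lambda := A/I_\lambda$, so $S_\lambda^{-1}A_\lambda = (A_\lambda)_{P_\lambda}$ is a local ring whose residue field is canonically $\kappa$, independent of $\lambda$. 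Crucially, the transition maps $(A_\mu)_{P_\mu} \to (A_\lambda)_{P_\lambda}$ are surjective local homomorphisms inducing the identity on $\kappa$.

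For parts (1) and (2) applied to $A\{S^{-1}\}$: projecting to any coordinate and composing with the residue surjection yields a well-defined surjection $A\{S^{-1}\} \twoheadrightarrow \kappa$. Its kernel $\fm$ is then the unique maximal ideal, since any $x = (x_\lambda) \notin \fm$ has each $x_\lambda$ a unit in a local ring, and the compatible system of inverses $(x_\lambda^{-1})$ remains compatible under the (local) transition maps, producing an actual inverse of $x$. This gives (2) for $A\{S^{-1}\}$ and half of (1) simultaneously.

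For $A_{\{S\}}$ I would exploit the canonical maps $A_{\{f\}} \to A\{S^{-1}\}$ (EGA I, (7.6.7)) to assemble a homomorphism $A_{\{S\}} \to A\{S^{-1}\}$, and compose with the residue surjection onto $\kappa$. The key claim is that any $x \in A_{\{S\}}$ with nonzero image in $\kappa$ is already a unit. Representing $x$ by some $x' \in A_{\{f\}}$ for $f \in S$, the hypothesis forces $x'$ to have nonzero image in every residue field quotient $(A_\lambda)_{P_\lambda}/P_\lambda(A_\lambda)_{P_\lambda}$; by clearing the denominators that witness this lying-outside-$P$ — that is, by multiplying $f$ by a suitable $g \in S$ so that $x'$ becomes invertible in $A_{\{fg\}}$ — one obtains an inverse to $x$ in the colimit. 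Consequently, the non-units of $A_{\{S\}}$ form the kernel of the surjection to $\kappa$, hence an ideal, proving $A_{\{S\}}$ is local with residue field $\kappa$. Localness of $A_{\{S\}} \to A\{S^{-1}\}$ is then automatic since both maximal ideals pull back from the common residue field $\kappa$, and flatness is the cited EGA I, (7.6.16).

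For the last assertion, assume $A$ is noetherian adic with ideal of definition $I$. Each $A_{\{f\}}$ identifies with the $I$-adic completion of $A[f^{-1}]$, hence is noetherian. The ring $A\{S^{-1}\}$ is the $IA_P$-adic completion of the noetherian local ring $A_P$, so is also noetherian. Faithful flatness then follows from flatness plus the standard criterion for flat local homomorphisms between local rings sharing a residue field. The main obstacle I foresee is establishing noetherianity of $A_{\{S\}}$: filtered colimits do not in general preserve noetherianity, and the cleanest resolution is to identify $A_{\{S\}}$ as a completion of $A_P$ in a suitable topology so that the noetherian hypothesis on $A$ transfers directly. Handling this identification carefully — and verifying that the comparison map $A_{\{S\}} \to A\{S^{-1}\}$ corresponds to passage between two natural but distinct topologies on $A_P$ — is where the technical weight of the proof lies.
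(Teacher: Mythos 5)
The paper does not prove this proposition; it is recalled directly from EGA~I, Ch.~0, (7.6.17)--(7.6.18), so there is no in-house argument to compare against. Your reconstruction of parts (1) and (2) via the common residue field $\kappa = {\rm Frac}(A/P)$ is sound: restricting to $\lambda$ with $I_\lambda \subseteq I_0 \subseteq P$, each $S_\lambda^{-1}A_\lambda = (A_\lambda)_{P_\lambda}$ is local with residue field $\kappa$, the transition maps are surjective local homomorphisms, and the compatible system of coordinatewise inverses shows the kernel of $A\{S^{-1}\} \to \kappa$ is the unique maximal ideal. The clearing-of-denominators argument for $A_{\{S\}}$ is correct in spirit, provided you add the standard completeness ingredient: an element of a complete linearly topologized ring whose image modulo an ideal of definition is a unit is itself a unit.

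The genuine gap is your route to noetherianity of $A_{\{S\}}$. You correctly flag that filtered colimits do not preserve noetherianity, but the proposed fix---realizing $A_{\{S\}}$ as a completion of $A_P$ in ``a suitable topology''---is unlikely to go through: the $IA_P$-adic completion of $A_P$ is $A\{S^{-1}\}$ itself, and the two rings $A_{\{S\}}$ and $A\{S^{-1}\}$ differ in general. The resolution is to reverse the order of steps. First, a flat local homomorphism of local rings is automatically faithfully flat (no hypothesis about residue fields is required), so flatness from EGA~I~(7.6.16) together with localness from part~(1) gives faithful flatness for free. Second, $A\{S^{-1}\}$ is noetherian as the $IA_P$-adic completion of the noetherian local ring $A_P$. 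Third, noetherianity descends along any faithfully flat ring map: for an ascending chain of ideals $\mathfrak{a}_1 \subseteq \mathfrak{a}_2 \subseteq \cdots$ of $A_{\{S\}}$, faithful flatness gives $\mathfrak{a}_i A\{S^{-1}\} \cap A_{\{S\}} = \mathfrak{a}_i$, so stabilization of the extended chain in $A\{S^{-1}\}$ forces stabilization of the original chain. This reordering removes the obstacle you identified and is the intended argument behind the EGA corollary.
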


\subsubsection{Affine formal schemes}
Recall that (\cite[D\'efinition (10.1.2), p.181]{EGA1}), for an admissible ring $A$, the formal spectrum $\mathfrak{X}=\Spf (A)$ of $A$ is the closed subspace of $\Spec (A)_{\rm Zar}$ given by the \emph{open} prime ideals of $A$, together with the structure sheaf $\mathcal{O}_{\mathfrak{X}}$ given by the projective limit of the sheaves $(\tilde{A}/ \tilde{ I}_{\lambda} )|_{\mathfrak{X}}$ over the fundamental system of neighborhoods $\{ I_{\lambda} \}$, so that $\mathfrak{X}$ is a ringed space. Here the tilde means the sheaves associated to the modules. We denote the underlying topological space by $|\mathfrak{X}|$.

An affine formal scheme is a ringed space that is isomorphic to an affine formal spectrum as ringed spaces.

The rings $A_{\{ f \}}$ of \eqref{eqn:comp loc f} give basic open subsets for affine formal schemes: Let $A$ be an admissible ring and let $\mathfrak{X} = \Spf (A)$. For $f \in A$, let $\mathfrak{D}(f) := D(f) \cap | \mathfrak{X}|$, where $D(f)= \{ P \in \Spec (A) \ | \ f \not \in P \}$. 
Then \cite[Proposition (10.1.4), p.181]{EGA1} shows that the induced ringed space $(\mathfrak{D}(f) , \mathcal{O}_{\mathfrak{X}}|_{\mathfrak{D} (f)})$ is isomorphic to the affine formal spectrum $\Spf (A_{\{ f \}})$.

\begin{defn}[{\cite[(10.1.5, p.182]{EGA1}}]\label{defn:formal stalk}
Let $A$ be an admissible ring and let $\mathfrak{X} = \Spf (A)$. Let $x \in |\mathfrak{X}|$. Define the stalk $\mathcal{O}_{\mathfrak{X}, x} $ at $x$ to be
$$
\mathcal{O}_{\mathfrak{X}, x} := A_{\{ S_x \}} = \varinjlim_{f \in S_x} A_{\{ f \}},
$$
where $S_x:= A \setminus P_x$ and $P_x \subset A$ is the prime ideal corresponding to the point $x$. Since $x \in |\mathfrak{X}|$, this $P$ is an open prime ideal of $A$, with respect to the topology of $A$. This stalk is indeed a local ring by Proposition \ref{prop:two localizations}-(1). \qed 
\end{defn}

\subsubsection{Regularity for affine formal schemes}

The following regularity result plays an important role in this article:

\begin{prop}\label{prop:regularity0}
Let $A$ be a noetherian adic ring and $\mathfrak{X}= \Spf (A)$. Let $x \in | \mathfrak{X} |$, which is a regular scheme point of $\Spec (A)$. Then the local ring $\mathcal{O}_{\mathfrak{X}, x}$ at $x$ of the affine formal scheme is a regular local ring.
\end{prop}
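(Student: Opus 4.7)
The plan is to reduce the regularity of $\mathcal{O}_{\mathfrak{X},x} = A_{\{S_x\}}$ to that of the more familiar ring $A\{S_x^{-1}\}$, using the faithfully flat comparison map provided by Proposition \ref{prop:two localizations}, and then to recognize the latter as the completion of a regular local ring.

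First I would set $S = S_x = A \setminus P_x$, and let $I \subset A$ be an ideal of definition, which exists because $A$ is adic. Since $P_x$ is an open prime (this is what it means for $x$ to lie in $|\mathfrak{X}|$), we have $I \subset P_x$. Using the very definition
\[
A\{S^{-1}\} \;=\; \varprojlim_{n} \, S_n^{-1}(A/I^n),
\]
where $S_n$ denotes the image of $S$ in $A/I^n$, and the standard identification $S_n^{-1}(A/I^n) = A_{P_x}/I^n A_{P_x}$, one sees that $A\{S^{-1}\}$ is canonically isomorphic to the $I A_{P_x}$-adic completion $\widehat{A_{P_x}}$ of the ordinary localization. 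This is the first key step, and it is essentially an unpacking of definitions once one checks that localization and quotient commute here.

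Next, the hypothesis that $x$ is a regular scheme point of $\Spec(A)$ says precisely that $A_{P_x}$ is a regular local ring. A standard theorem in commutative algebra (e.g., Matsumura, \emph{Commutative Ring Theory}, Thm.~19.5 or EGA 0$_{\mathrm{IV}}$ 17.1.5) says that the completion of a noetherian regular local ring with respect to an ideal contained in its maximal ideal is again regular. Hence $A\{S^{-1}\} \cong \widehat{A_{P_x}}$ is regular, and by Proposition \ref{prop:two localizations} it is noetherian and local.

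For the final step I would invoke Proposition \ref{prop:two localizations} again to conclude that the natural map
\[
\mathcal{O}_{\mathfrak{X},x} \;=\; A_{\{S\}} \;\longrightarrow\; A\{S^{-1}\}
\]
is a faithfully flat local homomorphism of noetherian local rings. Regularity descends along flat local homomorphisms: if $R \to T$ is a flat local homomorphism of noetherian local rings and $T$ is regular, then $R$ is regular (Matsumura, \emph{Commutative Ring Theory}, Thm.~23.7(i)). Applying this with $R = \mathcal{O}_{\mathfrak{X},x}$ and $T = A\{S^{-1}\}$ yields the desired regularity of $\mathcal{O}_{\mathfrak{X},x}$. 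The only step that requires genuine care is the identification of $A\{S^{-1}\}$ with $\widehat{A_{P_x}}$, since both sides are defined as projective limits and one has to verify that the natural comparison maps are isomorphisms at each finite level; everything else is a direct citation of EGA and standard commutative algebra.
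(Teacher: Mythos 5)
Your strategy is essentially the same as the paper's: reduce the regularity of $\mathcal{O}_{\mathfrak{X},x}=A_{\{S_x\}}$ to that of $A\{S_x^{-1}\}$ via flat descent along the flat local map of Proposition~\ref{prop:two localizations}, identify $A\{S_x^{-1}\}$ with the $I$-adic completion $\widehat{A_{P_x}}$ of the ordinary localization, and then argue that this completion is regular. There is one real gap, and it sits exactly where you wave at ``a standard theorem.'' Matsumura Theorem~19.5 (and, as far as I know, EGA $0_{\rm IV}$~17.1.5 as well) states that a noetherian local ring is regular if and only if its \emph{$\mathfrak{m}$-adic} completion is regular; it does not directly assert regularity of the $I$-adic completion for an arbitrary ideal $I\subset\mathfrak{m}$. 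The statement you actually need is true, but it requires a short extra argument: one observes that the map from the $I$-adic completion to the $\mathfrak{m}$-adic completion of $A_{P_x}$ is a flat local homomorphism of noetherian local rings, the target is regular by Matsumura~19.5, and regularity descends along flat local homomorphisms (the same Lemma~\ref{lem:flat_reg} you use in your final step). This two-stage argument is precisely how the paper's proof fills the step you compressed into a direct citation. So your proof is correct in outline and in spirit, but as written the middle step rests on a citation that does not deliver the claim, and a careful reader would have to supply the $I$-adic-to-$\mathfrak{m}$-adic bridge themselves.
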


To prove it, we need a few results. First recall the flat descent of regularity:

\begin{lem}[{EGA ${\rm IV}_2$ \cite[Proposition (6.5.1)-(i), p.143]{EGA4-2}}]\label{lem:flat_reg}
Let $R \to S$ be a flat local homomorphism of noetherian local rings. Under this assumption, if $S$ is regular, then so is $R$.
\end{lem}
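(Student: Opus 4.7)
The plan is to reduce the question about regularity of $R$ to a finiteness statement about the projective dimension of its residue field, and then to use the flatness of $R \to S$ to transfer that finiteness from $S$ to $R$. The key input is the Auslander--Buchsbaum--Serre criterion: a noetherian local ring $(A,\mathfrak{m}_A,k_A)$ is regular if and only if $\mathrm{pd}_A(k_A)<\infty$, equivalently if and only if $A$ has finite global dimension. So it suffices to show that $k_R=R/\mathfrak{m}_R$ has finite projective dimension over $R$.

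First I would choose a minimal free resolution of $k_R$ over $R$,
\[
\cdots \longrightarrow R^{b_2}\longrightarrow R^{b_1}\longrightarrow R^{b_0}\longrightarrow k_R\longrightarrow 0,
\]
so that the differentials satisfy $d_i(R^{b_i})\subset \mathfrak{m}_R\cdot R^{b_{i-1}}$, and the ranks are $b_i=\dim_{k_R}\mathrm{Tor}^R_i(k_R,k_R)$. Tensoring with $S$ over $R$ yields a complex of free $S$-modules
\[
\cdots \longrightarrow S^{b_2}\longrightarrow S^{b_1}\longrightarrow S^{b_0}\longrightarrow S/\mathfrak{m}_R S\longrightarrow 0,
\]
which is exact because $S$ is $R$-flat; thus it is a free resolution of the nonzero $S$-module $S/\mathfrak{m}_RS$ (nonzero since $\mathfrak{m}_RS\subset \mathfrak{m}_S$ by the locality of $R\to S$).

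Next I would observe that this resolution is \emph{minimal} over the local ring $(S,\mathfrak{m}_S)$: the images of the tensored differentials lie in $\mathfrak{m}_R\cdot S^{b_{i-1}}\subset \mathfrak{m}_S\cdot S^{b_{i-1}}$. Since $S$ is regular, the Auslander--Buchsbaum--Serre theorem gives $\mathrm{gl.dim}(S)=\dim S<\infty$, so in particular $\mathrm{pd}_S(S/\mathfrak{m}_RS)<\infty$. Because any two minimal free resolutions over a noetherian local ring have the same ranks (equal to the Betti numbers $\dim_{k_S}\mathrm{Tor}^S_i(S/\mathfrak{m}_RS,k_S)$), all but finitely many $b_i$ must vanish. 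But these $b_i$ are exactly the ranks of the original minimal free resolution over $R$, so $\mathrm{pd}_R(k_R)<\infty$, and $R$ is regular.

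The main technical step—really the only nonformal point—is verifying that the tensored resolution is still minimal in the sense required over $S$, so that the vanishing of the $b_i$ for $i\gg 0$ over $S$ forces the same vanishing over $R$. This uses crucially that $R\to S$ is a \emph{local} flat homomorphism, i.e.\ that $\mathfrak{m}_R S\subset \mathfrak{m}_S$; without locality one would have no control over where the tensored differentials land. Everything else is a formal invocation of flatness (to preserve exactness under $-\otimes_R S$), Serre's regularity criterion applied on both sides, and the uniqueness of minimal free resolutions over noetherian local rings.
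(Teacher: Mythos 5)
Your proof is correct. The paper does not prove this lemma at all --- it simply cites EGA ${\rm IV}_2$, Proposition (6.5.1)(i) --- and your argument is essentially the standard one behind that reference: flat base change of a minimal free resolution of $k_R$ preserves exactness and minimality (the latter because $\mathfrak{m}_R S\subset\mathfrak{m}_S$ by locality), so the Betti numbers of $k_R$ over $R$ coincide with those of $S/\mathfrak{m}_R S$ over $S$, and the Auslander--Buchsbaum--Serre criterion applied on both sides finishes the proof. All the steps you flag as needing care (nonvanishing of $S/\mathfrak{m}_R S$, minimality after tensoring, uniqueness of minimal resolutions) are handled correctly.
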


We also need the following:

\begin{lem}\label{lem:loc comp lem}
Let $A$ be a preadic ring, $S \subset A$ be a multiplicative subset, and $I \subset A$ be an ideal of definition. Then there is a canonical isomorphism of rings
$$A \{ S^{-1} \} \overset{\sim}{\to} \widehat{ S^{-1} A },$$
where the completion notation of the latter is the $S^{-1} I$-adic completion.
\end{lem}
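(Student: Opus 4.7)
The plan is to unpack both sides as projective limits and match them term by term, using standard facts about commutation of localization with quotients.

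Since $A$ is preadic, we may fix an ideal of definition $I \subset A$ such that the powers $\{I^n\}_{n>0}$ form a fundamental system of neighborhoods of $0$. By the definition of the completed ring of fractions recalled in the excerpt, applied to the fundamental system $\{I^n\}$, we have
\[
A\{S^{-1}\} = \varprojlim_{n} S_n^{-1}(A/I^n),
\]
where $S_n := u_n(S)$ denotes the image of $S$ under the natural surjection $u_n : A \twoheadrightarrow A/I^n$. On the other hand, since $\widehat{S^{-1}A}$ is by convention the $S^{-1}I$-adic completion of $S^{-1}A$, we have
\[
\widehat{S^{-1}A} = \varprojlim_n (S^{-1}A)/(S^{-1}I)^n.
\]

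The proof then reduces to providing a canonical termwise isomorphism of the two inverse systems. I would verify the following three standard identifications. First, the power and the localization commute: $(S^{-1}I)^n = S^{-1}(I^n)$, which is immediate from the description of $S^{-1}I$ as the ideal generated by $I$ in $S^{-1}A$ and the fact that localization preserves products of ideals. Second, by exactness (i.e.\ flatness) of localization applied to the short exact sequence $0 \to I^n \to A \to A/I^n \to 0$, we obtain a canonical isomorphism $(S^{-1}A)/S^{-1}(I^n) \xrightarrow{\sim} S^{-1}(A/I^n)$. Third, the universal property of localization gives $S^{-1}(A/I^n) \xrightarrow{\sim} S_n^{-1}(A/I^n)$, since inverting $S$ in $A/I^n$ is the same as inverting its image $S_n$. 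Composing these isomorphisms yields canonical isomorphisms $(S^{-1}A)/(S^{-1}I)^n \xrightarrow{\sim} S_n^{-1}(A/I^n)$ that are compatible with the transition maps in $n$, hence induce the desired isomorphism on the projective limits.

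Canonicity in both directions is then automatic from the universal properties: one uses the maps $A \to S_n^{-1}(A/I^n)$ to construct the map $A \to \widehat{S^{-1}A}$ factoring through $A\{S^{-1}\}$, and conversely the map $S^{-1}A \to A\{S^{-1}\}$ given termwise by $S^{-1}A \to S_n^{-1}(A/I^n)$ extends to the $S^{-1}I$-adic completion. No step here is genuinely difficult; the only mild care required is in keeping track of the distinction between $S^{-1}(A/I^n)$ and $S_n^{-1}(A/I^n)$ and verifying that the isomorphisms at each level $n$ are compatible with the surjections $A/I^{n+1} \twoheadrightarrow A/I^n$, so that they assemble to a morphism of projective systems.
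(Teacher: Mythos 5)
Your proof is correct and follows essentially the same route as the paper's: both unwind $A\{S^{-1}\}$ and $\widehat{S^{-1}A}$ as projective limits indexed by the powers $I^n$ (using that $A$ is preadic) and then identify $S_n^{-1}(A/I^n)$ with $(S^{-1}A)/(S^{-1}I)^n$ termwise. The paper's proof simply writes down the chain of isomorphisms without justification, whereas you have spelled out the standard intermediate steps ($(S^{-1}I)^n = S^{-1}(I^n)$, exactness of localization, and the universal property identifying $S^{-1}(A/I^n)$ with $S_n^{-1}(A/I^n)$) and the compatibility with transition maps; this is the same argument in fuller detail.
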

\begin{proof}
When $\{ I_{\lambda} \}_{\lambda}$ is a fundamental system of neighborhoods of $0$, we have isomorphisms of rings
$$
A\{ S^{ -1} \} \overset{\sim}{\to} \varprojlim_{\lambda} S^{-1} A/ S^{-1} I_{\lambda} \simeq \varprojlim_n S^{-1} A / (S^{-1}I)^n = \widehat{S^{-1} A},$$
which is also a homeomorphism. 
\end{proof}

\bigskip

\begin{proof}[Proof of Proposition \ref{prop:regularity0}]

The point $x \in |\mathfrak{X}|$ gives a regular scheme point of $\Spec (A)$. Let $P_x$ be the prime ideal corresponding to $x$ (thus $P_x$ is an open prime ideal in $A$), and let $S_x := A \setminus P_x$. By Definition \ref{defn:formal stalk}, we have $\mathcal{O}_{\mathfrak{X}, x} = A_{ \{ S_x \}}$. 

By Proposition \ref{prop:two localizations}, we have a flat local homomorphism $A_{\{S_x \}} \to A \{ S_x^{-1} \}$ of noetherian local rings. So, by Lemma \ref{lem:flat_reg}, it is enough to show that $A\{ S_x^{-1} \}$ is a regular local ring.

By the given assumption, $S^{-1}_x A =: A_x$ is a regular local ring, with the unique maximal ideal given by $S_x^{-1} P_x$. Here, $S^{-1}_x I$ gives an ideal of definition of $S^{-1}_x A$, and it is contained in the maximal ideal $S^{-1}_x P_x$. 

Since $S^{-1}_x A$ is a regular local ring, so is the completion $\widehat{S^{-1}_x A}'$ of $S_x ^{-1} A$ by the maximal ideal $S^{-1}_x P_x$ (see H. Matsumura \cite[Theorem 19.5, p.157]{Matsumura}). On the other hand, the local homomorphism $\widehat{S^{-1}_x A} \to \widehat{S^{-1}_x A}'$ is flat (see EGA I \cite[Corollaire (10.8.9), p.197]{EGA1}), so by Lemma \ref{lem:flat_reg}, the regularity of $\widehat{S^{-1}_x A}'$ implies the regularity of $\widehat{S^{-1}_x A}$. The latter ring is isomorphic to $A \{ S^{-1}_x \}$ by Lemma \ref{lem:loc comp lem}, so, $A\{ S_x ^{-1} \}$ is a regular local ring. This completes the proof.
\end{proof}

\subsubsection{Formal schemes}

We recall some basic definitions around noetherian formal schemes from EGA I \cite[(10.4.2), p.185]{EGA1}. A noetherian formal scheme is a locally ringed space such that its underlying topological space is quasi-compact, and each point has an open neighborhood that is an affine open formal spectrum given by a noetherian adic ring. 
All formal schemes $\mathfrak{X}$ will be assumed to be noetherian, unless said otherwise.

For a noetherian formal scheme $\mathfrak{X}$, the \emph{dimension} of $\mathfrak{X}$ is defined to be the supremum of the Krull dimensions of the local rings $\mathcal{O}_{\mathfrak{X}, x}$ over all $x \in |\mathfrak{X}|$. We have $\dim \ \mathfrak{X} \geq \dim \ |\mathfrak{X}|$, where the latter is the dimension of the noetherian topological space $|\mathfrak{X}|$. This becomes an equality if $\mathfrak{X}$ is a scheme, but otherwise it is a strict inequality in general.
Each open subset $U \subset | \mathfrak{X}|$ gives an open formal subscheme $(U, \mathcal{O}_{\mathfrak{X}} | _U)$, which we often denote by $\mathfrak{X}|_U$. 

We say that $\mathfrak{X}$ is \emph{equidimensional} if the dimensions of $\mathfrak{X}|_U$ are uniform over all nonempty affine open subsets $U \subset | \mathfrak{X}|$. We say $\mathfrak{X}$ is \emph{integral}, if every nonempty affine open formal subscheme $U \subset \mathfrak{X}$ is given by $U= \Spf (A)$ for some integral domain $A$.

A \emph{closed formal subscheme} $\mathfrak{Y}$ of $\mathfrak{X}$ is a ringed space $ (|\mathfrak{Y}|, \mathcal{O}_{\mathfrak{Y}})$ given by an ideal sheaf $\mathcal{I} \subset \mathcal{O}_{\mathfrak{X}}$; namely, we have $\mathcal{O}_{\mathfrak{Y}} = \mathcal{O}_{\mathfrak{X}}/ \mathcal{I}$ and $|\mathfrak{Y}|={\rm Supp} (\mathcal{O}_{\mathfrak{Y}})= | \mathcal{O}_{\mathfrak{Y}}|$. We often informally write that $\mathfrak{Y} \subset \mathfrak{X}$ is a closed formal subscheme. (See EGA I \cite[D\'efinition I-(10.14.2), p.210]{EGA1}) 

When $\mathfrak{X}$ is an equidimensional and $\mathfrak{Z} \subset \mathfrak{X}$ is a closed nonempty formal subscheme, we define the codimension to be
$$
{\rm codim}_{\mathfrak{X}} \mathfrak{Z} := \dim \ \mathfrak{X} - \dim \ \mathfrak{Z}.
$$
As in the case of schemes, when $\mathfrak{Z}_1, \mathfrak{Z}_2 \subset \mathfrak{X}$ are two closed formal subschemes, with their ideal sheaves $\mathcal{I}_{\mathfrak{Z}_1}, \mathcal{I}_{\mathfrak{Z}_2} \subset \mathcal{O}_{\mathfrak{X}}$, respectively, the formal scheme theoretic intersection $\mathfrak{Z}_1 \cap \mathfrak{Z}_2$ is defined to be the closed formal subscheme of $\mathfrak{X}$ associated to the sum $\mathcal{I}_{\mathfrak{Z}_1} + \mathcal{I}_{\mathfrak{Z}_2}$. In case the intersection $\mathfrak{Z}_1 \cap \mathfrak{Z}_2$ is either empty, or nonempty and its codimension in $\mathfrak{X}$ is greater than or equal to the sum of the codimensions of $\mathfrak{Z}_1, \mathfrak{Z}_2$, we say that the intersection is proper, or that $\mathfrak{Z}_1$ intersects properly with $\mathfrak{Z}_2$.

We say $\mathfrak{X}$ is \emph{regular}, if for each point $x \in |\mathfrak{X}|$, the local ring $\mathcal{O}_{\mathfrak{X}, x}$, as in Definition \ref{defn:formal stalk}, is a regular local ring. The notion of regular formal scheme is important in this paper. 

\subsection{Perfect complexes and pseudo-coherent complexes}

Recall the notion of perfect complexes on ringed topoi, especially on noetherian formal schemes. 
(See e.g. \cite[\S 2.4, 2.5]{Leo AT} or more generally SGA VI \cite[Expos\'e I, \S 4, p.119]{SGA6} for ringed topoi.)

For noetherian formal schemes, the perfect complexes can be described more precisely as follows: Let $\mathfrak{X}$ be a noetherian formal scheme. Let $\mathbf{A}(\mathfrak{X})$ be the category of all $\mathcal{O}_{\mathfrak{X}}$-modules. Let $\mathcal{D} (\mathfrak{X})$ be the derived category of $\mathbf{A}(\mathfrak{X})$.

A complex $\mathcal{E} \in \mathcal{D} (\mathfrak{X})$ is called a \emph{perfect complex} if each point $x \in | \mathfrak{X}|$ has an open neighborhood $U \subset |\mathfrak{X}|$ (which gives the open formal subscheme $\mathfrak{X}|_U$) and a bounded complex $\mathcal{F}$ of locally free finite type $\mathcal{O}_{\mathfrak{X}|_{U}}$-modules together with an isomorphism $\mathcal{F} \overset{\sim}{\to} \mathcal{E}|_{U}$ in $\mathcal{D}(\mathfrak{X}|_{U})$. We let $\mathcal{D}_{\perf} (\mathfrak{X})$ be the triangulated subcategory of $\mathcal{D}(\mathfrak{X})$ of perfect complexes on $\mathfrak{X}$.

A bit more generally, a complex $\mathcal{E}$ is called a \emph{pseudo-coherent complex} if every point $x \in |\mathfrak{X}|$ has an open neighborhood $U$ over which $\mathcal{E}|_U$ is quasi-isomorphic to a bounded above complex of locally free finite type $\mathcal{O}_{\mathfrak{X}|_U}$-modules. We let $\mathcal{D}_{\coh} (\mathfrak{X})$ be the triangulated subcategory of $\mathcal{D} (\mathfrak{X})$ of pseudo-coherent complexes on $\mathfrak{X}$ with bounded cohomologies.

\bigskip

An important result we need is the following: 

\begin{lem}\label{lem:coh=perfect}
Let $\mathfrak{X}$ be a noetherian formal scheme such that for each point $x \in |\mathfrak{X}|$, the stalk $\mathcal{O}_{\mathfrak{X}, x}$ is a regular local ring. 

Then the functor $\mathcal{D}_{\perf} (\mathfrak{X}) \to \mathcal{D}_{\coh} (\mathfrak{X})$ from the perfect complexes to pseudo-coherent complexes with bounded cohomologies is an equivalence.

 In particular, the natural induced homomorphism
$K_i (\mathfrak{X})  \to G_i (\mathfrak{X})$
is an isomorphism. 
\end{lem}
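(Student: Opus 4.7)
The plan is to reduce essential surjectivity to Auslander--Buchsbaum--Serre at the level of stalks, and then globalize by quasi-compactness together with our standing assumption that $\mathfrak{X}$ has finite Krull dimension. The containment $\mathcal{D}_{\perf}(\mathfrak{X}) \subset \mathcal{D}_{\coh}(\mathfrak{X})$ is immediate from the definitions, so the real work lies in showing that every pseudo-coherent complex with bounded cohomology is perfect.

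The most convenient handle on perfection is the criterion from SGA VI: a pseudo-coherent complex is perfect if and only if it has locally bounded Tor-amplitude. I would argue as follows. Given $\mathcal{E} \in \mathcal{D}_{\coh}(\mathfrak{X})$, at each point $x \in |\mathfrak{X}|$ the stalk $\mathcal{E}_x$ is quasi-isomorphic to a bounded-above complex of finite free $\mathcal{O}_{\mathfrak{X},x}$-modules whose cohomology is concentrated in some range $[-N,0]$. By Auslander--Buchsbaum--Serre the global dimension of $\mathcal{O}_{\mathfrak{X},x}$ equals $\dim \mathcal{O}_{\mathfrak{X},x}$, which is bounded above by $d := \dim \mathfrak{X} < \infty$. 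A standard hypercohomology spectral sequence argument then shows that $\mathrm{Tor}_i^{\mathcal{O}_{\mathfrak{X},x}}(\mathcal{E}_x, M) = 0$ for $i > N + d$ and any $\mathcal{O}_{\mathfrak{X},x}$-module $M$. Since the bound depends only on $N$ and $d$, not on $x$, the Tor-amplitude is uniformly bounded and $\mathcal{E}$ is perfect.

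Once the equivalence $\mathcal{D}_{\perf}(\mathfrak{X}) \simeq \mathcal{D}_{\coh}(\mathfrak{X})$ is in hand, the $K$-theoretic statement follows: by Thomason--Trobaugh, $K_i(\mathfrak{X})$ and $G_i(\mathfrak{X})$ are respectively the Waldhausen $K$-groups of the two sides, so an equivalence of triangulated categories with their canonical Waldhausen structures descends to an isomorphism $K_i(\mathfrak{X}) \overset{\sim}{\to} G_i(\mathfrak{X})$. The main obstacle I anticipate is the passage from schemes to noetherian formal schemes: the SGA VI machinery and Auslander--Buchsbaum--Serre are classical for schemes, and what needs care is precisely that the stalks $\mathcal{O}_{\mathfrak{X},x}$ in the sense of Definition~\ref{defn:formal stalk} are genuine regular local rings, which is exactly the content of Proposition~\ref{prop:regularity0} and lets us quote the standard ring-theoretic inputs directly.
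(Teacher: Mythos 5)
Your proposal is correct and follows essentially the same route as the paper's proof, which reduces perfection to finite Tor-amplitude via SGA VI, applies Auslander--Buchsbaum at the regular stalks, and invokes the Waldhausen/Thomason--Trobaugh machinery for the $K$-theory statement. One small point the paper treats explicitly that you use implicitly by arguing at stalks is the ``enough points'' hypothesis of SGA VI, which the authors verify via Deligne's completeness theorem for locally coherent underlying spaces.
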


\begin{proof}
This is an immediate consequence of SGA VI \cite[Expos\'e IV, \S 2.5, pp.280-281]{SGA6} and \cite[Expos\'e I, Corollaire 5.10, p.138]{SGA6}; here there are two things required to check by \cite[Expos\'e IV, \S 2.5, pp.280-281]{SGA6}. 

First of all, for the requirement of having sufficiently many points in the sense of SGA ${\rm IV}_1$ \cite[Expos\'e IV, D\'efinition 6.4.1, p.389]{SGA4-1}, this is satisfied by P. Deligne's completeness theorem SGA ${\rm IV}_2$ \cite[Expos\'e VI, Proposition (9.0), p.336]{SGA4-2} because all (formal) schemes have locally coherent underlying topological spaces. (A kind modern reference would be Fujiwara-Kato \cite[\S 2.2-2.7]{FK}.) Secondly, the finite Tor dimension requirement is automatic by Auslander-Buchsbaum \cite{AB} because $\mathcal{O}_{\mathfrak{X}, x}$ is a regular local ring.

The last follows from the constructions of the Waldhausen $K$-spaces of the derived categories (see Thomason-Trobaugh \cite{TT}, Waldhausen \cite{Waldhausen}).
\end{proof}

\subsection{Examples of embeddable schemes}

The technique of embedding a scheme $Y \hookrightarrow X$ was used by Grothendieck at a few places in the literature, e.g. in SGA I \cite{SGA1}, in SGA II \cite{SGA2}, as well as in R. Hartshorne \cite{Hartshorne DR}.

This is also widely used in this article. Here is one large class of examples:

\begin{lem}\label{lem:exoskeleton}
Let $Y$ be a quasi-projective $k$-scheme. Then $Y$ is embeddable, in that there is a closed immersion $Y \hookrightarrow X$ into a regular scheme. Furthermore, we can find a closed immersion $Y \hookrightarrow X$ into a regular noetherian scheme $X$ that is smooth over $k$.

If desired, one can choose $X$ to be equidimensional.

Given an embedding $Y \hookrightarrow X$, let $\widehat{X}$ be the completion of $X$ along $Y$. Then $\widehat{X}$ is a regular noetherian formal $k$-scheme. If $X$ is equidimensional, then so is $\widehat{X}$.
\end{lem}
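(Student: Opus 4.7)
The plan is to prove the three assertions (existence of an embedding into a smooth scheme, possibility of choosing it equidimensional, and regularity/equidimensionality of the completion) essentially independently, using standard projective geometry for the first two and Proposition \ref{prop:regularity0} plus flat base change for completions for the last two.

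For the embeddability, I would start from the defining property of quasi-projectivity: there is a locally closed immersion $Y \hookrightarrow \mathbb{P}^N_k$. Let $\overline{Y}$ be its schematic closure. Then $Z := \overline{Y} \setminus Y$ is closed in $\mathbb{P}^N_k$, and the open subscheme $X := \mathbb{P}^N_k \setminus Z$ contains $Y$ as a closed subscheme. Since $X$ is open in the smooth $k$-scheme $\mathbb{P}^N_k$, it is itself smooth over $k$ (and in particular regular). Because $X$ is open and nonempty in the irreducible scheme $\mathbb{P}^N_k$, it is irreducible of dimension $N$, hence automatically equidimensional; this settles the first two assertions simultaneously.

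For the regularity of $\widehat{X}$, I would cover $\widehat{X}$ by affine formal opens of the form $\Spf(\widehat{A})$, where $A = \mathcal{O}_X(U)$ for an affine open $U \subset X$ and $\widehat{A}$ is the $I$-adic completion with $I$ the ideal of $Y \cap U$ in $A$. Since $A$ is finitely generated and regular, $\widehat{A}$ is a noetherian adic ring (EGA I, \S 10.8). By Proposition \ref{prop:regularity0}, it suffices to show that every open prime $\mathfrak{p}$ of $\widehat{A}$ is a regular scheme point of $\Spec(\widehat{A})$, i.e.\ that $\widehat{A}_{\mathfrak{p}}$ is a regular local ring. Setting $P := \mathfrak{p} \cap A \supseteq I$, the local ring $A_P$ is regular because $X$ is, so by Matsumura \cite[Thm.~19.5]{Matsumura} the $IA_P$-adic completion $\widehat{A_P}$ is regular. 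Flat base change for completions identifies $\widehat{A_P}$ with a localization of $\widehat{A}$ at a multiplicative set contained in the complement of $\mathfrak{p}$, so $\widehat{A}_{\mathfrak{p}}$ appears as a further localization of the regular ring $\widehat{A_P}$ and is therefore regular. Thus $\mathcal{O}_{\widehat{X},x}$ is regular for every $x \in |\widehat{X}|$.

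For the equidimensionality of $\widehat{X}$ when $X$ is equidimensional of dimension $d$, I would argue locally on an affine open $\Spf(\widehat{A})$ as above: for any $x \in |\Spf(\widehat{A})|$ corresponding to an open prime $P \subset A$ with $S = A \setminus P$, Proposition \ref{prop:two localizations} gives a faithfully flat local homomorphism $A_{\{S\}} \to A\{S^{-1}\}$ with the same residue field, and Lemma \ref{lem:loc comp lem} identifies the target with $\widehat{A_P}$ (completion with respect to $IA_P$). Since completion of a noetherian local ring preserves Krull dimension, and faithfully flat local maps with the same residue field preserve dimension as well, we get $\dim \mathcal{O}_{\widehat{X},x} = \dim A_{\{S\}} = \dim A_P = d$. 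Taking the supremum over $x$ in any nonempty affine open of $\widehat{X}$ gives the uniform value $d$, proving equidimensionality. The main obstacle is not any single step but rather the careful bookkeeping between the three distinct localization-type rings $A_{\{S\}}$, $A\{S^{-1}\}$, and $\widehat{A}_{\mathfrak{p}}$ appearing in the definition of formal stalks; the two lemmas \ref{lem:flat_reg} and \ref{lem:loc comp lem} together with Proposition \ref{prop:two localizations} are exactly what is needed to bridge them cleanly.
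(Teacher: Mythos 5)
Your construction of $X$ (take the locally closed immersion $Y \hookrightarrow \mathbb{P}^N_k$, let $Z$ be the boundary of the schematic closure, and set $X := \mathbb{P}^N_k \setminus Z$) is correct and more explicit than the paper's proof, which simply calls existence of an embedding ``apparent'' and then repairs equidimensionality by re-embedding into an open subset of a large projective space; your $X$ is open in $\mathbb{P}^N_k$, hence irreducible of the full dimension $N$, so equidimensionality is automatic. Your plan for the last two parts also follows the paper's (cover by affine formal opens, apply Proposition \ref{prop:regularity0}), and you correctly flag a point the paper leaves silent, namely that one needs each open prime $\mathfrak{p}$ of $\widehat{A}$ to be a regular scheme point of $\Spec(\widehat{A})$. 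But the step you use to establish this is false. The claim that ``flat base change for completions identifies $\widehat{A_P}$ with a localization of $\widehat{A}$'' does not hold: for $A = k[x,y]$, $I = (x)$, $P = (x,y)$, one has $\widehat{A} = k[y][[x]]$ and $\widehat{A_P} = k[y]_{(y)}[[x]]$, and an element such as $\sum_{n \geq 1}(y-n)^{-1} x^n$ of $\widehat{A_P}$ has no bounded denominator, so $\widehat{A_P}$ strictly contains $\widehat{A}_{\mathfrak{p}}$ and is not any $T^{-1}\widehat{A}$. What is true is that $\widehat{A} \to \widehat{A_P}$ is \emph{flat} (it is a localization followed by an $I$-adic completion of the noetherian ring $S^{-1}\widehat{A}$) and that the maximal ideal of the local ring $\widehat{A_P}$ contracts to $\mathfrak{p}$; so you obtain a flat \emph{local} homomorphism $\widehat{A}_{\mathfrak{p}} \to \widehat{A_P}$ and must then conclude regularity of $\widehat{A}_{\mathfrak{p}}$ by Lemma \ref{lem:flat_reg} (flat descent of regularity), not by ``a further localization of a regular ring is regular.''

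The equidimensionality argument has two further errors. The general principle you invoke, that a faithfully flat local homomorphism of noetherian local rings with the same residue field preserves Krull dimension, is false: $k \to k[[t]]$ is such a map, yet the dimensions are $0$ and $1$. In addition, $\dim A_P = \mathrm{ht}(P)$ equals $d$ only when $P$ is a maximal ideal of $A$; for a non-closed $x \in |\Spf(\widehat{A})|$ one has $\dim A_P < d$, so your asserted equality $\dim\mathcal{O}_{\widehat{X},x} = d$ fails pointwise. Because the paper defines $\dim \mathfrak{X}$ as the \emph{supremum} of stalk dimensions, the corrected form of your argument is: show $\dim \mathcal{O}_{\widehat{X},x} = \mathrm{ht}(P)$ for every $x$ (using the two flat local maps $A_P \to \mathcal{O}_{\widehat{X},x} \to \widehat{A_P}$ and the fact that the composite $A_P \to \widehat{A_P}$ has $0$-dimensional closed fiber $\kappa(P)$, which forces both intermediate closed fibers to be $0$-dimensional), and then observe that on every nonempty affine open the closed points, which are dense and have $\mathrm{ht}(P) = d$ since $X$ is equidimensional smooth of dimension $d$, realize the supremum. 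The paper avoids all of this bookkeeping by citing EGA ${\rm IV}_2$, Corollaire (7.1.5).
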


\begin{proof}
That there is a closed immersion $Y \hookrightarrow X$ for a regular scheme $X$ that is smooth and quasi-projective over $k$, is apparent because $Y$ is quasi-projective over $k$.

If $X$ is not equidimensional, being quasi-compact and smooth over $k$, it has at most finitely many smooth connected components of possibly various dimensions. We can then find a big enough projective space $\mathbb{P}_k ^N$ in which all connected components of $X$ are closed subschemes of an open subset $U$ of $\mathbb{P}_k ^N$. Such $U$ is smooth over $k$, and it gives a closed immersion $Y \hookrightarrow X \hookrightarrow U$. Replacing $X$ by $U$, we may assume $X$ is equidimensional. 

By taking the completion, the formal scheme $\widehat{X}$ is an equidimensional (EGA ${\rm IV}_2$ \cite[Corollaire (7.1.5), p.184]{EGA4-2}) noetherian formal $k$-scheme. For the regularity, we need to see that for each $x \in |\widehat{X}|$, the local ring $\mathcal{O}_{\widehat{X}, x}$ is regular. This follows by first covering $\widehat{X}$ by open affine formal subschemes, and then applying Proposition \ref{prop:regularity0}.
\end{proof}

\subsection{On cycles and associated cycles}

We recall a few definitions and facts on cycles on affine formal schemes from \cite{Park Tate}. 

\begin{defn}\label{defn:CG}
Let $\mathfrak{X}= \Spf (A)$ be an equidimensional noetherian affine formal scheme of finite Krull dimension for an equidimensional ring $A$. 

\begin{enumerate} 
\item Let $\un{z}_d (\mathfrak{X})$ be the free abelian group on the set of integral closed formal subschemes of $\mathfrak{X}$ of dimension $d$. This is equal to the classical group $z_d (\Spec (A))$ of $d$-dimensional cycles by EGA ${\rm III}_1$ \cite[Corollaire (5.1.8), p.495]{EGA3-1}. 

This \emph{naive group $\un{z}_d (\mathfrak{X})$ of cycles} may contain some undesirable cycles that have poor behaviors with respect to ideals of definition of $\mathfrak{X}$. So, we consider the following subgroup.

\item Let $z_d (\mathfrak{X})$ be the subgroup of $\un{z}_d (\mathfrak{X})$ generated by the integral closed formal subschemes that intersect properly with the subscheme $\mathfrak{X}_{\red}$ defined by the largest ideal of definition of $\mathfrak{X}$. The largest ideal of definition exists by EGA I \cite[Proposition (10.5.4), p.187]{EGA1}. 

\item If $d_{\mathfrak{X}}$ is the dimension of $\mathfrak{X}$, and $0 \leq q \leq d_{ \mathfrak{X}}$, then we define the group of codimension $q$ cycles by $z^q (\mathfrak{X}) := z_{d_{\mathfrak{X}} - q} (\mathfrak{X})$. 

\item Let $\mathfrak{X}' \subset \mathfrak{X}$ be a fixed closed formal subscheme. The subgroup $z^q _{\{ \mathfrak{X}'\}} (\mathfrak{X})$ of $z^q (\mathfrak{X})$ is generated by the integral closed formal subschemes in $z^q (\mathfrak{X})$ that intersect properly with $\mathfrak{X}'$. 
\qed
\end{enumerate}
\end{defn}

Since the naive group of cycles on a noetherian affine formal scheme $\mathfrak{X}= \Spf (A)$ is essentially the group of cycles on the scheme $\Spec (A)$, for a coherent sheaf $\mathcal{F}$ on $\mathfrak{X}$, we have its associated cycle $[\mathcal{F}] \in \un{z}_* (\mathfrak{X})$. 

\begin{remk}
When $\mathfrak{X}$ is not (quasi-)affine, we do not know whether we can define the associated cycle $[\mathcal{F}]$ on $\mathfrak{X}$. But this situation does not appear in this article. 
\qed
\end{remk}

Recall from \cite{Park Tate}, \cite{P general} the following version of higher Chow cycles on noetherian affine formal schemes. While \cite{P general} discusses more generally cycles on quasi-affine formal schemes, we recall only those relevant to this article.

\begin{defn}\label{defn:HCG}
Let $\mathfrak{X}= \Spf (A)$ be an equidimensional noetherian affine formal $k$-scheme of finite Krull dimension.

Let $\square:= \mathbb{P}^1 \setminus \{ 1 \}$. Consider the $k$-rational points $\{ 0, \infty \} \subset \square$. Let $\square^n$ be the $n$-fold self fiber-product of $\square$ over $k$. A face $F \subset \square^n$ is a closed subscheme defined by a finite set of equations of the form
$$\{ y_{i_1} = \epsilon_1, \cdots, y_{i_s} = \epsilon_s \},$$
where $1 \leq i_1 < \cdots< i_s \leq n$ is an increasing sequence of indices and $\epsilon_j \in \{ 0, \infty \}$. We allow the set to be empty, in which case $F=\square^n$.

Consider the fiber product $\mathfrak{X} \times_k \square^n$
in the category of formal schemes, which exists by EGA I \cite[Proposition (10.7.3), p.193]{EGA1}. It is equidimensional by Greco-Salmon \cite[Theorem 7.6-(b), p.35]{GS}.  In what follows we will simply write
$\mathfrak{X} \times \square^n$.

Let $\mathcal{I}_0 \subset \mathcal{O}_{\mathfrak{X}}$ be the largest ideal of definition of $\mathfrak{X}$ (EGA I \cite[Proposition (10.5.4), p.187]{EGA1}), and let $\mathfrak{X}_{\red}$ be the closed subscheme defined by the ideal.

For integers $n, q \geq 0$, let $\un{z}^q (\mathfrak{X}, n)$ be the free abelian group on the set of integral closed formal subschemes $\mathfrak{Z} \subset \mathfrak{X} \times \square^n$ of codimension $q$, subject to the following conditions:
\begin{enumerate}
\item [(\textbf{GP})] (General position) The cycle $\mathfrak{Z}$ intersects properly with $\mathfrak{X} \times F$ for each face $F \subset \square^n$.

\item [(\textbf{SF})] (Special fiber) For each face $F \subset \square^n$, we have
$$
{\rm codim}_{\mathfrak{X}_{\red} \times F} ( \mathfrak{Z} \cap (\mathfrak{X}_{\red} \times F)) \geq q.
$$
\end{enumerate}

The cycles in $\un{z}^q (\widehat{X}, n)$ are called \emph{admissible} for simplicity. We let 
$$z^q (\mathfrak{X}, n):= \frac{ \un{z} ^q (\widehat{X}, n)}{\un{z} ^q (\widehat{X}, n)_{\rm degn}},$$
which is the group of non-degenerate cycles. 
\qed
\end{defn}

For the codimension $1$ face maps $\iota_i ^{\epsilon}: \mathfrak{X} \times \square^{n-1} \hookrightarrow \mathfrak{X} \times \square^n$ given by $\{y_i = \epsilon \}$, where $\epsilon \in \{ 0, \infty\}$, we have seen in \cite{Park Tate} (or \cite{P general}) that it induces the face map $\partial_i ^{\epsilon} = (\iota_i ^{\epsilon})^*: z^q (\mathfrak{X}, n) \to z^q (\mathfrak{X}, n-1)$. For $\partial:= \sum_{i=1} ^n (-1)^i (\partial_i ^{\infty} - \partial_i ^0)$, one checks that $\partial \circ \partial = 0$ and it defines a complex $(z^q (\mathfrak{X}, \bullet), \partial)$.

\section{A moving lemma for some sheaves on formal schemes}\label{sec:cnv 1}

The central goal in \S \ref{sec:cnv 1} is to prove a moving lemma 
in
Proposition \ref{prop:D ess} on the level of triangulated subcategories of certain pseudo-coherent complexes over the regular formal schemes of the form $\widehat{X} \times \square^n$, and to deduce a general pull-back in Theorem \ref{thm:moving pull-back}. They will be fundamental tools in \S \ref{sec:mod Y spaces} and \S \ref{sec:Cech}.

\subsection{A tower for formal schemes}\label{sec:cnv 0}

\begin{defn}\label{defn:codim n sheaf}
Let $\mathfrak{X}$ be an equidimensional noetherian formal scheme of finite Krull dimension. We say that a coherent $\mathcal{O}_{\mathfrak{X}}$-module $\mathcal{F}$ on $\mathfrak{X}$ is \emph{of codimension $q$} if there is a finite affine open cover $\mathcal{U}$ of $|\mathfrak{X}|$ such that for each $U \in \mathcal{U}$, we have $[\mathcal{F}|_U] \in z^q (\mathfrak{X}|_U)$ for the cycle group in Definition \ref{defn:CG}-(2),(3).
\qed
\end{defn}

\begin{lem}\label{lem:codim indep}
The notion of codimension in Definition \ref{defn:codim n sheaf} is independent of the choice of the cover.
\end{lem}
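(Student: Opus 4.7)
The plan is to reduce to comparing any two finite affine open covers $\mathcal{U}_1, \mathcal{U}_2$ of $|\mathfrak{X}|$ by constructing a common finite affine refinement $\mathcal{W}$, and then showing that the defining property passes from a cover to any affine refinement (\emph{descent}) and also lifts from a refinement back up to the coarser cover (\emph{ascent}). A common affine refinement exists: for each pair $(U_1, U_2)$, the intersection $U_1 \cap U_2$ is covered by basic opens of the form $\mathfrak{D}(f) = \Spf(A_{\{f\}})$ inside $U_1 = \Spf A$, and by quasi-compactness finitely many suffice; collecting these over all pairs produces $\mathcal{W}$.

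For the descent, I would fix affine opens $V \subset U$ of $|\mathfrak{X}|$, with $\mathfrak{X}|_U = \Spf A$ and $V$ a basic open of $U$, together with a coherent $\mathcal{F}$ on $\mathfrak{X}|_U$ satisfying $[\mathcal{F}] \in z^q(\mathfrak{X}|_U)$. Using the identification $\un{z}_*(\mathfrak{X}|_U) = z_*(\Spec A)$ from Definition \ref{defn:CG}-(1) together with the standard compatibility of associated cycles with open restriction on honest schemes, each integral component $\mathfrak{Z}$ of $[\mathcal{F}]$ either misses $V$ or restricts to an integral closed formal subscheme $\mathfrak{Z} \cap V \subset \mathfrak{X}|_V$ of the same codimension as $\mathfrak{Z}$ has in $\mathfrak{X}|_U$, thanks to the equidimensionality of $\mathfrak{X}$. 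Moreover, the largest ideal of definition localizes (EGA I \cite[Proposition (10.5.4), p.187]{EGA1}), so $(\mathfrak{X}|_V)_{\red} = (\mathfrak{X}|_U)_{\red} \cap V$, and the proper intersection condition restricts accordingly. Hence $[\mathcal{F}|_V] \in z^q(\mathfrak{X}|_V)$.

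For the ascent, suppose $\mathcal{W}$ refines $\mathcal{U}_2$ and $[\mathcal{F}|_W] \in z^q(\mathfrak{X}|_W)$ for every $W \in \mathcal{W}$. For any $U_2 \in \mathcal{U}_2$ and any integral component $\mathfrak{Z}$ of $[\mathcal{F}|_{U_2}]$, choose some $W \subset U_2$ in $\mathcal{W}$ meeting $\mathfrak{Z}$; then $\mathfrak{Z} \cap W$ is an integral component of $[\mathcal{F}|_W]$ of codimension $\geq q$ in $\mathfrak{X}|_W$, so $\mathfrak{Z}$ itself has codimension $\geq q$ in $\mathfrak{X}|_{U_2}$. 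Each irreducible component of $\mathfrak{Z} \cap (\mathfrak{X}|_{U_2})_{\red}$ similarly meets some $W$, where by hypothesis it has codimension $\geq q$ in $(\mathfrak{X}|_W)_{\red}$; since codimension is Zariski-local, the same bound holds globally, giving $[\mathcal{F}|_{U_2}] \in z^q(\mathfrak{X}|_{U_2})$.

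The only mildly delicate point is verifying the behavior of the associated cycle of a coherent sheaf under open restriction in the formal-scheme setting, but this is immediate once one passes through the identification of $\un{z}_*$ with the scheme-theoretic cycle group of $\Spec A$ recorded in Definition \ref{defn:CG}-(1), where the corresponding compatibility is classical.
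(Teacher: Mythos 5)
Your general strategy — construct a common affine refinement and check that the defining condition restricts — is a viable alternative to the paper's argument, but as written the proof never actually concludes. After carefully setting up descent and ascent, you never invoke the decisive fact: for $q \neq q'$, the groups $z^q(\mathfrak{X}|_W) = z_{d_{\mathfrak{X}}-q}(\mathfrak{X}|_W)$ and $z^{q'}(\mathfrak{X}|_W)$ sit in distinct dimension-graded pieces of $\un{z}_*(\mathfrak{X}|_W)$ and hence meet only in $\{0\}$, so a nonzero coherent sheaf cannot have its associated cycle land in both simultaneously. That single observation, applied after descending both hypotheses $[\mathcal{F}|_{U_i}] \in z^{q_i}$ to a common patch and ruling out the trivial case $\mathcal{F}=0$, is the entire engine of the proof; the paper applies it directly on a single overlap $U_0 \cap V$ (already affine, since schemes in $\Sch_k$ are separated), with no refinement machinery at all. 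Without this step you have restriction compatibilities but no contradiction from $q\neq q'$.

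Two secondary points: the ascent step is superfluous — descent to a common refinement together with the observation above is already enough, and nothing downstream uses the lift back to $\mathcal{U}_2$. Also, inside the ascent argument you repeatedly write codimension $\geq q$, but membership in $z^q = z_{d_{\mathfrak{X}}-q}$ is the assertion that the cycle is concentrated in dimension exactly $d_{\mathfrak{X}}-q$; the slippage to an inequality would quietly conflate $z^q$ with something like $\bigoplus_{q'\geq q} z^{q'}$, and would actually undermine the exact ``cannot lie in two distinct $z^q$'s'' argument you need to close the proof.
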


\begin{proof}
We may assume $|\mathfrak{X}|$ is connected. 

When $\mathcal{F}= 0$, there is nothing to prove. So, suppose $\mathcal{F}\not = 0$ in what follows.

\medskip

Let $\mathcal{U}$ and $\mathcal{V}$ be two finite affine open covers of $|\mathfrak{X}|$ for which $\mathcal{F}$ is of codimension $q$ and $q'$ with respect to $\mathcal{U}$ and $\mathcal{V}$, respectively, in the sense of Definition \ref{defn:codim n sheaf}. Since $\mathcal{F} \not = 0$, there is some $U_0 \in \mathcal{U}$ such that $\mathcal{F}|_{U_0} \not = 0$.

\medskip

Toward contradiction, suppose $q \not = q'$. For any $V \in \mathcal{V}$ such that $U_0 \cap V \not = \emptyset$, we are given that 
$$
[ \mathcal{F}|_{U_0} ] \in z^q (\mathfrak{X}|_{U_0}), \ \ \mbox{ and } \ \ [\mathcal{F}|_{V} ] \in z^{q'} (\mathfrak{X}|_V).
$$

Then by restriction, we have $[ \mathcal{F}|_{U_0 \cap V}] \in z^q (\mathfrak{X}|_{U_0 \cap V})$ and $[ \mathcal{F}|_{U_0 \cap V}] \in z^{q'} (\mathfrak{X}|_{U_0 \cap V})$ at the same time. Since $q \not = q'$, this implies that $[\mathcal{F}|_{U_0 \cap V } ] = 0$, which is possible only when $\mathcal{F}_{U_0 \cap V} = 0$.

Since the above holds for all $V \in \mathcal{V}$ such that $U_0 \cap V \not = \emptyset$, and $\mathcal{V}$ is an open cover of $|\mathfrak{X}|$, this holds only when $\mathcal{F}|_{U_0} = 0$. This is a contradiction.
\end{proof}

\begin{defn}\label{defn:cnv} 
Let $\mathfrak{X}$ be an equidimensional noetherian formal scheme of finite Krull dimension. 

Let $\mathcal{D}_{\coh}^q (\mathfrak{X}) \subset \mathcal{D}_{\coh} (\mathfrak{X})$ be the triangulated subcategory generated by coherent $\mathcal{O}_{\mathfrak{X}}$-modules $\mathcal{F}$ of codimension $\geq q$ in the sense of Definition \ref{defn:codim n sheaf}. Over all $ q \geq 0$, they form a decreasing sequence of triangulated subcategories of $\mathcal{D}_{\coh} (\mathfrak{X})$.
\qed
\end{defn}

\begin{remk}\label{remk:F0}
Here the inclusion $\mathcal{D}_{\coh} ^0 (\mathfrak{X}) \hookrightarrow \mathcal{D}_{\coh} (\mathfrak{X})$ may not be essentially surjective in general: the category $\mathcal{D}_{\coh} ^0 (\mathfrak{X})$ of Definition \ref{defn:cnv} is generated by coherent sheaves whose associated cycles have the proper intersection condition with $\mathfrak{X}_{\red}$, while the category $\mathcal{D}_{\coh} (\mathfrak{X})$ has no such a requirement.

However, in the special case when $\mathfrak{X}_{\red}=\mathfrak{X}$, clearly $\mathcal{D}_{\coh} ^0 (\mathfrak{X}) = \mathcal{D}_{\coh} (\mathfrak{X})$.  This situation occurs when $\mathfrak{X}= \widehat{X}$, where $Y$ is a smooth equidimensional scheme and $X=Y$ in Lemma \ref{lem:exoskeleton}. In particular, $\mathfrak{X} = \widehat{X} = X = Y$ in this case.
 \qed
\end{remk}

We need the following higher level version of Definition \ref{defn:cnv}. This is analogous to the cubical version of higher Chow cycles on schemes (see \cite{Bloch HC}) and such cycles on formal schemes from \cite{Park Tate}, \cite{P general}, recalled in Definition \ref{defn:HCG}.

\begin{defn}\label{defn:cnv higher}
Let $\mathfrak{X}$ be an equidimensional noetherian formal $k$-scheme of finite Krull dimension.

We say that a coherent $\mathcal{O}_{\mathfrak{X} \times \square^n}$-module $\mathcal{F}$ on $\mathfrak{X} \times  \square^n$ is \emph{admissible of codimension $ \geq q$} if there is a finite affine open cover $\mathcal{U}$ of $|\mathfrak{X}|$ such that for each $U \in \mathcal{U}$, we have $[\mathcal{F}|_{\mathfrak{X}|_U \times \square^n}] \in z^{\geq q} (\mathfrak{X}|_U, n)$ for the higher cycle group in Definition \ref{defn:HCG}. Using an argument similar to that in Lemma \ref{lem:codim indep}, one checks that this codimension is independent of $\mathcal{U}$.

Let $\mathcal{D}_{\coh} ^q (\mathfrak{X}, n) \subset \mathcal{D}_{\coh} (\mathfrak{X} \times  \square^n)$ be the full
triangulated subcategory generated by coherent $\mathcal{O}_{\mathfrak{X} \times \square^n}$-modules $\mathcal{F}$ that are admissible of codimension $\geq q$ in the above sense. 
\qed
\end{defn}

\begin{remk} \label{D0-n equal}
When $n=0$, for all $q \geq 0$, we have the equalities $\mathcal{D}_{\coh} ^q (\mathfrak{X},0)= \mathcal{D}_{\coh} ^q (\mathfrak{X})$ of the categories in Definitions \ref{defn:cnv} and \ref{defn:cnv higher} in this case. \qed
\end{remk}

\begin{remk} \label{cubicalfaces}
For the closed immersion of a codimension $1$ face $\iota_i ^{\epsilon}: F= \square ^{n-1} \hookrightarrow \square^n$ given by the equation $\{ y_i = \epsilon\}$ for $1 \leq i \leq n$ and $\epsilon \in \{0, \infty\}$, we have the induced derived functor
$$
\mathbf{L} (\iota_i ^{\epsilon})^*: \mathcal{D}_{\coh} ^q (\mathfrak{X}, n) \to \mathcal{D}_{\coh} ^{q} (\mathfrak{X}, n-1).
$$
Note that we have the free resolution
$$
0 \to \mathcal{O}_{\mathfrak{X} \times \square^n} \overset{ \times (y_i - \epsilon)}{\longrightarrow} \mathcal{O}_{\mathfrak{X} \times \square^n} \longrightarrow \mathcal{O}_{\mathfrak{X} \times F} \to 0.
$$
Using it together with the proper intersection with faces, one checks immediately that for each coherent sheaf $\mathcal{F} \in \mathcal{D}_{\coh} ^q (\mathfrak{X}, n)$, we have ${\rm Tor}_j ^{\mathcal{O}_{\mathfrak{X} \times \square^n}} (\mathcal{O}_{\mathfrak{X} \times F}, \mathcal{F}) = 0$ for $j > 0$. Hence the derived functor $\mathbf{L} (\iota_i ^{\epsilon})^*$ at $\mathcal{F}$ is in fact given just by the usual pull-back $(\iota_i ^{\epsilon})^*$ of $\mathcal{F}$.
\qed
\end{remk}

Together with the apparent degeneracy functors, the above face functors define the cubical triangulated category
$$
\mathcal{D}_{\coh} ^q (\mathfrak{X}, \bullet):= (\un{n}\mapsto \mathcal{D}_{\coh}^q (\mathfrak{X}, n)).
$$
Over $q \geq 0$, they give morphisms of cubical triangulated categories
$$
\mathcal{D}_{\coh}^{q+1} (\mathfrak{X}, \bullet) \hookrightarrow \mathcal{D}_{\coh}^{q} (\mathfrak{X}, \bullet) \hookrightarrow \cdots \hookrightarrow \mathcal{D}_{\coh}^{0} (\mathfrak{X}, \bullet) \hookrightarrow \mathcal{D}_{\coh} (\mathfrak{X} \times \square^{\bullet}).
$$

For each $n\geq 0$, we have the Waldhausen $K$-spaces
\begin{equation}\label{eqn:318-1}
\mathcal{G}^q (\mathfrak{X}, n):= \mathcal{K} (\mathcal{D}_{\rm coh} ^q (\mathfrak{X}, n))
\end{equation}
(see Thomason-Trobaugh \cite[\S 1]{TT} and Waldhausen \cite{Waldhausen}), that form the tower of the Waldhausen $K$-spaces 
\begin{equation} \label{eqn:higher n-tower}
\cdots \to \mathcal{G}^{q+1} (\mathfrak{X}, n) \to  \mathcal{G}^q (\mathfrak{X}, n) \to \cdots \to \mathcal{G}^0 (\mathfrak{X}, n) \to \mathcal{G} (\mathfrak{X} \times \square^n),
\end{equation}
which induces the tower of the cubical spaces
\begin{equation}\label{eqn:higher tower}
\cdots \to \mathcal{G}^{q+1} (\mathfrak{X}, \bullet) \to  \mathcal{G}^q (\mathfrak{X}, \bullet) \to \cdots \to \mathcal{G}^0 (\mathfrak{X}, \bullet) \to \mathcal{G} (\mathfrak{X} \times \square^{\bullet}).
\end{equation}

\begin{defn}\label{defn:g geometric}
For a noetherian equidimensional formal scheme $\mathfrak{X}$, we define
$$\mathcal{G} (\mathfrak{X}) := \mathcal{K} (\mathcal{D}_{\coh} (\mathfrak{X})).$$

For $q \geq 0$, define 
$$
\mathcal{G}^q (\mathfrak{X}):= | \un{n} \mapsto \mathcal{G}^q (\mathfrak{X}, n) |,
$$
$$
\mathcal{G} ^{\square} (\mathfrak{X}):=  | \un{n} \mapsto \mathcal{G} (\mathfrak{X} \times \square^n)|,
$$
the geometric realizations (see Remark \ref{remk:geometric realize} below) of the cubical spaces.\qed
\end{defn}
Thus we deduce the tower of spaces
$$
 \cdots \to \mathcal{G}^{q+1} (\mathfrak{X}) \to \mathcal{G}^q (\mathfrak{X}) \to \cdots \to \mathcal{G}^0 (\mathfrak{X}) \to \mathcal{G} ^{\square} (\mathfrak{X}).
$$

\medskip

\begin{remk}\label{remk:geometric realize}
We briefly mention that the geometric realization of a cubical space is defined as follows: recall that a cubical space (for us, a space is a spectrum) is a functor $\textbf{Cube} ^{\op} \to \Spt$. The category of cubical spaces is denoted by $\textbf{Cube}^{\op} \Spt$. 

Consider the $1$-simplex $\Delta ^1$, and for $n \geq 1$, let $(\Delta ^1)^n:= \Delta ^1 \times \cdots \times \Delta ^1$. When $n=0$, let $(\Delta ^1)^0$ be 
the zero simplex $\Delta ^0$. For the simplicial set 
$(\Delta ^1)^n_+:= (\Delta ^1)^n \coprod \Delta ^0$, consider the infinite suspension $n \mapsto C_n:=\Sigma^{\infty} (\Delta ^1)^n_+$. This gives a co-cubical spectrum denoted by 
$$C_-: \textbf{Cube} \to \Spt, \ \ n \mapsto C_n.$$

This induces the enriched Yoneda embedding
$$h : \textbf{Cube} \to \textbf{Cube}^{\op} \Spt$$
$$ n \mapsto ( h_n: \textbf{Cube}^{\op} \to \Spt)$$
where $h_n (m) = \Hom_{\Spt} (C_m, C_n).$

The geometric realization $| \alpha|$ of a cubical space $\alpha$ is then defined as the object evaluated with the left Kan extension $| - |$ of $C_{-}$ via $h$
$$
\xymatrix{ 
\textbf{Cube} \ar[d] ^h \ar[r]^{C_{-}} & \Spt \\
\textbf{Cube}^{\op} \Spt. \ar@{-->}[ru] _{|\cdot |} &}
$$
More specifically, 
$$| \alpha|=  \underset{ h_n \downarrow \alpha}{\rm colim}\  \Sigma^{\infty} (\Delta ^1)^n_+,
$$
which is the colimit over the comma category.
\qed
\end{remk}

\begin{lem}\label{lem:homotopy restricted}
For each $n \geq 0$, the morphism $\mathcal{G} (\widehat{X}) \to \mathcal{G} (\widehat{X} \times \square^n)$ induced by the projection $\widehat{X} \times \square^n \to \widehat{X}$ is a weak-equivalence.

In particular, the diagonal morphism $\mathcal{G} (\widehat{X}) \to \mathcal{G}^{\square} (\widehat{X})$ to the geometric realization is also a weak-equivalence.
\end{lem}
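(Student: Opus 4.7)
The plan is to reduce the first assertion to $\mathbb{A}^1$-homotopy invariance for the $G$-theory of regular noetherian formal schemes, and then to adapt Quillen's classical devissage argument to the formal setting. Since $\square = \mathbb{P}^1 \setminus \{1\} \cong \mathbb{A}^1_k$ and $\square^n$ factors as an iterated self-product over $k$, I would factor the projection $\widehat{X} \times \square^n \to \widehat{X}$ through the tower
\[
\widehat{X} \to \widehat{X} \times \square \to \widehat{X} \times \square^2 \to \cdots \to \widehat{X} \times \square^n,
\]
which reduces the claim by induction on $n$ to the statement that the projection $\mathfrak{X} \times \square \to \mathfrak{X}$ induces a weak-equivalence on $\mathcal{G}$, for any regular noetherian formal $k$-scheme $\mathfrak{X}$ of the form $\widehat{X} \times \square^m$.

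For this base case, I would first note that by Lemma \ref{lem:exoskeleton}, Proposition \ref{prop:regularity0}, and the Greco-Salmon regularity result cited in the introduction, each $\widehat{X} \times \square^m$ is a regular noetherian formal scheme, so by Lemma \ref{lem:coh=perfect} we may identify $\mathcal{G}$ with $\mathcal{K}$ and move freely between pseudo-coherent and perfect complexes. Working Zariski-locally on $|\widehat{X}|$ to pass to the affine case $\widehat{X} = \Spf(A)$ for a noetherian adic ring with regular stalks, the formal product $\widehat{X} \times \square$ becomes $\Spf(B)$ with $B$ the completion of $A[t]$ along an ideal of definition of $A$, and coherent sheaves correspond to finitely generated $B$-modules. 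The plan is then to adapt Quillen's devissage from \cite{Quillen}: every finitely generated $B$-module admits a finite filtration whose graded pieces are pulled back from finitely generated $A$-modules along the projection, and applying the Waldhausen additivity theorem to this filtration forces the pullback to be an isomorphism on all $K$-groups, hence a weak-equivalence of spectra. The main obstacle I anticipate is justifying the transfer of Quillen's devissage from $A[t]$ to the completion $B$, since $B$ is a restricted-power-series-type ring rather than a polynomial ring; one must check carefully that $B$ is noetherian and flat over $A$, and that the subquotients in the devissage remain pseudo-coherent and are pulled back from $A$, the last point being where regularity, via Lemma \ref{lem:coh=perfect}, is indispensable.

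Given the first assertion, the second follows formally. The projections $\widehat{X} \times \square^n \to \widehat{X}$ assemble into a morphism of cubical spectra from the constant cubical spectrum with value $\mathcal{G}(\widehat{X})$ to $\un{n} \mapsto \mathcal{G}(\widehat{X} \times \square^n)$, and the first assertion makes this comparison a levelwise weak-equivalence. Since geometric realization of cubical spectra preserves levelwise weak-equivalences and the realization of the constant cubical spectrum at $\mathcal{G}(\widehat{X})$ is canonically weak-equivalent to $\mathcal{G}(\widehat{X})$ (by direct inspection of the left Kan extension formula recalled in Remark \ref{remk:geometric realize}), the diagonal $\mathcal{G}(\widehat{X}) \to \mathcal{G}^{\square}(\widehat{X})$ is a weak-equivalence.
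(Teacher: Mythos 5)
Your proposal has the right general shape—reduce to the affine case, identify the comparison as $A \hookrightarrow B$ for a restricted formal power series ring $B$, and appeal to a $G$-theoretic homotopy-invariance statement—but the key step in your base case is false. It is not true that every finitely generated $B$-module admits a finite filtration whose graded pieces are pulled back from finitely generated $A$-modules along the projection; already for $B = A[t]$ and $M = B$ itself this fails, since $B$ is free of rank one over itself while any module pulled back from $A$ is $t$-torsion, so no filtration by such subquotients can present $M$. More structurally, Quillen's devissage theorem reduces $G$-theory to a full abelian subcategory closed under subobjects and quotients in the ambient abelian category, and the image of $\operatorname{coh}(A)$ in $\operatorname{coh}(B)$ under pullback is nothing of the sort. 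The statement you are trying to reach is not a devissage result at all; what the paper invokes is Theorem 7 of \S 6 of Quillen, the ``fundamental theorem'' for filtered rings, which works by filtering $B$, comparing with the associated graded, and running a Koszul-resolution argument---a different technique than devissage. You should cite that result rather than inventing a filtration that doesn't exist.

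Two further remarks. First, your detour through $\mathcal{K}$ via Lemma~\ref{lem:coh=perfect} is unnecessary and actually counterproductive: Quillen's filtered-ring theorem lives at the level of abelian categories of coherent modules, i.e. $G$-theory, and the paper never leaves $G$-theory for this lemma. Identifying $\mathcal{G}$ with $\mathcal{K}$ gains you nothing here and obscures which tool you are allowed to use. Second, your deduction of the statement about $\mathcal{G}^{\square}(\widehat{X})$ from the levelwise weak-equivalence and the realization of a constant cubical object is fine.
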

\begin{proof}
Under the localization theorem for $G$-theory, it is enough to consider the affine case $\widehat{X} = \Spf (A)$. Here, the projection corresponds to the injective homomorphism $A \hookrightarrow A \{ y_1, \cdots, y_n \}$ into the restricted formal power series ring. Here, the induced map of the $K$-spaces is a weak-equivalence by D. Quillen \cite[Theorem 7 of \S 6, p.120]{Quillen}.
\end{proof}

\subsection{A moving lemma: the statement}\label{sec:moving state}

Let $Y$ be a quasi-projective scheme. In \S \ref{sec:moving state}, we consider the special circumstance when there is a sequence of closed immersions $Y \hookrightarrow X_1 \hookrightarrow X_2$, where $X_1, X_2$ are equidimensional smooth $k$-schemes. They induce closed immersions $Y \hookrightarrow \widehat{X}_1 \overset{\widehat{\iota}}{\hookrightarrow} \widehat{X}_2$ of formal schemes, where $\widehat{X}_i$ is the completion of $X_i$ along $Y$.

\medskip

The following Definition \ref{defn:cnv X1} is similar to Definition \ref{defn:codim n sheaf}, but with an additional condition of proper intersections with respect to $\widehat{X}_1$ (cf. Definition \ref{defn:CG}-(4)):

\begin{defn}\label{defn:cnv X1}
Let 
\begin{equation}\label{eqn:X1 subcat}
\mathcal{D}_{\coh, \{ \widehat{X}_1\}} ^q (\widehat{X}_2, n)  \subset \mathcal{D}_{\coh} ^q (\widehat{X}_2, n)
\end{equation}
be the triangulated subcategory generated by all coherent $\mathcal{O}_{\widehat{X}_2 \times \square^n}$-modules $\mathcal{F}$ of codimension $\geq q$ intersecting properly with $\widehat{X}_1 \times F$ for each face $F \subset \square^n$. More precisely, there is an affine open cover $\mathcal{U}$ of $|\widehat{X}_2|=|Y|$ such that for each $U \in \mathcal{U}$, we have $[ \mathcal{F}|_{\widehat{X}_2|_U \times \square^n}] \in z^{ \geq q} _{\{ \widehat{X}_1|_U \}} (\widehat{X}_2|_U, n)$. As we did in Lemma \ref{lem:codim indep}, one checks that this notion is independent of the choice of $\mathcal{U}$. \qed
\end{defn}

Note that for any $\mathcal{F} \in \mathcal{D}_{\coh,  \{ \widehat{X}_1 \}} ^q (\widehat{X}_2,n)$, we have the pull-back $\mathbf{L}\widehat{\iota} ^* \mathcal{F} \in \mathcal{D}_{\coh} ^q (\widehat{X}_1,n )$ via the closed immersion $\widehat{\iota}: \widehat{X}_1 \hookrightarrow \widehat{X}_2$ (cf. \cite[Proposition 5.1.1]{P general}), and it induces the ``Gysin pull-back" functor
\begin{equation}\label{eqn:Gysin}
\mathbf{L}\widehat{\iota}^*: \mathcal{D}_{\coh,  \{ \widehat{X}_1 \}} ^q (\widehat{X}_2,n) \to  \mathcal{D}_{\coh} ^q (\widehat{X}_1,n).
\end{equation}
This \eqref{eqn:Gysin} gives the induced morphism of the Waldhausen $K$-spaces
\begin{equation}\label{eqn:Gysin2}
\widehat{\iota}^* : \mathcal{K} (\mathcal{D}_{\coh, \{ \widehat{X}_1 \}} ^q (\widehat{X}_2, n)) \to  \mathcal{K} (\mathcal{D}_{\coh} ^q (\widehat{X}_1,n)).
\end{equation}

To show that $\widehat{\iota}^*$ in \eqref{eqn:Gysin2} is also defined on $\mathcal{K} (\mathcal{D}^q _{\rm coh} (\widehat{X}_2, n))$ in the homotopy category, we want to check  whether the inclusion functor \eqref{eqn:X1 subcat} induces a weak-equivalence of the respective $K$-spaces. The following is a key, that we would like to consider as a moving lemma:

\begin{prop}\label{prop:D ess}
The inclusion functor \eqref{eqn:X1 subcat} is essentially surjective.
\end{prop}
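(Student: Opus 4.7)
The plan is to prove essential surjectivity via a moving lemma adapted from the level of cycles to that of the derived category. Since the inclusion $\mathcal{D}_{\coh, \{ \widehat{X}_1 \}}^q (\widehat{X}_2, n) \subset \mathcal{D}_{\coh}^q (\widehat{X}_2, n)$ is an exact functor between triangulated subcategories of $\mathcal{D}_{\coh}(\widehat{X}_2 \times \square^n)$, it suffices to show that every generator of the right-hand side is quasi-isomorphic, in the ambient derived category, to an object of the left-hand side. Fix then an admissible coherent $\mathcal{O}_{\widehat{X}_2 \times \square^n}$-module $\mathcal{F}$ of codimension $\geq q$; via a devissage along the associated-prime stratification of $\Supp(\mathcal{F})$, I would further reduce to the case where $\mathcal{F}$ has a single integral support $\mathfrak{Z}$ of codimension exactly $q$.

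If $\mathfrak{Z}$ already intersects each $\widehat{X}_1 \times F$ properly, there is nothing to do. Otherwise, by Lemma \ref{lem:exoskeleton} both $\widehat{X}_i$ are regular noetherian formal $k$-schemes, and the induced embedding $\widehat{\iota}:\widehat{X}_1 \hookrightarrow \widehat{X}_2$ is a regular closed immersion (inherited from the regular immersion $X_1 \hookrightarrow X_2$ of smooth $k$-schemes). Locally on $\widehat{X}_2$ the ideal $\mathcal{I}_{\widehat{X}_1}$ is then generated by a regular sequence $(f_1, \dots, f_c)$, with $c$ the codimension, and the associated Koszul complex resolves $\mathcal{O}_{\widehat{X}_1}$ by locally free $\mathcal{O}_{\widehat{X}_2}$-modules. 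Combining this with the equivalence $\mathcal{D}_{\perf}(\widehat{X}_2 \times \square^n) \simeq \mathcal{D}_{\coh}(\widehat{X}_2 \times \square^n)$ furnished by Lemma \ref{lem:coh=perfect}, I would construct a resolution of $\mathcal{F}$ whose terms can be analysed in terms of the intersections $\mathfrak{Z} \cap (\widehat{X}_1 \times F)$, and then use a Koszul-type surgery to replace each offending term by a quasi-isomorphic complex of sheaves in proper position.

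The main obstacle is to control three constraints simultaneously: the codimension $\geq q$ condition on every face $\widehat{X}_2 \times F$, the admissibility in the sense of Definition \ref{defn:HCG}, and the new proper intersection with every $\widehat{X}_1 \times F$. The most delicate point arises when $\mathfrak{Z}$ is itself contained in some $\widehat{X}_1 \times F$: no bounded complex of sheaves whose individual supports all avoid $\widehat{X}_1 \times F$ can recover a cohomology sheaf with that support, so the moving step must genuinely use the cubical directions. For $n \geq 1$ one expects to exploit the natural $\G_m$-action on $\square^n$ fixing $\{0,\infty\}^n$, where a generic translate of $\mathfrak{Z}$ intersects each $\widehat{X}_1 \times F$ properly while defining a sheaf quasi-isomorphic to $\mathcal{F}$ via a homotopy built from the translation. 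I anticipate the central technical work to be an inductive construction, doubly indexed by $q$ and by the dimension of the bad intersection locus, in which each step produces an admissible, properly-intersecting replacement at the cost of a strictly smaller error term absorbed at the next step.
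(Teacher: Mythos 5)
Your intuition that essential surjectivity should be achieved by some moving/translation automorphism that keeps the sheaf isomorphism type fixed while moving its support into proper position is the right one, and this is indeed the strategy of the paper. However, the direction of the translation you propose is wrong in a way that makes the argument unworkable, and this is the genuine gap.

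You propose to translate in the cubical directions, using a $\mathbb{G}_m$-action on $\square^n$ fixing $\{0,\infty\}$. There are two fatal problems with this. First, $\square = \mathbb{P}^1 \setminus \{1\}$ in Definition~\ref{defn:HCG}, and the $\mathbb{G}_m$-action fixing $\{0,\infty\}$ does not fix $1$, so it does not even preserve $\square^n$; one would at best get maps $\widehat{X}_2 \times \square^n \to \widehat{X}_2 \times \square^n$ with nontrivial indeterminacy or that change the ambient space. Second, and more fundamentally, a move purely in the $\square^n$-direction cannot possibly help when $\mathfrak{Z} \subset \widehat{X}_1 \times \square^n$: such a $\mathfrak{Z}$ is characterized by containment in the $\widehat{X}$-factor, and acting only on the $\square^n$-factor leaves $\mathfrak{Z}$ inside $\widehat{X}_1 \times \square^n$. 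This case really occurs as soon as $q \geq \mathrm{codim}_{\widehat{X}_2}\widehat{X}_1$. Finally, your proposal offers no mechanism at all when $n = 0$ (you explicitly restrict to $n \geq 1$), yet the proposition is needed for every $n \geq 0$. The Koszul/resolution surgery you sketch does not fill this gap: resolving $\mathcal{O}_{\widehat{X}_1}$ says nothing about moving the support of $\mathcal{F}$.

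What the paper actually does is translate in the \emph{formal} direction transverse to $\widehat{X}_1$ inside $\widehat{X}_2$. Locally on an affine $Y$ with $X_1 \hookrightarrow X_2$ a complete intersection, one has $\widehat{X}_1 = \Spf(A)$, $\widehat{X}_2 = \Spf(A[[t_1,\dots,t_r]])$, and one applies automorphisms $\psi_{\underline{c}}\colon t_i \mapsto t_i + c_i$ with $c_i$ chosen (via prime avoidance, Lemmas~\ref{lem:prime avoid}--\ref{lem:translation PP}) in ideals related to $J = \ker(A \to B)$, so that $\psi_{\underline{c}}^*\mathcal{F}$ meets every $\widehat{X}_1 \times F$ properly while remaining isomorphic to $\mathcal{F}$ because $\psi_{\underline{c}}$ is an automorphism of $\widehat{X}_2 \times \square^n$. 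Crucially these translations live in the $t$-variables, so they can push a cycle off $\widehat{X}_1 \times \square^n$, which your $\square^n$-translations cannot. The case $|J| < \infty$ is handled separately (it forces $J=0$, $Y = X_1$ smooth, where there is nothing to prove). The passage from the affine complete-intersection case to general quasi-projective $Y$ and general l.c.i.\ immersion $X_1 \hookrightarrow X_2$ is then done by an l.c.i.\ open cover and a Mayer--Vietoris gluing of perfect complexes in the derived category (via $j_!$ and cones), not by the devissage-plus-induction you anticipate.
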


The following \S \ref{sec:moving ci} and \S \ref{sec:moving general} are devoted to the proof of Proposition \ref{prop:D ess}.

\subsection{A moving lemma: the affine and complete intersection case}\label{sec:moving ci}

We continue to follow the notations and the assumptions of \S \ref{sec:moving state}. Since $X_1$ and $X_2$ are smooth, the closed immersion $X_1 \hookrightarrow X_2$ is an l.c.i.~morphism. 

\medskip

In \S \ref{sec:moving ci}, we prove Corollary \ref{cor:D ess local}, which is Proposition \ref{prop:D ess} in the special case when $Y$ is a connected affine $k$-scheme of finite type, and $X_1 \hookrightarrow X_2$ is a complete intersection. Using this and an argument resembling Zariski descent, in \S \ref{sec:moving general} we prove Proposition \ref{prop:D ess} in general.

\medskip

Let $\widehat{X}_1 = \Spf (A)$ for a regular noetherian $k$-domain $A$ complete with respect to an ideal $J \subset A$,  such that for $B:= A/J$, we have $Y= \Spec (B)$. When $r$ is the codimension of $\widehat{X}_1 \hookrightarrow \widehat{X}_2$, we have $\widehat{X}_2 = \Spf (A [[\un{t}]])$ for a set $\un{t}:= \{ t_1, \cdots, t_r \}$ of indeterminates. If $r=0$, there is nothing to consider, so we suppose $r \geq 1$.

\medskip

A basic intuition is to use a version of translation in the formal power series setting. We remark that for cycles, an analogous situation is studied in \cite[\S 5]{P general}. In what follows, some of the lemmas are very similar to some in \emph{ibid.} We give arguments independently for self-containedness.

\medskip

When we have rings $R_1 \subset R_2$ and a subset $S \subset R_1$, we use the notational conventions that $(S)_{R_2}$ is the ideal of $R_2$ generated by $S$.

\begin{defn} We introduce the following notations.
\begin{enumerate}
\item Let $A_0:= A$ and let $A_i := A[[t_1, \cdots, t_i]]$ for $1 \leq i \leq r$. They give a sequence of inclusions
$$A \subset A_1 \subset A_2 \subset \cdots \subset A_r.$$

\item Consider the ideal $J_i:= (J, t_1, \cdots, t_i )_{A_{i}} \subset A_i$ generated by $J, t_1, \cdots, t_i$. By convention $J_0:= J$. The ring $A_i$ is complete with respect to the ideal $J_i$, so we form the formal scheme $\widehat{X}_{1, i}:= \Spf (A_i)$. Here, $\widehat{X}_{1, 0} = \widehat{X}_1$ and $\widehat{X}_{1, r} = \widehat{X}_{2}$. 

Regarding $A_i$ as the quotient $A_{i+1}/ (t_{i+1})$, we have closed immersions
$$\widehat{X}_1 \subset \widehat{X}_{1,1} \subset \cdots \subset \widehat{X}_{1, r-1} \subset \widehat{X}_2.$$

\item Consider the ideal $(J_i) \subset A_{i+1}$, generated by $J_i$ in the bigger ring $A_{i+1}$. 

\item Define the set $\mathbb{V}_1= (J_0)$, and  the sets $\mathbb{V}_{i+1} := \mathbb{V}_i \times (J_{i})$ for $1 \leq i \leq r-1$, where $\times$ is the Cartesian product of sets.

\item For each $\un{c} = (c_1, \cdots, c_r) \in \mathbb{V}_r$, we consider the automorphism of $A[[\un{t}]]$ given by the translations $t_i \mapsto t_i + c_i$ for all $1 \leq i \leq r$. Note that $c_i \in (J_{i-1})$, and it may not necessarily be in $J_{i-1}$.

 Since $A[[\un{t}]]$ is complete with respect to the ideal $J_r =(J, (\un{t}))$, the above gives the induced automorphism of $\widehat{X}_2= \Spf (A[[\un{t}]])$
$$
\psi_{\un{c}}: \widehat{X}_2 \to \widehat{X}_2.
$$
This automorphism in turn induces the automorphism
$$
\psi_{\un{c}}: \widehat{X}_2 \times \square^n \to \widehat{X}_2 \times \square^n.
$$
We will use the above notations repeatedly until the end of the proof of Proposition \ref{prop:D ess}.
\qed
\end{enumerate}
\end{defn}

The automorphism $\psi_{\un{c}}$ induces pull-backs of cycles on $\widehat{X}_2 \times \square^n$. We first observe the following (cf. \cite[\S 5]{P general}):

\begin{lem}\label{lem:arbitrary translation}
Let $\un{c} \in \mathbb{V}_r$. Let $\mathfrak{Z} \in z^q (\widehat{X}_2, n)$. Then we have $\psi_{\un{c}} ^* (\mathfrak{Z}) \in z^q (\widehat{X}_2, n)$.
\end{lem}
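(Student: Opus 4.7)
The strategy is to verify that the translation $\psi_{\un{c}}$ is compatible with all the structures used to define $z^q(\widehat{X}_2, n)$, so that pull-back preserves codimension and the two intersection conditions $(\textbf{GP})$ and $(\textbf{SF})$ of Definition \ref{defn:HCG}.

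First I would note that $\psi_{\un{c}} \colon \widehat{X}_2 \to \widehat{X}_2$ is by construction an automorphism of formal schemes, so the product $\psi_{\un{c}} \times \id_{\square^n}$ is an automorphism of $\widehat{X}_2 \times \square^n$. In particular, pull-back along it preserves codimensions of closed formal subschemes, so the underlying codimension $q$ condition is automatic. Moreover, since $\psi_{\un{c}}$ acts trivially on the $\square^n$-factor, for each face $F \subset \square^n$ we have $(\psi_{\un{c}} \times \id)^{-1}(\widehat{X}_2 \times F) = \widehat{X}_2 \times F$. Hence
\[
\psi_{\un{c}}^*(\mathfrak{Z}) \cap (\widehat{X}_2 \times F) \;=\; (\psi_{\un{c}} \times \id)^{-1}\bigl(\mathfrak{Z} \cap (\widehat{X}_2 \times F)\bigr),
\]
which has the same codimension as $\mathfrak{Z} \cap (\widehat{X}_2 \times F)$ in $\widehat{X}_2 \times F$. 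This gives $(\textbf{GP})$ for $\psi_{\un{c}}^*(\mathfrak{Z})$.

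Next I would establish the key compatibility with the reduced structure, which is needed for $(\textbf{SF})$. I claim $\psi_{\un{c}}$ preserves the ideal $J_r = (J, t_1, \ldots, t_r)_{A_r}$. Indeed, $\psi_{\un{c}}$ fixes $A$, so it fixes $J \subset J_r$; and for each $i$, the definition of $\mathbb{V}_r$ forces $c_i \in (J_{i-1})_{A_r} = (J, t_1, \ldots, t_{i-1})_{A_r} \subset J_r$, whence $\psi_{\un{c}}(t_i) = t_i + c_i \in J_r$. Thus $\psi_{\un{c}}(J_r) \subset J_r$, and applying the same argument to $\psi_{-\un{c}} = \psi_{\un{c}}^{-1}$ yields equality. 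Since taking radicals commutes with ring automorphisms, $\psi_{\un{c}}$ also preserves $\sqrt{J_r}$, which contains the largest ideal of definition of $A_r$; hence $\psi_{\un{c}}$ preserves the largest ideal of definition of $\widehat{X}_2$ (use that this is the unique reduced quotient among quotients by ideals of definition, by EGA I \cite[Ch 0, Corollaires (7.1.6), (7.1.7), pp.61-62]{EGA1}). Therefore $\psi_{\un{c}}$ restricts to an automorphism of $(\widehat{X}_2)_{\red}$.

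Finally, combining the last paragraph with the trivial action on $\square^n$, the product $\psi_{\un{c}} \times \id_{\square^n}$ restricts to an automorphism of $(\widehat{X}_2)_{\red} \times F$ for every face $F$. The same intertwining identity as before gives
\[
\psi_{\un{c}}^*(\mathfrak{Z}) \cap \bigl((\widehat{X}_2)_{\red} \times F\bigr) \;=\; (\psi_{\un{c}} \times \id)^{-1}\bigl(\mathfrak{Z} \cap ((\widehat{X}_2)_{\red} \times F)\bigr),
\]
whose codimension in $(\widehat{X}_2)_{\red} \times F$ equals that of $\mathfrak{Z} \cap ((\widehat{X}_2)_{\red} \times F)$. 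Hence $(\textbf{SF})$ transfers from $\mathfrak{Z}$ to $\psi_{\un{c}}^*(\mathfrak{Z})$, completing the proof. The only substantive point is the stability of $J_r$ under $\psi_{\un{c}}$, which in turn is the reason for the specific definition of $\mathbb{V}_r$ as the nested Cartesian product of the ideals $(J_{i-1})_{A_r}$; I expect this bookkeeping to be the main thing to get right, but the argument is essentially formal.
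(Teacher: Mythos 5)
Your argument follows the same route as the paper's own proof: for \textbf{(GP)} both arguments observe that $\psi_{\un{c}}$ acts trivially on the $\square^n$ factor, so pull-back commutes with intersection against each $\widehat{X}_2 \times F$; and for \textbf{(SF)} the key point you isolate---that $\psi_{\un{c}}$ preserves the largest ideal of definition and hence restricts to an automorphism of $\widehat{X}_{2,\red}$---is exactly what the paper compresses into its one-line remark that $t_i$, $c_i$, $t_i+c_i$ all lie in the largest ideal of definition. So your fuller treatment of \textbf{(SF)} is welcome.

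There is, however, one incorrect intermediate step: the claim that $\psi_{\un{c}}^{-1} = \psi_{-\un{c}}$. Because $c_i$ lies in $(J_{i-1}) = J_{i-1}A_i$ with $A_i = A[[t_1,\ldots,t_i]]$, the element $c_i$ may genuinely depend on $t_i$; the paper even flags this (``it may not necessarily be in $J_{i-1}$''). For instance, with $r=1$, $J=(x)\subset A$ and $c_1 = x t_1$, the composite $\psi_{-c_1}\circ\psi_{c_1}$ sends $t_1 \mapsto t_1(1-x^2)\neq t_1$. The conclusion $\psi_{\un{c}}(J_r)=J_r$ is still correct, but needs a different justification. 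Two easy fixes: (i) having shown $\psi_{\un{c}}(J_r)\subset J_r$, establish $J_r \subset \psi_{\un{c}}(J_r)$ by ascending induction---$c_1 \in J\cdot A_r$ gives $t_1=(t_1+c_1)-c_1 \in (J,t_1+c_1,\ldots,t_r+c_r)$, then $c_2\in (J,t_1)A_r$ lies in the ideal generated by $J$ and $t_1+c_1$, so $t_2=(t_2+c_2)-c_2$ does as well, and so on; or (ii) recall that for any automorphism $\sigma$ of a noetherian ring with $\sigma(I)\subset I$, the increasing chain $I\subset\sigma^{-1}(I)\subset\sigma^{-2}(I)\subset\cdots$ stabilizes, forcing $\sigma(I)=I$. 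With either repair, the remainder of your argument goes through as written.
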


\begin{proof}
We may assume that $\mathfrak{Z}$ is integral. We check the conditions (\textbf{GP}) and (\textbf{SF}) for $\psi_{\un{c}} ^* (\mathfrak{Z})$. 

Let $F \subset \square^n$ be a face. Since $\psi_{\un{c}}$ is an isomorphism, it preserves dimensions. Since we have 
$$
\psi_{\un{c}} ^* (\mathfrak{Z}) \cap (\widehat{X}_2 \times F) = \psi_{\un{c}} ^* (\mathfrak{Z} \cap (\widehat{X}_2 \times F)),
$$
we immediately deduce the condition (\textbf{GP}) for $\psi_{\un{c}} ^* (\mathfrak{Z})$ from that of $\mathfrak{Z}$.

\medskip

The condition (\textbf{SF}) for $\psi_{\un{c}} ^* (\mathfrak{Z})$ holds immediately, because all of $t_i, t_i + c_i, c_i$ belong to the largest ideal of definition.
\end{proof}

For a given coherent sheaf $\mathcal{F}$ on $\widehat{X}_2\times \square^n $ of codimension $\geq q$ whose associated cycle $[\mathcal{F}]$ belongs to $z^{\geq q} (\widehat{X}_2, n)$, we want to know whether we can have the additional proper intersection property with $\widehat{X}_1 \times F$ over the faces $F \subset \square_k ^n$ for the translated sheaf $\psi_{\un{c}} ^* \mathcal{F}$ for a suitable choice of $\un{c} \in \mathbb{V}_r$. The cardinality of $J$ matters in doing so. The trivial case is:

\begin{lem}\label{lem:J finite}
Suppose $|J| < \infty$. Then
\begin{enumerate}
\item $J=0$ and 
\item $Y= \widehat{X}_1 = X_1$ and $Y$ is smooth over $k$.
\end{enumerate}
\end{lem}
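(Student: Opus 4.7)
The plan is to handle (1) by contradiction, using only that $A$ is an integral domain. Suppose for contradiction that $J \neq 0$, and pick any nonzero $a \in J$. Since $A$ is a domain, the multiplication-by-$a$ map $b \mapsto ba$ is an injection $A \hookrightarrow J$, so $|A| \leq |J| < \infty$. Hence $A$ is a finite integral domain, which is automatically a field (any injective endomorphism of a finite set is a bijection, producing an inverse for each nonzero element). In a field the only ideals are $0$ and the whole ring, so $J \neq 0$ would force $J = A$, giving $B = A/J = 0$ and $Y = \Spec(B) = \emptyset$. This contradicts the standing assumption that $Y$ is the (nonempty) finite type $k$-scheme whose formal neighborhood we are studying, and so $J = 0$.

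For (2), once $J = 0$ the $J$-adic topology on $A$ becomes discrete ($J^n = 0$ is already an open neighborhood of $0$ for $n \geq 1$), every prime ideal of $A$ is automatically open, and the structure sheaf $\varprojlim_n \widetilde{A/J^n}$ collapses to $\widetilde{A}$. Thus $\Spf(A) = \Spec(A)$ as locally ringed spaces, and since $B = A/J = A$ we obtain $Y = \Spec(B) = \Spec(A) = \widehat{X}_1$.

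To upgrade to $\widehat{X}_1 = X_1$, recall that $\widehat{X}_1$ is the formal completion of $X_1$ along $Y$; this completion is isomorphic to $Y$ itself only when $Y$ is (locally) open in $X_1$. Combined with $Y$ being closed in $X_1$, $Y$ must be a union of connected components. In the present affine setup where $A$ is a single domain, this forces $Y = X_1$ locally, and smoothness of $Y$ over $k$ then follows at once from that of $X_1$. The one delicate step will be ruling out the degenerate branch $J = A$ in the contradiction above, which rests on the implicit nonemptiness of $Y$ baked into the setup; everything beyond that is elementary ring-theoretic bookkeeping.
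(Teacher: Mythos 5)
Your proof is correct, but it reaches the conclusion of part (1) by a genuinely different route. The paper argues through completeness: since $J$ is finite, the descending chain $J \supset J^2 \supset \cdots$ stabilizes at some $J^N$, and then $J$-adic completeness forces $A = A/J^N$, hence $J^N = 0$; since $A$ is a (reduced) domain, $J = 0$. You instead sidestep completeness entirely and exploit only the domain hypothesis via a cardinality count: a nonzero $a \in J$ gives an injection $A \hookrightarrow J$, so $|A| \le |J| < \infty$, whence $A$ is a finite domain, hence a field, forcing $J \in \{0, A\}$; and $J = A$ is impossible because it would make $B = 0$, contradicting $Y \neq \emptyset$ (which is guaranteed either by the convention that connected schemes are nonempty, or more robustly by the hypothesis that $A$ is a domain, since $Y = \emptyset$ would force $A = 0$). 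Your argument is more elementary in that it never invokes $J$-adic completeness, at the price of a small side step around the degenerate case $J = A$; the paper's argument stays closer to the formal-scheme machinery of the section. For part (2), you reason topologically — the formal completion collapses to $Y$ only when $Y$ is open, and since $Y$ is also closed it is a union of components — whereas the paper works purely algebraically, tracing $J = \widehat{I} = 0$ back through $I = 0$ to $A = A_0 = B$. These are essentially equivalent, though the paper's version is more succinct and avoids having to invoke the characterization of when a formal completion is trivial. One small caution: in both your treatment and the paper's, the conclusion $Y = X_1$ should be understood after replacing $X_1$ by the connected component meeting $Y$, since the other components are invisible to the completion.
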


\begin{proof}

We have $Y= \Spec (B)$ and $\widehat{X}_1 = \Spf (A)$ with $A/J = B$. Write $X_1 = \Spec (A_0)$ for a smooth $k$-algebra $A_0$. Let $I \subset A_0$ be the ideal such that $A_0/I = B$. By definition, we have $A= \varprojlim_m A_0 / I^m$, $J= \widehat{I}$, and $A/J = A_0/I = B$.

\medskip

Since $J$ is finite, the descending chain of ideals 
$$
 \cdots \subset J^3 \subset J^ 2 \subset J
 $$
is stationary, so that there is some $N \geq 1$ such that $J^N = J^{N+1} = \cdots$. 

Since $A$ is $J$-adically complete, we have 
$$
A= \varprojlim_m A/ J^m = A/J^N.
$$
 This implies that $J^N = 0$. But $Y$ is connected so that $A$ is a regular $k$-domain. In particular, it has no nonzero nilpotent element. Hence $J= 0$, proving (1).

\medskip 

That $J=0$ implies $0 = \widehat{I}$. Hence $I=0$ and $A= A_0 = B$. This means that $Y = \widehat{X}_1 = X_1$. Because $X_1$ is smooth over $k$ and $Y= X_1$, the scheme $Y$ is smooth over $k$, proving (2).
\end{proof}

When $J$ is infinite, the technical key is Lemma \ref{lem:translation} below. The argument is somewhat involved. Analogous ideas are also to be used in \cite[\S 5]{P general}, in a slightly different situation. 

Let's begin with the following basic result from commutative algebra (see e.g. Atiyah-MacDonald \cite[Proposition 1.11-i), p.8]{AM}) needed in its proof.

\begin{lem}[Prime avoidance]\label{lem:prime avoid}
Let $A$ be a commutative ring with unity. Let $J \subset A$ be an ideal, and let $I_1, \cdots, I_N \subset A$ be prime ideals such that $J \not \subset I_i$ for all $1 \leq i \leq N$. Then $J \not \subset \bigcup_{i=1} ^N I_i$.
\end{lem}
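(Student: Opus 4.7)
The plan is to argue by induction on $N$. The base case $N=1$ is immediate from the hypothesis. For the inductive step with $N \geq 2$, suppose toward a contradiction that $J \subset \bigcup_{i=1}^N I_i$, and among all such counterexamples assume $N$ is minimal. Minimality forces that for each index $j$, we have $J \not\subset \bigcup_{i \neq j} I_i$, for otherwise a smaller counterexample would exist (the hypothesis $J \not\subset I_i$ survives when we drop indices).

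Using this, for each $j \in \{1, \ldots, N\}$ choose an element $x_j \in J$ with $x_j \notin I_i$ for every $i \neq j$. Since $J \subset \bigcup_i I_i$, such $x_j$ must lie in $I_j$. Now consider the element
\[
y := x_1 + x_2 x_3 \cdots x_N \in J.
\]
I would then check that $y \notin I_j$ for each $j$, which contradicts $y \in J \subset \bigcup_i I_i$.

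For $j = 1$: each $x_i$ with $i \geq 2$ lies outside $I_1$, so by primality of $I_1$ the product $x_2 \cdots x_N$ is outside $I_1$; since $x_1 \in I_1$, the sum $y$ is outside $I_1$. For $j \geq 2$: the factor $x_j$ lies in $I_j$, so $x_2 \cdots x_N \in I_j$; since $x_1 \notin I_j$, we again obtain $y \notin I_j$. This yields the desired contradiction and completes the induction.

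There is no real obstacle here; the only point requiring care is that primality is used exactly to conclude $x_2 \cdots x_N \notin I_1$ from each factor lying outside $I_1$, and that the product in the definition of $y$ is arranged so that for every index $j \geq 2$ at least one factor $x_j$ lies in $I_j$. The argument goes through for any commutative ring with unity, as stated.
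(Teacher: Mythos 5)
Your proof is correct and reproduces the standard textbook argument; the paper itself does not prove this classical lemma but simply cites Atiyah--MacDonald \cite[Proposition 1.11-i), p.8]{AM}, where essentially this same construction (minimality reduction, choice of $x_j \in J \setminus \bigcup_{i\neq j} I_i$, and the test element $x_1 + x_2\cdots x_N$) appears. No gap.
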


The following, which is used in Subcases 1-2 and 1-3 of the proof of Lemma \ref{lem:translation}, needs the prime avoidance:

\begin{lem}\label{lem:bad t prime avoid}
Let $A$ be an integral domain. For a subset $S \subset A[[t]]$, let $(S) \subset A[[t]]$ be the ideal generated by $S$. Suppose that for an ideal $J \subset A$, the ideal $(J)\subset A[[t]]$ is proper.

Let $I_1, \cdots, I_N \subset A[[t]]$ be prime ideals such that $(J, t) \not \subset I_i$.

Suppose $t \in I_{i_j}$ for some indices $ 1 \leq i_1< \cdots < i_u \leq N$. Then we have 
\begin{enumerate}
\item $(J) \not \subset \bigcup_{j=1} ^u I_{i_j}$.
\item $|(J) \setminus  ( (J) \cap (\bigcup_{j=1} ^u I_{i_j}))| = \infty.$
\item For each $c \in (J) \setminus ( (J) \cap (\bigcup_{j=1} ^u I_{i_j}))$, we have $t + c \not \in I_{i_j}$ for all $1 \leq j \leq u$.
\end{enumerate}
\end{lem}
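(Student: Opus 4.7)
The plan is to treat the three statements in the given order, since each feeds into the next through an essentially mechanical step. For (1), I would first use the hypothesis $t \in I_{i_j}$ to upgrade the given assumption $(J, t) \not\subset I_{i_j}$ to the sharper statement $(J) \not\subset I_{i_j}$: indeed, were $(J) \subset I_{i_j}$, then $(J, t) = (J) + (t) \subset I_{i_j}$, a contradiction. Applying Lemma \ref{lem:prime avoid} to the finite family $I_{i_1}, \ldots, I_{i_u}$ then yields $(J) \not\subset \bigcup_{j=1}^u I_{i_j}$.

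For (2), I would pick any witness $c_0 \in (J) \setminus \bigcup_{j=1}^u I_{i_j}$ produced by (1); such a $c_0$ is automatically nonzero since $0$ lies in every $I_{i_j}$. I then propose to exhibit the infinite family $\{\, c_0(1 + t^n)\,\}_{n \geq 1}$ of elements of $(J)$ which all miss the union. Distinctness of the family is immediate from $c_0 \neq 0$ together with the fact that $A[[t]]$ is a domain. If one had $c_0(1 + t^n) \in I_{i_j}$ for some $j$, then primality of $I_{i_j}$ combined with $c_0 \not\in I_{i_j}$ would force $1 + t^n \in I_{i_j}$; combined with $t \in I_{i_j}$ this would give $1 = (1 + t^n) - t \cdot t^{n-1} \in I_{i_j}$, contradicting the fact that $I_{i_j}$ is a (proper) prime ideal.

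For (3), given any $c \in (J) \setminus \bigcup_j I_{i_j}$, the assumption $t + c \in I_{i_j}$ for some $j$ would yield $c = (t + c) - t \in I_{i_j}$ by using once more that $t \in I_{i_j}$, contradicting the choice of $c$.

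I do not anticipate a real obstacle in this argument; the one point that requires some care is the choice of the infinite family in (2), because the ring $A$ itself could well be finite and scalar multiplication by elements of $A$ need not manufacture new cosets outside the union. The reason the family $c_0(1 + t^n)$ is robust is that the extra factors are disposable precisely because $t$ lies in each $I_{i_j}$, while infiniteness and distinctness are guaranteed automatically by $A[[t]]$ being an infinite integral domain, regardless of the size of $A$ or $J$.
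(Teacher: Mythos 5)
Your proof is correct. Parts (1) and (3) coincide exactly with the paper's argument. The only place where you diverge is in part (2), where you need to exhibit an infinite subset of $(J)\setminus\bigcup_j I_{i_j}$: the paper starts from a nonzero $c$ in this set and takes the powers $\{c^p\}_{p\geq 1}$, using the hypothesis that $(J)$ is a proper ideal (hence contains no unit) to rule out a coincidence $c^p=c^{p'}$ and thereby get distinctness; you instead take $\{c_0(1+t^n)\}_{n\geq 1}$, where distinctness is immediate from $A[[t]]$ being an infinite domain, and non-membership in $I_{i_j}$ follows from $t\in I_{i_j}$ together with primality. Both are clean; your variant has the minor advantage of not invoking the properness of $(J)$ for this part (it uses only $t\in I_{i_j}$, which is already in play for (1) and (3)), and your closing remark correctly identifies why injecting powers of $t$ is the robust way to avoid any dependence on the size of $A$ or $J$.
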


\begin{proof}
(1) For each $1 \leq j \leq u$, we are given that $t \in I_{i_j}$. If $(J) \subset I_{i_j}$, then we would have $(J, t) \subset I_{i_j}$, contradicting the given assumption that $(J, t) \not \subset I_{i_j}$. Thus we deduce that $(J) \not \subset I_{i_j}$. By the prime avoidance (Lemma \ref{lem:prime avoid}), we deduce that $(J) \not \subset \bigcup_{j=1} ^u I_{i_j}$, proving (1).

\medskip

(2) Since the set is nonempty by (1), choose any $c \in (J) \setminus  ( (J) \cap ( \bigcup_{j=1} ^u I_{i_j}))$. This is nonzero. Since $c \in (J)$ and $(J)$ is an ideal, we have $\{ c, c^2, \cdots \} \subset (J)$. 

We claim that the powers $c^1, c^2, \cdots, $ are all distinct. Indeed, suppose $c^p = c^{p'}$ for some distinct positive integers $p < p'$. This gives $c^p (1- c^{p'-p}) = 0$ in $A[[t]]$. Since $A[[t]]$ is an integral domain and $c \not = 0$, we deduce that $1- c^{p'-p} = 0$. In particular $c$ is a unit in $A[[t]]$. However, we are given that $(J)$ is a proper ideal of $A[[t]]$ so that $(J)$ contains no unit, a contradiction. In particular, the set $\{ c, c^2, c^3 , \cdots \}$ is infinite.

\medskip

If any one of $c^p$ is $\in \bigcup_{j=1} ^u I_{i_j}$ for some $p \geq 1$, then $c^p \in I_{i_{j}}$ for some $j$. But $I_{i_j}$ is a prime ideal so that it implies $c \in I_{i_j}$, which contradicts our choice of $c$. Hence the infinite set $\{ c^1, c^2, \cdots \}$ is disjoint from $\bigcup_{j=1}^u I_{i_j}$, i.e. it is contained in the set $(J) \setminus ( (J) \cap ( \bigcup_{j=1} ^u  I_{i_j}))$. This proves (2).

\medskip

(3) Toward contradiction, suppose $t + c \in I_{i_j}$ for some $1 \leq j \leq u$. Since $t \in I_{i_j}$ by our given assumption, we have $c= (t+ c) - t \in I_{i_j}$. But, this contradicts our choice of $c$ that $c \not \in I_{i_j}$. Hence $t+ c \not \in I_{i_j}$ for all $1 \leq j \leq u$, proving (3).
\end{proof}

The following is to be used in Subcase 1-3 of the proof of Lemma \ref{lem:translation}:

\begin{lem}\label{lem:pigeon hole}
Let $A$ be an integral domain complete with respect to an ideal $J \subset A$. 
Let $I \subset A[[t]]$ be a prime ideal such that $t \not \in I$. Let $c \in (J)$ be any nonzero member. 
\begin{enumerate}
\item If $c \in I$, then $t+ c^p \not \in I$ for all integers $p \geq 1$.
\item If $c \not \in I$, then there is at most one integer $p \geq 1$ such that $t+ c^p \in I$.
\end{enumerate}
\end{lem}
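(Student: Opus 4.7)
The plan is to treat both parts by contradiction; (1) is essentially tautological and (2) reduces to showing that a specific element is a unit in $A[[t]]$.

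For (1), if $c \in I$ then $c^p \in I$ for every $p \geq 1$, so $t + c^p \in I$ would give $t = (t + c^p) - c^p \in I$, contradicting the hypothesis $t \notin I$. For (2), I would suppose there exist two distinct positive integers $p < q$ with both $t + c^p$ and $t + c^q$ in $I$. Subtracting yields $c^p - c^q = c^p(1 - c^{q-p}) \in I$, and since $I$ is prime and $c \notin I$ we have $c^p \notin I$, hence $1 - c^{q-p} \in I$. The task is thus to show that $1 - c^{q-p}$ is a unit in $A[[t]]$, which will contradict the properness of the prime $I$.

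To check that $1 - c^{q-p}$ is a unit, first note that any element of $(J) = JA[[t]]$ can be written as a finite sum $\sum_i j_i g_i(t)$ with $j_i \in J$ and $g_i \in A[[t]]$, so its power-series coefficients are finite $A$-linear combinations of the $j_i$'s and therefore lie in $J$. Writing $c = \sum_n a_n t^n$ with $a_n \in J$, the constant term of $c^{q-p}$ is $a_0^{q-p} \in J$, so $1 - c^{q-p}$ has constant term $1 - a_0^{q-p}$. The $J$-adic completeness of $A$ forces $J$ into the Jacobson radical of $A$: for any $j \in J$, the series $\sum_{n \geq 0} j^n$ converges in $A$ by completeness and inverts $1 - j$. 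Thus $1 - a_0^{q-p}$ is a unit in $A$, and combining this with the standard fact that a power series in $A[[t]]$ is a unit iff its constant term is a unit in $A$ gives $1 - c^{q-p} \in A[[t]]^{\times}$, which is the desired contradiction. There is no real obstacle in this plan; the two points worth isolating are the coefficient-wise description of $JA[[t]]$ and the observation that $J$-adic completeness places $J$ inside the Jacobson radical of $A$.
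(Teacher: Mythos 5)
Your proof is correct and rests on the same core idea as the paper's: both reduce (2) to showing $1-c^{a}$ (with $a = q-p$) is a unit, and both produce the inverse via a geometric series made convergent by $J$-adic completeness. The only genuine difference is bookkeeping: the paper observes that $A[[t]]$ is complete with respect to $(J,t)$ and sums $\sum_{n\geq 0} c^{na}$ directly in $A[[t]]$ to invert $1-c^{a}$ there, whereas you pass to constant terms, invert $1 - a_0^{a}$ in $A$ using that $J$ lies in the Jacobson radical of the complete ring $A$, and then appeal to the criterion that a formal power series is a unit iff its constant term is. Both routes are valid and comparably elementary; the paper's is a bit more economical since it avoids the two auxiliary facts (the coefficientwise description of $JA[[t]]$ and the unit criterion in $A[[t]]$), while yours has the minor virtue of isolating the Jacobson-radical observation, which is a reusable property of complete rings.
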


\begin{proof}
(1) Suppose $c \in I$. Then for all $p \geq 1$, we have $c^p \in I$. If $t+ c^p \in I$, then $t = (t+ c^p ) - c^p  \in I$, which contradicts that $t \not \in I$. Hence we must have $t + c^p \not \in I$ for all $p \geq 1$, proving (1).

\medskip

(2) Now suppose $c \not \in I$. Toward contradiction, suppose for two positive integers $p < p'$, we have $t + c^p, t+ c^{p'} \in I$. This implies that 
$$
(t + c^p) - ( t+ c^{p'} )= c^{p} ( 1- c^{p'-p}) \in I.
$$
Since $I$ is a prime ideal such that $c \not \in I$, we have $1- c^a \in I_i$, where $a:= p'-p$. 

Since $A$ is complete with respect to $J$, the ring $A[[t]]$ is complete with respect to $(J, t)$. Since $c \in (J) \subset (J, t)$, we have $1+ c^a + c^{2a} + \cdots \in A[[t]]$. Thus multiplying it to the element $1-c^a$ of the ideal $I$, we deduce that
$$
(1 + c^a + c^{2a} + \cdots ) ( 1- c^a) = 1 \in I,
$$
which is a contradiction because $I \subset A[[t]]$ is a prime ideal, thus proper. This proves (2).
\end{proof}

\begin{lem}\label{lem:translation PP}
Let $A$ and $J$ be as the above. Suppose $|J|= \infty$. Consider the ring $A[[t]]$ for a variable $t$. Let $(J)\subset A[[t]]$ be the ideal generated by $J$ in $A[[t]]$. This is proper.

Let $I_1, \cdots, I_N \subset A[[t]]$ be prime ideals such that $(J, t) \not \subset I_i$ for all $1 \leq i \leq N$. Then there exists some $c \in (J)$ such that $t+ c \not \in I_i$ for all $1 \leq i \leq N$.
\end{lem}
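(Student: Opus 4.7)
The plan is to split the prime ideals $I_1, \dots, I_N$ according to whether they contain $t$ or not, and combine the two preceding lemmas. After relabeling indices, suppose $t \in I_i$ for $1 \leq i \leq u$ and $t \notin I_i$ for $u+1 \leq i \leq N$, where $u \geq 0$. The indices in the first block will be controlled by Lemma \ref{lem:bad t prime avoid}, while those in the second block will be controlled by Lemma \ref{lem:pigeon hole}. The trick is that both lemmas cooperate along the sequence of powers $c, c^2, c^3, \dots$ of a single well-chosen element $c \in (J)$.

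First I would produce a nonzero element $c \in (J)$ that avoids $I_1 \cup \cdots \cup I_u$. If $u \geq 1$, this is exactly what Lemma \ref{lem:bad t prime avoid}-(1),(2) provides; it even yields an infinite supply of such elements. If $u = 0$, there is nothing to avoid, and since $J \subset (J)$ and $|J| = \infty$ by hypothesis, any nonzero element of $(J)$ works. Note that $c$ is a nonzero non-unit in the integral domain $A[[t]]$, so the powers $c, c^2, c^3, \dots$ are all distinct (this is the argument already used in Lemma \ref{lem:bad t prime avoid}-(2)).

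Next I would verify that every power $c^p$, $p \geq 1$, still lies in $(J) \setminus \bigcup_{i=1}^{u} I_i$: membership in $(J)$ is clear, and since each $I_i$ ($1 \leq i \leq u$) is prime with $c \notin I_i$, also $c^p \notin I_i$. Applying Lemma \ref{lem:bad t prime avoid}-(3) with $c$ replaced by $c^p$ then gives $t + c^p \notin I_i$ for every $1 \leq i \leq u$ and every $p \geq 1$. So the entire first block is handled uniformly by any power of $c$.

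Finally I would treat the second block using Lemma \ref{lem:pigeon hole}: for each $i \in \{u+1, \dots, N\}$ we have $t \notin I_i$, and the lemma says that at most one integer $p \geq 1$ can satisfy $t + c^p \in I_i$. Hence the set of "bad" exponents, namely $\{\, p \geq 1 : t + c^p \in I_i \text{ for some } u+1 \leq i \leq N\,\}$, has cardinality at most $N - u$, in particular is finite. Since $\{c^p\}_{p \geq 1}$ is infinite, I can choose any $p$ outside this finite bad set; then $c' := c^p \in (J)$ satisfies $t + c' \notin I_i$ for all $1 \leq i \leq N$, as desired. The only mildly delicate point is making sure the element $c$ chosen at the start is genuinely nonzero so that the powers are distinct, which is why we explicitly invoked the infinitude of $(J)$; everything else is a bookkeeping assembly of the two preceding lemmas.
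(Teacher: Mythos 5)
Your argument is correct and essentially mirrors the paper's: fix a nonzero $c \in (J)$ avoiding the primes $I_i$ that contain $t$ (Lemma \ref{lem:bad t prime avoid}), then use Lemma \ref{lem:pigeon hole} to discard the finitely many ``bad'' exponents $p$ and take $t+c^p$ with $p$ outside that set. The only difference is cosmetic: the paper separates into three cases according to whether all, none, or some of the $I_i$ contain $t$, while you absorb the two extreme cases into the general one.
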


\begin{proof}
There are a few cases to consider. The easiest case is:

\textbf{Case 1:} Suppose $t \not \in I_i$ for all $1 \leq i \leq N$.

In this case, we may take $c=0$ to deduce the desired conclusion that $t+ c = t \not \in I_i$ for all $1 \leq i \leq N$.

\medskip

Here is the opposite case:

\textbf{Case 2:} Suppose $t \in I_i$ for all $1 \leq i \leq N$.

In this case, applying Lemma \ref{lem:bad t prime avoid}, we can find some $c \in (J) \setminus ( ( J) \cap ( \bigcup_{i=1} ^N I_i))$ such that $t + c \not \in I_i$ for all $1 \leq i \leq N$. This answers the lemma in this case.

\medskip

Here is the ``mixed" case:

\textbf{Case 3:} Suppose that $t \in I_i$ for some indices $i$, while we have $t \not \in I_{i'}$ for some other indices $i'$. 

After relabeling them, if necessary, we may assume that there is some positive integer $1 \leq s < N$ such that we have
$$
t\not \in I_i, \ \ \mbox{ for } 1 \leq i \leq s, \ \ \ \mbox{ and } t \in I_i, \ \ \mbox{ for } s+1 \leq i \leq N.
$$

We apply Lemma \ref{lem:bad t prime avoid} to the ideals $I_{s+1}, \cdots, I_N$. Let $\tilde{J}:= (J) \setminus ((J) \cap \bigcup_{i={s+1}} ^N I_i)$.

Pick $c_0 \in \tilde{J}$. By Lemma \ref{lem:bad t prime avoid}-(3), we have $ t + c_0 ^p \not \in I_i$ for $s+1 \leq i \leq N$ and $p \geq 1$. 

On the other hand, by Lemma \ref{lem:pigeon hole} there exists a finite subset $B \subset \mathbb{N}$ of size $|B|\leq s$ such that for all $p \in \mathbb{N} \setminus B$, we have $t + c_0 ^p \not \in I_i$ for all $1 \leq i \leq s$.

Thus for all $p \in \mathbb{N} \setminus B$ and for all $1 \leq i \leq N$, we have proven that $t + c_0 ^p \not \in I_i$.

This completes the proof of the lemma.
\end{proof}

\begin{lem}\label{lem:translation general}
Suppose $|J|=\infty$.

Suppose we are given finitely many integral cycles $\mathfrak{Z}_i \in z^{q_i} (\widehat{X}_2, n_i)$ for some integers $q_i, n_i \geq 0$ over the indices $1 \leq i \leq N$. 

Then there exists $\un{c} = (c_1, \cdots, c_r) \in \mathbb{V}_r$ such that $\psi_{\un{c}} ^* (\mathfrak{Z}_i) \in z^{q_i} _{\{ \widehat{X}_1 \}} (\widehat{X}_2, n_i)$ over all $1 \leq i \leq N$.
\end{lem}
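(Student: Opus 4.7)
The plan is to prove the lemma by induction on $r$, the codimension of the closed immersion $\widehat X_1 \hookrightarrow \widehat X_2$. The base case $r=1$ is essentially Lemma \ref{lem:translation PP} itself, applied to the finitely many prime ideals attached to the generic points of the components of $\mathfrak Z_i \cap (\widehat X_2 \times F)$, as $i$ and faces $F \subset \square^{n_i}$ vary, after the mild extension of that lemma described below.

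For the inductive step, suppose the lemma has been proved for codimension $r-1$. First I would deal with the single variable $t_r$. Enumerate the finitely many prime ideals $P_1,\dots,P_N$ corresponding to the generic points of the irreducible components of $\mathfrak Z_i \cap (\widehat X_2 \times F)$ as $i$ and $F$ vary. Condition (GP) in the admissibility of $\mathfrak Z_i$ pins the codimension of each such component $W$ at $q_i + \codim_{\square^{n_i}} F$ in $\widehat X_2 \times \square^{n_i}$, so $\dim W = \dim X_2 + \dim F - q_i$. If one had $J_r = (J_{r-1},t_r) \subset P_W$ for some $W$, then by integrality $W$ would be contained in $\widehat X_{2,\red}\times F$; but $\dim(\widehat X_{2,\red}\times F) = \dim Y + \dim F$ and $\dim X_2 > \dim Y$ for $r \geq 1$, so the codimension of $W$ in $\widehat X_{2,\red}\times F$ would be $q_i - (\dim X_2 - \dim Y) < q_i$, violating (SF). Hence $(J_{r-1},t_r) \not\subset P_j$ for every $j$. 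Applying the one-variable Lemma \ref{lem:translation PP} inside $A_{r-1}[[t_r]]$---extended in the obvious way to the local rings over affine charts of $\square^{n_i}$ where the $P_j$ actually live; the arguments of Lemmas \ref{lem:bad t prime avoid} and \ref{lem:pigeon hole} depend only on prime-avoidance and on power-series arithmetic in $t_r$ over an integral domain, and pass through tensoring with an affine $k$-algebra after reducing modulo a minimal prime if needed---produces $c_r \in (J_{r-1})_{A_r}$ with $t_r + c_r \notin P_j$ for every $j$.

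By Krull's Hauptidealsatz applied to the principal divisor $\{t_r = 0\}$ inside $\widehat X_2 \times F$, this $c_r$ makes $\psi^*_{(0,\dots,0,c_r)}(\mathfrak Z_i)$ intersect $\widehat X_{1,r-1}\times F$ properly for every $i$ and $F$. Let $\mathfrak Z'_i \in z^{q_i}(\widehat X_{1,r-1},n_i)$ denote the resulting intersection cycle; admissibility (GP) for $\mathfrak Z'_i$ is exactly what we just arranged, while (SF) for $\mathfrak Z'_i$ follows from (SF) for $\psi^*_{(0,\dots,0,c_r)}(\mathfrak Z_i)$ (available via Lemma \ref{lem:arbitrary translation}) combined with the scheme-theoretic identification $\widehat X_{1,r-1,\red} = \widehat X_{2,\red}$, which comes from the radical identity $\sqrt{J_{r-1}\, A_r + (t_r)} = \sqrt{J_r}$. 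Now apply the inductive hypothesis to the codimension-$(r-1)$ closed immersion $\widehat X_1 \hookrightarrow \widehat X_{1,r-1}$ with the cycles $\mathfrak Z'_i$ (the hypothesis $|J|=\infty$ is preserved): one obtains $(c_1,\dots,c_{r-1}) \in \mathbb V_{r-1}$ such that further translating by $(c_1,\dots,c_{r-1},0)$ brings each $\mathfrak Z'_i$ into $z^{q_i}_{\{\widehat X_1\}}(\widehat X_{1,r-1},n_i)$. Since translations in distinct variables commute, the tuple $\un{c} := (c_1,\dots,c_{r-1},c_r) \in \mathbb V_r$ then satisfies the conclusion.

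The main obstacle is the first step, where one must verify both that admissibility genuinely delivers $(J_{r-1},t_r) \not\subset P_j$ (the codimension comparison above, which crucially uses $r\geq 1$), and that the one-variable Lemma \ref{lem:translation PP} truly adapts to the cubical coordinate rings in which the $P_j$ sit. Both points are settled by careful bookkeeping with (GP), (SF), and the radical identity recorded above on the one hand, and by repeating the arguments of Lemmas \ref{lem:bad t prime avoid}--\ref{lem:pigeon hole} after passing to an integral-domain quotient on the other; no conceptually new ingredient beyond those already deployed in Section \ref{sec:moving ci} is required.
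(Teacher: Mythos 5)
Your proposal follows essentially the same inductive strategy as the paper's proof: induct on $r$, handle the base case $r=1$ via the one-variable Lemma \ref{lem:translation PP}, and in the inductive step first translate in $t_r$ to achieve proper intersection with the codimension-one hypersurface $\widehat X_{1,r-1}$, form the intersection cycles $\mathfrak Z'_i$, and invoke the induction hypothesis on $\widehat X_1\hookrightarrow\widehat X_{1,r-1}$. The explicit dimension count you supply to verify $(J_{r-1},t_r)\not\subset P_j$ from (GP) and (SF) is a useful elaboration of a step the paper passes over with a one-line citation of (SF). The one place where your implementation is looser than the paper's is the application of Lemma \ref{lem:translation PP}: you propose to re-run Lemmas \ref{lem:bad t prime avoid}--\ref{lem:pigeon hole} inside the rings $A_{r-1}[[t_r]]\widehat\otimes_k\Gamma(F)$ where the $P_j$ actually live, which forces you to worry about integrality (hence the suggestion to reduce modulo a minimal prime) and about completeness of those rings; moreover, passing to an integral quotient is not obviously compatible with avoiding several primes $P_j$ simultaneously. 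The paper's Step 1 sidesteps all of this by \emph{contracting} each $P_j$ to $P_j^0 := A_{r-1}[[t_r]]\cap P_j$ and applying Lemma \ref{lem:translation PP} verbatim inside $A_{r-1}[[t_r]]$, which is an integral domain complete along $(J_{r-1},t_r)$. The key observations making the contraction sufficient are that for $c\in A_{r-1}[[t_r]]$ one has $t_r+c\in P_j \iff t_r+c\in P_j^0$, and that $(J_{r-1},t_r)\not\subset P_j$ implies $(J_{r-1},t_r)\not\subset P_j^0$. Adopting this contraction step in place of the ``obvious extension'' would make your argument both complete and cleaner, with no new lemmas required.
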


\begin{proof}
By Lemma \ref{lem:arbitrary translation}, for any $\un{c} \in \mathbb{V}_r$, we already have $\psi_{\un{c}} ^* (\mathfrak{Z}_i) \in z^{q_i} (\widehat{X}_2, n_i)$. It remains to show that for a suitable $\un{c} \in \mathbb{V}_r$, each cycle $\psi_{\un{c}}^* (\mathfrak{Z}_i)$ intersects $\widehat{X}_1 \times F$ properly for all faces $F \subset \square_k ^n$.

\medskip

We prove it by induction on the codimension $r$ of $\widehat{X}_1$ in $\widehat{X}_2$.

\medskip

\textbf{Step 1:} Consider the case when $r=1$. Note that $\widehat{X}_2 \times \square^{n_i} = \Spf (A [[ t_1]] \{ y_1, \cdots, y_{n_i} \}).$

Let $I_i \subset A[[t_1]]\{ y_1, \cdots, y_{n_i} \}$ be the prime ideal of the integral cycle $\mathfrak{Z}_i$ over the indices $1 \leq i \leq N$. 

Let $F \subset \square ^{n_i}$ be a face. For the intersection $\mathfrak{Z}_i \cap (\widehat{X}_2 \times F)$, its integral components are given by a finite collection of prime ideals $I_{i, j}^F \subset A[[t_1]] \widehat{\otimes}_k \Gamma (F)$, where $\Gamma (F)$ is the coordinate ring of $F$. They all satisfy $(J, t_1) _{A[[t_1]]} \not \subset I_{i,j}^F$
by the condition (\textbf{SF}) in definition \ref{defn:HCG}.

\medskip

To obtain proper intersections with $\widehat{X}_1 \times F$ after a translation, we want some $c \in (J)$ such that $t _1+ c \not \in I_{i,j} ^F$ for all $i, j$ and $F$ at the same time. This can be achieved if the contracted prime ideals
$$
I_{i,j} ^{F,0}:= A[[t_1]] \cap I_{i,j}^F \ \ \mbox{ in }  A[[t_1]]
$$
over all $i, j , F$ have the property that $t_1 + c \not \in I_{i,j} ^{F, 0}$. Here we note that we still have $(J, t_1) _{A[[t_1]]} \not \subset I_{i,j} ^{F, 0}$
by the condition (\textbf{SF}) in definition \ref{defn:HCG}. 

Since $\{ I_{i,j} ^{F, 0} \}_{i, j, F}$ is a finite collection of prime ideals in $A[[t_1]]$, where none of them contains $(J, t_1)_{A[[t_1]]}$, we deduce from Lemma \ref{lem:translation PP} that there does exist some $c \in (J)$ such that $t_1 + c \not \in I_{i,j} ^{F, 0}$ for all $i, j, F$. 

Thus for such $c \in (J)$, the translations $\psi_{\un{c}} ^* (\mathfrak{Z}_i)$ intersect properly with $\widehat{X}_1 \times F$ for all $1 \leq i \leq N$ and all faces $F \subset \square_k ^n$. This proves the lemma for $r=1$.

\medskip

\textbf{Step 2:} Now suppose $r \geq 2$. Suppose that the lemma holds when the codimension of $\widehat{X}_1$ in $\widehat{X}_2$ is $\leq r-1$. The ring $A_{r-1}:= A[[t_1, \cdots, t_{r-1}]]$ is complete with respect to $J_{r-1}$, and $\widehat{X}_{1, r-1}= \Spf (A_{r-1})$ is a closed formal subscheme of codimension $1$ in $\widehat{X}_2$, given by the ideal generated by $t_r$.

For the given cycles $\mathfrak{Z}_i$ for $1 \leq i \leq N$, by Step $1$ applied to $\widehat{X}_{1, r-1} \subset \widehat{X}_2$, there exists some $c_r \in (J_{r-1})$ such that for all faces $F \subset \square^{n_i}$, 
$$\psi_{t_r \mapsto t_r + c_r} ^* (\mathfrak{Z}_i) \ \ \mbox{ and } \ \ \widehat{X}_{1, r-1} \times F$$
intersect properly. 

\medskip

For $1 \leq i \leq N$, let $\mathfrak{Z}'_i$ be the cycle associated to the intersection 
$$\psi_{t_r \mapsto t_r + c_r} ^* (\mathfrak{Z}_i) \cap ( \widehat{X}_{1, r-1} \times \square ^{n_i}).
$$ Since $\widehat{X}_1 \subset \widehat{X}_{1, r-1}$ is of codimension $r-1$, given by the ideal generated by $t_1, \cdots, t_{r-1}$, by the induction hypothesis we can find some $\un{c}'  = (c_1, \cdots, c_{r-1}) \in \mathbb{V}_{r-1}$ such that for all faces $F \subset \square^{n_i}$
$$
\psi_{\un{c}'} ^* (\mathfrak{Z}_i') \ \ \mbox{ and } \ \ \widehat{X}_1 \times F
$$
intersect properly. 

\medskip 

Combining the above choices of $\un{c}' \in \mathbb{V}_{r-1}$ and $c_r \in (J_{r-1})$, let $\un{c}:= (\un{c}', c_r) \in \mathbb{V}_r$. By construction, $\psi_{\un{c}} ^* (\mathfrak{Z}_i)$ and $\widehat{X}_1 \times F$ intersect properly for all faces $F \subset \square^{n_i}$ and all $1 \leq i \leq N$.

This completes the proof of the lemma.
\end{proof}

\begin{lem}\label{lem:translation}
Suppose $|J|= \infty$.

If $\mathcal{F}$ is a coherent $\mathcal{O}_{\widehat{X}_2 \times \square^n}$-module such that $[\mathcal{F}]$ belongs to the group $z^{\geq q} (\widehat{X}_2, n)$, then there exists $\un{c} = (c_1, \cdots, c_r) \in \mathbb{V}_r$ such that for the translated coherent sheaf $\psi_{\un{c}} ^* (\mathcal{F})$, the associated cycle $[\psi_{\un{c}} ^* (\mathcal{F})] \in z^{\geq q} _{\{ \widehat{X}_1 \}} (\widehat{X}_2,n )$.
\end{lem}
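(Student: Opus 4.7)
The plan is to reduce Lemma \ref{lem:translation} directly to Lemma \ref{lem:translation general} by decomposing the associated cycle of the coherent sheaf $\mathcal{F}$ into its integral components. Because $\psi_{\un{c}}$ is an automorphism, in particular a flat morphism, the operation of ``taking the associated cycle'' commutes with pullback by $\psi_{\un{c}}$, so moving the sheaf reduces to moving finitely many integral cycles simultaneously.

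More precisely, first I would write
\begin{equation*}
[\mathcal{F}] \;=\; \sum_{i=1}^N m_i \, \mathfrak{Z}_i \;\in\; z^{\geq q}(\widehat{X}_2, n),
\end{equation*}
where each $\mathfrak{Z}_i$ is an integral closed formal subscheme of $\widehat{X}_2 \times \square^n$ of some codimension $q_i \geq q$, and $m_i \in \mathbb{Z}_{>0}$ is the length of the stalk of $\mathcal{F}$ at the generic point of $\mathfrak{Z}_i$. Since $[\mathcal{F}] \in z^{\geq q}(\widehat{X}_2, n)$, each $\mathfrak{Z}_i$ automatically satisfies the admissibility conditions \textbf{(GP)} and \textbf{(SF)} of Definition \ref{defn:HCG}, so $\mathfrak{Z}_i \in z^{q_i}(\widehat{X}_2, n)$ for each $i$.

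Next I would invoke Lemma \ref{lem:translation general}, applied to the finite family $\{\mathfrak{Z}_i\}_{i=1}^N$ (with all $n_i = n$), to produce a single $\un{c} = (c_1,\dots,c_r) \in \mathbb{V}_r$ such that
\begin{equation*}
\psi_{\un{c}}^*(\mathfrak{Z}_i) \;\in\; z^{q_i}_{\{\widehat{X}_1\}}(\widehat{X}_2, n) \qquad \text{for all } 1 \le i \le N.
\end{equation*}
Finally, since $\psi_{\un{c}}$ is an automorphism of $\widehat{X}_2 \times \square^n$, pullback by $\psi_{\un{c}}$ is exact on coherent sheaves and preserves lengths at generic points, hence
\begin{equation*}
[\psi_{\un{c}}^* \mathcal{F}] \;=\; \sum_{i=1}^N m_i \, \psi_{\un{c}}^*(\mathfrak{Z}_i),
\end{equation*}
which then lies in $z^{\geq q}_{\{\widehat{X}_1\}}(\widehat{X}_2, n)$, as desired.

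I do not expect a serious obstacle here: all the geometric content has already been packaged in Lemma \ref{lem:translation general}, and the remaining point is the compatibility $[\psi_{\un{c}}^*\mathcal{F}] = \psi_{\un{c}}^*[\mathcal{F}]$. The only place to be careful is the verification that each integral component $\mathfrak{Z}_i$ of $[\mathcal{F}]$ individually satisfies \textbf{(GP)} and \textbf{(SF)}; this follows from the fact that the proper intersection conditions in Definition \ref{defn:HCG} are inherited by each summand of a cycle in $z^{\geq q}(\widehat{X}_2, n)$, because codimensions of individual integral components cannot exceed that of their union.
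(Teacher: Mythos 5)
Your proposal is correct and follows exactly the same route as the paper's proof: decompose $[\mathcal{F}]$ into its integral components $\sum m_i\mathfrak{Z}_i$, invoke Lemma \ref{lem:translation general} to move all of them simultaneously with a single $\un{c}\in\mathbb{V}_r$, and then use that $\psi_{\un{c}}$ is an automorphism to identify $[\psi_{\un{c}}^*\mathcal{F}]$ with $\sum m_i\,\psi_{\un{c}}^*(\mathfrak{Z}_i)$. The only difference is that you spell out the small point that effectivity of $[\mathcal{F}]$ (all $m_i>0$) forces each $\mathfrak{Z}_i$ to be an admissible generator of $z^{q_i}(\widehat{X}_2,n)$, which the paper assumes implicitly.
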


\begin{proof}
Write $[\mathcal{F}] = \sum_{i=1} ^N m_i \mathfrak{Z}_i$ for some integers $m_i >0$ and finitely many distinct integral cycles $\mathfrak{Z}_i \in z^{q_i} (\widehat{X}_2, n)$, for some integers $q_i \geq q$ and $n_1= \cdots= n_N = n$.

We apply Lemma \ref{lem:translation general} to those $\mathfrak{Z}_i$. Indeed by this lemma, we have some $\un{c} \in \mathbb{V}_r$ such that $\psi_{\un{c}} ^* (\mathfrak{Z}_i) \in z^{q_i} _{\{\widehat{X}_1 \} }(\widehat{X}_2, n)$ for all $1 \leq i \leq N$.

On the other hand, since $\psi_{\un{c}}$ is an isomorphism given by a translation, we have $[ \psi_{\un{c}} ^* (\mathcal{F})] = \sum_{i=1} ^N m_i \psi_{\un{c}} ^* (\mathfrak{Z}_i)$, which is in $z^{\geq q} _{\{ \widehat{X}_1 \}} (\widehat{X}_2,n )$ as desired.
\end{proof}

\begin{cor}\label{cor:D ess local}
Let $Y$ be a connected affine $k$-scheme of finite type, with closed immersions $Y \hookrightarrow X_1 \hookrightarrow X_2$ into equidimensional smooth $k$-schemes $X_1, X_2$ such that the immersion $X_1 \hookrightarrow X_2$ is a complete intersection.

Then Proposition \ref{prop:D ess} holds in this case, i.e. the inclusion functor
\begin{equation}\label{eqn:D ess local 0}
\mathcal{D}_{\coh, \{ \widehat{X}_1\}} ^q (\widehat{X}_2, n) \hookrightarrow \mathcal{D}_{\coh}^q (\widehat{X}_2, n)
\end{equation}
is essentially surjective.
\end{cor}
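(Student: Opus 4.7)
The plan is to prove essential surjectivity by reducing to the translation moving Lemma \ref{lem:translation} via a short case analysis on the codimension $r$ and on whether $|J|$ is finite. Retain the setup $\widehat{X}_1 = \Spf(A)$, $\widehat{X}_2 = \Spf(A[[t_1,\ldots,t_r]])$, and $J \subset A$ the defining ideal with $Y = \Spec(A/J)$. First I reduce: since $\mathcal{D}^q_{\coh}(\widehat{X}_2, n)$ is generated as a triangulated subcategory of $\mathcal{D}_{\coh}(\widehat{X}_2 \times \square^n)$ by coherent sheaves $\mathcal{F}$ admissible of codimension $\geq q$, it suffices to produce, for every such generator, an object of $\mathcal{D}^q_{\coh, \{\widehat{X}_1\}}(\widehat{X}_2, n)$ lying in the same isomorphism class in the ambient triangulated category.

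Next I dispose of the trivial cases. If $r = 0$ the two subcategories are literally equal. If $|J|$ is finite, Lemma \ref{lem:J finite} forces $J = 0$ and $\widehat{X}_1 = Y$ smooth; the largest ideal of definition of $A[[\un{t}]]$ is then $(\un{t})$ (since the quotient $A[[\un{t}]]/(\un{t}) = A$ is already reduced), so $(\widehat{X}_2)_{\red} = \widehat{X}_1$. Under these conditions, condition (\textbf{SF}) of Definition \ref{defn:HCG} is exactly the proper-intersection requirement against $\widehat{X}_1 \times F$, and the two subcategories coincide on the nose.

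For the main case $|J| = \infty$ and $r \geq 1$, I apply Lemma \ref{lem:translation} to the generator $\mathcal{F}$ to obtain $\un{c} \in \mathbb{V}_r$ with $[\psi_{\un{c}}^* \mathcal{F}] \in z^{\geq q}_{\{\widehat{X}_1\}}(\widehat{X}_2, n)$, so that $\psi_{\un{c}}^* \mathcal{F} \in \mathcal{D}^q_{\coh, \{\widehat{X}_1\}}(\widehat{X}_2, n)$. Because each coordinate $c_i \in (J_{i-1})$ lies in every open prime of $A[[\un{t}]]$, the automorphism $\psi_{\un{c}}$ acts as the identity on the underlying topological space $|\widehat{X}_2 \times \square^n|$ and preserves all codimension and admissibility conditions; consequently $\psi_{\un{c}}^*$ restricts to an autoequivalence of $\mathcal{D}^q_{\coh}(\widehat{X}_2, n)$, and also of the subcategory $\mathcal{D}^q_{\coh, \{\widehat{X}_1\}}(\widehat{X}_2, n)$. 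The translated sheaf $\psi_{\un{c}}^* \mathcal{F}$ then supplies the desired representative of the isomorphism class of $\mathcal{F}$ inside the smaller subcategory.

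The main obstacle I expect is the last identification, in which one must be precise about the sense in which $\mathcal{F}$ and $\psi_{\un{c}}^* \mathcal{F}$ represent the same isomorphism class in $\mathcal{D}^q_{\coh}(\widehat{X}_2, n)$: although $\psi_{\un{c}}$ is topologically trivial, the pullback twists the $\mathcal{O}_{\widehat{X}_2 \times \square^n}$-module structure. Handling this requires combining the compatibility of $\psi_{\un{c}}^*$ with the inclusion $\mathcal{D}^q_{\coh, \{\widehat{X}_1\}} \hookrightarrow \mathcal{D}^q_{\coh}$, the fact that $\psi_{-\un{c}}^*$ furnishes an inverse autoequivalence, and the identification of generators coming from coherent sheaves, in order to conclude that the essential image of the inclusion exhausts $\mathcal{D}^q_{\coh}(\widehat{X}_2, n)$.
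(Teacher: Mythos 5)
Your proposal follows the same route as the paper's own proof: the same affine setup $\widehat{X}_1=\Spf A$, $\widehat{X}_2=\Spf A[[\un{t}]]$, the same dichotomy on $|J|$ (with Lemma~\ref{lem:J finite} disposing of $|J|<\infty$ by forcing $J=0$, $\widehat{X}_1=Y=(\widehat{X}_2)_{\red}$, so that the two subcategories coincide), and the same application of the translation Lemma~\ref{lem:translation} when $|J|=\infty$. Your treatment of the finite case is a bit more explicit than the paper's (which simply remarks that the requirement $[\mathcal{F}]\in z^{\geq q}(\widehat{X}_2,n)$ already contains proper intersection with $Y\times F=\widehat{X}_1\times F$), but substantively the two are the same.

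The divergence, and a genuine gap, is in your final paragraph. You correctly identify the delicate point --- whether $\psi_{\un{c}}^*\mathcal{F}$ represents the same isomorphism class as $\mathcal{F}$ in $\mathcal{D}_{\coh}^q(\widehat{X}_2,n)$ --- but the ``combining'' you sketch does not resolve it. First, the claim that $\psi_{\un{c}}^*$ restricts to an autoequivalence of the \emph{smaller} subcategory $\mathcal{D}^q_{\coh,\{\widehat{X}_1\}}(\widehat{X}_2,n)$ is false: $\psi_{\un{c}}$ translates $\widehat{X}_1=V(\un{t})$ to a shifted copy and does not fix it, so proper intersection with $\widehat{X}_1$ is precisely what translation does \emph{not} preserve --- this is the very reason a moving lemma is needed. (By contrast, $\psi_{\un{c}}^*$ is indeed an autoequivalence of the larger $\mathcal{D}^q_{\coh}(\widehat{X}_2,n)$, by Lemma~\ref{lem:arbitrary translation}.) Second, even granting autoequivalences on both sides, that by itself does not yield essential surjectivity of the inclusion: one must still exhibit, for each object of the larger category, an isomorphic object lying in the smaller one, which is exactly the isomorphism $\psi_{\un{c}}^*\mathcal{F}\cong\mathcal{F}$ you are trying to avoid asserting directly. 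The paper's own Case~2 does not take the autoequivalence route at all: it simply asserts that, because $\psi_{\un{c}}$ is an automorphism of $\widehat{X}_2\times\square^n$, the pullback $\psi_{\un{c}}^*\mathcal{F}$ is isomorphic to $\mathcal{F}$ as an $\mathcal{O}_{\widehat{X}_2\times\square^n}$-module, and then declares essential surjectivity by ``replacing'' $\mathcal{F}$ by $\psi_{\un{c}}^*\mathcal{F}$. You should engage with that direct assertion (and decide whether you believe it, given that pullback along a non-identity automorphism twists the module structure and hence, in general, changes the associated cycle and schematic support), rather than substitute an autoequivalence argument that cannot supply the missing content.
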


\begin{proof}
Write $Y= \Spec (B)$, $\widehat{X}_1= \Spf (A)$, and $\widehat{X}_2= \Spf (A[[\un{t}]])$, where $B= A/J$ for an ideal $J \subset A$ as  above. If $X_1= X_2$, then there is nothing to prove. So, we suppose $X_1 \subsetneq X_2$ and $r:= {\rm codim}_{X_2} X_1 \geq 1$.

Let $\mathcal{F}$ be a coherent $\mathcal{O}_{\widehat{X}_2\times \square^n}$-module of codimension $\geq q$, such that its associated cycle $[\mathcal{F}]$ belongs to $z^{\geq q} (\widehat{X}_2, n)$.

\medskip

\textbf{Case 1:} Suppose $|J|< \infty$. By Lemma \ref{lem:J finite}, we have $J=0$ and $Y= \widehat{X}_1= X_1$. 

Our requirement is that $[\mathcal{F}] \in z^{\geq q} (\widehat{X}_2,n)$ so that each component already intersects properly with $Y \times F$, which is $\widehat{X}_1 \times F$, for all faces $F \subset \square^n$. Thus in this case the functor \eqref{eqn:D ess local 0} is the identity and there is nothing to show.

\medskip

\textbf{Case 2:} Now suppose $|J|= \infty$. By Lemma \ref{lem:translation}, there exists $\un{c} \in \mathbb{V}_r$ such that $\psi_{\un{c}} ^* \mathcal{F}$ is a coherent $\mathcal{O}_{\widehat{X}_2 \times \square^n}$-module that intersects $\widehat{X}_1 \times F$ properly for all faces $F \subset \square^n$. But, $\psi_{\un{c}}$ is an automorphism of $\widehat{X}_2 \times \square^n$ so that $\psi_{\un{c}} ^* \mathcal{F}$ is isomorphic to $\mathcal{F}$ as $\mathcal{O}_{\widehat{X}_2 \times \square^n}$-modules.

Thus after replacing $\mathcal{F}$ by $\psi_{\un{c}} ^* \mathcal{F}$, which is in $\mathcal{D}_{\coh, \{ \widehat{X}_1 \}} ^q (\widehat{X}_2,n)$, we see that the inclusion \eqref{eqn:D ess local 0} is essentially surjective. This proves the corollary.
\end{proof}

The above Corollary \ref{cor:D ess local} answers part of Proposition \ref{prop:D ess} when $Y$ is connected affine and $X_1 \hookrightarrow X_2$ is a complete intersection.

\subsection{A moving lemma: the general case}\label{sec:moving general}
The argument via translations given in \S \ref{sec:moving ci} does not extend to a general quasi-projective $Y$ and a general closed immersion $X_1 \hookrightarrow X_2$ between smooth $k$-schemes. This may not be a complete intersection, but it is always a l.c.i., so a version of local-to-global Zariski descent-type gluing may offer a way-out. 

In \S \ref{sec:moving general}, we finish the proof of Proposition \ref{prop:D ess} using such an argument.

\begin{proof}[Proof of Proposition \ref{prop:D ess}]
If $Y$ is not connected, then we can argue for each connected component separately. Hence we may assume $Y$ is connected.

Since the morphism $X_1 \hookrightarrow X_2$ is l.c.i., there is a finite affine open cover $\mathcal{V}=\{ V_1, \cdots, V_N \}$ of $X_2$ such that $X_1 \cap V_i \hookrightarrow X_2 \cap V_i$ is a complete intersection for each $1 \leq i \leq N$.

Let $U_i:= Y \cap V_i$. We regard $U_i$ as an open subset of $Y$ as well as an open subscheme of $Y$. The collection $\mathcal{U}:= \{ U_1, \cdots, U_N\}$ gives an affine open cover of $Y$. We may assume each $U_i$ is connected.

Recall that $\widehat{X}_{\ell}$ is a ringed space whose underlying topological space $|\widehat{X}_{\ell}|$ is exactly equal to that of $Y$. So, when $U \subset Y$ is an open subset, recall $\widehat{X}_{\ell}|_U$ means the open formal subscheme $(U, \mathcal{O}_{\widehat{X}_{\ell}}|_U)$. For $\ell=1,2$ and $1 \leq i \leq N$, note that $\widehat{X}_{\ell} |_{U_i} $ is equal to the completion of $X_{\ell} \cap V_i$ along $U_i$. 

By Corollary \ref{cor:D ess local} applied to each $1 \leq i \leq N$, we see that the inclusion functor
$$
\mathcal{D}_{\coh, \{ \widehat{X}_1 |_{U_i} \}} ^q (\widehat{X}_2 |_{U_i},n) \hookrightarrow \mathcal{D}_{\coh}^q (\widehat{X}_2 |_{U_i},n)
$$
is essentially surjective. To prove Proposition \ref{prop:D ess} by induction, it is enough to show that if the proposition holds for two (not necessarily affine) open subsets of $Y$ that cover $Y$, then the proposition holds for $Y$ as well.

\medskip

So, suppose for two open sets $V_1, V_2 \subset X_2$, with $U_i = Y \cap V_i$, the proposition holds for $(U_{\ell}, \widehat{X}_1|_{U_{\ell}}, \widehat{X}_2|_{U_{\ell}})$, $\ell=1,2$, and $Y= U_1 \cup U_2$ so that $\widehat{X}_2= \widehat{X}_2|_{U_1 \cup U_2}$. 

Let $\mathcal{F}$ be a coherent sheaf in $\mathcal{D}_{\coh} ^q (\widehat{X}_2,n)$. We prove that for some $\mathcal{F}' \in \mathcal{D}_{\coh, \{ \widehat{X}_1 \}} ^q (\widehat{X}_2,n)$, there is a quasi-isomorphism between $\mathcal{F}$ and $\mathcal{F}'$. 

\medskip

In what follows, for each open subset $U \subset Y$, as a shorthand let us write $\mathcal{F}|_U$ instead of the bulky notation $\mathcal{F}|_{ \widehat{X}_2|_U \times \square^n}$, for notational simplicity.

By the given assumptions, for $\ell=1,2$ there are quasi-isomorphisms
$$
\alpha_{\ell}: \mathcal{F}|_{U_{\ell}} \overset{\sim}{\to} \mathcal{M}_{\ell}
$$
for some pseudo-coherent complexes $\mathcal{M}_{\ell} \in \mathcal{D}_{\coh, \{ \widehat{X}_1 |_{U_{\ell}}\}} ^q (\widehat{X}_2 |_{U_{\ell}},n)$. By Lemma \ref{lem:coh=perfect}, each $\mathcal{M}_{\ell}$ is a perfect complex on $\widehat{X}_2|_{U_{\ell} \times \square^n}$.
Let $j_\ell$ (resp. $j_{12}$) denote the open immersion 
$\widehat{X}_2|_{U_{\ell}}  \times \square^n \hookrightarrow \widehat{X}_2  \times \square^n $ (resp.
$\widehat{X}_2 |_{U_1 \cap U_2}   \times \square^n \hookrightarrow \widehat{X}_2 \times \square^n $).
Since $\mathcal{F}$ is a $\mathcal{O}_{\widehat{X}_2 \times \square^n }$-module, we then have a short exact sequence of $\mathcal{O}_{\widehat{X}_2 \times \square^n }$-modules
$$
0 \to  j_{12!} \mathcal{F}|_{U_1 \cap U_2} \to   j_{1!} \mathcal{F}|_{U_1} 
\oplus j_{2!} \mathcal{F} |_{U_2} \to  \mathcal{F}  \to 0.
$$ 
We can map it to the corresponding distinguished triangle in the derived category of $\mathcal{O}_{\widehat{X}_2 \times \square^n }$-modules. 

For an open immersion $j$, since $(j_!, j^*)$ is an adjoint pair (see e.g. SGA ${\rm IV}_1$ \cite[Exp. IV, 11.3.3]{SGA4-1} and SGA ${\rm IV}_2$ \cite[Exp. V, 1.3.1]{SGA4-2}), we have a morphism
\begin{equation}\label{eqn:D ess triangles}
\xymatrix{
j_{12!} \mathcal{F}|_{U_1 \cap U_2} \ar[d] ^{  \alpha_1 |_{U_1 \cap U_2}}  \ar[r] &  j_{1!} \mathcal{F}|_{U_1} \oplus j_{2!} \mathcal{F} |_{U_2} \ar[d] ^{ (\alpha_1 , \alpha_2)}  \ar[r] &  \mathcal{F} \ar[r] \ar@{-->}[d] ^{\gamma} &( j_{12!} \mathcal{F}|_{U_1 \cap U_2}) [1] \ar[d] ^{\alpha_1 |_{U_1 \cap U_2} [1]} \\
 j_{12!} \mathcal{M}_1 |_{U_1 \cap U_2} 
 \ar[r] ^{\beta \ \ \ \ } 
 & j_{1!} \mathcal{M}_1 |_{U_1} \oplus j_{2!} \mathcal{M}_2 |_{U_2} \ar[r] & \mathcal{F}' \ar[r] &( j_{12!} \mathcal{M} |_{U_1 \cap U_2}) [1],}
\end{equation}
of distinguished triangles in the derived category of $\mathcal{O}_{\widehat{X}_2  \times \square^n }$-modules, with $\beta = (\iota,\alpha_2 \circ \alpha_1 ^{-1}|_{U_1 \cap U_2})$, where
$\iota$ the canonical map induced by the counit of the adjunction
$(j_!, j^*)$, and
$$
\mathcal{F}':= {\rm Cone} (\beta).
$$
Notice that the morphism $\gamma$ exists by the axiom \textbf{TR3} of triangulated categories (see A. Neeman \cite{Neeman}). Since $\mathcal{F}'$ restricted over $U_{\ell}$ is just $\mathcal{M}_{\ell} $ for $\ell=1,2$, we see that $\mathcal{F}'$ is a perfect complex. By construction we see that $\mathcal{F}' \in \mathcal{D}_{\coh, \{ \widehat{X}_1 \}} ^q (\widehat{X}_2,n)$. Since the first two vertical morphisms of \eqref{eqn:D ess triangles} are quasi-isomorphisms, so is the morphism $\gamma$. This shows that the inclusion functor \eqref{eqn:D ess local 0} is essentially surjective, finishing the proof of the proposition.
\end{proof}

\begin{remk}
Note that when $\mathcal{F}|_{U}$ is quasi-coherent, the sheaf $j_! \mathcal{F}_{|U}$ is not necessarily quasi-coherent in general. But this is not a problem here; we use implicitly the fact that the algebraic $K$-groups constructed from various kinds of derived categories are all equivalent. See Thomason-Trobaugh \cite[Lemmas 3.5-3.7, p.313]{TT}.

We also remark that the above construction of $\mathcal{F}' = {\rm Cone} (\beta)$ is somewhat similar to the argument around \cite[(3.20.4.2), p.326]{TT}, where a perfect complex is obtained by gluing two perfect complexes defined on open subsets.
\qed
\end{remk}

\begin{cor}\label{cor:Gysin cnv}
Let $Y$ be a quasi-projective $k$-scheme. Let $Y \hookrightarrow X_1 \overset{\iota}{\hookrightarrow} X_2$ be closed immersions, where $X_1, X_2$ are equidimensional smooth $k$-schemes. Let $\widehat{X}_i$ be the completion of $X_i$ along $Y$.

Then the zigzag of functors
$$
\mathcal{D}_{\coh} ^q (\widehat{X}_2,n) \overset{\mathfrak{i}}{\hookleftarrow} \mathcal{D}_{\coh, \{ \widehat{X}_1 \}} ^q (\widehat{X}_2,n ) \overset{\widehat{\iota}^\ast}{\to} \mathcal{D}_{\coh}^q (\widehat{X}_1,n),$$
induces morphisms of the induced Waldhausen $K$-spaces
\begin{equation}\label{eqn:Gysin cnv 0}
\mathcal{G}^q (\widehat{X}_2,n) \overset{\mathfrak{i}}{\leftarrow} \mathcal{G}^q _{\{ \widehat{X}_1 \}} (\widehat{X}_2,n)\overset{\widehat{\iota}^*}{\to} \mathcal{G}^q (\widehat{X}_1,n),
\end{equation}
where the first arrow $\mathfrak{i}$ is a weak-equivalence. In particular, we have the Gysin morphisms of spaces in the homotopy category
\begin{equation}\label{eqn:Gysin cnv 1}
\tuborg
\widehat{\iota}^*: \mathcal{G}^q (\widehat{X}_2,n) \to \mathcal{G}^q (\widehat{X}_1,n), \\
\widehat{\iota}^*: \mathcal{G}^q (\widehat{X}_2) \to \mathcal{G}^q (\widehat{X}_1),
\sluttuborg
\end{equation}
where $\mathcal{G}^q (-)$ are as in Definition \ref{defn:g geometric}.
\end{cor}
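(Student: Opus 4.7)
The plan is to leverage \propref{prop:D ess} to see that $\mathfrak{i}$ is an equivalence of triangulated categories, upgrade this to a weak-equivalence of Waldhausen $K$-spectra via \cite{TT, Waldhausen}, and then propagate everything through the cubical structure to obtain the Gysin morphism after geometric realization.

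First, I would observe that by construction in \defref{defn:cnv X1}, the category $\mathcal{D}_{\coh, \{\widehat{X}_1\}}^q(\widehat{X}_2, n)$ sits inside $\mathcal{D}_{\coh}^q(\widehat{X}_2, n)$ as a full triangulated subcategory, so the inclusion functor $\mathfrak{i}$ is automatically fully faithful and exact. Combined with \propref{prop:D ess}, which gives essential surjectivity, $\mathfrak{i}$ is therefore an equivalence of triangulated categories. Since both sides carry the induced Waldhausen structure (cofibrations = admissible monomorphisms, weak equivalences = quasi-isomorphisms) from $\mathcal{D}_{\coh}(\widehat{X}_2 \times \square^n)$, the $K$-theoretic invariance results of Thomason-Trobaugh \cite{TT} and Waldhausen \cite{Waldhausen} imply that the induced map $\mathfrak{i}: \mathcal{G}^q_{\{\widehat{X}_1\}}(\widehat{X}_2, n) \to \mathcal{G}^q(\widehat{X}_2, n)$ of $K$-spectra is a weak-equivalence, as claimed.

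Next, for the second arrow, $\widehat{\iota}^*$ is precisely the map \eqref{eqn:Gysin2} whose well-definedness is given by \eqref{eqn:Gysin}: the proper intersection condition built into $\mathcal{D}_{\coh, \{\widehat{X}_1\}}^q(\widehat{X}_2, n)$ ensures that $\mathbf{L}\widehat{\iota}^*$ acts as the naive pull-back on the generating coherent sheaves (the higher $\Tor$'s vanish for the same Koszul-resolution reason as in \remref{cubicalfaces}), and this pull-back preserves codimension $\geq q$ and admissibility. Inverting $\mathfrak{i}$ in the homotopy category and composing with $\widehat{\iota}^*$ then yields the first Gysin morphism in \eqref{eqn:Gysin cnv 1}.

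For the second Gysin morphism after geometric realization, I would verify that the cubical face and degeneracy functors of $\un{n} \mapsto \mathcal{G}^q(\widehat{X}_2, n)$ preserve the subcategory $\mathcal{D}_{\coh, \{\widehat{X}_1\}}^q(\widehat{X}_2, n)$. For faces $\iota_i^\epsilon$, this reduces to applying the argument of \remref{cubicalfaces} not only to the face $F \subset \square^n$ but also to the extra divisor $\widehat{X}_1 \times F$, which is legitimate because the admissibility and proper-intersection conditions in \defref{defn:cnv X1} are imposed levelwise for every face. For degeneracies the assertion is formal. Consequently $\mathfrak{i}$ and $\widehat{\iota}^*$ upgrade to morphisms of cubical spectra, the former being a levelwise weak-equivalence. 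Since geometric realization of cubical spectra (\remref{remk:geometric realize}) sends levelwise weak-equivalences to weak-equivalences, we obtain a weak-equivalence $\mathfrak{i}: \mathcal{G}^q_{\{\widehat{X}_1\}}(\widehat{X}_2) \to \mathcal{G}^q(\widehat{X}_2)$, which on inversion and composition with $\widehat{\iota}^*$ yields the second Gysin morphism in \eqref{eqn:Gysin cnv 1}.

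The main point requiring care, which I expect to be the most delicate, is the compatibility of the cubical face functors with both intersection conditions simultaneously --- namely that pulling back along a face map preserves both proper intersection with all sub-faces and proper intersection with $\widehat{X}_1 \times F$ --- so that $\mathfrak{i}$ and $\widehat{\iota}^*$ assemble into bona fide morphisms of cubical objects before geometric realization. Once this is checked, the rest of the assembly is formal.
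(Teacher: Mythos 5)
Your proposal is correct and follows essentially the same route as the paper's proof: invoke Proposition~\ref{prop:D ess} for essential surjectivity of $\mathfrak{i}$, use the Thomason--Trobaugh invariance theorems to upgrade this to a weak-equivalence of $K$-spectra, verify compatibility with the cubical faces and degeneracies via Remark~\ref{cubicalfaces} so that everything assembles into a morphism of cubical spectra, and then pass to geometric realizations. One minor caveat: your parenthetical claim that higher $\Tor$'s vanish for the pull-back along $\widehat{\iota}$ ``for the same Koszul-resolution reason'' is only immediate in the codimension-one case treated in Remark~\ref{cubicalfaces}; for higher codimension the paper instead defines $\widehat{\iota}^*$ as the derived pull-back $\mathbf{L}\widehat{\iota}^*$ and delegates the fact that it lands in $\mathcal{D}_{\coh}^q(\widehat{X}_1,n)$ to an external reference, so that extra assertion is not needed and should be dropped or argued more carefully.
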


\begin{proof}
By Proposition \ref{prop:D ess}, the inclusion functor $\mathfrak{i}: \mathcal{D}_{\coh, \{ \widehat{X}_1 \}} ^q (\widehat{X}_2,n) \hookrightarrow \mathcal{D}_{\coh} ^q (\widehat{X}_2,n)$ in \eqref{eqn:X1 subcat} is essentially surjective.
In addition, the functors $\mathfrak{i}$, $\widehat{\iota}^\ast$ commute
with the respective faces and degeneracies, see Remark \ref{cubicalfaces}.

Hence in the homotopy category, we obtain the morphisms \eqref{eqn:Gysin cnv 0} for each $n\geq 0$, and by \cite[Theorems 1.9.1, 1.9.8, p.263, 271]{TT},  $\mathfrak{i}$ 
 induces a weak-equivalence of their respective $K$-spaces, i.e.~the first arrow of \eqref{eqn:Gysin cnv 0} is a weak-equivalence. Thus we deduce the first morphism of \eqref{eqn:Gysin cnv 1}. 
 
 The morphisms in \eqref{eqn:Gysin cnv 0} over $n \geq 0$ form morphisms of cubical spaces, and taking their respective geometric realizations, we deduce the second morphism of \eqref{eqn:Gysin cnv 1}.
\end{proof}

\subsection{Some general pull-backs}\label{sec:moving pull-back}
In \S \ref{sec:moving pull-back}, we generalize Corollary \ref{cor:Gysin cnv} a bit. The extension we obtain is given as Theorem \ref{thm:moving pull-back} below.

Note that we have:

\begin{lem}\label{lem:prodcomp}
For $i=1,2$, let $Y_i$ be quasi-projective $k$-schemes. Choose any closed immersions $Y_i \hookrightarrow X_i$ into smooth $k$-schemes, and let $\widehat{X}_i$ be the completion of $X_i$ along $Y_i$ for $i=1,2$. We also consider $Y_1 \times_k Y_2$ as a closed subscheme of $X_1 \times_k X_2$, and let $\widehat{X_1 \times_k X_2}$ be the completion of $X_1 \times_k X_2$ along $Y_1 \times Y_2$. 

Then we have a natural isomorphism of formal schemes
\begin{equation}\label{eqn:prodcomp0}
 \widehat{X}_1 \times_k \widehat{X}_2 \simeq \widehat{X_1 \times_k X_2}.
 \end{equation}
\end{lem}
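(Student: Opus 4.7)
The plan is to reduce to the affine case and then identify both sides as formal spectra of the same complete topological ring. Since completion along a closed subscheme and the formation of fiber products in formal schemes are local operations on the ambient schemes (EGA I \cite[\S 10.4, \S 10.7]{EGA1}), I would first choose compatible affine open covers $X_i = \bigcup_{\alpha} \Spec A_{i,\alpha}$ with $Y_i \cap \Spec A_{i,\alpha} = V(I_{i,\alpha})$ for ideals $I_{i,\alpha} \subset A_{i,\alpha}$. Both sides of \eqref{eqn:prodcomp0} are obtained by gluing from their restrictions to these affine pieces, so it suffices to produce the isomorphism in the affine case and verify that it is natural in $((A_1,I_1),(A_2,I_2))$, so the local isomorphisms assemble into the desired global one.

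Reduced to the affine case, write $X_i = \Spec A_i$ and $Y_i = V(I_i)$, and let $\widehat{A}_i$ denote the $I_i$-adic completion. By EGA I \cite[Proposition (10.7.3)]{EGA1}, the fiber product in the category of formal schemes is
\begin{equation*}
\widehat{X}_1 \times_k \widehat{X}_2 \simeq \Spf(\widehat{A}_1 \widehat{\otimes}_k \widehat{A}_2),
\end{equation*}
where $\widehat{\otimes}_k$ denotes the completed tensor product. On the other hand, $Y_1 \times_k Y_2 \subset X_1 \times_k X_2$ is cut out by the ideal $J := I_1 \otimes_k A_2 + A_1 \otimes_k I_2$ of $A_1 \otimes_k A_2$, so $\widehat{X_1 \times_k X_2}$ is the formal spectrum of the $J$-adic completion of $A_1 \otimes_k A_2$. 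The core algebraic task is therefore to produce a natural isomorphism between this $J$-adic completion and $\widehat{A}_1 \widehat{\otimes}_k \widehat{A}_2$.

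To that end, I would compare two descending filtrations on $A_1 \otimes_k A_2$: the powers $J^n$, and the ideals $K_n := I_1^n \otimes_k A_2 + A_1 \otimes_k I_2^n$. The inclusion $K_n \subset J^n$ is immediate from the definition of $J$. Conversely, any generator of $J^{2n-1}$ is a product of $2n-1$ factors each lying in $I_1 \otimes A_2$ or in $A_1 \otimes I_2$, so by pigeonhole at least $n$ of them are of the same type, whence $J^{2n-1} \subset K_n$. Hence the two filtrations are cofinal, and passing to inverse limits yields
\begin{equation*}
\varprojlim_n (A_1 \otimes_k A_2)/J^n \simeq \varprojlim_n (A_1/I_1^n) \otimes_k (A_2/I_2^n) = \widehat{A}_1 \widehat{\otimes}_k \widehat{A}_2,
\end{equation*}
which provides the required identification; its naturality in the pairs $(A_i, I_i)$ is manifest from the construction.

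The argument has no serious obstacle; the only point meriting care is the pigeonhole estimate $J^{2n-1} \subset K_n$ establishing cofinality of the two filtrations, together with the routine bookkeeping to glue the affine isomorphisms across a cover.
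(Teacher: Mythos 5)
Your proof is correct and follows essentially the same route as the paper: reduce to the affine case, then show that the $J$-adic filtration and the filtration by $K_n = I_1^n \otimes_k A_2 + A_1 \otimes_k I_2^n$ are cofinal, so the completions agree; the paper's use of the binomial theorem for $N \geq m+n$ and your pigeonhole estimate $J^{2n-1} \subset K_n$ are the same observation up to indexing. No gaps.
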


\begin{proof}

The question is local, so we may assume $Y_i$ and $X_i$ are affine.

For $i=1,2$, write $Y_i = \Spec (B_i)$, $X_i = \Spec (A_i)$. We have natural surjections $A_i \to B_i$ and let $I_i$ be the kernels. Let $A_3:= A_1 \otimes_k A_2$, $B_3:= B_1 \otimes_k B_2$, so that $X_1 \times_k X_2 = \Spec (A_3)$ and $Y_1 \times_k Y_2 = \Spec (B_3)$. 

We have
\begin{equation}
B_3  = \frac{A_1}{I_1} \otimes_k \frac{A_2}{I_2} \simeq \frac{ A_1 \otimes_k A_2}{ I_1 \otimes_k A_2 + A_1 \otimes_k I_2},
\end{equation}
and let $I_3:= I_1 \otimes_k A_2 + A_1 \otimes_k I_2$. For $1 \leq i \leq 3$, we have $\widehat{X}_i = \Spf (\widehat{A}_i)$, where $\widehat{A}_i:= \varprojlim_n \frac{A_i}{ I_i ^n}.$

The product $\widehat{X}_1 \times \widehat{X}_2$ of the formal schemes is the formal spectrum of the completed tensor product $\widehat{A}_1 \widehat{\otimes}_k \widehat{A}_2$. This is given also by
\begin{equation}\label{eqn:prodcomp1}
\widehat{A}_1 \widehat{\otimes}_k \widehat{A}_2=\varprojlim_{m,n} \frac{A_1}{ I_1 ^m} \otimes_k \frac{ A_2}{I_2 ^n} \simeq \varprojlim_{m,n} \frac{ A_1 \otimes_k A_2}{ I_1 ^m \otimes_k A_2 + A_1 \otimes_k I_2 ^n}.
\end{equation}

For integers $m, n \geq 1$, if $N \geq m+n$, by the binomial theorem
\begin{equation}\label{eqn:prodcomp2-1}
I_3 ^N = (I_1 \otimes_k A_2 + A_1 \otimes_k I_2)^N \subset I_1 ^m \otimes_k A_2 + A_1 \otimes_k I_2 ^n .
\end{equation}
Conversely for an integer $N \geq 1$, we have
\begin{equation}\label{eqn:prodcomp2-2}
I_1 ^N \otimes_k A_2 + A_1 \otimes_k I_2 ^N \subset (I_1 \otimes_k A_2 + A_1 \otimes_k I_2)^N = I_3 ^N.
\end{equation}

By \eqref{eqn:prodcomp2-1} and \eqref{eqn:prodcomp2-2}, the two systems $\{ I_1 ^m \otimes_k A_2 + A_1 \otimes_k I_2 ^n \}_{m, n \geq 1}$ and $\{ I_3 ^N \}_{N \geq 1}$ define the same topology on $A_3$. Hence the ring $\widehat{A}_1 \widehat{\otimes}_k \widehat{A}_2$ of \eqref{eqn:prodcomp1} is equal to the ring $\varprojlim_N A_3/ I_3 ^N$
\end{proof}

The following generalization of Corollary \ref{cor:Gysin cnv} is used repeatedly for the rest of the article:

\begin{thm}\label{thm:moving pull-back}

Let $g: Y_1 \to Y_2$ be a morphism of quasi-projective $k$-schemes. Let $Y_i \hookrightarrow X_i$ be closed immersion into equidimensional smooth $k$-schemes for $i=1,2$. Let $\widehat{X}_i$ be the completion of $X_i$ along $Y_i$. Suppose there is a morphism $f: X_1 \to X_2$ such that the diagram
\begin{equation}\label{eqn:moving pull-back 0}
\xymatrix{ 
X_1 \ar[r]^f & X_2 \\
Y_1 \ar[r]^g \ar@{^{(}->}[u] & Y_2 \ar@{^{(}->}[u] 
}
\end{equation}
commutes. Then for each $q \geq 0$, $n\geq 0$, we have the induced morphisms in the homotopy category
\begin{equation}\label{eqn:f* final}
\tuborg
\widehat{f}^*: \mathcal{G}^q (\widehat{X}_2,n) \to \mathcal{G}^q (\widehat{X}_1,n), \\
\widehat{f}^*: \mathcal{G}^q (\widehat{X}_2) \to \mathcal{G}^q (\widehat{X}_1),
\sluttuborg
\end{equation}
where $\mathcal{G}^q (-)$ are as in Definition \ref{defn:g geometric}.

Furthermore they form the commutative diagrams
$$
\xymatrix{
\mathcal{G}^{q+1} (\widehat{X}_2,n) \ar[d] \ar[r] & \mathcal{G}^{q+1} (\widehat{X}_1,n) \ar[d] &&
\mathcal{G}^{q+1} (\widehat{X}_2) \ar[d] \ar[r] & \mathcal{G}^{q+1} (\widehat{X}_1) \ar[d]\\
\mathcal{G}^q (\widehat{X}_1,n) \ar[r] & \mathcal{G}^q (\widehat{X}_1,n), &&
\mathcal{G}^q (\widehat{X}_1) \ar[r] & \mathcal{G}^q (\widehat{X}_1)
}
$$
in the homotopy category, where the vertical arrows are as in
\eqref{eqn:higher n-tower}-\eqref{eqn:higher tower}.

\end{thm}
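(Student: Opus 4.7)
My plan is to factor $f$ through its graph,
\[
X_1 \xrightarrow{\Gamma_f} X_1 \times_k X_2 \xrightarrow{\pi_2} X_2,
\]
where $\Gamma_f(x) = (x, f(x))$ is a closed immersion of equidimensional smooth $k$-schemes (since both $X_1$ and $X_1 \times_k X_2$ are smooth and equidimensional) and $\pi_2$ is the smooth projection. Commutativity of \eqref{eqn:moving pull-back 0} forces $\Gamma_f \circ i_1$ to land in $Y_1 \times_k Y_2$, so in particular $Y_1$ embeds as a closed subscheme of $X_1 \times_k X_2$ via the graph of $g$. Let $\mathfrak{X}$ denote the completion of $X_1 \times_k X_2$ along this copy of $Y_1$; by Lemma \ref{lem:exoskeleton} it is a regular noetherian formal $k$-scheme, which is distinct from the completion $\widehat{X}_1 \times_k \widehat{X}_2$ of Lemma \ref{lem:prodcomp} (that one being along $Y_1 \times_k Y_2$). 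I will define $\widehat{f}^*$ as the composite $\widehat{\Gamma_f}^* \circ \widehat{\pi_2}^*$, where $\widehat{\Gamma_f}: \widehat{X}_1 \hookrightarrow \mathfrak{X}$ is the completion of the graph and $\widehat{\pi_2}: \mathfrak{X} \to \widehat{X}_2$ is the induced morphism from the projection.

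For the Gysin factor, I would apply Corollary \ref{cor:Gysin cnv} directly to the tower $Y_1 \hookrightarrow X_1 \xrightarrow{\Gamma_f} X_1 \times_k X_2$ of closed immersions, whose ambient schemes are equidimensional and smooth over $k$; this at once supplies in the homotopy category
\[
\widehat{\Gamma_f}^*: \mathcal{G}^q(\mathfrak{X}, n) \to \mathcal{G}^q(\widehat{X}_1, n), \qquad \widehat{\Gamma_f}^*: \mathcal{G}^q(\mathfrak{X}) \to \mathcal{G}^q(\widehat{X}_1),
\]
compatibly with the coniveau tower and the cubical structure. For the flat pull-back, $\pi_2 \circ \Gamma_f \circ i_1 = i_2 \circ g$ carries the embedded $Y_1$ into $Y_2$, inducing the morphism $\widehat{\pi_2}: \mathfrak{X} \to \widehat{X}_2$, which is flat because $\pi_2$ is smooth and flatness is preserved under completion along compatibly chosen closed subschemes. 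Standard properties of flat pull-back handle the preservation of perfect complexes, codimension of associated cycles, and proper intersection with the cubical faces $\mathfrak{X} \times F$; combined with the weak equivalence $\mathcal{G}^q_{\{\widehat{X}_1\}}(\mathfrak{X}, n) \overset{\sim}{\to} \mathcal{G}^q(\mathfrak{X}, n)$ from Proposition \ref{prop:D ess} applied to $\widehat{X}_1 \hookrightarrow \mathfrak{X}$, this yields in the homotopy category $\widehat{\pi_2}^*: \mathcal{G}^q(\widehat{X}_2, n) \to \mathcal{G}^q(\mathfrak{X}, n)$. Setting $\widehat{f}^* := \widehat{\Gamma_f}^* \circ \widehat{\pi_2}^*$, both factors commute with the inclusions $\mathcal{D}_{\coh}^{q+1} \hookrightarrow \mathcal{D}_{\coh}^q$ and with the cubical faces and degeneracies (Remark \ref{cubicalfaces}), so $\widehat{f}^*$ produces the required commutative diagrams with the coniveau tower, and geometric realization yields the second pair of morphisms.

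The main technical obstacle is preserving the special fiber condition (SF) of Definition \ref{defn:HCG} under $\widehat{\pi_2}^*$: because the induced morphism on reduced subschemes is $g: Y_1 \to Y_2$, which need not be flat, the naive flat pull-back of an admissible cycle can fail (SF) on $\mathfrak{X} \times \square^n$, since the intersection with $Y_1 \times F$ may drop below the required codimension. The remedy, and the precise role of Proposition \ref{prop:D ess} above, is to realize the composite $\widehat{\Gamma_f}^* \circ \widehat{\pi_2}^*$ as a zigzag in the homotopy category passing through the moved subcategory $\mathcal{D}_{\coh,\{\widehat{X}_1\}}^q(\mathfrak{X}, n)$: the built-in proper intersection with $\widehat{X}_1$ there feeds into the Gysin pull-back of Step 1 and forces the composite to land cleanly in $\mathcal{G}^q(\widehat{X}_1, n)$. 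This use of Proposition \ref{prop:D ess} as the corrective weak equivalence is what makes $\widehat{f}^*$ well defined in the homotopy category despite the possible non-flatness of $g$, and it is also the step that I expect to require the most careful bookkeeping.
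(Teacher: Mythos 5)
Your proposal recovers the paper's strategy almost exactly: factor $\widehat{f}$ through the completed graph, split the pull-back into a flat part from the projection followed by a Gysin part from the closed immersion $\Gamma_f$, and invoke Corollary~\ref{cor:Gysin cnv} together with Proposition~\ref{prop:D ess} for the Gysin factor. Your $\mathfrak{X}$ is the paper's $\widehat{X}_{12}'$. There are, however, two substantive gaps.

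First, you assert that $\widehat{\pi}_2 : \mathfrak{X}\to\widehat{X}_2$ is flat ``because $\pi_2$ is smooth and flatness is preserved under completion along compatibly chosen closed subschemes.'' But $Y_1\subset X_1\times X_2$ (embedded via $\Gamma_g$) is \emph{not} the preimage $\pi_2^{-1}(Y_2)=X_1\times Y_2$, so the subschemes are not compatible in the base-change sense, and no off-the-shelf lemma gives this flatness in one step. The paper obtains it precisely by splitting $\widehat{\pi}_2 = \widehat{pr}_2\circ\alpha$ through the intermediate formal scheme $\widehat{X}_{12}$, the completion along $Y_1\times Y_2$, which equals $\widehat{X}_1\times\widehat{X}_2$ by Lemma~\ref{lem:prodcomp}; there $\widehat{pr}_2$ is a product projection of formal schemes (hence flat), and $\alpha:\mathfrak{X}\to\widehat{X}_{12}$ is the further completion along $Y_1\subset Y_1\times Y_2$ (flat by EGA~I (10.8.9)). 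This intermediate factorization must be included; without it the flatness of $\widehat{\pi}_2$ is not established.

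Second, the role you assign to Proposition~\ref{prop:D ess} in your last paragraph is misplaced. The moving lemma is used only to define the Gysin pull-back $\widehat{\Gamma}_f^*$ as a zigzag through $\mathcal{D}_{\coh,\{\widehat{X}_1\}}^q(\mathfrak{X},n)$; it cannot retroactively repair a failure of the special-fiber condition (\textbf{SF}) in the output of $\widehat{\pi}_2^*$. The moving applies to a complex that is already an object of $\mathcal{D}_{\coh}^q(\mathfrak{X},n)$, i.e.\ one already satisfying (\textbf{SF}) with respect to $(Y_1)_{\red}$, so if $\widehat{\pi}_2^*$ failed (\textbf{SF}) the zigzag downstream could not even start. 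In the paper's proof the two flat factors $\widehat{pr}_2^*$ and $\alpha^*$ are (asserted to be) genuine functors $\mathcal{D}^q_{\coh}\to\mathcal{D}^q_{\coh}$ preserving admissibility, so no moving is needed there; the zigzag is confined to the final Gysin step. If you wish to press your concern about non-flatness of $(Y_1)_{\red}\to(Y_2)_{\red}$, the honest route is to verify (\textbf{SF})-preservation directly for each flat factor $\widehat{pr}_2^*$ and $\alpha^*$, not to route it through Proposition~\ref{prop:D ess}.
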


\begin{proof}
From the given diagram \eqref{eqn:moving pull-back 0}, we deduce the commutative diagram
$$
\xymatrix{ 
\widehat{X}_1 \ar[r]^{\widehat{f}}& \widehat{X}_2 \\
Y_1 \ar[r]^g \ar@{^{(}->}[u] & Y_2. \ar@{^{(}->}[u] 
}
$$
The diagram \eqref{eqn:moving pull-back 0} also induces the commutative diagram
$$
\xymatrix{
X_1 \ar@{^{(}->}[r] ^{gr_f \ \ \ } & X_1 \times X_2 \ar[r] ^{\ \ \ pr_2} & X_2 \\
Y_1 \ar@{^{(}->}[u] \ar@{^{(}->}[r]^{gr_g \ \ \ } & Y_1 \times Y_2  \ar@{^{(}->}[u] \ar[r] ^{\ \ \ pr_2} & Y_2. \ar@{^{(}->}[u]}
$$

Let $X_{12}:= X_1 \times X_2$ and let $\widehat{X}_{12}$ be the completion of $X_{12}$ along $Y_1 \times Y_2$. This is equal to $\widehat{X}_1 \times \widehat{X}_2$ by Lemma \ref{lem:prodcomp}.

We also have the closed immersions $Y_1\hookrightarrow X_1 \overset{\iota:= gr_f}{\hookrightarrow} X_1 \times X_2= X_{12}$. We let $\widehat{X}_{12}'$ be the completion of $X_{12}$ along $Y_1$. This is equal to the further completion $\alpha: \widehat{X}_{12}' \to \widehat{X}_{12}$ of $\widehat{X}_{12}$ along $Y_1$ as well. Thus we have the commutative diagram
\begin{equation}\label{eqn:moving pull-back1}
\begin{array}{c}
\xymatrix{
\widehat{X}_1 \ar@{^{(}->}[r] ^{\widehat{\iota}}\ar@/^1.5pc/[rrr] ^{\widehat{f}} & \widehat{X}_{12}' \ar[r] ^{\alpha} & \widehat{X}_{12} \ar[r] ^{\widehat{pr}_2} & \widehat{X}_2 \\
Y_1 \ar@{^{(}->}[u] \ar@{=}[r] & Y_1  \ar@{^{(}->}[u]   \ar@{^{(}->}[r]^{gr_g} & Y _1\times Y_2  \ar@{^{(}->}[u]  \ar[r] ^{\ \ \ pr_2} & Y_2.  \ar@{^{(}->}[u] }
\end{array}
\end{equation}
Since the morphisms $\alpha$ (see EGA I \cite[Corollaire (10.8.9), p.197]{EGA1} or \cite[Theorems 8.8 and 8.12, p.60-61]{Matsumura}) and $\widehat{pr}_2$ are flat, if a coherent sheaf is of codimension $\geq q$, so are its flat pull-backs. Thus we have the associated pull-backs
\begin{equation}\label{eqn:moving pull-back2}
\tuborg
\widehat{pr_2}^*: \mathcal{D}_{\coh} ^q (\widehat{X}_2,n) \to \mathcal{D}_{\coh} ^q (\widehat{X}_{12},n),\\
\alpha^*: \mathcal{D}_{\coh} ^q (\widehat{X}_{12},n) \to \mathcal{D}_{\coh} ^q (\widehat{X}_{12}',n),
\sluttuborg
\end{equation}
thus applying the Waldhausen constructions, we have morphisms of the Waldhausen $K$-spaces
\begin{equation}\label{eqn:alpha pr}
\tuborg  \widehat{pr}^*_2 : \mathcal{G}^q (\widehat{X}_2,n) \to \mathcal{G}^q (\widehat{X}_{12},n), \\
 \alpha^* :\mathcal{G}^q (\widehat{X}_{12},n) \to \mathcal{G}^q (\widehat{X}_{12}',n).\sluttuborg
\end{equation}
On the other hand, the morphism $\widehat{\iota}$ is obtained from the closed immersion $ X_1 \overset{\iota=gr_f}{\hookrightarrow} X_{12}$ so that by Corollary \ref{cor:Gysin cnv}, we have the pull-back in the homotopy category
\begin{equation}\label{eqn:Gysin final}
\widehat{\iota}^*: \mathcal{G}^q (\widehat{X}_{12}',n) \to \mathcal{G}^q (\widehat{X}_1,n).
\end{equation}
Composing all pull-backs in \eqref{eqn:alpha pr} and \eqref{eqn:Gysin final} in the correct order, we obtain the first desired pull-back of \eqref{eqn:f* final}
$$
\widehat{f}^*: \mathcal{G}^q (\widehat{X}_2, n) \to \mathcal{G}^q (\widehat{X}_1, n).
$$
This induces a morphism of cubical spaces in the homotopy category with the faces and degeneracies, see e.g. Remark \ref{cubicalfaces}. Thus taking the geometric realizations, we deduce the second morphism in \eqref{eqn:f* final} in the homotopy category.

The second part is apparent and we omit details.
\end{proof}

The above construction can be composed:

\begin{lem}\label{lem:moving pull-back composed}
Let $g_1: Y_1 \to Y_2$ and $g_2: Y_2 \to Y_3$ be morphisms of quasi-projective $k$-schemes. Suppose we have closed immersions $Y_i \hookrightarrow X_i$ into equidimensional smooth $k$-schemes for $i=1,2,3$, and there are morphisms $f_1: X_1 \to X_2$ and $f_2: X_2 \to X_3$ that form a commutative diagram
$$
\xymatrix{
X_1 \ar[r] ^{f_1} & X_2 \ar[r] ^{f_2} & X_3 \\
Y_1 \ar[r] ^{g_1} \ar@{^{(}->}[u] & Y_2 \ar[r] ^{g_2}  \ar@{^{(}->}[u]  & Y_3.  \ar@{^{(}->}[u]}
$$
Let $\widehat{X}_i$ be the completion of $X_i$ along $Y_i$ for $i=1,2,3$, and let 
$$
\xymatrix{
\widehat{X}_1 \ar[r] ^{\widehat{f}_1} & \widehat{X}_2 \ar[r] ^{\widehat{f}_2} & \widehat{X}_3 \\
Y_1 \ar[r] ^{g_1} \ar@{^{(}->}[u] & Y_2 \ar[r] ^{g_2}  \ar@{^{(}->}[u]  & Y_3.  \ar@{^{(}->}[u]}
$$
be the induced commutative diagram.

Then we have
\begin{equation}\label{eqn:pull-back transitive}
\tuborg
(\widehat{f}_2 \circ \widehat{f}_1)^* &= \widehat{f}_1 ^* \circ \widehat{f}_2^*: \mathcal{G}^q (\widehat{X}_3,n) \to \mathcal{G}^q (\widehat{X}_1,n), \\
(\widehat{f}_2 \circ \widehat{f}_1)^*  &= \widehat{f}_1 ^* \circ \widehat{f}_2^*: \mathcal{G}^q (\widehat{X}_3) \to \mathcal{G}^q (\widehat{X}_1), 
\sluttuborg
\end{equation}
in the homotopy category.
\end{lem}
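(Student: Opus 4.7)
The plan is to identify both sides of \eqref{eqn:pull-back transitive} with a single common pull-back constructed through the triple product $X_{123}:=X_1\times X_2\times X_3$. Consider the graph immersion $X_1\hookrightarrow X_{123}$ given by $(\mathrm{id},f_1,f_2\circ f_1)$, which restricts to a closed immersion $Y_1\hookrightarrow X_{123}$ with image contained in $Y_1\times Y_2\times Y_3$. Let $\widehat{X}_{123}$ be the completion of $X_{123}$ along $Y_1\times Y_2\times Y_3$, identified with $\widehat{X}_1\times\widehat{X}_2\times\widehat{X}_3$ via Lemma \ref{lem:prodcomp}, let $\widehat{X}_{123}'$ be the further completion along $Y_1$, and define a candidate pull-back $\mathcal{G}^q(\widehat{X}_3)\to\mathcal{G}^q(\widehat{X}_1)$ as the composition of the two flat pull-backs (projection $\widehat{X}_{123}\to\widehat{X}_3$ and further completion $\widehat{X}_{123}'\to\widehat{X}_{123}$) with the Gysin pull-back for $\widehat{X}_1\hookrightarrow\widehat{X}_{123}'$ provided by Corollary \ref{cor:Gysin cnv}.

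Unwinding the construction of each $\widehat{f}_i^*$ in the proof of Theorem \ref{thm:moving pull-back}, I would then show that both $(\widehat{f}_2\circ\widehat{f}_1)^*$ and $\widehat{f}_1^*\circ\widehat{f}_2^*$ agree with this common pull-back in the homotopy category. This reduces to three elementary compatibilities: (i) transitivity of the flat pull-backs $\widehat{pr}_2^*$ and $\alpha^*$ appearing in \eqref{eqn:alpha pr}, which is immediate since these are induced from honest flat morphisms of regular noetherian formal schemes (Lemma \ref{lem:exoskeleton}, Proposition \ref{prop:regularity0}); (ii) transitivity of Gysin pull-backs along a tower $\widehat{Z}_1\hookrightarrow\widehat{Z}_2\hookrightarrow\widehat{Z}_3$ of closed immersions of regular formal schemes, which via iterated application of Proposition \ref{prop:D ess} reduces to the underived case where it becomes ordinary functoriality of sheaf pull-back (see Remark \ref{cubicalfaces}); and (iii) a base-change compatibility between a flat completed projection and a Gysin pull-back in the Cartesian squares arising from \eqref{eqn:moving pull-back1}, again reduced via Proposition \ref{prop:D ess} to the underived, properly-intersecting case.

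Chasing the identifications (i)--(iii) through the relevant diagrams identifies both composites in the first equality of \eqref{eqn:pull-back transitive} with the common pull-back described above. The second equality, at the level of the geometric realizations $\mathcal{G}^q(\widehat{X}_i)=|\underline{n}\mapsto\mathcal{G}^q(\widehat{X}_i,n)|$, then follows formally: the cubical face and degeneracy maps commute with all three ingredients (strictly for (i), and in the homotopy category for (ii) and (iii)), so the equality of cubical morphisms in the homotopy category passes to their realizations.

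The main obstacle will be the simultaneous implementation of (ii) and (iii), because for each generating coherent sheaf in $\mathcal{D}_{\coh}^q(\widehat{X}_3,n)$ one needs a single representative whose associated cycle intersects \emph{several} closed formal subschemes (the various intermediate $\widehat{X}$'s appearing in the two compositions, together with each face $F\subset\square^n$) properly at once. I expect this to be handled by an iterated version of the translation argument of \S\ref{sec:moving ci}, imposing the proper-intersection condition one closed immersion at a time by prime avoidance (cf.\ Lemmas \ref{lem:prime avoid}, \ref{lem:translation PP}), followed by the Zariski-gluing argument of \S\ref{sec:moving general} to pass from the affine complete-intersection case to the general quasi-projective case.
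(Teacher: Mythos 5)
Your strategy is sound but follows a genuinely different route from the paper's. The paper's (very terse) proof works through the \emph{pairwise} products $\widehat{X}_{12},\widehat{X}_{13},\widehat{X}_{23}$ and their further completions $\widehat{X}_{12}',\widehat{X}_{13}',\widehat{X}_{23}'$, assembling them into a single commutative ladder of formal schemes against which both sides of \eqref{eqn:pull-back transitive} are read off; the details of the diagram chase are explicitly omitted. You instead pass through the \emph{triple} product $\widehat{X}_{123}$ so that both composites are compared to one common pull-back. What your approach buys is a cleaner bookkeeping: one candidate map and three clearly-isolated compatibilities (flat transitivity, Gysin transitivity, flat/Gysin base change). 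What the paper's pairwise construction buys is that each step reuses the $\widehat{X}_{ij}$'s already introduced in Theorem \ref{thm:moving pull-back}, avoiding a new completion. Either way the technical burden is the same, and you have correctly identified it: one must apply the moving machinery of \S\ref{sec:moving ci}--\S\ref{sec:moving general} so that a single representative intersects several of the intervening closed formal subschemes properly at once. A small remark that lightens this: for Gysin transitivity along $\widehat{Z}_1\hookrightarrow\widehat{Z}_2\hookrightarrow\widehat{Z}_3$, simultaneous proper intersection of a cycle with $\widehat{Z}_1$ and with $\widehat{Z}_2$ already forces the restriction to $\widehat{Z}_2$ to meet $\widehat{Z}_1$ properly, by the additivity of codimensions; so the two-condition essential surjectivity of $\mathcal{D}^q_{\coh,\{\widehat{Z}_1\}}\cap\mathcal{D}^q_{\coh,\{\widehat{Z}_2\}}\hookrightarrow\mathcal{D}^q_{\coh}$ suffices, and since the $\widehat{Z}_i$ are nested and locally cut out by subsets of the power-series variables, Step~2 of Lemma~\ref{lem:translation general} (which already threads through the intermediate $\widehat{X}_{1,i}$) gives this with only notational changes. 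Your ingredient (iii) then only needs proper intersection with a single closed formal subscheme per Cartesian square, so no genuinely stronger moving lemma is required. With those refinements your plan is a legitimate and complete alternative to the argument the paper gestures at.
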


\begin{proof}
Following the notational conventions in Theorem \ref{thm:moving pull-back} and its proof, we define $\widehat{X}_{12}, \widehat{X}_{12}', \widehat{X}_{13}, \widehat{X}_{13}', \widehat{X}_{23}, \widehat{X}_{23}'$ similarly. They form part of the commutative diagram 
$$
\xymatrix{
\widehat{X}_1 \ar[r]  \ar[drr] & \widehat{X}_{12}' \ar[r] & \widehat{X}_{12} \ar[r] & \widehat{X}_2 \ar[r] & \widehat{X}_{23}' \ar[r] & \widehat{X}_{23} \ar[r] & \widehat{X}_3\\
& & \widehat{X}_{13}' \ar[rr] && \widehat{X}_{13}. \ar[rru] & &}
$$
This diagram and the construction in Theorem \ref{thm:moving pull-back} imply \eqref{eqn:pull-back transitive}. We omit details.
\end{proof}

\section{The $K$-spaces under the mod equivalences}\label{sec:mod Y spaces}

For a while, let $Y$ be an affine $k$-scheme of finite type. Let $Y \hookrightarrow X$ be a closed immersion into an equidimensional smooth $k$-scheme, and let $\widehat{X}$ be the completion of $X$ along $Y$. 

In \S \ref{sec:mod Y spaces}, we construct a tower of ``$K$-spaces modulo $Y$" in the homotopy category, using ideas inspired by the derived Milnor patching of S. Landsburg \cite{Landsburg Duke}.

\subsection{A pushout and derived Milnor patching}\label{sec:derived Milnor}

We recall a few basic results around the Milnor patching needed in this article. The readers may find some analogies in the constructions given in \cite{P general}. We would like to give self-contained treatments here as much as we could, and some basic lemmas may be repeated here. First recall from D. Ferrand \cite[Th\'eor\`eme 5.4]{Ferrand}:

\begin{lem}\label{lem:Ferrand}
Let $X'$ be a scheme, $Y' \subset X'$ a closed subscheme, and $g: Y' \to Y$ is a finite morphism of schemes. Consider the pushout $X:= X' \coprod_{Y'} Y$ in the category of ringed spaces so that we have the co-Cartesian square
$$
\xymatrix{Y' \ar[r] ^g \ar[d] & Y \ar[d] ^u\\
X' \ar[r] ^f & X.}
$$

Suppose that $X'$ and $Y$ satisfy the following:
\begin{enumerate}
\item [{\rm (FA)}] Each finite subset of points is contained in an affine open subset.
\end{enumerate}

Then $X$ is a scheme satisfying ${\rm (FA)}$, the diagram is Cartesian as well, and the morphism $f$ is finite, while $u$ is a closed immersion.
\end{lem}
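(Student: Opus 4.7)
The plan is to reduce to the affine case and then glue using the hypothesis (FA).

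First, I would handle the affine case. Suppose $X' = \Spec B$, $Y' = \Spec(B/I)$ is cut out by an ideal $I \subset B$, and $Y = \Spec A$ with $g^\sharp \colon A \to B/I$ finite. Define $R := B \times_{B/I} A$, the fiber product of rings along $g^\sharp$ and the quotient $B \to B/I$. I would verify the following elementary facts about $R$: (i) the projection $R \to A$ is surjective, with kernel identified as an $R$-module with $I \subset B$ (sitting in $R$ as pairs $(i,0)$); (ii) the projection $R \to B$ is finite—lifting generators of $B/I$ as an $A$-module to elements $b_1, \ldots, b_n \in B$, the $b_i$ generate $B$ as an $R$-module because any $b \in B$ can be corrected modulo $I \subset R$ using its image in $B/I$; (iii) the square of spectra is Cartesian, a direct consequence of $R = B \times_{B/I} A$; (iv) $\Spec R$ is the pushout of $\Spec B \leftarrow \Spec(B/I) \to \Spec A$ in the category of ringed spaces, by translating the universal property of the fiber product of rings into one for ringed spaces.

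Next, for the general case, the challenge is to cover $X$ by affine opens compatible with the pushout construction. Given $y \in Y$, the fiber $g^{-1}(y) \subset Y'$ is finite since $g$ is finite, so by (FA) for $X'$ there is an affine open $U' \subset X'$ containing $g^{-1}(y)$. Shrinking carefully, and using that finite morphisms are affine and closed, I would produce compatible affine opens $V \subset Y$ containing $y$ and $U' \subset X'$ satisfying $U' \cap Y' = g^{-1}(V)$. The affine case then produces local pushouts $\Spec R_{U'}$, and these glue along the natural identifications coming from the universal property. One checks that the resulting scheme is, locally on $Y$, given by the affine construction above, so that $f$ is locally finite (hence finite), $u$ is locally a closed immersion (hence a closed immersion), and the square is simultaneously Cartesian and co-Cartesian in schemes.

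Finally, I would verify property (FA) for the scheme $X$. Given a finite set $S \subset X$, decompose $S = S_{X'} \sqcup S_Y$ according to whether its points lie in the image of $X' \setminus Y'$ or in $u(Y)$. Using (FA) on $Y$, pick an affine open $V \subset Y$ containing $S_Y$; then $g^{-1}(V)$ need not be finite, but using (FA) on $X'$ applied to the finite set $S_{X'} \cup g^{-1}(S_Y)$, pick $U' \subset X'$ affine containing it, and shrink both so that $U' \cap Y' = g^{-1}(V)$. The corresponding local pushout $\Spec R_{U'}$ is then an affine open of $X$ containing $S$.

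The main obstacle is the gluing step: arranging simultaneous affine opens $U' \subset X'$ and $V \subset Y$ with $U' \cap Y' = g^{-1}(V)$. This is where (FA) is genuinely used, in combination with the finiteness of $g$ (to guarantee that fibers $g^{-1}(y)$ are finite sets one can enclose in an affine open of $X'$) and the affineness of $g$ (to arrange the intersection condition). Once these compatible affine pieces are in hand, the gluing is formal.
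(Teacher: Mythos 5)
The paper does not actually prove this lemma: it is quoted verbatim from D.~Ferrand's article and the entire ``proof'' is the citation \cite[Th\'eor\`eme~5.4]{Ferrand}. Your sketch therefore supplies an argument where the paper gives none, and your route (conductor/Milnor square in the affine case, then gluing via compatible affine opens selected with the help of~(FA) and finiteness of $g$) is exactly the route Ferrand follows, and also the one used in more recent accounts (e.g.\ K.~Schwede and the Stacks Project treatment of pushouts along affine/finite morphisms). The affine case is correct as you describe it: the key finiteness claim follows because the projection $R \to A$ is surjective with kernel $I$, so $B/I$ is finite over $R$, and lifting generators $b_1,\dots,b_n$ gives $B = R\cdot 1 + R b_1 + \dots + R b_n$ since $I$ lies in the image of $R \to B$.

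The point that still needs a real argument is the one you flag as ``the main obstacle,'' and I would push back on the phrase ``shrinking carefully'': a naive shrink on the $X'$ side forces a shrink on the $Y$ side, which might then force another shrink on the $X'$ side, and a priori the process could fail to terminate. The way the known proofs close this loop is to observe the following. Fix $y \in Y$, let $F = g^{-1}(y)$, and pick affine $U'_0 \supset F$ in $X'$ by~(FA); write $U'_0 = \Spec B$, $U'_0 \cap Y' = \Spec C$ with $B \twoheadrightarrow C$. Since $g$ is finite (hence closed), shrink to an affine $V_0 = \Spec A \ni y$ with $g^{-1}(V_0) = \Spec D \subset \Spec C$ open and affine, $A \to D$ finite. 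Choose $c \in C$ with $F \subset D(c) \subset \Spec D$ (possible because $F$ is finite and $\Spec D$ is open in the affine $\Spec C$), so $C_c = D_{c'}$ with $c'$ the image of $c$ in $D$. Again using closedness of $g$, choose $a \in A$ with $y \in D(a) \subset V_0$ and $g^{-1}(D(a)) \subset D(c')$; then $c'$ is a unit on $\Spec D_a$, so $g^{-1}(D(a)) = \Spec D_a = \Spec (C_c)_a$. Writing $a = c''/c^n$ in $C_c$, one gets $g^{-1}(D(a)) = D(c\,c'')$ in $\Spec C$, which is a \emph{principal} affine open of $\Spec C$ and therefore lifts to a principal affine open $U' = D(f) \subset U'_0$ for any lift $f \in B$ of $c\,c''$. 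This single step yields $U' \cap Y' = g^{-1}(D(a))$ with $F \subset U'$ and $y \in D(a)$ affine, terminating the iteration. I suggest you make this principality observation explicit; without it the ``compatible affine opens'' step is not obviously well-founded. Your handling of the final~(FA) verification for $X$ is fine once this is in place.
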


\begin{lem}\label{lem:double}
Let $Y$ be an affine $k$-scheme of finite type. Let $Y \hookrightarrow X$ be a closed immersion into an equidimensional smooth $k$-scheme, and let $\widehat{X}$ be the completion of $X$ along $Y$.

Then the push-out $D_{\widehat{X}} = \widehat{X} \coprod_Y \widehat{X}$, taken as a locally ringed space, is a noetherian formal scheme.
\end{lem}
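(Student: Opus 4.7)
The plan is to reduce to a local affine calculation and then realize the pushout as the formal spectrum of a fiber product of noetherian adic rings. Since $|\widehat{X}| = |Y|$ and both closed immersions $Y \hookrightarrow \widehat{X}$ are homeomorphisms on underlying spaces, the pushout in locally ringed spaces has underlying topological space $|Y|$, and its structure sheaf is the sheaf-theoretic fiber product $\mathcal{O}_{\widehat{X}} \times_{\mathcal{O}_Y} \mathcal{O}_{\widehat{X}}$. Being a noetherian formal scheme is a local condition, so after covering $Y = \Spec(B)$ by basic open affines $D(f)$ and invoking EGA I (10.1.4) to write $\widehat{X}|_{D(f)} = \Spf(\widehat{A}_{\{f\}})$, it suffices to treat the case $Y = \Spec(B)$, $\widehat{X} = \Spf(\widehat{A})$ with $\widehat{A}$ a noetherian adic ring and $\widehat{A}/\widehat{I} \simeq B$ for an ideal of definition $\widehat{I}$.

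In this local setting I would set $R := \widehat{A} \times_B \widehat{A}$ equipped with the topology defined by the ideal $J := \widehat{I} \times \widehat{I} \subset R$, and argue that $\Spf(R)$ realizes the pushout. Noetherian-ness of $R$ follows from the Eakin--Nagata theorem: the inclusion $R \hookrightarrow \widehat{A} \times \widehat{A}$ fits into a short exact sequence of $R$-modules
\begin{equation*}
0 \to R \to \widehat{A} \times \widehat{A} \xrightarrow{(a_1,a_2) \mapsto (a_2 - a_1) \bmod \widehat{I}} B \to 0,
\end{equation*}
so $\widehat{A} \times \widehat{A}$ is a finitely generated $R$-module; since the former is noetherian as a ring, so is $R$. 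The adic and complete structure on $R$ follows from the elementary identity $J^n = \widehat{I}^n \times \widehat{I}^n$ together with the commutation of inverse limits with fiber products:
\begin{equation*}
\varprojlim_n R/J^n = \varprojlim_n \bigl((\widehat{A}/\widehat{I}^n) \times_B (\widehat{A}/\widehat{I}^n)\bigr) = \widehat{A} \times_B \widehat{A} = R,
\end{equation*}
with separatedness inherited from that of $\widehat{A}$.

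To identify $\Spf(R)$ with the pushout I would either verify its universal property directly (a continuous ring homomorphism $R \to S$ to an adic ring $S$ corresponds to a pair of continuous maps $\widehat{A} \rightrightarrows S$ agreeing after composition with $\widehat{A} \to B$), or alternatively apply the scheme-theoretic Ferrand result Lemma~\ref{lem:Ferrand} at each thickening $X_n := (|Y|, \mathcal{O}_{\widehat{X}}/\widehat{I}^n)$ to obtain schemes $D_n := X_n \coprod_Y X_n = \Spec(\widehat{A}/\widehat{I}^n \times_B \widehat{A}/\widehat{I}^n)$ whose formal colimit along the common copy of $Y$ yields $\Spf(R)$. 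Globalization then requires gluing these local constructions over an affine open cover of $Y$, which reduces to checking compatibility of the fiber product with basic localizations $\widehat{A}_{\{f\}}$. The main obstacle I anticipate is this last compatibility, together with the careful verification that the pushout of locally ringed spaces (rather than of formal schemes) is correctly represented by $\Spf(R)$, since limits of sheaves of rings and $\Spf$ do not automatically commute with all the formal-scheme operations involved.
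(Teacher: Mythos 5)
Your proposal is correct, and its second branch (``apply the scheme-theoretic Ferrand result at each thickening $X_n$ and colimit'') is exactly the paper's argument: the paper forms $Y_m \subset \widehat{X}$ cut out by $\mathcal{I}^m$, applies Lemma~\ref{lem:Ferrand} to get noetherian affine schemes $D_m = Y_m \coprod_Y Y_m$, and sets $D_{\widehat{X}} := \colim_m D_m$, invoking commutation of colimits to identify this with $\widehat{X}\coprod_Y \widehat{X}$. What you do differently is reduce first to the affine case and realize the pushout as $\Spf(R)$ for $R = \widehat{A} \times_B \widehat{A}$ with the $J$-adic topology, $J = \widehat{I}\times\widehat{I}$, checking noetherian-ness via Eakin--Nagata and completeness via the identity $J^n = \widehat{I}^n\times\widehat{I}^n$ and $R/J^n \cong (\widehat{A}/\widehat{I}^n)\times_B(\widehat{A}/\widehat{I}^n)$; all of these verifications are correct and make explicit what the paper outsources to Ferrand's theorem applied to each $D_m$. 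The trade-off is that the paper's global use of Ferrand on the schemes $Y_m$ sidesteps the gluing step entirely (a scheme never had to be reassembled from affines), whereas your local ring-theoretic route gains explicitness at the cost of the globalization and the LRS-versus-formal-scheme compatibility that you yourself flag at the end; that flagged concern is real, and the paper's formulation (take the colimit directly as formal schemes and then appeal to commutation of colimits) is the standard way to discharge it.
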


\begin{proof}
Let $\mathcal{I}$ be the ideal sheaf of the closed immersion $Y \hookrightarrow \widehat{X}$. This is also an ideal of definition of the noetherian formal scheme $\widehat{X}$ as well. For each $m \geq 1$, let $Y_m \subset \widehat{X}$ be the closed subscheme defined by $\mathcal{I}^m$. 

Since $Y_m$ is affine, it satisfies the condition (FA) of Lemma \ref{lem:Ferrand}. In particular, the pushout $D_m:= Y_m \coprod_Y Y_m$ exists as a noetherian affine $k$-scheme. Then we take $D_{\widehat{X}} := \underset{m}{\colim} \  D_m$ in the category of noetherian formal schemes. We have $\widehat{X}= \underset{m}{ \colim}  \ Y_m$, and there are closed immersions $j_i: \widehat{X} \hookrightarrow D_{\widehat{X}}$ for $i=1,2$. Since colimits commute among themselves, we have $D_{\widehat{X}} = \widehat{X} \coprod_Y \widehat{X}$ as desired.
\end{proof}

We say that the above $D_{\widehat{X}}$ is the \emph{double} of $\widehat{X}$ along $Y$. We have the following version of the derived Milnor patching of S. Landsburg \cite{Landsburg Duke}:

\begin{lem}\label{lem:derived Milnor}
Let $Y$ be an affine $k$-scheme of finite type. Let $Y \hookrightarrow X$ be a closed immersion into an equidimensional smooth $k$-scheme, and let $\widehat{X}$ be the completion of $X$ along $Y$. Consider the double $D_{\widehat{X}}$ of Lemma \ref{lem:double} and the associated co-Cartesian diagram
$$
\xymatrix{
Y \ar[r]^{\iota_1} \ar[d] ^{\iota_2} & \widehat{X} \ar[d] ^{j_2} \\
\widehat{X} \ar[r] ^{j_1} & D_{\widehat{X}}.}
$$

Let $\mathcal{F}_1$ and $\mathcal{F}_2$ be perfect complexes on $\widehat{X}$ such that we have an isomorphism $\mathbf{L}\iota_1 ^* \mathcal{F}_1 \simeq \mathbf{L}\iota_2 ^* \mathcal{F}_2$. Then there exists a perfect complex $\tilde{\mathcal{F}}$ on $D_{\widehat{X}}$ such that $\mathbf{L} j_i ^* \tilde{ \mathcal{F} } \simeq \mathcal{F}_i$ for $i=1,2$.
\end{lem}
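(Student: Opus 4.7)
The plan is to reduce the problem to a ring-theoretic Milnor square and invoke the derived Milnor patching theorem of S. Landsburg. Since $Y$ is affine, after shrinking $X$ to an affine open neighborhood of $Y$ (or covering by affine opens and patching), we may assume $\widehat{X} = \Spf A$ is an affine noetherian formal scheme, where $A$ is complete with respect to an ideal $J$ such that $A/J = B$ and $Y = \Spec B$. From the construction in Lemma \ref{lem:double}, passing to the inverse limit of the Milnor squares that define each $D_m = Y_m \coprod_Y Y_m$, and using that inverse limits commute with finite limits of rings, we obtain $D_{\widehat{X}} = \Spf \tilde A$ where $\tilde A := A \times_B A$. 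Under this identification, the two closed immersions $j_1, j_2: \widehat{X} \hookrightarrow D_{\widehat{X}}$ correspond to the two projections $p_1, p_2: \tilde A \to A$, while the common inclusion $Y \hookrightarrow D_{\widehat{X}}$ corresponds to the diagonal quotient $\tilde A \to B$.

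Next, I would translate the geometric data into algebra. Under the equivalence between perfect complexes on $\widehat{X}$ and perfect complexes of $A$-modules (implicit in Lemma \ref{lem:coh=perfect} together with affineness), $\mathcal{F}_1, \mathcal{F}_2$ become perfect complexes $P_1, P_2$ of $A$-modules, and the hypothesis $\mathbf{L}\iota_1^* \mathcal{F}_1 \simeq \mathbf{L}\iota_2^* \mathcal{F}_2$ becomes a quasi-isomorphism $\phi: P_1 \otimes^L_A B \overset{\sim}{\to} P_2 \otimes^L_A B$ in $\mathcal{D}_{\perf}(B)$. After replacing $P_1, P_2$ with strictly quasi-isomorphic bounded complexes of finitely generated projective $A$-modules and lifting $\phi$ to a chain-level quasi-isomorphism $\tilde \phi$, one defines
$$
\tilde P := P_1 \times^h_{P_2 \otimes_A B} P_2,
$$
the homotopy fiber product, where the first map is the composite $P_1 \to P_1 \otimes_A B \xrightarrow{\tilde\phi} P_2 \otimes_A B$ and the second is the canonical quotient. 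The two projections $p_1, p_2$ endow $\tilde P$ with the structure of a complex of $\tilde A$-modules.

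Two assertions then remain to be checked: (i) $\mathbf{L} p_i^* \tilde P \simeq P_i$ for $i=1,2$, and (ii) $\tilde P$ is a perfect complex over $\tilde A$. For (i), a direct computation with the Milnor square shows that derived base change along one projection collapses the homotopy fiber product to the corresponding factor. The main obstacle is (ii): a general homotopy fiber product of perfect complexes need not be perfect, so one must exploit the ring-theoretic Milnor-square structure. This is precisely the content of the derived Milnor patching theorem of S. Landsburg \cite{Landsburg Duke}, which guarantees that in any Milnor square $\tilde A = A \times_B A$ with $A \twoheadrightarrow B$ surjective, the homotopy fiber product of two perfect complexes of $A$-modules with quasi-isomorphic restrictions to $B$ is perfect over $\tilde A$. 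The surjectivity of $A \to B$ holds by construction of the double, so Landsburg's theorem applies and produces the desired perfect complex $\tilde{\mathcal{F}}$ on $D_{\widehat{X}}$ with $\mathbf{L} j_i^* \tilde{\mathcal{F}} \simeq \mathcal{F}_i$.
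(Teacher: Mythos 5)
Your proposal is correct, but it takes a genuinely different route from the one in the paper. The paper never identifies $D_{\widehat X}$ with a formal spectrum; instead it works level-by-level: it restricts $\mathcal F_1,\mathcal F_2$ to the infinitesimal thickenings $Y_m$, applies Landsburg's patching to each \emph{scheme-theoretic} Milnor square $D_m = Y_m \coprod_Y Y_m$ to get $\tilde{\mathcal F}_m$, checks compatibility of the $\tilde{\mathcal F}_m$ along the closed immersions $D_m \hookrightarrow D_{m+1}$, and then sets $\tilde{\mathcal F}:= \mathbf R\varprojlim_m \tilde{\mathcal F}_m$, citing \cite[Lemma 0CQG]{stacks} (or Bhatt) for perfectness of the derived limit. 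You instead observe that since $Y$ is affine, $\widehat X = \Spf A$ is an affine formal scheme, so $D_{\widehat X} = \Spf(A\times_B A)$ (inverse limits commute with the finite limit defining each $D_m$), and apply Landsburg once to the completed Milnor square $A\times_B A \to A \rightrightarrows B$, constructing $\tilde P$ as a homotopy fibre product of $P_1$ and $P_2$. Both routes work. The payoff of your approach is that it is shorter and avoids the $\mathbf R\varprojlim$ and the accompanying derived-limit perfectness criterion; it also gives an explicit model of $\tilde{\mathcal F}$ as a homotopy pullback. The payoff of the paper's approach is that Landsburg's hypotheses are only ever verified for the finite-level, strictly ring-theoretic squares $D_m$, so one never has to worry about whether the patching theorem applies verbatim to the (noetherian but complete and technically slightly more exotic) ring $A\times_B A$, and all the completion issues are swept into a single standard $\mathbf R\varprojlim$ lemma. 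One small nit: the ``equivalence between perfect complexes on $\widehat X$ and perfect complexes of $A$-modules'' that you need is an affine GAGA-type equivalence for noetherian formal spectra, not Lemma \ref{lem:coh=perfect}, which is the regularity statement equating $\mathcal D_{\perf}$ with $\mathcal D_{\coh}$. You should also say a word about why $A\times_B A$ is noetherian (it is a finite $A$-algebra, being $A\oplus J$ as an $A$-module), since Landsburg requires this.
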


\begin{proof}
This essentially follows from the main theorem of \cite{Landsburg Duke}. We sketch the argument. Let $\mathcal{I}$ be the ideal sheaf of the closed immersion $Y \hookrightarrow \widehat{X}$. Let $Y_m \hookrightarrow \widehat{X}$ be the closed subscheme defined by $\mathcal{I}^m$ for integers $m \geq 1$. Let $\mathcal{F}_{i, m}$ be the derived pull-back of $\mathcal{F}_i$ to $Y_m$, for $i=1,2$. We let $\iota_{i,m}: Y \hookrightarrow Y_m$ be the closed immersion. 

The given condition $\mathbf{L}\iota_1 ^* \mathcal{F}_1 \simeq \mathbf{L}\iota_2 ^* \mathcal{F}_2$ implies that $\mathbf{L} \iota_{1,m} ^* \mathcal{F}_{1, m} \simeq \mathbf{L} \iota_{2,m} ^* \mathcal{F}_{2,m}$. Thus by the main theorem of \cite{Landsburg Duke}, there is a perfect complex $\tilde{\mathcal{F}}_m$ on $D_m:= Y_m \coprod_{Y} Y_m$ whose derived restrictions to $Y_m$ are quasi-isomorphic to $\mathcal{F}_{i, m}$ for $i=1,2$. For each closed immersion $\iota_{m/ (m+1)}: D_m \hookrightarrow D_{m+1}$, we have $\iota_{m/(m+1)}^* \tilde{\mathcal{F}}_{m+1} \simeq \tilde{\mathcal{F}}_{m}$. We then take $\tilde{\mathcal{F}} := \mathbf{R}  \underset{m}{\varprojlim} \  \tilde{\mathcal{F}}_m$, which is a perfect complex on $D_{\widehat{X}}$ by \cite[Lemma 0CQG]{stacks} (or B. Bhatt \cite[Lemma 4.2]{Bhatt}). One checks that this is a desired perfect complex.
\end{proof}

\subsection{A mod equivalent pair of sheaves}

For the cycles on formal schemes over $\widehat{X}$, in \cite[\S 3]{P general}, the notion of ``mod $Y$-equivalence" was defined. There, it was first defined for a pair $\mathcal{A}_1, \mathcal{A}_2$ of coherent $\mathcal{O}_{\widehat{X} \times \square^n}$-\emph{algebras}, and then we took the subgroup $\mathcal{M}^q (\widehat{X}, Y, n)$ of the differences of the associated cycles $[\mathcal{A}_1]-[\mathcal{A}_2]$. The quotient $z^q (\widehat{X}, n)/ \mathcal{M}^q (\widehat{X}, Y, n)$ defined the equivalence on cycles. 

This process used the language of the derived algebraic geometry in terms of $\infty$-categories. For some basics of derived algebraic geometry, one can look at J. Lurie \cite{Lurie} and B. To\"en \cite{Toen}, while for $\infty$-categories, we refer the reader to J. Lurie \cite{Lurie 2}.

\medskip

In this paper, we would like to try its $K$-theoretic counterpart. After some attempts, we realized that we should work with coherent \emph{modules} unlike in the case of cycles, while we can proceed without the language of derived algebraic geometry, and just stick to the classical homological language of derived categories, together with the derived Milnor patching recalled in \S \ref{sec:derived Milnor}.

We start with the following (cf. \cite[\S 3.2]{P general}):

\begin{defn}\label{defn:adm pair}
Let $Y$ be an affine $k$-scheme of finite type. Let $Y \hookrightarrow X$ be a closed immersion into an equidimensional smooth $k$-scheme, and let $\widehat{X}$ be the completion of $X$ along $Y$. Recall we had the notion of admissible coherent $\mathcal{O}_{\widehat{X}\times \square^n}$-modules in Definition \ref{defn:cnv higher}.

 Two admissible coherent $\mathcal O _{\widehat{X} \times \square^n}$-modules (resp. of codimension $ \geq q$) 
$\mathcal{M}_1, \mathcal{M}_2$ 
are said to be \emph{mod $Y$-equivalent} if we have an isomorphism
\begin{equation}\label{eqn:adm pair}
\mathcal{M}_1 \otimes _{\mathcal{O}_{\widehat{X} \times \square^n}} ^{\mathbf{L}} {\mathcal{O}_{Y \times \square^n}} \simeq \mathcal{M}_2 \otimes _{\mathcal{O}_{\widehat{X} \times \square^n}} ^{\mathbf{L}} {\mathcal{O}_{Y \times \square^n}}
\end{equation}
in the derived category of $\mathcal{O}_{Y \times \square^n}$-modules. Let $\mathcal{L} (\widehat{X}, Y, n)$ (resp. $\mathcal{L} ^{  \geq q} (\widehat{X}, Y, n)$) be the set of all pairs of mod $Y$-equivalent admissible coherent 
$\mathcal O _{\widehat{X} \times \square^n}$-modules  (resp. of codimension $\geq q$). \qed
\end{defn}

There are two notable points here. The first is that $\widehat{X} \times \square^n$ is a regular formal scheme, so that by Lemma \ref{lem:coh=perfect}, a coherent $\mathcal O _{\widehat{X} \times \square^n}$-module is a perfect complex on $\widehat{X} \times \square^n$. The second point is that \eqref{eqn:adm pair} is a quasi-isomorphism of the derived pull-backs to $Y \times \square^n$ of perfect complexes on $\mathcal{O}_{\widehat{X} \times \square^n}$, so that the derived Milnor patching discussed in Lemma \ref{lem:derived Milnor} applies. We state it as follows:

\begin{cor}\label{cor:pair derived Milnor}
Under the notations and the assumptions of Definition \ref{defn:adm pair}, there exists a perfect complex $\tilde{\mathcal{M}}$ on $D_{\widehat{X}} \times \square^n$ whose derived pull-backs $\mathbf{L} j_i ^* \tilde{\mathcal{M}}$ via the two closed immersions $j_i: \widehat{X} \times \square^n \hookrightarrow D_{\widehat{X}} \times \square^n$ for $i=1,2$ are quasi-isomorphic to $ \mathcal{M}_i$ for $i=1,2$.
\end{cor}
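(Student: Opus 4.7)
The plan is to reduce the statement to a direct application of Lemma \ref{lem:derived Milnor}, applied not to $\widehat{X}$ itself but to the product $\widehat{X} \times \square^n$, viewed as a formal scheme containing the closed subscheme $Y \times \square^n$. Since $\square \simeq \mathbb{A}^1$ is affine and $Y$ is affine, $Y \times \square^n$ is affine; since $X$ is equidimensional smooth over $k$ and $\square^n$ is smooth over $k$, the scheme $X \times \square^n$ is equidimensional smooth, and $Y \times \square^n \hookrightarrow X \times \square^n$ is a closed immersion. Moreover, $\widehat{X} \times \square^n$ is the completion of $X \times \square^n$ along $Y \times \square^n$ (by an argument similar to Lemma \ref{lem:prodcomp}), and this formal scheme is regular by Lemma \ref{lem:exoskeleton} applied to the embedding $Y \times \square^n \hookrightarrow X \times \square^n$.

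Next, I would identify the target. By Lemma \ref{lem:double} applied to this new embedding, the pushout
$$
D':= (\widehat{X} \times \square^n) \coprod_{Y \times \square^n} (\widehat{X} \times \square^n)
$$
exists as a noetherian formal scheme. It remains to check the natural identification $D' \simeq D_{\widehat{X}} \times \square^n$. Following the construction of $D_{\widehat{X}}$ as $\colim_m (Y_m \coprod_Y Y_m)$ in Lemma \ref{lem:double}, where $Y_m$ is the closed subscheme of $\widehat{X}$ cut out by the $m$-th power of the ideal sheaf of $Y \hookrightarrow \widehat{X}$, the product $D_{\widehat{X}} \times \square^n$ is the colimit of $(Y_m \times \square^n) \coprod_{Y \times \square^n} (Y_m \times \square^n)$. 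Because $\square^n \to \Spec k$ is flat (indeed $\square^n$ is affine smooth), fiber product with $\square^n$ commutes with these pushouts in the category of (quasi-)affine schemes, so termwise we have $(Y_m \times \square^n) \coprod_{Y \times \square^n} (Y_m \times \square^n) = (Y_m \coprod_Y Y_m) \times \square^n$, and passing to the colimit yields $D' \simeq D_{\widehat{X}} \times \square^n$ as formal schemes, with the two closed immersions $j_1, j_2$ matching up.

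With these identifications in place, the derived Milnor patching (Lemma \ref{lem:derived Milnor}), applied to the co-Cartesian diagram
$$
\xymatrix{
Y \times \square^n \ar[r]^{\ \ \iota_1} \ar[d]_{\iota_2} & \widehat{X} \times \square^n \ar[d]^{j_2} \\
\widehat{X} \times \square^n \ar[r]_{\ \ \ \ j_1} & D_{\widehat{X}} \times \square^n,
}
$$
directly produces the desired perfect complex $\tilde{\mathcal{M}}$ from the pair $(\mathcal{M}_1,\mathcal{M}_2)$: regularity of $\widehat{X} \times \square^n$ and Lemma \ref{lem:coh=perfect} ensure that each admissible coherent $\mathcal{O}_{\widehat{X} \times \square^n}$-module $\mathcal{M}_i$ is a perfect complex, and the mod $Y$-equivalence hypothesis \eqref{eqn:adm pair} is precisely the quasi-isomorphism $\mathbf{L}\iota_1^* \mathcal{M}_1 \simeq \mathbf{L}\iota_2^* \mathcal{M}_2$ required as input to the patching lemma. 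The conclusion $\mathbf{L} j_i^* \tilde{\mathcal{M}} \simeq \mathcal{M}_i$ is then exactly the output of that lemma.

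The only genuine technical point — and hence what I expect to be the main obstacle — is the identification $D' \simeq D_{\widehat{X}} \times \square^n$ for formal schemes: one must check that the formal-scheme pushout along a closed immersion commutes with taking the product with a smooth affine factor. This is harmless here because $\square^n$ is an affine smooth $k$-scheme (so everything reduces, level-by-level in the defining filtration, to the flat base change of the classical affine pushouts of Lemma \ref{lem:Ferrand}), but it should be stated explicitly to justify applying Lemma \ref{lem:derived Milnor} with the compatible identifications of the two closed immersions $j_1, j_2$.
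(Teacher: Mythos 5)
Your proposal is correct and follows essentially the same route as the paper, which also reduces to Lemma \ref{lem:derived Milnor} applied to the embedding $Y \times \square^n \hookrightarrow X \times \square^n$ --- the two observations immediately preceding the corollary in the text are exactly the inputs you invoke (regularity of $\widehat{X} \times \square^n$, hence perfectness of the $\mathcal{M}_i$ by Lemma \ref{lem:coh=perfect}, and the reading of \eqref{eqn:adm pair} as the patching condition). The only genuine addition is that you make explicit the identification $D_{\widehat{X}} \times \square^n \simeq (\widehat{X} \times \square^n) \coprod_{Y \times \square^n} (\widehat{X} \times \square^n)$ and the matching of the two closed immersions, a compatibility the paper leaves implicit.
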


We collect all such $\tilde{\mathcal{M}}$:

\begin{defn}\label{defn:patched perfect}
Under the above notations and the assumptions:
\begin{enumerate}
\item Let $\tilde{\mathcal{L}}(D_{\widehat{X}}, Y, n)$ (resp. $\tilde{\mathcal{L}}^{\geq q} (D_{\widehat{X}}, Y, n)$) be the set of perfect complexes $\tilde{\mathcal{M}}$ (resp. of codimension $\geq q$) on $D_{\widehat{X}} \times \square^n$ obtained in Corollary \ref{cor:pair derived Milnor}. 

\item Let $\mathcal{V} (D_{\widehat{X}}, Y, n)$ (resp. $\mathcal{V}^q (D_{\widehat{X}}, Y, n)$) be the triangulated subcategory of $\mathcal{D}_{\coh} (D_{\widehat{X}} \times \square^n)$ generated by the perfect complexes in $\tilde{\mathcal{L}}(D_{\widehat{X}}, Y, n)$ (resp. $\tilde{\mathcal{L}}^{\geq q} (D_{\widehat{X}}, Y, n)$).

\item Let $\mathcal{S} (D_{\widehat{X}}, Y, n)$ (resp. $\mathcal{S} ^q (\widehat{X}, Y, n)$) be the Waldhausen $K$-space of the triangulated category $\mathcal{V} (\widehat{X}, Y, n)$ (resp. $\mathcal{V}^q (\widehat{X}, Y, n)$).
\qed
\end{enumerate}
\end{defn}

The two closed immersions $j_i : \widehat{X} \times \square^n \hookrightarrow D_{\widehat{X}} \times \square^n$ for $i=1,2$ induce the following commutative diagram of functors, where the pull-backs are the derived pull-backs of perfect complexes:
$$
\xymatrix{
\mathcal{V} ^{  q} (D_{\widehat{X}}, Y, n) \ar@2{->}[r] ^{j_1^*}_{j_2^*} \ar[d] &  \mathcal{D}_{\coh} ^q ( \widehat{X}, n) \ar[d]  \\
 \mathcal{V} (D_{\widehat{X}}, Y, n)  \ar@2{->}[r] ^{j_1^*}_{j_2^*} &  \mathcal{D}_{\coh} ( \widehat{X}, n).}
$$
This diagram in turn induces the commutative diagram of the spaces
\begin{equation}\label{eqn:hocoeq0}
\begin{array}{c}
\xymatrix{
\mathcal{S} ^{  q} (D_{\widehat{X}}, Y, n)  \ar@2{->}[r] ^{j_1^*}_{j_2^*} \ar[d] &  \mathcal{G} ^q ( \widehat{X}, n) \ar[d]  \\
 \mathcal{S} (D_{\widehat{X}}, Y, n)  \ar@2{->}[r] ^{j_1^*}_{j_2^*} &  \mathcal{G} ( \widehat{X}, n).}
 \end{array}
 \end{equation}

\begin{defn}\label{defn:K-sp mod Y}
Take the homotopy coequalizers of the horizontal maps of \eqref{eqn:hocoeq0} to define
$$
\mathcal{G}^q (\widehat{X}, Y, n):= {\rm hocoeq} \left( \mathcal{S} ^{ q} (D_{\widehat{X}}, Y, n) \rightrightarrows   \mathcal{G} ^q ( \widehat{X}, n) \right) ,
$$
$$
\mathcal{G} (\widehat{X}, Y, n):= {\rm hocoeq} \left( \mathcal{S}  (D_{\widehat{X}}, Y, n) \rightrightarrows   \mathcal{G} ( \widehat{X}, n) \right).
$$
We can regard them as the ``Waldhausen $K$-spaces of $\widehat{X}$ mod $Y$" in a sense.
\qed
\end{defn}

\begin{remk}  \label{rem. modY trivial}
If $Y$ is a smooth affine $k$-scheme of finite type, then we can take $X=Y$ in Lemma \ref{lem:derived Milnor}.

In this case, we claim that we have $\mathcal{G}^q (\widehat{X}, Y, n)=\mathcal{G} ^q ( Y, n)$.

 Indeed, in this case we get $\widehat X =Y$, so $D_{\widehat X}= \widehat{X} \coprod_Y \widehat{X} = Y$ and all the maps in the cartesian diagram in \eqref{lem:derived Milnor} are equal to the identity on $Y$.

For two admissible coherent $\mathcal{O}_{Y\times \square^n}$-modules $\mathcal{M}_1$ and $\mathcal{M}_2$, they are mod $Y$-equivalent in the sense of Definition \ref{defn:adm pair} if and only if $\mathcal{M}_1 \simeq \mathcal{M}_2$ as $\mathcal{O}_{Y \times \square^n}$-modules. 

Hence we have $\mathcal{V}^q (D_{\widehat{X}}, Y, n) = \mathcal{D}^q _{\coh} (Y, n)$. In particular, applying the Waldhausen $K$-spaces, we deduce that $\mathcal{S} ^{ q} (D_{\widehat{X}}, Y, n)=\mathcal{G} ^q ( Y, n)$.

Since $j_1^\ast =j_2 ^\ast ={\rm Id}$ in \eqref{eqn:hocoeq0}, we deduce that $\mathcal{G}^q (\widehat{X}, Y, n) = \mathcal{G}^q (Y, n)$, as desired. Similarly, we have $\mathcal{G} (\widehat{X}, Y, n) = \mathcal{G} (Y, n)$.

We will use this observation later when we compare our constructions of the filtrations on $K$-theory with the pre-existing ones in the literature when $Y$ is smooth. 
\qed
\end{remk}

\medskip

Coming back to Definition \ref{defn:K-sp mod Y}, by construction, we deduce the morphisms in the homotopy category
$$
\mathcal{G}^{q+1} (\widehat{X}, Y, n) \to \mathcal{G}^q (\widehat{X}, Y, n) \to \mathcal{G} (\widehat{X}, Y, n).
$$
We also note that there are natural morphisms in the homotopy category
$$
\mathcal{G}^q (\widehat{X} , n) \to \mathcal{G}^0( \widehat{X}, n) \to \mathcal{G} (\widehat{X} \times \square^n) \overset{\sim}{ \leftarrow} \mathcal{G} (\widehat{X}) \overset{\sim}{ \leftarrow }\mathcal{K} (\widehat{X}) \to \mathcal{K} (Y).
$$
Its composition with both of the arrows $j_i^*$ in \eqref{eqn:hocoeq0} are equal. Thus by the definition of the homotopy coequalizers, we deduce the commutative diagram of spaces in the homotopy category
$$\xymatrix{
\mathcal{G}^q (\widehat{X}, Y, n)  \ar[d] \ar[dr] &  \\
\mathcal{G}(\widehat{X}, Y, n) \ar[r] &  \mathcal{K} (Y).}
$$

These $\mathcal{G}^q (\widehat{X}, Y, n) $ and $\mathcal{G}(\widehat{X}, Y, n)$ over $n \geq 0$ form cubical spaces.

\begin{defn}\label{defn:mod Y space realize}
We define 
$$
\mathcal{G}^q (\widehat{X}, Y):= | \un{n}\mapsto \mathcal{G}^q (\widehat{X}, Y, n)|,
$$
 $$
 \mathcal{G} (\widehat{X}, Y):= | \un{n}\mapsto \mathcal{G} (\widehat{X}, Y, n)|,
 $$
 the geometric realizations.
 \qed
\end{defn}

By construction, we have the induced morphisms
$$
\mathcal{G}^{q+1} (\widehat{X}, Y) \to \mathcal{G}^q (\widehat{X}, Y) \to \mathcal{G} (\widehat{X}, Y) \to \mathcal{K} (Y)
$$
in the homotopy category.

\subsection{Moving lemma and general pull-backs}

Combining the above construction with the discussions in \S \ref{sec:cnv 1}, we would like to have the ``mod $Y$-equivalence" version of Theorem \ref{thm:moving pull-back}. This is eventually proven in Theorem \ref{thm:moving pull-back mod Y} below. Along the way, we go over part of the constructions in \S \ref{sec:cnv 1} to check that they respect the mod $Y$-equivalences. We discuss these first.

\medskip

The following is a minor variant of Definition \ref{defn:K-sp mod Y} for the triangulated subcategory $\mathcal{D}_{\coh, \{ \widehat{X}_1 \}} ^q (\widehat{X}_2, n) \subset \mathcal{D}^q _{\coh} (\widehat{X}_2, n)$ (see Definition \ref{defn:cnv X1}):

\begin{defn}\label{defn:K-sp mod Y var}
Take the homotopy coequalizer to define
$$
\mathcal{G}^q _{\{ \widehat{X}_1 \}} (\widehat{X}_2, Y, n):= {\rm hocoeq} \left( \mathcal{S} ^{ q} (D_{\widehat{X}_2}, Y, n) \rightrightarrows \mathcal{G} ^{ q} (\widehat{X}_2, n)  \overset{w.e.}{\leftarrow} \mathcal{G}_{\{\widehat{X}_1\}} ^{ q} (\widehat{X}_2, n) \right),
$$
where $w.e.$ is a weak-equivalence given by Corollary \ref{cor:Gysin cnv}. Similarly we also define
$\mathcal{G}_{\{ \widehat{X}_1 \}} (\widehat{X}_2, Y, n).
$

As before, we take the geometric realizations to define
$$
\mathcal{G}_{\{ \widehat{X}_1 \}} ^q (\widehat{X}_2, Y):= | \un{n} \mapsto \mathcal{G}^q _{\{ \widehat{X}_1 \}} (\widehat{X}_2, Y, n)|,
$$
and similarly we define $\mathcal{G}_{ \{ \widehat{X}_1 \}} (\widehat{X}_2, Y)$. 
\qed
\end{defn}

The following is the mod $Y$-version of Corollary \ref{cor:Gysin cnv} for the $K$-spaces mod $Y$: 

\begin{lem}\label{lem:moving-pull-back mod version}
Let $Y$ be an affine $k$-scheme of finite type. Suppose we have a sequence of closed immersions $Y \hookrightarrow X_1 \hookrightarrow X_2$, where $X_1, X_2$ are equidimensional smooth $k$-schemes. They induce closed immersions $Y \hookrightarrow \widehat{X}_1 \overset{\widehat{\iota}}{\hookrightarrow} \widehat{X}_2$, where $\widehat{X}_i$ is the completion of $X_i$ along $Y$. 

Then for $n \geq 0$, the zigzag of functors
\begin{equation}\label{eqn:mpb mod 1}
\mathcal{D}_{\coh} ^q (\widehat{X}_2,n) \overset{\mathfrak{i}}{\hookleftarrow} \mathcal{D}_{\coh, \{ \widehat{X}_1 \}} ^q (\widehat{X}_2,n ) \overset{\widehat{\iota}^\ast}{\to} \mathcal{D}_{\coh}^q (\widehat{X}_1,n),
\end{equation}
induces morphisms of the spaces
\begin{equation}\label{eqn:Gysin cnv mod 0}
\mathcal{G}^q (\widehat{X}_2, Y ,n) \overset{\mathfrak{i}}{\leftarrow} \mathcal{G}^q _{\{ \widehat{X}_1 \}} (\widehat{X}_2, Y ,n)\overset{\widehat{\iota}^*}{\to} \mathcal{G}^q (\widehat{X}_1, Y, n),
\end{equation}
where the first arrow $\mathfrak{i}$ is a weak-equivalence. Consequently, we have the Gysin morphisms of spaces in the homotopy category
\begin{equation}\label{eqn:Gysin cnv mod 1}
\tuborg
\widehat{\iota}^*: \mathcal{G}^q (\widehat{X}_2, Y,n) \to \mathcal{G}^q (\widehat{X}_1, Y,n), \ \ n \geq 0, \\
\widehat{\iota}^*: \mathcal{G}^q (\widehat{X}_2, Y) \to \mathcal{G}^q (\widehat{X}_1, Y).
\sluttuborg
\end{equation}
\end{lem}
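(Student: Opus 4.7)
The plan is to lift the construction of $\widehat{\iota}^*$ in Corollary~\ref{cor:Gysin cnv} to the level of Milnor-patched complexes. By functoriality of the pushout in Lemma~\ref{lem:double}, the closed immersion $\widehat{\iota}:\widehat{X}_1 \hookrightarrow \widehat{X}_2$ induces a closed immersion $D_{\widehat{\iota}}: D_{\widehat{X}_1} \hookrightarrow D_{\widehat{X}_2}$ of doubles compatible with the two closed immersions $j_i: \widehat{X}_a \hookrightarrow D_{\widehat{X}_a}$, $a=1,2$. First I would introduce an auxiliary triangulated subcategory $\mathcal{V}^q_{\{D_{\widehat{X}_1}\}}(D_{\widehat{X}_2}, Y, n) \subset \mathcal{V}^q(D_{\widehat{X}_2}, Y, n)$ generated by derived Milnor patchings of mod-$Y$-equivalent pairs $(\mathcal{M}_1, \mathcal{M}_2)$ whose members lie in $\mathcal{D}^q_{\coh, \{\widehat{X}_1\}}(\widehat{X}_2, n)$, with associated Waldhausen $K$-space $\mathcal{S}^q_{\{D_{\widehat{X}_1}\}}(D_{\widehat{X}_2}, Y, n)$.

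The central step is a moving lemma asserting that the inclusion $\mathcal{V}^q_{\{D_{\widehat{X}_1}\}}(D_{\widehat{X}_2}, Y, n) \hookrightarrow \mathcal{V}^q(D_{\widehat{X}_2}, Y, n)$ is essentially surjective, and hence induces a weak-equivalence on the corresponding $K$-spaces by \cite[Theorems 1.9.1, 1.9.8]{TT}. Given a patched $\widetilde{\mathcal{M}}$ with restrictions $\mathcal{M}_i := \mathbf{L}j_i^* \widetilde{\mathcal{M}}$, Proposition~\ref{prop:D ess} supplies quasi-isomorphic replacements $\mathcal{M}'_i \in \mathcal{D}^q_{\coh,\{\widehat{X}_1\}}(\widehat{X}_2, n)$. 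Because derived pull-back to $Y \times \square^n$ preserves quasi-isomorphisms, the pair $(\mathcal{M}'_1, \mathcal{M}'_2)$ remains mod-$Y$-equivalent in the sense of Definition~\ref{defn:adm pair}, so Lemma~\ref{lem:derived Milnor} produces a patched $\widetilde{\mathcal{M}}'$ in $\mathcal{V}^q_{\{D_{\widehat{X}_1}\}}(D_{\widehat{X}_2}, Y, n)$; uniqueness up to quasi-isomorphism of the patching in \cite{Landsburg Duke} identifies $\widetilde{\mathcal{M}}'$ with $\widetilde{\mathcal{M}}$ in $\mathcal{V}^q(D_{\widehat{X}_2}, Y, n)$, yielding the essential surjectivity.

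Next, for $\widetilde{\mathcal{M}} \in \mathcal{V}^q_{\{D_{\widehat{X}_1}\}}(D_{\widehat{X}_2}, Y, n)$ I would consider $\mathbf{L} D_{\widehat{\iota}}^* \widetilde{\mathcal{M}}$ on $D_{\widehat{X}_1} \times \square^n$. By base change its restrictions satisfy $\mathbf{L}j_i^* \mathbf{L} D_{\widehat{\iota}}^* \widetilde{\mathcal{M}} \simeq \mathbf{L}\widehat{\iota}^* \mathbf{L}j_i^* \widetilde{\mathcal{M}}$, which lie in $\mathcal{D}^q_{\coh}(\widehat{X}_1, n)$ via \eqref{eqn:Gysin} and remain mod-$Y$-equivalent by transitivity of derived pull-backs along $Y \hookrightarrow \widehat{X}_1 \hookrightarrow \widehat{X}_2$. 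This yields a functor $\mathbf{L}D_{\widehat{\iota}}^*: \mathcal{V}^q_{\{D_{\widehat{X}_1}\}}(D_{\widehat{X}_2}, Y, n) \to \mathcal{V}^q(D_{\widehat{X}_1}, Y, n)$ and a corresponding morphism $D_{\widehat{\iota}}^*: \mathcal{S}^q_{\{D_{\widehat{X}_1}\}}(D_{\widehat{X}_2}, Y, n) \to \mathcal{S}^q(D_{\widehat{X}_1}, Y, n)$ of Waldhausen $K$-spaces, compatible via $j_1^*, j_2^*$ with the Gysin zigzag $\mathcal{G}^q(\widehat{X}_2, n) \overset{w.e.}{\leftarrow} \mathcal{G}^q_{\{\widehat{X}_1\}}(\widehat{X}_2, n) \to \mathcal{G}^q(\widehat{X}_1, n)$ supplied by Corollary~\ref{cor:Gysin cnv}. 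Taking homotopy coequalizers along $j_1^*, j_2^*$ and using the invariance of hocoequalizers under levelwise weak-equivalence yields \eqref{eqn:Gysin cnv mod 0} in the homotopy category, with the first arrow a weak-equivalence; composition produces the first morphism of \eqref{eqn:Gysin cnv mod 1}. Since every step commutes with the cubical faces and degeneracies (cf.\ Remark~\ref{cubicalfaces}, applied to each factor of the double), the geometric realization over $n \geq 0$ delivers the second morphism of \eqref{eqn:Gysin cnv mod 1}.

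The main obstacle will be the compatibility of derived Milnor patching with $\mathbf{L}D_{\widehat{\iota}}^*$: verifying that $\mathbf{L}D_{\widehat{\iota}}^*$ applied to a patching $\widetilde{\mathcal{M}}$ recovers, up to quasi-isomorphism, the patching of the Gysin pull-backs $\mathbf{L}\widehat{\iota}^*\mathcal{M}_i$. This should follow from compatibility of the inverse-limit construction $\widetilde{\mathcal{F}} = \mathbf{R}\varprojlim_m \widetilde{\mathcal{F}}_m$ in the proof of Lemma~\ref{lem:derived Milnor} with base change along the truncated closed immersions of doubles, combined with the uniqueness of Landsburg's patching, but it will require careful bookkeeping rather than being entirely formal.
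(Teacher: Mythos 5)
Your proof takes a genuinely different (and more elaborate) route than the paper's, which is very short. The paper simply observes that, by Proposition~\ref{prop:D ess} and \cite[Theorems 1.9.1, 1.9.8]{TT}, the map $\mathcal{G}^q_{\{\widehat{X}_1\}}(\widehat{X}_2,n)\to\mathcal{G}^q(\widehat{X}_2,n)$ is a weak-equivalence, hence so is the induced map of homotopy coequalizers built in Definition~\ref{defn:K-sp mod Y var}, and then passes to geometric realizations; it does not introduce any new triangulated subcategory at the level of the doubles $D_{\widehat X_a}$. You, by contrast, introduce an auxiliary $\widehat X_1$-constrained category $\mathcal{V}^q_{\{D_{\widehat X_1}\}}(D_{\widehat X_2},Y,n)$ with its own Waldhausen $K$-space, prove a separate moving lemma asserting that its inclusion into $\mathcal{V}^q(D_{\widehat X_2},Y,n)$ is essentially surjective, and then construct a Gysin functor $\mathbf{L}D_{\widehat\iota}^*$ on the constrained category. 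Your route is more explicit about what actually makes the second arrow in \eqref{eqn:Gysin cnv mod 0} commute with the two coequalizing maps from the $\mathcal{S}$-spaces, something the paper leaves implicit. The trade-off is that your argument needs additional justification at two points that the paper sidesteps: (i) the essential-surjectivity claim for $\mathcal{V}^q_{\{D_{\widehat X_1}\}}\hookrightarrow\mathcal{V}^q$ leans on a uniqueness (or functoriality) statement for Landsburg's derived Milnor patching, whereas the paper's Lemma~\ref{lem:derived Milnor} and Corollary~\ref{cor:pair derived Milnor} only assert existence, so this appeal needs to be made precise; (ii) as you acknowledge, the compatibility $\mathbf{L}D_{\widehat\iota}^*\widetilde{\mathcal M}\simeq$ (patching of the $\mathbf{L}\widehat\iota^*\mathcal M_i$) is not immediate from the $\mathbf{R}\varprojlim_m$ construction and requires genuine bookkeeping at the truncation level. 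Also note that you must let your constrained category be generated by pairs of \emph{complexes} in $\mathcal D^q_{\coh,\{\widehat X_1\}}$ rather than only coherent sheaves (since the moved replacements produced by Proposition~\ref{prop:D ess} are in general complexes), which is a slight relaxation of the paper's Definition~\ref{defn:adm pair}-style generators but is harmless because the generated triangulated subcategories agree. Modulo carefully filling in (i) and (ii), your argument is sound and complementary to the paper's more compressed treatment.
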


\begin{proof}
By Proposition \ref{prop:D ess}, the first arrow $\mathfrak{i}$ of \eqref{eqn:mpb mod 1} is essentially surjective so that the morphism $\mathcal{G}^q _{\{ \widehat{X}_1 \}} (\widehat{X}_2, n) \overset{\mathfrak{i}}{\to} \mathcal{G}^q (\widehat{X}_2, n)$ is a weak-equivalence by \cite[Theorems 1.9.1, 1.9.8, p.263, p.271]{TT}. Thus the morphism $\mathcal{G} _{\{ \widehat{X}_1 \}} ^q (\widehat{X}_2, Y, n) \to \mathcal{G}^q (\widehat{X}_2, Y, n)$ (see Definition \ref{defn:K-sp mod Y var}) of the homotopy coequalizers must also be a weak-equivalence. This gives the first morphism in \eqref{eqn:Gysin cnv mod 1}.

Taking the geometric realizations of the cubical spaces over $n \geq 0$, we deduce the second morphism in \eqref{eqn:Gysin cnv mod 1}, and the lemma follows.
\end{proof}

We are now ready to prove the following:

\begin{thm}\label{thm:moving pull-back mod Y}
Let $g: Y_1 \to Y_2$ be a morphism of affine $k$-schemes of finite type. Let $Y_i \hookrightarrow X_i$ be a closed immersion into an equidimensional smooth $k$-scheme for $i=1,2$. Let $\widehat{X}_i$ be the completion of $X_i$ along $Y_i$. Suppose there is a morphism $f: X_1 \to X_2$ such that the diagram
$$
\xymatrix{
X_1 \ar[r] ^f  & X_2 \\
Y_1 \ar@{^{(}->}[u] \ar[r] ^g & Y_2  \ar@{^{(}->}[u]}
$$
commutes. Then for $q \geq 0$, $n\geq 0$, there exist the induced morphisms in the homotopy category
\begin{equation}\label{eqn:moving pull-back mod Y f}
\widehat{f}^*: \mathcal{G}^q (\widehat{X}_2 , Y_2,n) \to \mathcal{G}^q (\widehat{X}_1, Y_1,n),  \; \;
\widehat{f}^*: \mathcal{G}^q (\widehat{X}_2 , Y_2) \to \mathcal{G}^q (\widehat{X}_1, Y_1),
\end{equation}
where $\mathcal{G} ^q (\widehat{X}, Y, n)$ and $\mathcal{G}^q (\widehat{X} , Y)$ are as in Definitions \ref{defn:K-sp mod Y} and \ref{defn:mod Y space realize}.
\end{thm}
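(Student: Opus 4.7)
The plan is to mimic the proof of Theorem~\ref{thm:moving pull-back} by factoring $f$ through its graph, and then upgrade every step from the coniveau spaces $\mathcal{G}^q(\widehat X,n)$ to the mod-$Y$ spaces $\mathcal{G}^q(\widehat X,Y,n)$. Put $X_{12}:=X_1\times_k X_2$, let $\widehat X_{12}$ be the completion of $X_{12}$ along $Y_1\times Y_2$ (so $\widehat X_{12}\simeq\widehat X_1\times\widehat X_2$ by Lemma~\ref{lem:prodcomp}), and let $\widehat X_{12}'$ be the further completion of $\widehat X_{12}$ along $Y_1\hookrightarrow Y_1\times Y_2$ arising from the graph of $g$. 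Exactly as in the diagram \eqref{eqn:moving pull-back1}, we obtain the chain
\[
\widehat X_1\;\overset{\widehat\iota}{\hookrightarrow}\;\widehat X_{12}'\;\overset{\alpha}{\longrightarrow}\;\widehat X_{12}\;\overset{\widehat{pr}_2}{\longrightarrow}\;\widehat X_2,
\]
whose composite is $\widehat f$. The goal is to produce a mod-$Y$ pull-back along each arrow and set $\widehat f^{\,*}:=\widehat\iota^{\,*}\circ\alpha^{\,*}\circ\widehat{pr}_2^{\,*}$.

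For the two flat morphisms $\widehat{pr}_2$ and $\alpha$ (which restrict on the base to $pr_2\colon Y_1\times Y_2\to Y_2$ and $gr_g\colon Y_1\hookrightarrow Y_1\times Y_2$ respectively), the universal property of the Ferrand pushout (Lemma~\ref{lem:double}) yields induced morphisms of doubles
\[
D_{\widehat X_{12}'}\longrightarrow D_{\widehat X_{12}}\longrightarrow D_{\widehat X_2}.
\]
Given a patched perfect complex $\tilde{\mathcal M}\in\tilde{\mathcal L}^{\geq q}(D_{\widehat X_2},Y_2,n)$ of Definition~\ref{defn:patched perfect} with $j_i^{\,*}\tilde{\mathcal M}\simeq\mathcal M_i$ for a mod $Y_2$-equivalent pair, the derived pull-back along the induced map of doubles is again a perfect complex whose $j_i$-restrictions are $\widehat{pr}_2^{\,*}\mathcal M_i$; flatness ensures these are admissible coherent of codimension $\geq q$ on $\widehat X_{12}\times\square^n$, and they are automatically mod $(Y_1\times Y_2)$-equivalent because they are the $j_i$-restrictions of a common perfect complex on $D_{\widehat X_{12}}\times\square^n$. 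Hence $\widehat{pr}_2^{\,*}$ carries $\tilde{\mathcal L}^{\geq q}(D_{\widehat X_2},Y_2,n)$ into $\tilde{\mathcal L}^{\geq q}(D_{\widehat X_{12}},Y_1\!\times\!Y_2,n)$, and similarly for $\alpha^{\,*}$. Passing to the Waldhausen $K$-spaces and the homotopy coequalizers of \eqref{eqn:hocoeq0} then yields
\[
\mathcal{G}^q(\widehat X_2,Y_2,n)\,\overset{\widehat{pr}_2^{\,*}}{\longrightarrow}\,\mathcal{G}^q(\widehat X_{12},Y_1\!\times\! Y_2,n)\,\overset{\alpha^{\,*}}{\longrightarrow}\,\mathcal{G}^q(\widehat X_{12}',Y_1,n)
\]
in the homotopy category.

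For the closed immersion $\widehat\iota:\widehat X_1\hookrightarrow\widehat X_{12}'$, we are in the setting $Y_1\hookrightarrow X_1\overset{gr_f}{\hookrightarrow}X_{12}$ of nested closed immersions into equidimensional smooth $k$-schemes, with $\widehat X_1$ and $\widehat X_{12}'$ both completions along the \emph{same} affine base $Y_1$. This is exactly the hypothesis of Lemma~\ref{lem:moving-pull-back mod version}, giving $\widehat\iota^{\,*}:\mathcal{G}^q(\widehat X_{12}',Y_1,n)\to\mathcal{G}^q(\widehat X_1,Y_1,n)$ in the homotopy category. Composing the three arrows produces the first morphism of \eqref{eqn:moving pull-back mod Y f}; since all three constructions are compatible with the cubical faces and degeneracies as in Remark~\ref{cubicalfaces}, taking geometric realizations over $n\ge 0$ furnishes the second morphism. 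The main subtlety to keep in mind is the bookkeeping of the three distinct mod-$Y$ structures along the chain: $\widehat X_2$ is mod $Y_2$, the intermediate $\widehat X_{12}$ is naturally mod $(Y_1\times Y_2)$, and $\widehat X_{12}'$, $\widehat X_1$ are mod $Y_1$. The flat compatibility of the doubles $D_{(-)}$ with the base maps $Y_1\hookrightarrow Y_1\times Y_2\to Y_2$ is what bridges these three flavors, and this is where the real technical work lies; once it is in place, the moving lemma of Proposition~\ref{prop:D ess}, packaged through Lemma~\ref{lem:moving-pull-back mod version}, does the rest.
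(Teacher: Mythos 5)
Your proof follows the same overall skeleton as the paper's: factor $\widehat f$ through the graph as $\widehat X_1\overset{\widehat\iota}{\hookrightarrow}\widehat X_{12}'\overset{\alpha}{\to}\widehat X_{12}\overset{\widehat{pr}_2}{\to}\widehat X_2$, handle the two flat arrows first, invoke Lemma~\ref{lem:moving-pull-back mod version} for the closed immersion $\widehat\iota$, and compose. The difference lies in how you treat the two flat steps. The paper works directly with the pairs $(\mathcal M_1,\mathcal M_2)$ in $\mathcal L^{\geq q}(\widehat X_2,Y_2,n)$: it applies the flat pull-back $\widehat{pr}_2^*$ to the derived isomorphism $\mathcal M_1\otimes^{\mathbf L}\mathcal O_{Y_2\times\square^n}\simeq\mathcal M_2\otimes^{\mathbf L}\mathcal O_{Y_2\times\square^n}$, then pushes along the natural maps $\widehat{pr}_2^*\mathcal O_{\widehat X_2\times\square^n}\to\mathcal O_{\widehat X_{12}\times\square^n}$ and $\widehat{pr}_2^*\mathcal O_{Y_2\times\square^n}\to\mathcal O_{Y_{12}\times\square^n}$ to conclude that the pulled-back pair is mod $Y_{12}$-equivalent. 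Only after establishing the set map on $\mathcal L^{\geq q}$ does the paper pass to the triangulated categories $\mathcal V^q$. You instead work at the level of the doubles: you invoke the universal property of the Ferrand pushout to build maps $D_{\widehat X_{12}'}\to D_{\widehat X_{12}}\to D_{\widehat X_2}$ lying over $Y_1\hookrightarrow Y_{12}\to Y_2$, and then pull back the patched perfect complexes $\tilde{\mathcal M}$ directly, observing that the two $j_i$-restrictions of any complex on a double are automatically mod-equivalent since $j_1\circ\iota=j_2\circ\iota$. This is in fact a cleaner route to the functor on $\mathcal V^q$, because $\mathcal V^q$ is generated precisely by the $\tilde{\mathcal M}$; the paper's passage from a set map on $\mathcal L^{\geq q}$ to a functor on $\mathcal V^q$ is left somewhat implicit. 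The one thing you should make explicit is that the induced map of doubles really exists in the category of formal schemes, i.e.\ that the universal property survives the colimit $D_{\widehat X}=\colim_m D_m$ used in Lemma~\ref{lem:double}; the paper avoids this issue by not using maps between doubles at all, while your argument leans on it. Once that is noted the argument is complete.
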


\begin{proof}
 We are going to repeat part of the argument of Theorem \ref{thm:moving pull-back}. We continue to use the diagrams and the notations there.

From the commutative diagram of formal schemes \eqref{eqn:moving pull-back1}, we deduced the pull-back functors in \eqref{eqn:moving pull-back2} 
\begin{equation}\label{eqn:moving pull-back3}
\tuborg
\widehat{pr}_2^*: \mathcal{D}_{\coh} ^q (\widehat{X}_2,n) \to \mathcal{D}_{\coh} ^q (\widehat{X}_{12},n),\\
\alpha^*: \mathcal{D}_{\coh} ^q (\widehat{X}_{12},n) \to \mathcal{D}_{\coh} ^q (\widehat{X}_{12}',n),
\sluttuborg
\end{equation}
and the morphisms of spaces in \eqref{eqn:alpha pr}
\begin{equation}\label{eqn:alpha pr2}
\tuborg  \widehat{pr}^*_2 : \mathcal{G}^q (\widehat{X}_2,n) \to \mathcal{G}^q (\widehat{X}_{12},n), \\
 \alpha^* :\mathcal{G}^q (\widehat{X}_{12},n) \to \mathcal{G}^q (\widehat{X}_{12}',n).\sluttuborg
\end{equation}

\medskip

We need to check that the functors in \eqref{eqn:moving pull-back3} respect the mod equivalences.

Suppose we are given a pair $(\mathcal{M}_1, \mathcal{M}_2)$ of coherent 
$\mathcal{O}_{\widehat{X}_2 \times \square^n}$-modules such that we have an isomorphism in the derived category of $\mathcal{O}_{Y_2 \times \square^n}$-modules
\begin{equation}\label{eqn:moving pull-back3 mod Y}
\mathcal{M}_1 \otimes_{\mathcal{O}_{\widehat{X}_2 \times \square^n}} ^{\mathbf{L}} \mathcal{O}_{Y_2 \times \square^n} \simeq \mathcal{M}_2 \otimes_{\mathcal{O}_{\widehat{X}_2 \times \square^n}} ^{\mathbf{L}} \mathcal{O}_{Y_2 \times \square^n}.
\end{equation}
Since $\widehat{pr}_2$ is flat, applying the flat pull-back $\widehat{pr}_2^*$ to \eqref{eqn:moving pull-back3 mod Y}, we deduce an isomorphism
$$
\widehat{pr}_2^* (\mathcal{M}_1) \otimes_{\widehat{pr}_2 ^* \mathcal{O}_{\widehat{X}_2 \times \square^n}}  ^{\mathbf{L}}  \widehat{pr}_2 ^* \mathcal{O}_{Y_2 \times \square^n} \simeq \widehat{pr}_2^* (\mathcal{M}_2) \otimes_{\widehat{pr}_2 ^* \mathcal{O}_{\widehat{X}_2 \times \square^n}} ^{\mathbf{L}}  \widehat{pr}_2 ^* \mathcal{O}_{Y_2 \times \square^n},
$$
which implies
\begin{equation}\label{eqn:moving pull-back3 mod Y 2}
\widehat{pr}_2^* (\mathcal{M}_1) \otimes_{\mathcal{O}_{\widehat{X}_{12} \times \square^n}} ^{\mathbf{L}} \widehat{pr}_2 ^* \mathcal{O}_{Y_2 \times \square^n} \simeq \widehat{pr}_2^* (\mathcal{M}_2) \otimes_{ \mathcal{O}_{\widehat{X}_{12} \times \square^n}} ^{\mathbf{L}}   \widehat{pr}_2 ^* \mathcal{O}_{Y_2 \times \square^n}
\end{equation}
via the natural morphism $\widehat{pr}_2 ^* \mathcal{O}_{\widehat{X}_2 \times \square^n} \to \mathcal{O}_{\widehat{X}_{12} \times \square^n}$. We also have the natural morphism $\widehat{pr}_2 ^* \mathcal{O}_{Y_2 \times \square^n} \to \mathcal{O}_{Y_{12} \times \square^n}$, where $Y_{12}= Y_1 \times Y_2$. Applying the derived functor $(-) \otimes_{\widehat{pr}_2 ^* \mathcal{O}_{Y_2 \times \square^n} } ^{\mathbf{L}}  \mathcal{O}_{Y_{12} \times \square^n}$ to \eqref{eqn:moving pull-back3 mod Y 2}, we deduce an isomorphism in the derived category of $\mathcal{O}_{Y_{12} \times \square^n}$-modules
$$
\widehat{pr}_2^* (\mathcal{M}_1) \otimes_{\mathcal{O}_{\widehat{X}_{12} \times \square^n}} ^{\mathbf{L}} \mathcal{O}_{Y_{12} \times \square^n} \simeq \widehat{pr}_2^* (\mathcal{M}_2) \otimes_{ \mathcal{O}_{\widehat{X}_{12} \times \square^n}} ^{\mathbf{L}}   \mathcal{O}_{Y_{12} \times \square^n}.
$$
Thus, $\widehat{pr}_2 ^*$ of \eqref{eqn:moving pull-back3} induces the set map $\mathcal{L}^{\geq q} (\widehat{X}_2, Y_2, n) \to \mathcal{L}^{\geq q} (\widehat{X}_{12}, Y_{12}, n)$, which in turn induces
$$
\widehat{pr}_2^*: \mathcal{V}^{ q} (D_{\widehat{X}_{2}}, Y_{2}, n) \to \mathcal{V} ^{ q} (D_{\widehat{X}_{12}}, Y_{12}, n).
$$
Applying the Waldhausen $K$-spaces, together with \eqref{eqn:alpha pr2}, we deduce
$$
\xymatrix{
\mathcal{S} ^{ q} (D_{\widehat{X}_2}, Y_2, n) \ar@2{->}[r]^{j_1 ^*} _{j_2^*}  \ar[d] ^{\widehat{pr}_2^*} & \mathcal{G}^q (\widehat{X}_2, n) \ar[d] ^{\widehat{pr}_2 ^*} \\
\mathcal{S} ^{ q} (D_{\widehat{X}_{12}}, Y_{12}, n) \ar@2{->}[r] ^{j_1 ^*} _{j_2^*} & \mathcal{G}^q (\widehat{X}_{12}, n),
}
$$
and taking the homotopy coequalizers of the horizontal arrows, for each $n \geq 0$, we deduce the morphism in the homotopy category 
\begin{equation}\label{eqn:moving pull-back mod Y pr_2 0}
\widehat{pr}_2 ^*: \mathcal{G}^q (\widehat{X}_2, Y_2, n) \to \mathcal{G}^q (\widehat{X}_{12}, Y_{12}, n).
\end{equation}
The above \eqref{eqn:moving pull-back mod Y pr_2 0} over all $n \geq 0$ forms a morphism of cubical objects in the homotopy category. Taking the geometric realizations, we deduce the morphism in the homotopy category
\begin{equation}\label{eqn:moving pull-back mod Y pr_2}
\widehat{pr}_2^*: \mathcal{G}^q (\widehat{X}_2, Y_2) \to \mathcal{G}^q (\widehat{X}_{12}, Y_{12}).
\end{equation}
Since the completion morphism $\alpha$ in \eqref{eqn:moving pull-back1} is flat, we similarly deduce
\begin{equation}\label{eqn:moving pull-back mod Y alpha}
\tuborg
\alpha^*:  \mathcal{G}^q (\widehat{X}_{12}, Y_{12},n) \to \mathcal{G}^q (\widehat{X}_{12}', Y_1,n), \\
\alpha^*:  \mathcal{G}^q (\widehat{X}_{12}, Y_{12}) \to \mathcal{G}^q (\widehat{X}_{12}', Y_1).
\sluttuborg
\end{equation}

On the other hand, by Lemma \ref{lem:moving-pull-back mod version}, from $\widehat{\iota}$ in \eqref{eqn:moving pull-back1}, we deduce the following morphisms in the homotopy category
\begin{equation}\label{eqn:moving pull-back mod Y iota}
\tuborg
\widehat{\iota}^*: \mathcal{G}^q (\widehat{X}_{12}' , Y_1, n) \to \mathcal{G}^q (\widehat{X}_1, Y_1,n), \\
\widehat{\iota}^*: \mathcal{G}^q (\widehat{X}_{12}' , Y_1) \to \mathcal{G}^q (\widehat{X}_1, Y_1).
\sluttuborg
\end{equation}

Composing \eqref{eqn:moving pull-back mod Y pr_2}, \eqref{eqn:moving pull-back mod Y alpha} and \eqref{eqn:moving pull-back mod Y iota}, we thus deduce the desired morphisms \eqref{eqn:moving pull-back mod Y f} as
$$
\tuborg
\widehat{f}^*=\widehat{pr}_2^* \circ \alpha^* \circ  \widehat{\iota}^* : \mathcal{G}^q (\widehat{X}_2, Y_2,n) \to \mathcal{G}^q (\widehat{X}_1, Y_1,n), \\
\widehat{f}^*=\widehat{pr}_2^* \circ \alpha^* \circ  \widehat{\iota}^* : \mathcal{G}^q (\widehat{X}_2, Y_2) \to \mathcal{G}^q (\widehat{X}_1, Y_1).
\sluttuborg
$$
This proves the theorem.
\end{proof}

We can compose the above morphisms, too:

\begin{lem}\label{lem:moving pull-back composed mod}
Let $g_1: Y_1 \to Y_2$ and $g_2: Y_2 \to Y_3$ be morphisms of affine $k$-schemes of finite type. Suppose we have closed immersions $Y_i \hookrightarrow X_i$ into equidimensional smooth affine $k$-schemes for $i=1,2,3$, and there are morphisms $f_1: X_1 \to X_2$ and $f_2: X_2 \to X_3$ that form a commutative diagram
$$
\xymatrix{
X_1 \ar[r] ^{f_1} & X_2 \ar[r] ^{f_2} & X_3 \\
Y_1 \ar[r] ^{g_1} \ar@{^{(}->}[u] & Y_2 \ar[r] ^{g_2}  \ar@{^{(}->}[u]  & Y_3.  \ar@{^{(}->}[u]}
$$
Let $\widehat{X}_i$ be the completion of $X_i$ along $Y_i$ for $i=1,2,3$, and let 
$$
\xymatrix{
\widehat{X}_1 \ar[r] ^{\widehat{f}_1} & \widehat{X}_2 \ar[r] ^{\widehat{f}_2} & \widehat{X}_3 \\
Y_1 \ar[r] ^{g_1} \ar@{^{(}->}[u] & Y_2 \ar[r] ^{g_2}  \ar@{^{(}->}[u]  & Y_3.  \ar@{^{(}->}[u]}
$$
be the induced commutative diagram.

Then we have
\begin{equation}\label{eqn:pull-back transitive mod}
\tuborg
(\widehat{f}_2 \circ \widehat{f}_1)^* &= \widehat{f}_1 ^* \circ \widehat{f}_2^*: \mathcal{G}^q (\widehat{X}_3, Y_3,n) \to \mathcal{G}^q (\widehat{X}_1, Y_1,n), \\
(\widehat{f}_2 \circ \widehat{f}_1)^* &= \widehat{f}_1 ^* \circ \widehat{f}_2^*: \mathcal{G}^q (\widehat{X}_3, Y_3) \to \mathcal{G}^q (\widehat{X}_1, Y_1)
\sluttuborg
\end{equation}
in the homotopy category.
\end{lem}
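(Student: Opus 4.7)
The plan is to mimic the proof of Lemma \ref{lem:moving pull-back composed}, tracking the mod-$Y$ structure through each of the intermediate constructions that already appeared in the proof of Theorem \ref{thm:moving pull-back mod Y}. Recall from that proof that, writing $\iota_i = gr_{f_i}$ for the graph closed immersions, each pull-back $\widehat{f}_i^{*}$ was obtained as a composition
\[
\widehat{f}_i^{*} \;=\; \widehat{\iota}_i^{*} \circ \alpha_i^{*} \circ \widehat{pr}_{i+1}^{*},
\]
where $\widehat{pr}_{i+1}^{*}$ and $\alpha_i^{*}$ are induced by flat morphisms (the projection from the product and the completion morphism), and $\widehat{\iota}_i^{*}$ is the Gysin morphism of Lemma \ref{lem:moving-pull-back mod version} associated with a closed immersion of regular formal schemes.

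First I will form the enlarged diagram with the three double products and their further completions, namely $\widehat{X}_{12}, \widehat{X}_{12}', \widehat{X}_{23}, \widehat{X}_{23}', \widehat{X}_{13}, \widehat{X}_{13}'$, exactly as in the proof of Lemma \ref{lem:moving pull-back composed}, but now also carrying along the corresponding $Y_{ij}=Y_i\times Y_j$ and their doubles $D_{\widehat{X}_{ij}}$. Since completed tensor products of admissible ideals of definition are compatible with Cartesian products of closed subschemes (Lemma \ref{lem:prodcomp}), the big diagram of formal schemes commutes and every square is compatible with the ambient closed subschemes $Y_i$. In particular the two natural paths from $\widehat{X}_3$ to $\widehat{X}_1$, one going through $\widehat{X}_{13}$ and the other through $\widehat{X}_{23}$ followed by $\widehat{X}_{12}$, commute on the nose as morphisms of formal schemes.

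Next I will propagate this commutativity to the induced morphisms of $K$-spaces mod $Y$. For the flat pieces $\widehat{pr}^{*}$ and $\alpha^{*}$, the verification given in the proof of Theorem \ref{thm:moving pull-back mod Y} shows that they respect mod-$Y$ equivalence of pairs $(\mathcal{M}_1,\mathcal{M}_2)$, and moreover flat pull-backs compose strictly (no higher Tor corrections), so the relevant compatibilities hold as equalities of honest functors between the triangulated categories $\mathcal{V}^{q}(D_{\widehat{X}_{\bullet}}, Y_{\bullet}, n)$ and on the nose between the Waldhausen $K$-spaces $\mathcal{S}^{q}(D_{\widehat{X}_{\bullet}}, Y_{\bullet}, n)$ and $\mathcal{G}^{q}(\widehat{X}_{\bullet}, n)$. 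For the Gysin pieces $\widehat{\iota}^{*}$, one uses that the composition of two closed immersions of regular formal schemes is again such a closed immersion, together with the essential surjectivity of the inclusion \eqref{eqn:X1 subcat} (Proposition \ref{prop:D ess}) applied iteratively, so that the moving lemma lets one represent any object of $\mathcal{D}^q_{\coh}(\widehat{X}_3, n)$ by one in general position with respect to both $\widehat{X}_{23}'$ and $\widehat{X}_{13}'$ simultaneously; on such representatives the two iterated Gysin restrictions agree with the single Gysin restriction along the composed immersion.

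Passing to homotopy coequalizers and then to geometric realizations of the cubical spaces, these strict equalities and weak-equivalences assemble into the identity $(\widehat{f}_2 \circ \widehat{f}_1)^{*} = \widehat{f}_1^{*} \circ \widehat{f}_2^{*}$ in the homotopy category, at both the level of each $n$ and after realization, yielding both equalities in \eqref{eqn:pull-back transitive mod}. The main obstacle I anticipate is the Gysin step: showing that simultaneous general position of a given admissible coherent sheaf with respect to two nested regular closed formal subschemes can be achieved without altering the mod-$Y$ class. This should be handled by applying Proposition \ref{prop:D ess} first along $\widehat{X}_{23}' \hookrightarrow \widehat{X}_{23}$ and then along $\widehat{X}_{13}' \hookrightarrow \widehat{X}_{13}$, combined with the observation (used throughout \S \ref{sec:mod Y spaces}) that the translations and Zariski-style gluings from \S \ref{sec:cnv 1} preserve pairs of mod-$Y$ equivalent perfect complexes because the derived pull-back to $Y \times \square^{n}$ commutes with these operations.
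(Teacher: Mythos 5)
Your proposal is correct in spirit but takes a substantially more elaborate route than the paper. The paper's proof is essentially one line: since the morphisms $\widehat{f}_i^*$ on $\mathcal{G}^q(\widehat{X}_i, Y_i, n)$ are, by the construction in Theorem \ref{thm:moving pull-back mod Y}, induced by morphisms of the coequalizer diagrams $\mathcal{S}^q \rightrightarrows \mathcal{G}^q$, the equality $(\widehat{f}_2 \circ \widehat{f}_1)^* = \widehat{f}_1^* \circ \widehat{f}_2^*$ on the mod-$Y$ spaces is simply what one gets by applying the homotopy coequalizer to the corresponding equality on the underlying $\mathcal{G}^q$-spaces, which is Lemma \ref{lem:moving pull-back composed}. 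You instead re-run the entire prism-diagram argument from Lemma \ref{lem:moving pull-back composed}, carrying the mod-$Y$ structure through every flat pull-back, completion, and Gysin step; this is also a valid strategy and would give a more self-contained proof, but it forgoes the economy of reducing to the already-established Lemma \ref{lem:moving pull-back composed}. One point in your Gysin step reads a bit imprecisely: $\widehat{X}_{23}'$ and $\widehat{X}_{13}'$ do not both sit inside a single ambient formal scheme, so "putting an object of $\mathcal{D}^q_{\coh}(\widehat{X}_3, n)$ in general position with respect to both simultaneously" is not how the zigzags from Corollary \ref{cor:Gysin cnv} combine. The actual mechanism is the step-by-step commutativity of the big diagram in the proof of Lemma \ref{lem:moving pull-back composed}, where the Gysin along the composite immersion $\widehat{X}_1 \hookrightarrow \widehat{X}_{13}'$ is compared, via intermediate flat squares, with the two successive Gysins into $\widehat{X}_2$ and then $\widehat{X}_1$. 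That part would need to be reworked, but since you are ultimately appealing to the same diagram, the gap is expository rather than a flaw in the overall plan.
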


\begin{proof}
Once we know that morphisms in \eqref{eqn:pull-back transitive mod} exist by Theorem \ref{thm:moving pull-back mod Y}, the equality follows from Lemma \ref{lem:moving pull-back composed} immediately.
\end{proof}

 \section{The filtrations}\label{sec:Cech}

 In \S \ref{sec:Cech}, we use a version of the \v{C}ech machine applied to the mod version of the Waldhausen $K$-spaces of Definitions \ref{defn:K-sp mod Y} and \ref{defn:mod Y space realize}, to define the desired filtrations on the algebraic $K$-theory of an arbitrary scheme $Y \in \Sch_k$.
 
 For our purposes, the \v{C}ech hypercohomology machine for presheaves as in R. Thomason \cite[\S 1]{Thomason etale}, is a bit insufficient. We use a structured version of \v{C}ech covers, called \emph{systems of local embeddings}, minted by R. Hartshorne \cite{Hartshorne DR} in his studies of algebraic de Rham cohomology. Unlike the work of Thomason, we limit our discussions only to algebraic $K$-theory.

 \subsection{The extended \v{C}ech construction}
 
The following is a minor variant of the notion from \cite[Remark, p.28]{Hartshorne DR}. The version here is used in \cite{P general} as well:

\begin{defn}\label{defn:system}
Let $Y$ be a $k$-scheme of finite type. A finite collection $\mathcal{U}:= \{ (U_i, X_i)\}_{i \in \Lambda}$ consisting of 
\begin{enumerate}
\item the underlying quasi-affine open cover $|\mathcal{U}|:= \{ U_i \}_{i \in \Lambda}$ of $Y$, and
\item a closed immersion $U_i \hookrightarrow X_i$ into an equidimensional smooth $k$-scheme for each $i \in \Lambda$,
\end{enumerate}
is called a \emph{system of local embeddings} for $Y$.
\qed
\end{defn}

N.B. For the rest of the paper, we will use the systems $\mathcal{U}$ such that \emph{each $U_i$ is affine only}, though often we don't say so explicitly.

\medskip

For $p \geq 0$, let $I = (i_0, \cdots, i_{p})\in \Lambda^{p+1}$. We define $U_I:= U_{i_0} \cap \cdots \cap U_{i_{p}}$ and $X_I:= X_{i_0} \times \cdots \times X_{i_{p}}$. Consider the diagonal closed immersion $U_I \hookrightarrow X_I$. Let $\widehat{X}_I$ be the completion of $X_I$ along $U_I$. Since each $U_{i_j}$ is affine, so is $U_I$, because $Y$ is assumed to be separated over $k$.

For the (usual) open cover $|\mathcal{U}|$ of $Y$, the usual \v{C}ech cosimplicial space in the sense of R. Thomason \cite[\S 1]{Thomason etale} is defined to be:
\begin{equation}\label{eqn:usual cech K}
{\mathcal{K}} ^{\bullet} _{|\mathcal{U}|} := \left \{ \prod_{i_0 \in \Lambda} \mathcal{K} (U_{i_0}) \overset{\leftarrow}{ \mathrel{\substack{\textstyle\rightarrow\\[-0.6ex]
                      \textstyle\rightarrow }}}   \prod_{I \in \Lambda^2} \mathcal{K} (U_I)
 \overset{\leftarrow}{ \mathrel{\substack{\textstyle\rightarrow\\[-0.6ex]
                      \textstyle\rightarrow \\[-0.6ex]
                      \textstyle\rightarrow}} } \prod_{I \in \Lambda^3} \mathcal{K} (U_I) \cdots \right \}.
\end{equation}
Here, when $U \subset Y$ is open, $\mathcal{K}(U)$ is the usual Waldhausen algebraic $K$-theory space of the scheme $U$, i.e. it is the Waldhausen $K$-space of the triangulated category of perfect complexes on $U$.

\medskip

We want to have its parallel construction for $\mathcal{U}$, involving the completions $\widehat{X}_I$ over the multi-indices $I \in \Lambda^{p+1}$ and $p \geq 0$:

\medskip

For each $ 0 \leq j \leq p$, let $I_j ':= (i_0, \cdots, \check{i}_j, \cdots, i_{p}) \in \Lambda^p$, i.e. we omit $i_j$. The open inclusion $U_I \hookrightarrow U_{I_j'}$ and the projection $X_I \to X_{I_j'}$ form the commutative diagram
$$
\xymatrix{
X_I \ar[r] & X_{I_j'} \\
U_I \ar[r] \ar@{^{(}->}[u] & U_{I_j'}. \ar@{^{(}->}[u]
}
$$
By Theorem \ref{thm:moving pull-back mod Y}, this diagram induces the morphism
\begin{equation}\label{eqn:coface 1}
\mathcal{G}^q ( \widehat{X}_{I_j'}, U_{I_j'}) \to \mathcal{G}^q ( \widehat{X}_I, U_I)
\end{equation}
 in the homotopy category, where $\mathcal{G}^q (-,-)$ are as in Definition \ref{defn:mod Y space realize}.

Thus for $0 \leq j \leq p$, we have the coface maps in the homotopy category
\begin{equation}\label{eqn:coface 2}
\delta_j: \prod_{I \in \Lambda^p} \mathcal{G}^q (\widehat{X}_I, U_I) \to \prod_{I \in \Lambda^{p+1}} \mathcal{G}^q (\widehat{X}_I, U_I),
\end{equation}
defined as follows: for each component at $I \in \Lambda^{p+1}$ of the right hand side, $\delta_j$ is given first by taking the projection to the factor $\mathcal{G}^p (\widehat{X}_{I_j'}, U_{I_j'} )$ for $I_j' \in \Lambda^p$ of the left hand side, and then applying \eqref{eqn:coface 1}.

\medskip

We have the codegeneracies as well: for a given $I=(i_0, \cdots, i_{p-1}) \in \Lambda ^{p}$ and for each $0 \leq j \leq p$, consider $I_j '':= (i_0, \cdots, i_j, i_j, \cdots, i_{p-1}) \in \Lambda^{p+1}$, i.e. we insert a copy of $i_j$ at the $(j+1)$-th place, and shift the remaining ones to the next position. In this case $U_{I} = U_{I_j''}$, while we have the diagonal closed immersion $X_I \hookrightarrow X_{I_j''} $ induced from $X_{i_j} \hookrightarrow X_{i_j} \times X_{i_j}$. They form the commutative diagram
$$
\xymatrix{
X_{I} \ar@{^{(}->}[r] & X_{I_j ''} \\
U_I \ar@{=}[r] \ar@{^{(}->}[u] & U_{I_j''}. \ar@{^{(}->}[u]
}
$$
Thus by the Theorem \ref{thm:moving pull-back mod Y}, this diagram induces the morphism
\begin{equation}\label{eqn:codegeneracy 1}
\mathcal{G}^q (\widehat{X}_{I_j''}, U_{I_j ''}) \to \mathcal{G}^q (\widehat{X}_I, U_I)
\end{equation}
 in the homotopy category. Thus for $0 \leq j \leq p-1$, we have the codegeneracy maps
\begin{equation}\label{eqn:codegeneracy 2}
s_j: \prod_{I \in \Lambda^{p+1}} \mathcal{G} ^q (\widehat{X}_I, U_I) \to \prod_{I \in \Lambda^p} \mathcal{G}^q (\widehat{X}_I, U_I)
\end{equation}
 in the homotopy category, which is defined as follows: for the component at $I \in \Lambda^p$ of the right hand side, $s_j$ is given first by taking the projection to the factor $\mathcal{G}^q (\widehat{X}_{I_j ''}, U_{I_j ''})$ for $I_j'' \in \Lambda^{p+1}$ of the left hand side, and then applying \eqref{eqn:codegeneracy 1}.

One can check straightforwardly (but with tedious calculations) that the above cofaces in \eqref{eqn:coface 2} and codegeneracies \eqref{eqn:codegeneracy 2} indeed satisfy the axioms of a cosimplicial object in the homotopy category of spaces. So, we define:

\begin{defn}\label{defn:cosimplicial}
Let $Y\in \Sch_k$. We have the following cosimplicial space in the homotopy category
\begin{equation}\label{eqn:complete cech K}
\widehat{\mathcal{G}} ^{q, \bullet} _{\mathcal{U}} := \left \{ \prod_{i_0 \in \Lambda} \mathcal{G}^q (\widehat{X}_{i_0}, U_{i_0}) \overset{\leftarrow}{ \mathrel{\substack{\textstyle\rightarrow\\[-0.6ex]
                      \textstyle\rightarrow }}}   \prod_{I \in \Lambda^2} \mathcal{G}^q (\widehat{X}_I, U_I)
 \overset{\leftarrow}{ \mathrel{\substack{\textstyle\rightarrow\\[-0.6ex]
                      \textstyle\rightarrow \\[-0.6ex]
                      \textstyle\rightarrow}} } \prod_{I \in \Lambda^3} \mathcal{G}^q (\widehat{X}_I, U_I) \cdots \right \}.
\end{equation}

Let $p \geq 0$.
Note that for each $I \in \Lambda^{p+1}$, we have the closed immersion $U_I \hookrightarrow \widehat{X}_I$,
and it induces $\mathcal{G}^q (\widehat{X}_I, U_I )  \rightarrow \mathcal{K}(U_I)$ in the homotopy category. 
Comparing \eqref{eqn:usual cech K} and \eqref{eqn:complete cech K} over various $q \geq 0$, we obtain the induced morphisms of cosimplicial objects in the homotopy category
\begin{equation}\label{eqn:complete usual cech K}
\cdots \to \widehat{\mathcal{G}} ^{q, \bullet}_{\mathcal{U}} \to \widehat{\mathcal{G}} ^{q-1, \bullet}_{\mathcal{U}} \to \cdots \to \widehat{\mathcal{G}} ^{ 0, \bullet}_{\mathcal{U}} \to  \mathcal{K} ^{\bullet} _{|\mathcal{U}|},
\end{equation}
where the last $\mathcal{K}^{\bullet}_{|\mathcal{U}|}$ is as in \eqref{eqn:usual cech K}.
\qed
\end{defn}

\begin{defn}\label{defn:Cech hypercohomology}
Let $Y\in \Sch_k$ and let $\mathcal{U}$ be a system of local embeddings for $Y$. For the above cosimplicial objects $\widehat{\mathcal{G}} ^{q, \bullet}_{\mathcal{U}}$ in Definition \ref{defn:cosimplicial}, define the \emph{\v{C}ech hypercohomology space} with respect to the system $\mathcal{U}$ to be
$$
\check{\mathbb{H}}^{\cdot} (\mathcal{U}, \widehat{\mathcal{G}}^q):= \underset{\Delta}{\holim}\  \widehat{\mathcal{G}} ^{q, \bullet} _{\mathcal{U}}.
$$
Recall that for \eqref{eqn:usual cech K}, we similarly had (see R. Thomason \cite[(1.5), p.443]{Thomason etale})
$$
\check{\mathbb{H}}^{\cdot} (|\mathcal{U}|, \mathcal{K}):= \underset{\Delta}{\holim} \  \mathcal{K} ^{ \bullet} _{|\mathcal{U}|}.
$$
The morphisms \eqref{eqn:complete usual cech K} induce the tower
\begin{equation}\label{eqn:complete usual cech K hyper}
\cdots \to \check{\mathbb{H}}^{\cdot} (\mathcal{U},  \widehat{\mathcal{G}}^q) \to \check{\mathbb{H}}^{\cdot} (\mathcal{U},  \widehat{\mathcal{G}}^{q-1}) \to \cdots \to \check{\mathbb{H}}^{\cdot} (\mathcal{U},  \widehat{\mathcal{G}}^0) \to \check{\mathbb{H}}^{\cdot} (|\mathcal{U}|, \mathcal{K})
\end{equation}
in the homotopy category.
\qed
\end{defn}

 \begin{remk}\label{remk:Cech Zariski descent}
 Note that we have the natural morphism $\mathcal{K} (Y) \to \check{\mathbb{H}}^{\cdot} (|\mathcal{U}|, \mathcal{K})$, and it is a weak-equivalence. See Thomason-Trobaugh \cite[Theorem 8.4, Exercise 8.5-(a), pp.373-374]{TT}. 
 \qed
 \end{remk}
 
 \subsection{Refinements}

 For the systems of local embeddings in Definition \ref{defn:system}, we have the following notion of refinements. 
 This is more flexible than a similar notion considered in \cite[Remark, p.28]{Hartshorne DR}, and identical to the corresponding notion considered in \cite[\S 6]{P general}:
 
 \begin{defn}\label{defn:refine}
 Let $Y \in \Sch_k$. Let $\mathcal{U} = \{ (U_i, X_i) \}_{i \in \Lambda}$ and $\mathcal{V}= \{ (V_j, X_j ') \}_{ j \in \Lambda'}$ be systems of local embeddings for $Y$. We say that $\mathcal{V}$ is a refinement system of $\mathcal{U}$, if there is a set map $\lambda: \Lambda' \to \Lambda$ such that
 \begin{enumerate}
 \item for each $j \in \Lambda'$, we have $V_j \subset U_{\lambda (j)}$, and
 \item there is a morphism $f_j : X_j ' \to X_{\lambda (j)}$ that restricts to $V_j \hookrightarrow U_{\lambda (j)}.$ \qed
 \end{enumerate}
 \end{defn}
 
 Some of the basic lemmas in the subsection are also in \cite[\S 6]{P general}. We included their arguments here for self-containedness of this article.

 \begin{lem}\label{lem:comref}
 Let $Y \in \Sch_k$. Let $\mathcal{U} = \{ (U_i, X_i) \}_{i \in \Lambda}$ and $\mathcal{V}= \{ (V_j, X_j ') \}_{ j \in \Lambda'}$ be systems of local embeddings for $Y$. Suppose all $U_i$ and $V_j$ are affine open in $Y$. 
 
 Then there exists a common refinement system $\mathcal{W}$ of both $\mathcal{U}$ and $\mathcal{V}$.
 \end{lem}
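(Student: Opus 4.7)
The plan is to index a common refinement by pairs $(i,j) \in \Lambda \times \Lambda'$. For each such pair, set $W_{ij} := U_i \cap V_j \subset Y$. Because $Y$ is separated over $k$, the intersection of two affine opens is again affine, so $W_{ij}$ is affine; and since $\{U_i\}_{i}$ and $\{V_j\}_{j}$ each cover $Y$, so does $\{W_{ij}\}_{(i,j)}$.

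Next I would produce a smooth equidimensional ambient scheme for each $W_{ij}$. Observe that $U_i \setminus W_{ij}$ is closed in $U_i$ (as the complement of the open $W_{ij} \subset U_i$), and $U_i$ is closed in $X_i$, so $U_i \setminus W_{ij}$ is closed in $X_i$. Thus $X_i^{(ij)} := X_i \setminus (U_i \setminus W_{ij})$ is open in $X_i$ (hence smooth and equidimensional over $k$), and by construction $X_i^{(ij)} \cap U_i = W_{ij}$, so that $W_{ij} \hookrightarrow X_i^{(ij)}$ is a closed immersion. Define $X_j'^{(ij)} \subset X_j'$ symmetrically, and set $Z_{ij} := X_i^{(ij)} \times_k X_j'^{(ij)}$. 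Then $Z_{ij}$ is smooth and equidimensional over $k$. The composite
\[
W_{ij} \xrightarrow{\,\Delta\,} W_{ij} \times_k W_{ij} \hookrightarrow X_i^{(ij)} \times_k X_j'^{(ij)} = Z_{ij}
\]
is a closed immersion: the diagonal $\Delta$ is closed since $W_{ij}$, being open in the separated scheme $Y$, is separated over $k$; and the second map is a product of two closed immersions, hence itself a closed immersion. This equips $(W_{ij}, Z_{ij})$ with the data required of a local embedding.

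Set $\mathcal{W} := \{(W_{ij}, Z_{ij})\}_{(i,j)\in \Lambda \times \Lambda'}$. To see that $\mathcal{W}$ refines $\mathcal{U}$, take $\lambda: \Lambda \times \Lambda' \to \Lambda$ to be the first projection; then $W_{ij} \subset U_i = U_{\lambda(i,j)}$, and the morphism
\[
f_{(i,j)} : Z_{ij} \xrightarrow{\ \mathrm{pr}_1\ } X_i^{(ij)} \hookrightarrow X_i
\]
restricts along the closed immersion $W_{ij} \hookrightarrow Z_{ij}$ to the inclusion $W_{ij} \hookrightarrow U_i$, verifying Definition~\ref{defn:refine}. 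An entirely symmetric argument using the second projection shows that $\mathcal{W}$ also refines $\mathcal{V}$, so $\mathcal{W}$ is the desired common refinement.

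I do not anticipate a real obstacle here; the argument is a straightforward elaboration of the fact that the intersection of two affines in a separated scheme is affine, combined with the observation that the ambient smooth scheme can be shrunk in each factor so as to make $W_{ij}$ genuinely closed. The one point to watch is that one must cut down each $X_i$ to $X_i^{(ij)}$ before taking the product, since otherwise the image of $W_{ij}$ in $X_i \times_k X_j'$ would only be locally closed rather than closed.
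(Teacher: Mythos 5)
Your proof is correct and follows the same route as the paper's: refine over the index set $\Lambda\times\Lambda'$ with $W_{ij}=U_i\cap V_j$, shrink each ambient $X_i$ (resp. $X_j'$) to an open subscheme in which $W_{ij}$ is closed, take the product, and embed $W_{ij}$ diagonally, with refinement maps given by the two projections. The only difference is cosmetic — you write out the explicit open subset $X_i\setminus(U_i\setminus W_{ij})$, whereas the paper simply asserts its existence via the subspace-topology observation.
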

 
 \begin{proof}
 
  Let $\Lambda'':= \Lambda \times \Lambda'$. For $(i, j) \in \Lambda''$, let $W_{ij}:= U_i \cap V_j$. Since $U_i, V_j$ are affine, so is $W_{ij}$. 
   
 Note that we have a closed immersion $U_i \hookrightarrow X_i$, and $W_{ij}$ is open in $U_i$. Since the subspace topology on $U_i$ from $X_i$ agrees with the topology of $U_i$, there is an open subscheme $X_{ij} \subset X_i$ such that we have the induced closed immersion $W_{ij} \hookrightarrow X_{ij}$. Similarly, $W_{ij}$ is open in $V_j$ as well, so that there is an open subscheme $X'_{ij} \subset X'_{j}$ such that we have the induced closed immersion $W_{ij} \hookrightarrow X'_{ij}$.

The two closed immersions $W_{ij} \hookrightarrow X_{ij}, X'_{ij}$ induce the diagonal closed immersion $W_{ij} \hookrightarrow X''_{ij}:= X_{ij} \times_k X'_{ij}$. Let $\mathcal{W}:= \{ (W_{ij}, X''_{ij} ) \}_{(i,j)\in \Lambda''}$. This is a system of local embeddings for $Y$.
 
 Consider the projection maps $\lambda, \lambda': \Lambda'' = \Lambda \times \Lambda' \to \Lambda, \Lambda'$ of the index sets, respectively. The corresponding morphisms
 $$
 \tuborg X''_{ij} = X_{ij} \times X_{ij}' \to X_{ij} \overset{op}{\hookrightarrow} X_i, \\
 X''_{ij}= X_{ij} \times X_{ij} ' \to  X_{ij}' \overset{op}{\hookrightarrow} X_j'
 \sluttuborg
 $$
 are given by projections followed by open immersions. Hence $\mathcal{W}$ is a refinement of both $\mathcal{U}$ and $\mathcal{V}$. 
 \end{proof}
 
 \begin{lem}\label{lem:refinement induce}
 Let $Y \in \Sch_k$. Let $\mathcal{U} = \{ (U_i, X_i) \}_{i \in \Lambda}$ and $\mathcal{V}= \{ (V_j, X_j ') \}_{ j \in \Lambda'}$ be systems of local embeddings for $Y$ such that $\mathcal{V}$ is a refinement of $\mathcal{U}$ for a set map $\lambda: \Lambda' \to \Lambda$.
 
 Then there exists a morphism of cosimplicial objects
 \begin{equation}\label{eqn:refinement map *}
 \lambda^*: \widehat{\mathcal{G}} ^{q, \bullet} _{\mathcal{U}} \to  \widehat{\mathcal{G}} ^{q, \bullet} _{\mathcal{V}}
\end{equation}
 in the homotopy category. In particular, we have the induced morphism of the \v{C}ech hypercohomology spaces in the homotopy category
 $$
 \lambda^*: \check{\mathbb{H}}^{\cdot} (\mathcal{U},  \widehat{\mathcal{G}}^q) \to  \check{\mathbb{H}}^{\cdot} (\mathcal{V},  \widehat{\mathcal{G}}^q).
 $$
 
 Furthermore, this is compatible with the morphisms $ \widehat{\mathcal{G}} ^{q, \bullet} _{ - } \to  \widehat{\mathcal{G}} ^{q-1, \bullet} _{ - } $, where $(-) = \mathcal{U}, \mathcal{V}$.
 \end{lem}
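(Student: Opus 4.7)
The plan is to build the morphism $\lambda^*$ level-wise and then assemble. For each $p\geq 0$ and each tuple $J=(j_0, \ldots, j_p) \in (\Lambda')^{p+1}$, set $\lambda(J) := (\lambda(j_0), \ldots, \lambda(j_p)) \in \Lambda^{p+1}$. The refinement data give the inclusion $V_J \subset U_{\lambda(J)}$ and a product morphism $f_J := f_{j_0} \times \cdots \times f_{j_p} : X'_J \to X_{\lambda(J)}$ that restricts to this inclusion, yielding the commutative square
$$
\xymatrix{
X'_J \ar[r]^{f_J} & X_{\lambda(J)} \\
V_J \ar@{^{(}->}[u] \ar@{^{(}->}[r] & U_{\lambda(J)}. \ar@{^{(}->}[u]
}
$$
By \thmref{thm:moving pull-back mod Y}, this produces a morphism $\widehat{f}_J^*: \mathcal{G}^q(\widehat{X}_{\lambda(J)}, U_{\lambda(J)}) \to \mathcal{G}^q(\widehat{X}'_J, V_J)$ in the homotopy category. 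Composing with the projection onto the $\lambda(J)$-factor of the source product and taking the product over $J \in (\Lambda')^{p+1}$, I obtain the level-$p$ map
$$
\lambda^*_p: \prod_{I \in \Lambda^{p+1}} \mathcal{G}^q(\widehat{X}_I, U_I) \to \prod_{J \in (\Lambda')^{p+1}} \mathcal{G}^q(\widehat{X}'_J, V_J).
$$

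To upgrade $\{\lambda^*_p\}_p$ into a morphism of cosimplicial objects in the homotopy category, I would verify that it intertwines the cofaces $\delta_j$ of \eqref{eqn:coface 2} and the codegeneracies $s_j$ of \eqref{eqn:codegeneracy 2}. Each required naturality square reduces, after unwinding definitions, to a commutative diagram of schemes
$$
\xymatrix{
X'_{J'} \ar[r]^{f_{J'}} \ar[d] & X_{\lambda(J')} \ar[d] \\
X'_{J} \ar[r]^{f_J} & X_{\lambda(J)}
}
$$
with vertical arrows either projection maps (omission of an index, for $\delta_j$) or diagonal maps (duplication of an index, for $s_j$). Such squares commute on the nose, since each $f_{j_s}$ is coordinate-wise and the face/degeneracy maps are the corresponding coordinate projections or diagonals. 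Applying \lemref{lem:moving pull-back composed mod} to the two compositions around each square then yields equality in the homotopy category of the two induced morphisms $\mathcal{G}^q(\widehat{X}_{\lambda(J')}, U_{\lambda(J')}) \to \mathcal{G}^q(\widehat{X}'_J, V_J)$, giving the required cosimplicial compatibility.

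Finally, applying $\holim_\Delta$ to $\lambda^*$ yields the induced morphism $\check{\mathbb{H}}^{\cdot}(\mathcal{U}, \widehat{\mathcal{G}}^q) \to \check{\mathbb{H}}^{\cdot}(\mathcal{V}, \widehat{\mathcal{G}}^q)$. Compatibility with the tower $\widehat{\mathcal{G}}^{q,\bullet}_{-} \to \widehat{\mathcal{G}}^{q-1,\bullet}_{-}$ is built in at the level of each $\widehat{f}_J^*$, by the second assertion of \thmref{thm:moving pull-back} (whose mod-$Y$ analogue is immediate from the construction in \thmref{thm:moving pull-back mod Y}, since the coequalizers in Definition \ref{defn:K-sp mod Y} are natural in $q$). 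The main obstacle I expect is purely organizational: keeping straight, across all face and degeneracy indices and all tuples $J$, the identification of the combinatorial operations on $(\Lambda')$-tuples and $\Lambda$-tuples via $\lambda$. Once the indexing is set up carefully, every homotopy-category identity needed reduces to a single application of \lemref{lem:moving pull-back composed mod}.
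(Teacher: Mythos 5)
Your proof is correct and follows essentially the same route as the paper's: level-wise maps $\lambda^*_p$ built from the commuting square $X'_J \to X_{\lambda(J)}$ over $V_J \hookrightarrow U_{\lambda(J)}$ via Theorem~\ref{thm:moving pull-back mod Y}, then checking cosimplicial compatibility and applying $\holim_\Delta$. You supply more detail than the paper on the coface/codegeneracy compatibility (reducing it to commuting scheme squares and invoking Lemma~\ref{lem:moving pull-back composed mod}), which the paper leaves as ``one checks''.
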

 
 \begin{proof}
 For $p \geq 0$, let $J \in ( \Lambda')^{p+1}$ so that $ \lambda (J) \in \Lambda^{p+1}$. Then we have the associated commutative diagram
 $$
 \xymatrix{ 
 X'_J \ar[r] & X_{\lambda (J)} \\
 V_J \ar[r] \ar@{^{(}->}[u] & U_{\lambda (J)}, \ar@{^{(}->}[u]
 }
 $$
 which induces (by Theorem \ref{thm:moving pull-back mod Y}) the morphism of spaces
 \begin{equation}\label{eqn:refinement map p}
 \mathcal{G} ^q (\widehat{X}_{\lambda (J)}, U_{\lambda (J)}) \to \mathcal{G}^q (\widehat{X}_J', V_J)
 \end{equation}
 in the homotopy category. They induce the morphism in the homotopy category
 $$
 \lambda_p ^*: \prod_{I \in \Lambda^{p+1}}  \mathcal{G} ^q (\widehat{X}_I, U_I ) \to \prod_{J  \in (\Lambda')^{p+1}} \mathcal{G}^q (\widehat{X}_J ', V_J),
 $$
which is defined as follows: for the $J$-th factor on the right hand side, first taking the projection to the factor at $\lambda (J) \in \Lambda^{p+1}$ on the left hand side, and then applying \eqref{eqn:refinement map p}. One checks that these morphisms over $p \geq 0$ are compatible with the cofaces and the codegeneracies. Thus we have the morphism \eqref{eqn:refinement map *} of cosimplicial objects.

The second part follows by applying the homotopy limits of \eqref{eqn:refinement map *} over $\Delta$ as in Definition \ref{defn:Cech hypercohomology}. The last part of the lemma is immediate and we omit details.
 \end{proof}
 
 \begin{lem}\label{lem:refinement well-defined}
  Let $Y \in \Sch_k$. Let $\mathcal{U} = \{ (U_i, X_i) \}_{i \in \Lambda}$ and $\mathcal{V}= \{ (V_j, X_j ') \}_{ j \in \Lambda'}$ be systems of local embeddings for $Y$ such that $\mathcal{V}$ is a refinement of $\mathcal{U}$ for two different set maps $\lambda, \lambda': \Lambda' \to \Lambda$.
  
  Then the maps $\lambda^*$ and $(\lambda' )^*$ are homotopic to each other.
 \end{lem}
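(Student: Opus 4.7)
The plan is to construct an explicit cosimplicial homotopy between $\lambda^*$ and $(\lambda')^*$ at the level of the cosimplicial spaces $\widehat{\mathcal{G}}^{q,\bullet}_{\mathcal{U}}$ and $\widehat{\mathcal{G}}^{q,\bullet}_{\mathcal{V}}$ in the homotopy category, and then pass to the homotopy limit over $\Delta$. A cosimplicial homotopy descends to an honest homotopy on homotopy limits, so this yields the desired equality $\lambda^* = (\lambda')^*$ in the homotopy category on $\check{\mathbb{H}}^{\cdot}(-, \widehat{\mathcal{G}}^q)$. Note that since the construction is uniform in $q$, the homotopy will automatically be compatible with the tower structure on $\widehat{\mathcal{G}}^{q, \bullet}$.

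For each $p \geq 0$, $J = (j_0, \ldots, j_p) \in (\Lambda')^{p+1}$, and $0 \leq s \leq p$, I form the mixed multi-index
$$\mu_s(J) := (\lambda(j_0), \ldots, \lambda(j_s), \lambda'(j_s), \lambda'(j_{s+1}), \ldots, \lambda'(j_p)) \in \Lambda^{p+2}.$$
Since $V_{j_i} \subset U_{\lambda(j_i)} \cap U_{\lambda'(j_i)}$ for each $i$, intersecting gives a closed immersion $V_J \hookrightarrow U_{\mu_s(J)}$. Combining the refinement morphisms $f_{j_i}: X'_{j_i} \to X_{\lambda(j_i)}$ and $f'_{j_i}: X'_{j_i} \to X_{\lambda'(j_i)}$, with a diagonal repetition of $X'_{j_s}$ at the seam, produces a morphism $\phi_s: X'_J \to X_{\mu_s(J)}$ whose restriction to $V_J$ agrees with the closed immersion $V_J \hookrightarrow U_{\mu_s(J)}$. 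By Theorem \ref{thm:moving pull-back mod Y}, this yields a morphism
$$\widehat{\phi}_s^*: \mathcal{G}^q(\widehat{X}_{\mu_s(J)}, U_{\mu_s(J)}) \longrightarrow \mathcal{G}^q(\widehat{X}'_J, V_J)$$
in the homotopy category. Assembling over $J$, these give
$$h^s: \prod_{I \in \Lambda^{p+2}} \mathcal{G}^q(\widehat{X}_I, U_I) \longrightarrow \prod_{J \in (\Lambda')^{p+1}} \mathcal{G}^q(\widehat{X}'_J, V_J), \quad 0 \le s \le p.$$
The key boundary observation: deleting the $0$th entry of $\mu_0(J)$ yields $\lambda'(J)$, while deleting the last entry of $\mu_p(J)$ yields $\lambda(J)$, so composing $h^0$ with the $0$th coface of $\widehat{\mathcal{G}}^{q, \bullet}_{\mathcal{U}}$ recovers $(\lambda')^*$ and composing $h^p$ with the $(p+2)$nd coface recovers $\lambda^*$.

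It remains to verify the standard cosimplicial homotopy identities relating the $h^s$ to the cofaces $\delta_i$ and codegeneracies $\sigma_j$ of $\widehat{\mathcal{G}}^{q,\bullet}_\mathcal{U}$ and $\widehat{\mathcal{G}}^{q,\bullet}_\mathcal{V}$. These identities reduce to combinatorial equalities among the multi-indices $\mu_s(J)$ (e.g., omitting or inserting an entry in $\mu_s(J)$ yields $\mu_{s\pm 1}$ applied to a face or degeneracy of $J$), combined with the transitivity of pullbacks in the homotopy category furnished by Lemma \ref{lem:moving pull-back composed mod}. The main obstacle is purely bookkeeping: each pullback lives only in the homotopy category, so to conclude each identity \emph{on the nose} rather than merely up to higher coherence, one must carefully trace every composite through Theorem \ref{thm:moving pull-back mod Y} and glue via Lemma \ref{lem:moving pull-back composed mod}. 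Once these identities are in hand, the family $(h^s)$ is a cosimplicial homotopy from $\lambda^*$ to $(\lambda')^*$, and applying $\underset{\Delta}{\holim}$ completes the proof.
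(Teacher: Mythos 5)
The paper's proof is simply the one-line citation ``This is standard. cf.~\cite[Lemma 1.20, p.446]{Thomason etale}.'', and your explicit construction of the cosimplicial homotopy via the mixed multi-indices $\mu_s(J)$ is precisely the standard argument being cited, so you have taken the same approach and merely unpacked it. One small slip: the inclusion $V_J \hookrightarrow U_{\mu_s(J)}$ of intersections of open sets is an \emph{open} immersion, not a closed one, though this has no effect on the argument since what Theorem~\ref{thm:moving pull-back mod Y} needs is only the commuting square of $V_J \hookrightarrow X'_J$ over $U_{\mu_s(J)} \hookrightarrow X_{\mu_s(J)}$.
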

 
 \begin{proof}
 This is standard. cf. \cite[Lemma 1.20, p.446]{Thomason etale}.
 \end{proof}

 \subsection{The filtrations}\label{subsec:cnv}
  
  Finally, in \S \ref{subsec:cnv}, we define two filtrations on the algebraic $K$-theory of $Y \in \Sch_k$. 
 
 \begin{defn} \label{def qstep}
 Let $Y \in \Sch_k$. Define in the homotopy category
 $$
 \widehat{\mathcal{G}} ^q_Y:= \underset{\mathcal{U}}{\hocolim} \ \check{\mathbb{H}}^{\cdot} (\mathcal{U}, \widehat{\mathcal{G}} ^q)
 $$
(see Definition \ref{defn:Cech hypercohomology} for $\check{\mathbb{H}}^{\cdot} (\mathcal{U}, \widehat{\mathcal{G}} ^q)$), where the homotopy colimit is taken over all systems $\mathcal{U}$ of local embeddings for $Y$. By Lemmas \ref{lem:comref}, \ref{lem:refinement induce} and \ref{lem:refinement well-defined}, the homotopy colimit over $\mathcal{U}$ is well-defined up to weak-equivalence.

\medskip

Similarly, for a fixed $n \geq 0$, if we use $\mathcal{G}^q (\widehat{X}_I, U_I,n)$ of Definition \ref{defn:K-sp mod Y} in \eqref{eqn:coface 1}-\eqref{eqn:complete cech K} instead of the geometric realization $\mathcal{G}^q (\widehat{X}_I, U_I)$, and modify accordingly the construction in Definitions \ref{defn:cosimplicial} and \ref{defn:Cech hypercohomology}, for $\mathcal{U}$, $q\geq 0$, $n\geq 0$ we obtain $ \check{\mathbb{H}}^{\cdot} (\mathcal{U}, \widehat{\mathcal{G}} ^q,n)$ as well as
$$
 \widehat{\mathcal{G}} ^q_{Y,n}:= \underset{\mathcal{U}}{\hocolim} \ \check{\mathbb{H}}^{\cdot} (\mathcal{U}, \widehat{\mathcal{G}} ^q,n)
 $$
in the homotopy category.
 \qed
 \end{defn}
 
Note that for each system $\mathcal{U}$ and a refinement $\mathcal{V}$ of $\mathcal{U}$, we have the following commutative diagram in the homotopy category
$$
\xymatrix{ 
\cdots \ar[r] & \check{\mathbb{H}}^{\cdot} (\mathcal{U}, \widehat{\mathcal{G}}^q) \ar[d] ^{\lambda^*} \ar[r] & \cdots \ar[r] & \check{\mathbb{H}}^{\cdot} (\mathcal{U}, \widehat{\mathcal{G}}^0) \ar[d] ^{\lambda^*} \ar[r] & \check{\mathbb{H}}^{\cdot} (| \mathcal{U}|, \mathcal{K}) \ar[d] ^{\lambda^*}  & \mathcal{K} (Y) \ar[l]_{\ \ \ w.e.}  \ar[dl]^{w.e.} \\
\cdots \ar[r] & \check{\mathbb{H}}^{\cdot} (\mathcal{V}, \widehat{\mathcal{G}}^q) \ar[r] & \cdots \ar[r] & \check{\mathbb{H}}^{\cdot} (\mathcal{V}, \widehat{\mathcal{G}}^0) \ar[r] & \check{\mathbb{H}}^{\cdot} (| \mathcal{V}|, \mathcal{K}) ,&}
$$
where $w.e.$ stands for weak-equivalences (Remark \ref{remk:Cech Zariski descent}).
So, after taking the homotopy colimits over $\mathcal{U}$ and inverting weak-equivalences, we deduce morphisms in the homotopy category
\begin{equation} \label{final mcnv fil}
\cdots \to  \widehat{\mathcal{G}} ^q_Y \to  \widehat{\mathcal{G}} ^{q-1} _Y\to \cdots \to \widehat{\mathcal{G}} ^0 _Y \to \mathcal{K} (Y).
\end{equation}
Similarly, if we repeat the above arguments with $\check{\mathbb{H}}^{\cdot} (\mathcal{U}, \widehat{\mathcal{G}} ^q,n)$ instead of $\check{\mathbb{H}}^{\cdot} (\mathcal{U}, \widehat{\mathcal{G}} ^q)$ for $n \geq 0$, we have
\begin{equation} \label{final uscnv fil}
\cdots \to \widehat{\mathcal{G}} ^q_{Y,n} \to   \widehat{\mathcal{G}} ^{q-1} _{Y,n}\to \cdots \to  \widehat{\mathcal{G}} ^0 _{Y,n} \to \mathcal{K} (Y).
\end{equation}
For \eqref{final uscnv fil}, we will be primarily interested in the case $n=0$.

\begin{defn}\label{defn:cnv cech}
Let $Y \in \Sch_k$ and let $q \geq 1$ be an integer. Define
$$
F^q_{\cnv} K_n (Y): = {\rm Im} \left(\pi_n (\widehat{\mathcal{G}} ^q _{Y,0}) 
\to \pi_n ( \mathcal{K} (Y)) \right)  = \underset{\mathcal{U}}{\varinjlim} \ {\rm Im} \left( \pi_n (\check{ \mathbb{H}} ( \mathcal{U}, \widehat{\mathcal{G}}^q,0)) \to \pi_n( \mathcal{K} (Y)) \right),
$$
and
$$
F^q_{\mcnv} K_n (Y):= {\rm Im} \left(\pi_n (\widehat{\mathcal{G}} ^q _Y) \to \pi_n ( \mathcal{K} (Y)) \right) = \underset{\mathcal{U}}{\varinjlim} \ {\rm Im} \left( \pi_n (\check{ \mathbb{H}} ( \mathcal{U}, \widehat{\mathcal{G}}^q)) \to \pi_n( \mathcal{K} (Y)) \right).
$$

For $q\leq 0$, we define
$$
F_{\cnv} ^q K_n (Y):= K_n (Y), \ \ \mbox{ and } \ \ 
 F_{\mcnv} ^q K_n (Y):= K_n (Y).
$$
By construction, they form decreasing filtrations
$$ 
K_n (Y) = F^{0} _{\cnv} K_n (Y)  \supset F^1 _{\cnv} K_n (Y)  \supset F^2 _{\cnv} K_n (Y) \supset \cdots,
$$
and
$$
K_n (Y) = F^{0} _{\mcnv} K_n (Y)  \supset F^1 _{\mcnv} K_n (Y) \supset F^2 _{\mcnv} K_n (Y) \supset \cdots,
$$
which we call \emph{the coniveau filtration} and \emph{the motivic coniveau filtration} on $K_n (Y)$, respectively.

By construction, these filtrations are independent of the choice of a particular system $\mathcal{U}$ of local embeddings for $Y$.
\qed
\end{defn}

\subsection{Consistency for the coniveau filtration}\label{sec:cnv sm comp}

In \S \ref{sec:cnv sm comp}, we check the consistency of the coniveau filtration $F_{\rm cnv} ^{\bullet}$ on $K_n (Y)$ in Definition \ref{defn:cnv cech}, when $Y$ is smooth.

\medskip

Consider the tower of spaces constructed by D. Quillen \cite[\S 7.5 Theorem 5.4]{Quillen}:
\begin{equation} \label{Quillen tower}
\cdots \to K (M_q(Y)) \to  K (M_{q-1}(Y))\to \cdots \to K (M_1(Y)) \to K (M_0(Y))=K (Y).
\end{equation}
where $M_q(Y)$ is the exact category consisting of coherent $\mathcal O _Y$-modules with the supports of codimension $\geq q$.

\begin{prop}\label{prop:sm cnv}
Let $Y$ be an equidimensional smooth $k$-scheme, and let $q\geq 1$. 

Then there exists a commutative diagram in the homotopy category 

\begin{align} \label{eqn:Quillen tower comp}
\begin{array}{c}
\xymatrix@C=1.5pc{
\cdots \ar[r] & K (M_q(Y)) \ar[r] \ar[d]&  K (M_{q-1}(Y)) \ar[r] \ar[d]& \cdots \ar[r]&  K (M_1(Y))  \ar[r] \ar[d]&K (Y) \ar[d]_-{w.e.}\\
\cdots \ar[r] & \widehat{\mathcal{G}} ^q_{Y,0} \ar[r]& \widehat{\mathcal{G}} ^{q-1} _{Y,0} \ar[r]& \cdots \ar[r]& \widehat{\mathcal{G}} ^1 _{Y,0} \ar[r]& \mathcal{K} (Y),}
 \end{array}
\end{align}
 where the vertical maps except the last one are weak-equivalences induced by the inclusions $M_q(Y)\subseteq \mathcal{D}_{\coh} ^q (Y)$.

In particular, the coniveau filtration in Definition \ref{defn:cnv cech} coincides with the classical coniveau filtration on $K_n (Y)$ of Quillen.
\end{prop}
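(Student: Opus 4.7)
The strategy is to construct, compatibly with $q$, a zigzag of weak equivalences between Quillen's $K(M_q(Y))$ and $\widehat{\mathcal{G}}^q_{Y,0}$ by restricting the homotopy colimit over systems to a cofinal class where the Čech cosimplicial object is computable. The key advantage in the smooth case is the availability of \emph{tautological} systems of local embeddings: for any affine open cover $\{U_i\}_{i \in \Lambda}$ of $Y$, the collection $\mathcal{U}^{\mathrm{taut}} := \{(U_i, U_i)\}_{i \in \Lambda}$ is a valid system, since $U_i$ is itself smooth and equidimensional. Given any system $\mathcal{V} = \{(V_j, X_j')\}_{j \in \Lambda'}$ for $Y$, one obtains a common refinement with $\mathcal{U}^{\mathrm{taut}}$ by setting $W_{ij} := U_i \cap V_j$ with $X''_{ij} := W_{ij}$ and using the composition $W_{ij} \hookrightarrow V_j \hookrightarrow X_j'$ (together with the open immersion $W_{ij} \hookrightarrow U_i$). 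Hence tautological systems are cofinal, and by Lemmas \ref{lem:comref}--\ref{lem:refinement induce}, $\widehat{\mathcal{G}}^q_{Y,0}$ is computed as $\underset{\mathcal{U}^{\mathrm{taut}}}{\hocolim}\, \check{\mathbb{H}}^{\cdot}(\mathcal{U}^{\mathrm{taut}}, \widehat{\mathcal{G}}^q, 0)$.

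For a tautological system and a singleton index $i_0$, Remark \ref{rem. modY trivial} directly identifies $\mathcal{G}^q(\widehat{X}_{i_0}, U_{i_0}, 0) = \mathcal{G}^q(U_{i_0}, 0) = \mathcal{K}(\mathcal{D}^q_{\coh}(U_{i_0}))$. I would then apply the Gillet--Waldhausen theorem to the inclusion $M_q(U_{i_0}) \hookrightarrow \mathcal{D}^q_{\coh}(U_{i_0})$: since $M_q(U_{i_0})$ is closed under subobjects, quotients, and extensions in $\coh(U_{i_0})$ (codimension of support is preserved under these operations), it is a Serre subcategory, and an immediate d\'evissage on cohomology length shows that $\mathcal{D}^q_{\coh}(U_{i_0})$ coincides with the full subcategory $\mathcal{D}^b_{M_q(U_{i_0})}(\coh U_{i_0})$ of complexes whose cohomology sheaves lie in $M_q(U_{i_0})$. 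Gillet--Waldhausen then gives $K(M_q(U_{i_0})) \xrightarrow{\sim} \mathcal{G}^q(U_{i_0}, 0)$. For a multi-index $I = (i_0,\ldots,i_p)$ with $p \geq 1$, the formal scheme $\widehat{X}_I$ is the completion of the product $U_{i_0} \times \cdots \times U_{i_p}$ along the diagonal $U_I$, with $(\widehat{X}_I)_{\mathrm{red}} = U_I$; I would identify $\mathcal{G}^q(\widehat{X}_I, U_I, 0) \xrightarrow{\sim} K(M_q(U_I))$ by analyzing the homotopy coequalizer in Definition \ref{defn:K-sp mod Y}: the derived Milnor patching of Corollary \ref{cor:pair derived Milnor} forces any two perfect complexes on $\widehat{X}_I$ with isomorphic derived restriction to $U_I$ to become equivalent after coequalization, so the resulting homotopy coequalizer collapses onto the $K$-theory of codimension-$\geq q$ coherent sheaves on the underlying reduced scheme $U_I$.

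Once these per-index equivalences are in place, compatibility with the Čech cofaces \eqref{eqn:coface 2} and codegeneracies \eqref{eqn:codegeneracy 2} follows because both sides of the comparison are functorial under pullbacks along open immersions $U_I \hookrightarrow U_{I'_j}$ and along diagonal morphisms, and these pullbacks preserve $M_q$ (flat pullback does not decrease codimension of support). This yields a weak equivalence of cosimplicial spaces $K(M_q)^{\bullet}_{|\mathcal{U}^{\mathrm{taut}}|} \xrightarrow{\sim} \widehat{\mathcal{G}}^{q,\bullet}_{\mathcal{U}^{\mathrm{taut}}}$. Applying $\underset{\Delta}{\holim}$ together with Zariski descent for the presheaf $U \mapsto K(M_q(U))$ on smooth schemes (a consequence of Quillen's localization sequence together with Brown--Gersten descent for $G$-theory on regular noetherian schemes) gives $K(M_q(Y)) \xrightarrow{\sim} \check{\mathbb{H}}^{\cdot}(\mathcal{U}^{\mathrm{taut}}, \widehat{\mathcal{G}}^q, 0)$. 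Passing to the filtered $\hocolim$ over tautological systems, and combining with the weak-equivalence $\mathcal{K}(Y) \xrightarrow{\sim} \check{\mathbb{H}}^{\cdot}(|\mathcal{U}^{\mathrm{taut}}|, \mathcal{K})$ of Remark \ref{remk:Cech Zariski descent} on the bottom row, produces the commutative diagram \eqref{eqn:Quillen tower comp} with all the vertical arrows weak-equivalences; naturality in $q$ gives the commutativity of each square. The coincidence of filtrations on $K_n(Y)$ is then immediate by taking $\pi_n$ of the images in $\mathcal{K}(Y)$.

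The main obstacle will be the identification $\mathcal{G}^q(\widehat{X}_I, U_I, 0) \simeq K(M_q(U_I))$ for $|I| \geq 2$, where $\widehat{X}_I$ carries genuinely non-trivial nilpotent structure. This is the heart of the comparison: one must verify that the homotopy coequalizer underlying Definition \ref{defn:K-sp mod Y} (built from the derived Milnor patching on the double $D_{\widehat{X}_I}$) does not create or destroy codimension when $\mathcal{D}^q_{\coh}$ is restricted via the mod-$U_I$ equivalence, and that the resulting quotient genuinely sees only the reduced scheme $U_I$. Everything else in the argument is, in principle, formal once this local comparison is established.
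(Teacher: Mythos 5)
Your proposal follows essentially the same high-level architecture as the paper's own proof: use the tautological system $\mathcal{U}_0 = \{(U_i,U_i)\}$ (valid because $Y$ is smooth equidimensional), observe it is cofinal among all systems, apply Remark~\ref{rem. modY trivial} to remove the mod-$Y$ coequalizer, identify $\mathcal{D}^q_{\coh}(U)$ with a bounded derived category of the Serre subcategory $M_q(U)\subset\coh(U)$ so that Gillet--Waldhausen (i.e., the Thomason--Trobaugh comparison theorems, \cite[1.9.8, 1.11.7]{TT}) gives $K(M_q(U))\simeq\mathcal{G}^q(U,0)$, and conclude with Zariski descent for $U\mapsto K(M_q(U))$ and Remark~\ref{remk:Cech Zariski descent} for the bottom row. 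The cofinality step and the final $\pi_n$-passage are also as in the paper (the paper establishes cofinality more directly by noting $\mathcal{U}_0$ is itself a refinement of any $\mathcal{U}$, rather than via a common refinement, but this is cosmetic).

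The one place where you part ways from the published argument, and correctly so, is the multi-index case $|I|\geq 2$. The paper asserts that for the tautological system ``$\widehat{U}_I = U_I$ for each $I\in\Lambda^{p+1}$'' and then invokes Remark~\ref{rem. modY trivial}. You rightly observe that this equality only holds literally for singleton $I$: for $|I|\geq 2$, the scheme $X_I = U_{i_0}\times\cdots\times U_{i_p}$ completed along the diagonal $U_I$ carries the genuine formal thickening of a conormal bundle, so $\widehat{X}_I\neq U_I$ even though $(\widehat{X}_I)_{\red}=U_I$, and Remark~\ref{rem. modY trivial} (which is stated for $X=Y$) does not apply as is. So you have identified a real subtlety. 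However, your proposed patch --- that the Milnor-patching homotopy coequalizer ``forces any two perfect complexes with isomorphic derived restriction to $U_I$ to become equivalent,'' hence ``collapses onto'' $K(M_q(U_I))$ --- is a heuristic, not an argument: a homotopy coequalizer of $K$-spaces built from a span of exact functors does not perform identifications object-by-object in the way the sentence suggests, and no weak-equivalence $\mathcal{G}^q(\widehat{X}_I, U_I, 0)\to\mathcal{G}^q(U_I,0)$ is actually exhibited. You already flag this as the ``main obstacle'' and the ``heart of the comparison,'' which is the right assessment. A workable route would use the flat retraction $\widehat{X}_I\to U_I$ induced by the projection onto one factor (which together with a d\'evissage/coequalizer argument could supply the required weak-equivalence), but as written neither your proposal nor the paper's asserted ``$\widehat{U}_I=U_I$'' supplies a complete proof of this local identification.
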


\begin{proof}Since the classical coniveau filtration $F^q_{top} K_n (Y)$ on $K_n (Y)$ is defined \cite[\S 7.5 Theorem 5.4]{Quillen} as the image of the map $K_n(M_q(Y))\rightarrow K_n(Y)$ induced by applying $\pi_n$ to \eqref{Quillen tower}, it suffices to show the existence of the diagram \eqref{eqn:Quillen tower comp}, where the vertical maps are weak-equivalences.

Let $\mathcal{U} = \{ (U_i, X_i ) \}_{i \in \Lambda}$ be a system of local embeddings for $Y$, with each $U_i \subset Y$ is affine open. Since $Y$ is equidimensional and smooth over $k$, each open set $U_i \subset Y$ of the underlying open cover $| \mathcal{U}| = \{ U_i \}_{i \in \Lambda}$, is also equidimensional and smooth, so that the identity map $U_i \to U_i$ gives a closed immersion into an equidimensional smooth $k$-scheme. 

Hence the collection $\mathcal{U}_{0} := \{ (U_i, U_i) \}_{i \in \Lambda}$ is a system of local embeddings for $Y$, which is also a refinement of $\mathcal{U}$. Furthermore, $\widehat{U}_I = U_I$ for each $I \in \Lambda^{p+1}$, so by Remark \ref{rem. modY trivial} we obtain $\mathcal{G}^q (U_I, U_I, 0)=\mathcal{G}^q (U_I, 0)$ in Definition \ref{defn:K-sp mod Y}.  Thus, $\check{\mathbb{H}}^{\cdot} (\mathcal{U}_0, \widehat{\mathcal{G}}^q,0)= \check{\mathbb{H}}^{\cdot} (| \mathcal{U}|, \mathcal{G}^q (-, 0))$ for the presheaf on $Y_{\rm Zar}$ that assigns $U \mapsto \mathcal{G} ^q (U,0)$, where $\mathcal{G}^q (U,0)$ is as in \eqref{eqn:318-1} applied to the scheme $U$ seen as a formal scheme. 

\medskip
 
 Now, we consider the presheaf $U\mapsto K (M_q(U))$ on $Y_{\rm Zar}$. Observe that the natural  map $K (M_q(Y))\rightarrow \check{\mathbb{H}}^{\cdot} (|\mathcal{U}|, K (M_q(-)))$ is a weak-equivalence; this is standard, but one can argue also by combining the computation of Quillen \cite[(5.5) in Theorem 5.4 \S 7.5]{Quillen} and the descending induction argument of \cite[(10.3.13), (10.3.14), p.386]{TT}  applied to $\check{\mathbb{H}}^{\cdot}$ instead of $\mathbb{H}_{\rm Zar}^{\cdot}$. We shrink details.
 
 Therefore, it suffices to see that for each equidimensional smooth $k$-scheme $U$ the inclusions $M_q(U)\subseteq \mathcal{D}_{\coh} ^q (U)$ induce weak-equivalences $K (M_q(U))\rightarrow \mathcal{G}^q (U,0)$, which fit into a commutative diagram in the homotopy category:
 
 $$
\xymatrix{
\cdots \ar[r] & K (M_q(U)) \ar[r] \ar[d]& K (M_{q-1}(U)) \ar[r] \ar[d]& \cdots \ar[r]&  K (M_1(U))  \ar[r] \ar[d]&K (U) \ar[d]_{w.e.}\\
 \cdots \ar[r] & \mathcal{G}^q (U,0) \ar[r]& \mathcal{G}^{q-1} (U,0) \ar[r]& \cdots \ar[r]&  \mathcal{G}^1 (U,0) \ar[r]& \mathcal{K} (U).}
$$
 
 To check this, we observe that $\mathcal{D}_{\coh} ^q (U,0)=\mathcal{D}_{\coh} ^q (U)$ (see Remark \ref{D0-n equal}), and therefore the inclusion $M_q(U)\subseteq \mathcal{D}_{\coh} ^q (U)$ induces the desired weak-equivalence by \cite[Theorem 1.9.8, p.271]{TT}, \cite[Theorem 1.11.7, p.279]{TT}, and an argument parallel to \cite[Lemmas 3.11, 3.12, Corollary 3.13, pp.316-317]{TT} that compares the Quillen $K$-theory with the Waldhausen $K$-theory. The commutativity of the diagram is clear  since the horizontal maps are induced by the inclusion of the corresponding supports.
\end{proof}

\subsection{Consistency for the motivic coniveau filtration}\label{sec:mcnv sm comp}

Now in \S \ref{sec:mcnv sm comp}, we proceed to check the consistency when $Y$ is smooth for the motivic coniveau filtration $F_{\rm m.cnv} ^{\bullet}$ on $K_n (Y)$ of Definition \ref{defn:cnv cech}.

\medskip

First let's consider the cubical version of the homotopy coniveau tower of M. Levine \cite[\S 2.1]{Levine hcnv} for Quillen's
$K$-theory in what follows.

\begin{defn}\label{defn:S^q}
Let $n \geq 0$. 
Let $\mathcal S _Y^{[q]}(n)$ be the set of closed subsets $Z$ of $Y\times \square ^n$ of codimension $\geq q$,
such that
\begin{equation}\label{eqn:S^q 1}
\mathrm{codim} _{Y\times F}Z\cap(Y\times F)\geq q
\end{equation}
for each face $F \subset \square ^n$ (cf. Definition \ref{defn:HCG}-(\textbf{GP})).\qed
\end{defn}

\begin{remk}\label{remk:S^q z^q diff}
We note that there is a subtle difference between Definition \ref{defn:S^q} and Definition \ref{defn:HCG}-(\textbf{GP}). Recall that in the latter, when $Z$ in $Y \times \square^n$ has the codimension $c$, then we want
\begin{equation}\label{eqn:S^q z^q 1}
{\rm codim}_{Y \times F} ( Z \cap (Y \times F)) \geq c,
\end{equation}
for each face $F \subset \square^n$. However, when an irreducible set $Z$ is in $S^{[q]}_Y (n)$, and its codimension $c$ in $Y \times \square^n$ is $>q$, then we have \eqref{eqn:S^q 1}, but it does not imply \eqref{eqn:S^q z^q 1} in general. 

This is one reason why the proof of Lemma \ref{lem.compfin} later, is more complicated than one might hope at first sight.\qed
\end{remk}

\begin{defn}\label{defn:555}
Given $Z\in \mathcal S _Y^{[q]}(n)$, let $K ^Z (Y\times \square ^n)$ denote the homotopy fiber of the map $j^\ast: K (Y\times \square ^n)\rightarrow K ((Y\times \square ^n) \backslash Z)$ induced by the open immersion $j:(Y\times \square ^n)\backslash Z \rightarrow Y \times \square^n$, where $K(-)$ is Quillen's $K$-space functor.

Let $ K^{[q]}(Y,n)$ be the filtered homotopy colimit
$$
	K^{[q]}(Y,n)=\hocolim _{Z\in \mathcal S _Y^{[q]}(n)} K ^Z (Y\times \square ^n).
$$

By pulling back along the cofaces and codegeneracies of the co-cubical scheme $(\un{n}\mapsto Y\times \square ^n)$, we obtain a cubical space $(\un{n}\mapsto K^{[q]}(Y,n))$, and we will write
$$
 K^{[q]}(Y):= | \un{n} \mapsto K^{[q]} (Y, n)|
 $$
  for its (cubical) geometric realization, as in Remark \ref{remk:geometric realize}.\qed
\end{defn}

Via the inclusions of supports, we deduce the following tower of spaces

\begin{equation} \label{eqn:hconiveautower}
 \cdots \to K^{[q]} (Y) \to   K^{[q-1]} (Y)\to \cdots \to K^{[1]} (Y) \to K^{[0]} (Y)\cong K(Y)
\end{equation}
in the homotopy category, where the last weak-equivalence follows from the $\mathbb A ^1$-invariance of $K$-theory for smooth $k$-schemes.

In order to compare the cubical homotopy coniveau tower \eqref{eqn:hconiveautower} with the simplicial one in M. Levine \cite[\S 2.1]{Levine hcnv}, we consider the following intermediate ``simplicial-cubical" construction.

\begin{defn}\label{defn:553}
Let $q, m, n \geq 0$ 
Let $\mathcal S _Y^{q}(m,n)$ be the set of closed subsets $Z$ of $Y\times \Delta ^m \times \square ^n$ of codimension $\geq q$ such that
$$
\mathrm{codim} _{Y\times F_s \times F_c}Z\cap(Y\times F_s \times F_c)\geq q
$$
for each pair consists of a face $F_s \subset \Delta^m$ and a face $F_c \subset \square^n$.

Let $K^{q}(Y,m,n)$ be the filtered homotopy colimit
$$
	 K^{q}(Y,m,n)=\hocolim _{Z\in \mathcal S _Y^{q}(m,n)} K ^Z (Y\times \Delta ^m \times \square ^n),
$$
which gives a simplicial-cubical space. We will write 
$$ 
\tuborg K^{q,(\un{m})}(Y,n) =|\un{m}\mapsto K^{q}(Y,m,n)|, \\
K^{q,[\un{n}]}(Y,m) = |\un{n}\mapsto  K^{q}(Y,m,n)|,
\sluttuborg
$$
for the geometric realizations of the simplicial and the cubical spaces, respectively. \qed
\end{defn}

Thus, we obtain a cubical (resp. simplicial) space $(\un{n}\mapsto  K^{q,(\un{m})}(Y,n))$ (resp. $(\un{m}\mapsto K^{q,[\un{m}]}(Y,m))$), and since colimits commute among themselves we conclude that the corresponding geometric realizations are isomorphic to each other in the homotopy category:

$$
	|\un{n} \mapsto  K^{q,(\un{m})}(Y,n) | \cong	|\un{m}\mapsto K^{q,[\un{n}]}(Y,m) | := K^{q,(\un{m}),[\un{n}]}(Y).
$$

\begin{lem} \label{lem.simpcubcomp}
Let $Y$ be an equidimensional smooth $k$-scheme, and let $q,m,n\geq 0$ be integers.

Then there are weak-equivalences
\begin{equation} \label{eqn:simp-cub we}
	 K ^{[q]}(Y)\rightarrow K^{q,(\un{m}),[\un{n}]}(Y) \leftarrow K^{(q)}(Y),
\end{equation}
that are natural in $Y$, where $ K^{(q)}(Y)$ is the $q$-th term in the simplicial homotopy coniveau tower of M. Levine \cite[\S 2.1]{Levine hcnv}.
\end{lem}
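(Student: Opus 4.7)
The plan is to reduce both weak-equivalences in \eqref{eqn:simp-cub we} to the $\mathbb{A}^1$-invariance of algebraic $K$-theory with supports on smooth $k$-schemes, applied separately in the simplicial and cubical directions of the bi-index object $K^q(Y,m,n)$. At $m=0$ one has $\mathcal{S}_Y^q(0,n)=\mathcal{S}_Y^{[q]}(n)$ and hence $K^q(Y,0,n)=K^{[q]}(Y,n)$, so the left arrow of \eqref{eqn:simp-cub we} arises by including the $m=0$ slice into the simplicial-cubical object $(m,n)\mapsto K^q(Y,m,n)$ followed by both realizations. Symmetrically, at $n=0$ one has $\mathcal{S}_Y^q(m,0)$ coinciding with the support system of M.~Levine in \cite[\S 2.1]{Levine hcnv} and $K^q(Y,m,0)=K^{(q)}(Y,m)$, so the right arrow arises by including the $n=0$ slice.

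For the left arrow, I would invoke the compatibility of the total bi-index realization with each partial realization to reduce, for each fixed $n$, to showing that the natural map
$$
K^{[q]}(Y,n)=K^q(Y,0,n)\longrightarrow |\un{m}\mapsto K^q(Y,m,n)|
$$
is a weak-equivalence, and then realize over $n$. The key input is the $\mathbb{A}^1$-invariance of $K$-theory with supports along the flat projection $Y\times\Delta^m\times\sq^n\to Y\times\sq^n$, whose source and target are both smooth $k$-schemes and whose fibre is $\Delta^m\simeq\mathbb{A}^m$; together with a standard extra-degeneracy / cosimplicial-contraction argument applied to the cosimplicial scheme $\Delta^{\bullet}$, this forces the simplicial realization to collapse onto its $0$-simplex. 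The right arrow is treated symmetrically by fixing $m$ and proving
$$
K^{(q)}(Y,m)=K^q(Y,m,0)\longrightarrow |\un{n}\mapsto K^q(Y,m,n)|
$$
is a weak-equivalence, now via the cubical analogue using that $\sq$ is smooth and $\mathbb{A}^1$-contractible, so that pullback along $Y\times\Delta^m\times\sq^n\to Y\times\Delta^m$ induces an $\mathbb{A}^1$-equivalence on $K$-theory with supports. Naturality in $Y$ is automatic from the functoriality of all constructions under morphisms of smooth $k$-schemes.

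The main obstacle, I expect, is the cofinality/moving step implicit in each of the above $\mathbb{A}^1$-invariance arguments at the level of supports. As flagged in Remark \ref{remk:S^q z^q diff}, the codimension-$\geq q$ condition defining $\mathcal{S}_Y^{[q]}(n)$ and $\mathcal{S}_Y^q(m,n)$ is strictly weaker than the exact-codimension-and-proper-intersection condition of Definition \ref{defn:HCG}, so pullback supports of the form $Z\times\Delta^m$ with $Z\in\mathcal{S}_Y^{[q]}(n)$ do not exhaust $\mathcal{S}_Y^q(m,n)$, and similarly in the cubical direction. I would handle this by invoking Levine's moving lemma for the homotopy coniveau filtration of smooth schemes \cite[\S 2.6]{Levine hcnv}, applied to the smooth ambient schemes $Y\times\sq^n$ and $Y\times\Delta^m$, to guarantee that every member of the bi-index support system is cofinally represented, up to weak-equivalence on the homotopy colimit, by a support pulled back from the appropriate boundary slice. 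With this cofinality in hand, the two $\mathbb{A}^1$-invariance arguments furnish both weak-equivalences of \eqref{eqn:simp-cub we}.
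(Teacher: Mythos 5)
Your plan follows the same overall outline as the paper's proof: both weak-equivalences are obtained by exhibiting $K^{[q]}(Y)$ (resp.\ $K^{(q)}(Y)$) as the $m=0$ (resp.\ $n=0$) boundary slice of the bi-index object, and the key inputs are $\mathbb{A}^1$-invariance plus a cofinality/moving step to account for the fact that supports do not simply pull back across the realization. You also correctly identify the subtlety flagged in Remark~\ref{remk:S^q z^q diff}: that is the substantive content here, and you are right that without it the naive ``extra-degeneracy / $\mathbb{A}^1$-contraction'' argument does not close.

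Where you diverge from the paper is in how this moving step is packaged. You propose to cite the moving lemma of \cite[\S 2.6]{Levine hcnv} and argue that supports pulled back from the boundary slice are cofinal; the paper instead realizes over $\un{m}$ first and then invokes M.~Levine \cite[Proposition 3.3.4]{Levine Chow} (with $A=k$), which is precisely the statement that the constrained support system ${\mathcal S}^q_Y(m,n)$ and Levine's unconstrained one on $Y\times\square^n\times\Delta^m$ give weak-equivalent $K$-theory with supports after geometric realization over $\Delta^\bullet$, together with \cite[Theorem 3.3.5]{Levine Chow} for the homotopy invariance that makes the remaining cubical direction degenerate. That is a sharper reference: \cite[\S 2.6]{Levine hcnv} gives a moving lemma of the right flavor, but the specific ``two support systems yield equivalent $K$-theory after simplicial realization'' comparison you need is exactly \cite[Proposition 3.3.4]{Levine Chow}, so your citation should be redirected there (and the cubical half handled by its \emph{mutatis mutandis} analogue, as the paper notes). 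Your description of the moving step as producing ``cofinality by pullback supports'' is also a mild misstatement of how the argument actually proceeds --- one does not move an individual support to a pullback, one shows the inclusion of the constrained support system induces a weak-equivalence on the realization --- but that is a matter of formulation, not a gap in the plan.
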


\begin{proof}
We apply M. Levine \cite[Proposition 3.3.4]{Levine Chow} for $A=k$ to conclude that there is a weak-equivalence $K^{q,(\un{m})}(Y,n)\rightarrow K ^{(q)}(Y\times \square ^n)$, which is natural in $Y$. Then \cite[Theorem 3.3.5]{Levine Chow} implies that the degeneracies $K^{q,(\un{m})}(Y,n) \rightarrow K^{q,(\un{m})}(Y,n+1)$ are weak-equivalences for $n\geq 1$, so all the $n$-cubes in the cubical space $(\un{n}\mapsto K^{q,(\un{m})}(Y,n))$ are degenerate for $n\geq 1$. Hence the map from the $0$-cubes into the geometric realization gives the first desired weak-equivalence of \eqref{eqn:simp-cub we}
\begin{equation}\label{eqn:simpcubcomp1}
K^{(q)}(Y)= K^{q,(\un{m})}(Y,0)\rightarrow  |\un{n} \mapsto  K^{q,(\un{m})}(Y,n) | = K^{q,(\un{m}),[\un{n}]}(Y).
\end{equation}

To construct the remaining weak-equivalence of \eqref{eqn:simp-cub we}, we use the cubical analogue of \cite[Proposition 3.3.4, Theorem 3.3.5]{Levine Chow}, \emph{mutatis mutandis} and argue similarly. We shrink details. 

As a result, we deduce the map from the $0$-simplices into the geometric realization is the second desired weak-equivalence of \eqref{eqn:simp-cub we}:
\begin{equation}\label{eqn:simpcubcomp2}
K^{[q]}(Y)= K^{q,[\un{n}]}(Y,0)\rightarrow  |\un{m}\mapsto  K^{q,[\un{n}]}(Y,m) | = K^{q,(\un{m}),[\un{n}]}(Y).
\end{equation}
Combining the weak-equivalences in \eqref{eqn:simpcubcomp1} and \eqref{eqn:simpcubcomp2}, we have the lemma.
\end{proof}

We can now state the comparison result:

\begin{prop}\label{prop:sm m.cnv}
Let $Y$ be an equidimensional smooth $k$-scheme, $q\geq 1$. Then there exists a commutative diagram in the homotopy category 

\begin{equation} \label{eqn:coniveau tower comp}
\begin{array}{c}
\xymatrix{
\cdots \ar[r] & K^{[ q]} (Y) \ar[r] \ar[d]& K^{[ q-1]} (Y) \ar[r] \ar[d]& \cdots \ar[r]&   K^{[ 1]} (Y)  \ar[r] \ar[d]& K (Y) \ar[d]_{w.e.}\\
\cdots \ar[r] &  \widehat{\mathcal{G}} ^q_{Y} \ar[r]&  \widehat{\mathcal{G}} ^{q-1} _{Y} \ar[r]& \cdots \ar[r]&   \widehat{\mathcal{G}} ^1 _{Y} \ar[r]& \mathcal{K} (Y),}
 \end{array}
\end{equation}
where the vertical maps are weak-equivalences:

In particular, the motivic coniveau filtration in Definition \ref{defn:cnv cech} coincides with the homotopy coniveau filtration on $K_n (Y)$ of M. Levine \cite[\S 2.1]{Levine hcnv}.
\end{prop}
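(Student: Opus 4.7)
The plan mirrors the strategy of Proposition~\ref{prop:sm cnv}, but worked with the full cubical realization rather than just the $n=0$ level, and then glued to Levine's tower via Lemma~\ref{lem.simpcubcomp}. First I would choose an affine open cover $|\mathcal{U}|=\{U_i\}_{i\in\Lambda}$ of $Y$; since $Y$ is equidimensional and smooth, each $U_i$ is smooth, so the identity maps make $\mathcal{U}_0:=\{(U_i,U_i)\}_{i\in\Lambda}$ into a system of local embeddings refining every other system. For each $I\in\Lambda^{p+1}$ we have $\widehat{U}_I=U_I$, and Remark~\ref{rem. modY trivial} gives $\mathcal{G}^q(\widehat{U}_I,U_I,n)=\mathcal{G}^q(U_I,n)$. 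Consequently
\[
\check{\mathbb{H}}^{\cdot}(\mathcal{U}_0,\widehat{\mathcal{G}}^q)\;\simeq\;\check{\mathbb{H}}^{\cdot}(|\mathcal{U}|,\mathcal{G}^q(-)),
\]
for the presheaf $U\mapsto\mathcal{G}^q(U):=|\un{n}\mapsto\mathcal{G}^q(U,n)|$ on $Y_{\Zar}$, and the homotopy colimit defining $\widehat{\mathcal{G}}^q_Y$ collapses up to weak-equivalence to this right-hand side by Lemmas~\ref{lem:comref}, \ref{lem:refinement induce}, \ref{lem:refinement well-defined}.

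Next I would establish Zariski descent for Levine's cubical tower: the presheaf $U\mapsto K^{[q]}(U)$ on $Y_{\Zar}$ satisfies $K^{[q]}(Y)\overset{w.e.}{\to}\check{\mathbb{H}}^{\cdot}(|\mathcal{U}|,K^{[q]}(-))$. This follows from Lemma~\ref{lem.simpcubcomp} (which identifies $K^{[q]}$ up to weak equivalence with the simplicial-cubical model $K^{q,(\un{m}),[\un{n}]}$, whence with Levine's $K^{(q)}$), combined with the Zariski descent for $K^{(q)}$ established in M.~Levine's papers, plus the standard descending induction \cite[(10.3.13), (10.3.14), p.386]{TT}. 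Hence it suffices to produce, for each smooth affine $U$, a natural weak-equivalence
\[
K^{[q]}(U)\;\longrightarrow\;\mathcal{G}^q(U)
\]
compatible with the tower structure in $q$ and with the terminal map to $\mathcal{K}(U)$.

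At the bottom of this comparison, for each fixed $n\geq 0$ and each $Z\in\mathcal{S}^{[q]}_U(n)$, the Thomason--Trobaugh localization theorem \cite[Theorem 5.1, Proposition 3.14]{TT} identifies $K^Z(U\times\square^n)$ with the Waldhausen $K$-space of the triangulated category of perfect complexes on $U\times\square^n$ supported on $Z$; since $U\times\square^n$ is regular this in turn agrees with the $K$-space of pseudo-coherent complexes with cohomology supported on $Z$. Taking the filtered homotopy colimit over $Z\in\mathcal{S}^{[q]}_U(n)$ yields the $K$-space of coherent sheaves whose support has codimension $\geq q$ and meets each face in codimension $\geq q$. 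There is then a natural map from this space to $\mathcal{G}^q(U,n)=\mathcal{K}(\mathcal{D}^q_{\coh}(U,n))$, the latter being generated by coherent sheaves whose associated cycles lie in $z^{\geq q}(U,n)$. I would prove this map is a weak-equivalence by invoking (or proving in the course of the paper) Lemma~\ref{lem.compfin}, which addresses precisely the discrepancy flagged in Remark~\ref{remk:S^q z^q diff}: although a component of codimension $c>q$ meeting faces only in codimension $\geq q$ need not meet them in codimension $\geq c$, one can filter such generators by codimension and use the approximation theorem \cite[Theorem 1.9.1]{TT} together with a devissage to reduce to the case $c=q$, where the two conditions coincide.

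Passing to cubical geometric realizations over $\un{n}$ converts this into a weak-equivalence $K^{[q]}(U)\overset{\sim}{\to}\mathcal{G}^q(U)$ natural in smooth $U$ and compatible with variation in $q$, including the identification $K^{[0]}(U)\simeq K(U)\simeq\mathcal{K}(U)$ by the $\mathbb{A}^1$-invariance of $K$-theory on regular schemes. Assembling everything, the sought commutative diagram \eqref{eqn:coniveau tower comp} is the Čech construction of these local weak-equivalences, and the equality of the two filtrations on $K_n(Y)$ follows on applying $\pi_n$. The main obstacle is unquestionably Lemma~\ref{lem.compfin}: handling the distinction between ``proper intersection'' (codimension $\geq c$ for a component of codimension $c$) and the weaker ``codimension $\geq q$ on faces'' condition without losing track of the Waldhausen structure is the only step that is not essentially formal.
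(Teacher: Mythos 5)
Your proposal follows the same overall route as the paper: pass from $\mathcal{U}$ to the trivial system $\mathcal{U}_0=\{(U_i,U_i)\}$, invoke Remark~\ref{rem. modY trivial} to identify $\mathcal{G}^q(\widehat{U}_I,U_I)$ with $\mathcal{G}^q(U_I)$, use Lemma~\ref{lem.simpcubcomp} and Levine's Zariski descent to reduce to an affine local comparison, identify $K^{[q]}(U,n)$ via Thomason--Trobaugh localization with the $K$-space of coherent sheaves supported in $\mathcal{S}^{[q]}_U(n)$ (this is Lemma~\ref{lem.compfin1}), and then invoke Lemma~\ref{lem.compfin} to pass from $\mathcal D^{[q]}_{\coh}(U,n)$ to $\mathcal D^{q}_{\coh}(U,n)$. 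You also correctly flag that Lemma~\ref{lem.compfin} and the discrepancy in Remark~\ref{remk:S^q z^q diff} form the real content.

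The one place where your sketch does not reach the paper's argument is precisely your proposed proof of Lemma~\ref{lem.compfin}. You say one should ``filter such generators by codimension and use the approximation theorem together with a devissage to reduce to the case $c=q$,'' but that does not address the actual obstruction: a single irreducible $Z$ of codimension $c>q$ can have $Z\in\mathcal{S}^{[q]}_U(n)$ while $[Z]\notin z^{\geq q}(U,n)$, so $\mathcal{O}_Z$ is not \emph{a priori} a generator of $\mathcal D^q_{\coh}(U,n)$, and there is nothing to filter. No filtration of $\mathcal{O}_Z$ by coherent subsheaves will change the underlying cycle, and the approximation theorem does not by itself produce objects with better face intersections. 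The paper's argument instead constructs, by prime avoidance (induction on $i$ with $f_1,\dots,f_q\in J$ avoiding the finitely many associated primes of the face intersections), a closed subscheme $Z'\supseteq Z$ of codimension \emph{exactly} $q$ with $[Z']\in z^{\geq q}(U,n)$; then the short exact sequence $0\to\mathcal I\to\mathcal O_{Z'}\to\mathcal O_Z\to 0$ with $[\mathcal I]=[Z']$ (a computation of associated primes over the domain $R/\mathfrak P$) exhibits $\mathcal O_Z$ as a cone of two objects of $\mathcal D^q_{\coh}(U,n)$. That envelope construction is the missing idea: without it your ``reduce to $c=q$'' step has no mechanism to produce objects of $\mathcal D^q_{\coh}(U,n)$ from a high-codimension $Z$ whose face intersections are only $\geq q$, and the lemma, hence the proposition, does not follow.
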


\begin{proof}
By Lemma \ref{lem.simpcubcomp}, it suffices to show the existence of the commutative diagram \eqref{eqn:coniveau tower comp}, where the vertical maps are weak-equivalences.

Let $\mathcal{U} = \{ (U_i, X_i ) \}_{i \in \Lambda}$ be a system of local embeddings for $Y$, where each $U_i$ is affine for $i\in \Lambda$. Since $Y$ is equidimensional and smooth over $k$, each open set $U_i \subset Y$ of the underlying open cover $| \mathcal{U}| = \{ U_i \}_{i \in \Lambda}$, is also equidimensional and smooth, so that the identity map $U_i \to U_i$ gives a closed immersion into an equidimensional smooth $k$-scheme. 

Hence the collection $\mathcal{U}_{0} := \{ (U_i, U_i) \}_{i \in \Lambda}$ is a system of local embeddings for $Y$, which is also a refinement of $\mathcal{U}$. Furthermore, $\widehat{U}_I = U_I$ for each $I \in \Lambda^{p+1}$, so by Remark \ref{rem. modY trivial}, we obtain $\mathcal{G}^q (U_I, U_I, n)=\mathcal{G}^q (U_I, n)$ for $n\geq 0$ and thus $\mathcal{G}^q (U_I, U_I) =\mathcal{G}^q (U_I)$.

Thus $\check{\mathbb{H}}^{\cdot} (\mathcal{U}_0, \widehat{\mathcal{G}}^q)= \check{\mathbb{H}}^{\cdot} (| \mathcal{U}|, \mathcal{G}^q)$, for the presheaf $U \mapsto \mathcal{G} ^q (U)$ on $Y_{\rm Zar}$. 
 
 Now, we consider the presheaf $U\mapsto K^{[q]} (U)$ on $Y_{\rm Zar}$. Observe that the natural map $K^{[q]} (Y)\rightarrow \check{\mathbb{H}}^{\cdot} (|\mathcal{U}|, K^{[q]} (-))$ is a weak-equivalence; this follows from Lemma \ref{lem.simpcubcomp} and M. Levine \cite[Theorem 7.1.1]{Levine hcnv}.
 
 Therefore, it suffices to see that for every smooth affine equidimensional $k$-scheme $U$ there exist weak-equivalences 
 $\mathcal{G}^q (U)\rightarrow \mathcal{K}^{[q]} (U)$ for $q \geq 0$, which fit into a commutative diagram in the homotopy category:
 
$$
 \xymatrix{
\cdots \ar[r] & K^{[q]} (U) \ar[r] \ar[d] & K^{[q-1]} (U) \ar[r] \ar[d] & \cdots \ar[r]& K^{[1]} (U)  \ar[r] \ar[d] &K (U)  \ar[d]^-{w.e.}\\
\cdots \ar[r] & \mathcal{G}^q (U) \ar[r] &  \mathcal{G}^{q-1} (U) \ar[r] & \cdots \ar[r]&  \mathcal{G}^1 (U) \ar[r] & \mathcal{K} (U).}
$$
 
 The existence of the vertical arrows follows by taking the geometric realizations for the weak-equivalences between the cubical spaces in Lemmas \ref{lem.compfin1} and \ref{lem.compfin} proven separately below. The commutativity of the diagram is clear since the horizontal maps are induced by inclusion of supports which are compatible with the weak-equivalences in Lemmas \ref{lem.compfin1} and \ref{lem.compfin}.
\end{proof}

The proof of the above Proposition \ref{prop:sm m.cnv} used the following Lemmas \ref{lem.compfin1} and \ref{lem.compfin}. We establish them in what follows.

\begin{defn}\label{defn:559}
Let $M_q(Y,n)$ be the exact category of coherent $\mathcal O _{Y\times \square ^n}$-modules with the supports in $\mathcal S _Y^{[q]}(n)$ defined in Definition \ref{defn:553}, and let $\mathcal D ^{[q]}_{\coh}(Y,n)\subseteq \mathcal D_{\coh}(Y\times \square ^n)$ be the full triangulated subcategory generated by $M_q(Y,n)$.\qed
\end{defn}

\begin{lem}  \label{lem.compfin1}
Let $Y$ be an equidimensional smooth $k$-scheme, and let $q$, $n\geq 0$.

Then there are weak-equivalences:
\begin{equation}\label{eqn:compfin1-1}
	 \mathcal{K} (\mathcal D ^{[q]}_{\coh}(Y,n))  \leftarrow K(M_q(Y,n))\rightarrow K^{[q]}(Y,n),
\end{equation}
that are natural in $Y$ and they form maps of cubical spaces as $n \geq 0$ varies, where the space $K^{[q]} (Y, n)$ is defined in Definition \ref{defn:555}.
\end{lem}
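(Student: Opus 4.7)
The plan is to establish the two weak-equivalences of \eqref{eqn:compfin1-1} separately, with the cubical/naturality statements following from functoriality of all constructions involved. The unifying input is the regularity of $Y\times \square^n$ (since $Y$ is equidimensional and smooth over $k$), which makes the interface between Quillen's $G$-theory and $K$-theory seamless and which guarantees that coherent sheaves have finite locally free resolutions.

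For the left arrow $K(M_q(Y,n)) \to \mathcal{K}(\mathcal{D}^{[q]}_{\coh}(Y,n))$, I would argue exactly as in the final paragraph of the proof of Proposition \ref{prop:sm cnv}. Because $Y\times\square^n$ is regular, every coherent sheaf on it is a perfect complex, and the exact category $M_q(Y,n)$ generates the triangulated subcategory $\mathcal{D}^{[q]}_{\coh}(Y,n)$ by Definition \ref{defn:559}. The Gillet--Waldhausen-type comparison encoded in \cite[Theorem 1.9.8, p.271]{TT}, \cite[Theorem 1.11.7, p.279]{TT}, together with the Quillen-to-Waldhausen $K$-theory comparison of \cite[Lemmas 3.11, 3.12, Corollary 3.13, pp.316--317]{TT}, then yields the desired weak-equivalence.

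For the right arrow $K(M_q(Y,n)) \to K^{[q]}(Y,n)$, the strategy is Quillen's localization theorem followed by a filtered-colimit argument. For each $Z \in \mathcal{S}_Y^{[q]}(n)$, write $M^Z(Y\times\square^n)$ for the exact category of coherent $\mathcal{O}_{Y\times\square^n}$-modules with support in $Z$. Quillen's localization theorem \cite[\S 7.3, Theorem 5]{Quillen} gives a homotopy fibration
$$
K(M^Z(Y\times\square^n)) \to G(Y\times\square^n) \to G((Y\times\square^n)\setminus Z),
$$
and by regularity $G\simeq K$ on both $Y\times\square^n$ and its open complement, so this identifies $K(M^Z(Y\times\square^n))$ with the homotopy fibre $K^Z(Y\times\square^n)$ of Definition \ref{defn:555}. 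The poset $\mathcal{S}_Y^{[q]}(n)$ is filtered (closed under finite unions, since the defining codimension-$\geq q$ condition on faces is stable under union), and $M_q(Y,n)$ is the filtered $2$-colimit of the $M^Z(Y\times\square^n)$. Since $K$-theory commutes with such filtered colimits, passing to the hocolim over $Z$ produces the desired weak-equivalence.

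Naturality in $Y$ is evident from each construction. The cubical structure on the three spaces is induced by pulling back coherent sheaves along the codimension-one face inclusions $Y\times\square^{n-1} \hookrightarrow Y\times\square^n$ and by the flat degeneracy projections; the general-position condition of Definition \ref{defn:S^q} is precisely what ensures that these pull-backs preserve membership in $\mathcal{S}_Y^{[q]}$ and send $M_q(Y,n)$ into $M_q(Y,n-1)$, so all constructions assemble into cubical spaces and the weak-equivalences respect the cubical structure. The main expected obstacle is the careful identification in the second step: one must verify that Quillen's localization sequence really matches the homotopy-fibre definition of $K^Z$ on the nose after the $G\simeq K$ comparison, and that the map of filtered systems indexed by $\mathcal{S}_Y^{[q]}(n)$ is sufficiently functorial to pass to hocolim; this hinges essentially on the regularity of $Y\times\square^n$ on both the closed stratum $Z$ (where $K$ of supports equals $G$ of $Z$) and the open complement.
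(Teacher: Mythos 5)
Your proof takes essentially the same route as the paper's: the left arrow via the Thomason--Trobaugh comparison of the Quillen and Waldhausen $K$-theories of $M_q(Y,n)$ and $\mathcal{D}^{[q]}_{\coh}(Y,n)$, and the right arrow via Quillen's localization theorem, the identification $K(M^Z(Y\times\square^n))\simeq K^Z(Y\times\square^n)$ (the paper does this explicitly through d\'evissage $K(M^Z)\simeq G(Z)$ followed by the $G$-theory localization theorem), and commuting $K$-theory with the filtered colimit over $\mathcal{S}_Y^{[q]}(n)$ (for which the paper additionally invokes Dugger's result to identify the strict colimit with the homotopy colimit). The only substantive thing you leave implicit that the paper spells out is the appeal to d\'evissage and to Dugger's Proposition 7.3; otherwise the steps coincide.
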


\begin{proof}
By Thomason-Trobaugh \cite[Theorem 1.9.8, p.271]{TT}, \cite[Theorem 1.11.7, p.279]{TT}, and an argument parallel to \cite[Lemmas 3.11, 3.12, Corollary 3.13, pp.316-317]{TT}, the inclusion $M_q(Y,n)\subseteq \mathcal{D}_{\coh} ^{[q]} (Y,n)$ induces the left weak-equivalence of \eqref{eqn:compfin1-1}
$$
K(M_q(Y,n)) \overset{w.e.}{\rightarrow} \mathcal K (\mathcal D ^{[q]}_{\coh}(Y,n)).
$$
It is natural in $Y$ because so is the inclusion $M_q(Y,n)\subseteq \mathcal{D}_{\coh} ^{[q]} (Y,n)$. The compatibility with the faces and the degeneracies is immediate.

\medskip

For the remaining map of \eqref{eqn:compfin1-1}, we apply D. Quillen \cite[\S 2 (9), p.104]{Quillen} and \cite[\S 7 Proposition 3.1, p.127]{Quillen} to obtain a weak-equivalence
$$
K(M_q(Y,n)) \overset{w.e.}{\rightarrow} \colim _{Z\in \mathcal S _Y^{[q]}(n)}G(Z).
$$
However, $G(Z)=K^Z(Y\times \square ^n)$ by the localization theorem for $G$-theory in D. Quillen \cite[\S 5 Theorem 5, p.113]{Quillen}. Thus, there is a weak-equivalence
\begin{equation}\label{eqn:compfin1-2}
K(M_q(Y,n)) \overset{w.e.}{\rightarrow} \colim _{Z\in \mathcal S _Y^{[q]}(n)}K^Z (Y\times \square ^n).
\end{equation}
Since this is a filtered colimit, it follows from D. Dugger \cite[Proposition 7.3]{Dugger} that the right hand side of \eqref{eqn:compfin1-2} computes the filtered homotopy colimit
$$
\hocolim _{Z\in \mathcal S _Y^{[q]}(n)} K ^Z (Y\times \square ^n)=K^{[q]}(Y,n).
$$
This construction is also natural in $Y$ and it is compatible with the faces and the degeneracies.
\end{proof}

\begin{defn}
Let $M'_{q}(Y,n)$ be the exact category of coherent $\mathcal O _{Y\times \square ^n}$-modules that are admissible of codimension $\geq q$ (see Definition \ref{defn:cnv higher}). Since $Y$ is smooth, there is an inclusion $M'_q(Y,n)\subseteq M_q (Y,n)$ into the category of Definition \ref{defn:559}. This induces a fully faithful triangulated functor $i_n:\mathcal D_{\coh}^q(Y,n) \rightarrow \mathcal D_{\coh}^{[q]}(Y,n)$.
\qed
\end{defn}

\begin{lem} \label{lem.compfin}
Let $U$ be an equidimensional smooth affine $k$-scheme, and let $q$, $n\geq 0$. Let $\Spec (R) = U \times \square^n$.

Then the triangulated functor $i_n:\mathcal D_{\coh}^q(U,n) \rightarrow \mathcal D_{\coh}^{[q]}(U,n)$ is an equivalence of categories, so it induces a weak-equivalence
$$
	\iota _n :\mathcal G ^q(U,n)\rightarrow \mathcal K (\mathcal D_{\coh}^{[q]}(U,n)).
$$
Furthermore, it is natural in $U$ and forms a map of cubical spaces as $n \geq 0$ varies. 
\end{lem}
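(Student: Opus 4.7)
The plan is to observe that $i_n$ is automatically fully faithful, since $\mathcal{D}_{\coh}^q(U,n)$ and $\mathcal{D}_{\coh}^{[q]}(U,n)$ are both defined as full triangulated subcategories of $\mathcal{D}_{\coh}(U \times \square^n)$ with the former contained in the latter via $M'_q(U,n) \subseteq M_q(U,n)$. Hence the real content of the lemma is essential surjectivity, for which it suffices to show every $\mathcal{F} \in M_q(U,n)$ can be built, via iterated triangles inside $\mathcal{D}_{\coh}(U \times \square^n)$, from objects of $M'_q(U,n)$. Granted this, the induced map $\iota_n$ is a weak-equivalence by \cite[Theorems 1.9.1, 1.9.8, pp.263, 271]{TT}, and naturality in $U$ and compatibility with the cubical structure follow because every construction below is functorial in $U$ and commutes with flat pull-backs along the face and degeneracy maps of $\square^n$, cf.\ Remark \ref{cubicalfaces}.

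\textbf{Main argument.} First I would reduce to the case of $\mathcal{F}$ with support $Z$ of pure codimension $c \geq q$, by decomposing $Z$ into irreducible components and filtering $\mathcal{F}$ by the local cohomology subsheaves $\mathcal{F}^{\geq c} := \mathcal{H}^0_{Z^{\geq c}}(\mathcal{F})$, whose successive quotients have pure codimension-$c$ support and remain in $M_q(U,n)$. Then I would induct on the defect $c - q$. In the base case $c = q$, the admissibility condition and the $\mathcal{S}^{[q]}_U(n)$-condition coincide, so $\mathcal{F} \in M'_q(U,n)$ directly. For $c > q$, either $Z$ already satisfies $\codim_{U \times F}(Z \cap (U\times F)) \geq c$ for every face $F$ (in which case $\mathcal{F} \in M'_q(U,n)$), or else there is a proper face $F$ of codimension $s \geq 1$ in $\square^n$ with a component $W$ of $Z$ lying inside $U \times F$; note that $\codim_{U \times F} W = c - s \geq q$ since $\mathcal{F} \in M_q(U,n)$, so $s \leq c - q$. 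I would then separate the bad components out via a further local cohomology step, and for the corresponding summand $\mathcal{G}$ supported on such $W$, choose a pure codimension $c - s$ subvariety $\widetilde{W} \subset U \times \square^n$ lifting $W$ and meeting every other face of $\square^n$ in the correct codimension, equip it with a coherent extension $\widetilde{\mathcal{G}}$ of $\mathcal{G}$, and realize $\mathcal{G}$ as a cone through the triangle $\widetilde{\mathcal{G}} \xrightarrow{\,f\,} \widetilde{\mathcal{G}} \to \mathcal{G}$, where $f \in \Gamma(\widetilde{W}, \mathcal{O})$ is a defining function of $F$ in $\widetilde{W}$. Both $\widetilde{\mathcal{G}}$-terms have strictly smaller defect $(c - s) - q < c - q$, so the induction hypothesis applies.

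\textbf{Main obstacle.} The hard part will be producing the admissible lift $\widetilde{W}$ controlling \emph{all} faces of $\square^n$ simultaneously. Lifting $W \subset U \times F$ off $F$ (for instance by adjoining a linear term in a $\square^n$-coordinate transverse to $F$) is easy, but ensuring that the intersections of $\widetilde{W}$ with every other face of $\square^n$ have the correct codimension requires a moving lemma in the spirit of Lemma \ref{lem:translation general}, but for the smooth affine scheme $U$ in place of a formal neighborhood. Affineness of $U$ together with the product structure of $\square^n$ and its coordinate-wise $\mathbb{G}_m$-action should provide enough flexibility via iterated translations $y_i \mapsto t_i y_i$ with $t_i \in \mathbb{G}_m(U)$; verifying however that such a translation can be chosen simultaneously for all faces, and that the entire construction varies naturally with $U$ and commutes with the cubical face and degeneracy functors, is where the bulk of the technical work will lie.
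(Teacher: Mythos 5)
Your setup (full faithfulness is automatic, essential surjectivity is the content, TT 1.9.1/1.9.8 then gives the weak-equivalence, and naturality/cubical compatibility follow formally) matches the paper's. But your strategy for essential surjectivity diverges from the paper's at exactly the point you flag as the ``main obstacle,'' and the obstacle is genuine: the paper never needs an admissible lift $\widetilde{W}$ or any moving lemma on the scheme side, and your induction-on-defect scheme runs into two real problems. First, the cone construction $\widetilde{\mathcal{G}}\xrightarrow{f}\widetilde{\mathcal{G}}\to\mathcal{G}$ only cuts one equation, so it can peel off a face of codimension $s=1$; for a bad component sitting inside a face of codimension $s\geq 2$ you would need a Koszul-type resolution, and then the intermediate terms are not obviously admissible. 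Second, requiring $\widetilde{W}$ to intersect \emph{every} face of $\square^n$ in the correct codimension \emph{and} restrict to $W$ on $F$ \emph{and} carry a coherent extension of $\mathcal{G}$ is a simultaneous constraint that you would indeed have to establish by a moving argument on $U\times\square^n$; you correctly identify this as where the work lies, but you have not resolved it.

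The paper avoids all of this by not moving anything. After filtering $\mathcal{F}\in M_q(U,n)$ by structure sheaves $\mathcal{O}_{Z}$ of irreducible reduced $Z$, the key point is: given such a $Z\in\mathcal{S}^{[q]}_U(n)$ of codimension $c\geq q$, use prime avoidance (Lemma \ref{lem:prime avoid}) to produce $f_1,\dots,f_q\in R$ with $Z\subseteq Z':=V(f_1,\dots,f_q)$, where $Z'$ is a complete intersection of codimension exactly $q$ whose cycle lies in $z^{\geq q}(U,n)$ — i.e.\ $Z'$ need only \emph{contain} $Z$ and be admissible, which is much weaker than being an exact lift of $Z$ off a face. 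Then the short exact sequence $0\to\mathcal{I}\to\mathcal{O}_{Z'}\to\mathcal{O}_Z\to 0$, together with the observation that any nonzero submodule $\mathcal{I}$ of the integral domain $\mathcal{O}_{Z'}$ satisfies $\mathrm{Ass}(\mathcal{I})=\{(0)\}$ and hence $[\mathcal{I}]=[Z']$, shows both $\mathcal{I}$ and $\mathcal{O}_{Z'}$ are already admissible of codimension $\geq q$, so $\mathcal{O}_Z\in\mathcal{D}^q_{\coh}(U,n)$ by the triangle. This is exactly the bridge across the gap noted in Remark \ref{remk:S^q z^q diff} — one does not need $[Z]$ itself to be admissible, only that $\mathcal{O}_Z$ be built from admissible objects — and it replaces your translation argument with a single application of prime avoidance inside the ideal of $Z$.
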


\begin{proof} 
The morphism $\iota_n$ is natural in $U$ because the triangulated functor $i_n$ is induced by the inclusion $M'_q(U,n)\subseteq M_q(U,n)$, which is clearly natural. The compatibility with the faces and the degeneracies for $\iota_n$ is immediate.

It follows from Thomason-Trobaugh \cite[Theorems 1.9.1, 1.9.8, p.263, p.271]{TT} that $\iota _n$ is a weak-equivalence provided that $i_n$ is an equivalence of categories. Since $i_n$ is fully faithful, it only remains to check that $i_n$ is essentially surjective on objects.

\medskip

Recall (Definition \ref{defn:559}) that $\mathcal D_{\coh}^{[q]}(U,n)$ is the full triangulated subcategory of $\mathcal D _{\coh}(U\times \square ^n)$ generated by coherent $\mathcal O _{U\times \square ^n}$-modules with the supports in $\mathcal S _U^{[q]}(n)$. Therefore if $q=0$, we indeed have the equality $\mathcal D_{\coh}^q(U,n)=\mathcal D_{\coh}^{[q]}(U,n)$, thus $\mathcal{G}^0 (U, n) = \mathcal{K} (\mathcal{D}_{\coh} ^{[0]} (U, n))$.

\medskip

We now assume $q \geq 1$. 

Since every coherent $\mathcal O _{U\times \square ^n}$-module $\mathcal M$ admits a finite decreasing filtration $\mathcal M_i\subset \mathcal M$ such that each successive quotient $\mathcal M_i/\mathcal M_{i+1}\cong \mathcal O_{Z_i}$ for an irreducible reduced closed subscheme $Z_i$ of $U\times \square ^n$, where the set of such $Z_i$ is uniquely determined counting multiplicities. Thus in order to show the required essential surjectivity, it is enough to verify the following two claims:

\begin{enumerate}
\item \label{lem.compfin.a}
Suppose $Z \in  \mathcal S _U^{[q]}(n)$ is irreducible. Then there exists a closed subscheme $Z'$ of $Y\times \square ^n$ such that $Z\subseteq Z'$ and $[Z']\in z^{\geq q} (U, n)$.
\item \label{lem.compfin.b}
Let $Z\subseteq Z'$ be irreducible reduced closed subschemes of $U\times \square ^n$ such that $[Z']\in z^{\geq q} (U, n)$.  Then $\mathcal O_Z \in \mathcal{D}_{\coh} ^q (U,n)$. (N.B. We don't claim $[Z] \in z^{\geq q} (U, n)$. See Remark \ref{remk:S^q z^q diff}.)
\end{enumerate}

\medskip

We show that the two claims hold.

\eqref{lem.compfin.a}:  Since $Z \in \mathcal S _U^{[q]}(n)$, $Z$ is a closed subset of $U\times \square ^n$ of codimension $c\geq q$, such that
\begin{equation}  \label{eq.faceinterq}
\mathrm{codim} _{U\times F}Z\cap(U\times F)\geq q\geq 1 \tag{IF(q)}
\end{equation}
for each face $F \subset \square ^n$.  If $c=q$, by definition $Z$ intersects $U\times F$ properly for each face $F$ of $\square ^n$, so $[Z]\in z^{\geq q} (U, n)$. 

In case $c\geq q+1$, by induction we will construct $f_1, \cdots , f_q \in R$ such that $Z\subseteq V_i=V(f_1, \cdots , f_i)$ and $[V_i] \in z^{\geq i} (U, n)$  for $1\leq i\leq q$. (N.B. Here, the codimension of $V_i$ should be necessarily $i$.)

 Let $J\subset R$ be an ideal such that $V(J)=Z$ and let $\{ \mathfrak P _{0,r}\in \Spec (R) \}_{r\in C_0}$ be the finite set of prime ideals such that $\{ V(\mathfrak P _{0,r})\}_{r\in C_0}$ is the set of irreducible components in the collection of closed subsets $\{ U\times F  \ | \ F  \mbox{ is a face of } \square ^n\}$. By \eqref{eq.faceinterq}, we deduce that $J\not \subset \mathfrak P _{0,r}$ for all $r\in C_0$, thus the prime avoidance (Lemma \ref{lem:prime avoid}) implies that there is some $f_1\in J$ such that $f_1\not \in \mathfrak P _{0,r}$ for all $r\in C_0$.  Hence $V_1:=V(f_1)$ satisfies the required conditions.

We proceed the induction steps. Let $1 \leq i < q$. Assume that there are $f_1, \cdots , f_i\in R$, with the required properties. Let $\{ \mathfrak P _{i,r}\in \Spec (R) \}_{r\in C_i}$ be the finite set of prime ideals such that $\{ V(\mathfrak P _{i,r})\}_{r\in C_i}$ is the set of irreducible components in the collection of closed subsets $\{ V_i\cap (U\times F) \ | \ F \mbox{ is a face of } \square ^n\}$. Since $\mathrm{codim} _{U\times F}V_i\cap(U\times F)=i<q$, we deduce by \eqref{eq.faceinterq} that $J \not \subset \mathfrak P _{i,r}$ for all $r\in C_i$, thus the prime avoidance implies that there is some $f_{i+1}\in J$ such that $f_{i+1}\not \in \mathfrak P _{i,r}$ for all $r\in C_i$.  Hence $V_{i+1}:=V(f_1,\cdots , f_{i+1})$ satisfies the required conditions. This proves \eqref{lem.compfin.a}.

\medskip

\eqref{lem.compfin.b}: If $Z=Z'$, then the claim holds by definition, so we may assume $Z\subsetneq Z'$.

Let $0\neq \mathcal I \subset \mathcal O_{Z'}$ be the ideal sheaf such that $\mathcal O_{Z'}/\mathcal I \cong \mathcal O_Z$. Consider the short exact sequence 
\begin{equation}\label{eqn:tri1}
0\rightarrow \mathcal I \rightarrow \mathcal O_{Z'}\rightarrow \mathcal O_Z \rightarrow 0,
\end{equation}
and observe that $\mathcal O_{Z'} \in \mathcal{D}_{\coh} ^q (U,n)$ since we are given that $[Z']\in z^{\geq q} (U, n)$. 

We first claim that $[\mathcal I ]\in z^{\geq q} (U, n)$. 

To prove it, we switch to affine notations: let $\mathfrak{P} \in \Spec (R)$ be such that $\Spec (R/\mathfrak{P}) = Z'$, and let $\mathfrak{Q} \in \Spec ( R/ \mathfrak{P})$ be such that $\Spec (R/ \mathfrak{Q}) = Z$. In particular, $\widetilde{\mathfrak{Q}} = \mathcal{I}$. Then $R/ \mathfrak{P}$ is an integral domain since $Z'$ is reduced and irreducible. Thus $\mathrm{Ass} (R / \mathfrak{P}) = \{ (0) \}$, the zero ideal of $R/ \mathfrak{P}$. Since we have $0 \not = \mathfrak{Q} \subset R/ \mathfrak{P}$, we have $\emptyset \not = \mathrm{Ass} (\mathfrak{Q}) \subset \mathrm{Ass} (R/ \mathfrak{P}) = \{ (0)\}$. In particular, $\mathrm{Ass} (\mathfrak{Q}) = \{ (0)\}$.

Since $(0) \subset R/ \mathfrak{P}$ corresponds to $Z'$, this means $[ \mathcal{I}] = [ \widetilde{\mathfrak{Q}}] = [ Z' ]$, which is in $z^{ \geq q} (U, n)$, proving the claim.

This implies in particular that $\mathcal{I} \in \mathcal{D}_{\coh} ^q (U, n)$. Since $\mathcal{I}, \mathcal{O}_{Z'} \in \mathcal{D}_{\coh} ^q (U, n)$, from the exact sequence \eqref{eqn:tri1}, we deduce that $\mathcal{O}_Z \in \mathcal{D}_{\coh} ^q (U, n)$ (see e.g. A. Neeman \cite[Remark 1.5.2]{Neeman}). This proves (2), and finishes the proof of the lemma.
\end{proof}

\subsection{Functoriality}
We want to check that the two filtrations we defined in Definition \ref{defn:cnv cech} are functorial on $\Sch_k$. We consider the following, which is also used in \cite[\S 6]{P general}. 
This also generalizes the notion of refinement of systems of local embeddings:

 \begin{defn}\label{defn:funrefine}
 Let $g: Y_1 \to Y_2$ be a morphism in $\Sch_k$. Let $\mathcal{U}= \{ (U_i, X_i)\}_{i \in \Lambda}$ be a system of local embeddings for $Y_2$.
 
 We say that a system $\mathcal{V}= \{ (V_j, X'_j)\}_{j \in \Lambda'}$ of local embeddings for $Y_1$ is \emph{associated to $\mathcal{U}$ by $g$}, if there is a set map $\lambda: \Lambda' \to \Lambda$ such that for each $j \in \Lambda'$, we have $g (V_j) \subset U_{\lambda (j)}$, and there is a morphism $f_j: X_j' \to X_{\lambda (j)}$ such that $f_j|_{V_j} = g|_{V_j}$.
 \qed
  \end{defn}
  
  Such systems do exist:
  
\begin{lem}\label{lem:ass system}
Let $g: Y_1 \to Y_2$ be a morphism in $\Sch_k$. Let $\mathcal{U} = \{ (U_i, X_i) \}_{i \in \Lambda}$ be an arbitrary system of local embeddings for $Y_2$. 

Then there exists a system $\mathcal{V}$ of local embeddings for $Y_1$ associated to $\mathcal{U}$ by $g$.
\end{lem}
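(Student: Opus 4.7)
The plan is to build $\mathcal{V}$ one affine piece at a time using the graph construction. For each point $y \in Y_1$, I first pick some $i \in \Lambda$ with $g(y) \in U_i$, which is possible because $\{U_i\}_{i \in \Lambda}$ is a cover of $Y_2$. Then $g^{-1}(U_i)$ is open in $Y_1$ and contains $y$, so I can choose an affine open neighborhood $V \subset g^{-1}(U_i)$ of $y$ in $Y_1$; such $V$ exists because $Y_1 \in \Sch_k$, and it will become one of the pieces of $\mathcal{V}$ with $\lambda$ sending its index to $i$.

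The crux is to upgrade the morphism $g|_V : V \to X_i$ to a morphism of ambient smooth $k$-schemes containing a closed immersion of $V$. For this I use the graph trick: since $V$ is affine of finite type over $k$, pick any closed immersion $V \hookrightarrow \mathbb{A}^N_k$, set $X' := \mathbb{A}^N_k \times_k X_i$, which is smooth and equidimensional (its dimension is $N + \dim X_i$), and consider the graph morphism
\[
\gamma: V \longrightarrow X' = \mathbb{A}^N_k \times_k X_i, \qquad v \longmapsto (v, g(v)).
\]
Because $X_i$ is separated, the graph $V \to V \times_k X_i$ of $g|_V$ is a closed immersion (it is the pull-back of the diagonal of $X_i$); composing with the closed immersion $V \times_k X_i \hookrightarrow \mathbb{A}^N_k \times_k X_i$ induced by $V \hookrightarrow \mathbb{A}^N_k$ shows that $\gamma$ is a closed immersion. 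The second projection $f : X' \to X_i$ then satisfies $f \circ \gamma = g|_V$, which is exactly the extension condition of Definition \ref{defn:funrefine}.

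Finally I invoke quasi-compactness: $Y_1$ is of finite type over $k$, hence noetherian and in particular quasi-compact, so from the cover of $Y_1$ by the affine opens $V_y$ constructed above I extract a finite subcover $\{V_j\}_{j \in \Lambda'}$ with the corresponding $\lambda: \Lambda' \to \Lambda$, $X'_j := \mathbb{A}^{N_j}_k \times_k X_{\lambda(j)}$, and $f_j := \pr_2 : X'_j \to X_{\lambda(j)}$. The resulting $\mathcal{V} := \{(V_j, X'_j)\}_{j \in \Lambda'}$ is then a system of local embeddings for $Y_1$ associated to $\mathcal{U}$ by $g$.

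I do not expect any serious obstacle here: the only technical point is the verification that the graph of $g|_V$ is a closed immersion, which is immediate from the separatedness of $X_i$, and everything else is bookkeeping with quasi-compactness and the existence of closed immersions of affine $k$-schemes of finite type into affine spaces.
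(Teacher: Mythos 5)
Your proposal is correct and follows essentially the same route as the paper's proof: cover $Y_1$ by affine opens $V$ mapping into some $U_{\lambda(\cdot)}$, embed each $V$ into a smooth ambient scheme, and use the graph trick $V \hookrightarrow Z \times X_{\lambda(\cdot)}$ with the second projection as the required extension. The only cosmetic difference is that you take $Z = \mathbb{A}^N_k$ explicitly and organize the cover point-by-point before passing to a finite subcover, whereas the paper covers each $g^{-1}(U_i)$ by finitely many affines and allows an arbitrary equidimensional smooth ambient $Z_{ij}$; both are correct.
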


\begin{proof}
For each $i \in \Lambda$, the open subset $g^{-1} (U_i) \subset Y_1$ may not be affine, but we can find a finite affine open cover $\{ V_{ij} \}_{j \in \Lambda_i'}$ of $g^{-1} (U_i)$ for a finite set $\Lambda_i'$. Let $\Lambda':= \coprod_{ i \in \Lambda} \Lambda_i '$. Let $\lambda: \Lambda' \to \Lambda$ be the projection set map given by sending $j \in \Lambda_i'$ to $i$.

 For each $V_{ij}$, choose a closed immersion $\iota_{ij}: V_{ij} \hookrightarrow Z_{ij}$ into an equidimensional smooth $k$-scheme. Let $X_{ij}' := Z_{ij} \times X_i$, and consider the  closed immersion
 $$ 
 V_{ij} \overset{ gr_{g_{ij}}}{\hookrightarrow} V_{ij} \times U_i \hookrightarrow Z_{ij} \times  X_i = X_{ij}',
 $$
 where we let $g_{ij} := g|_{V_{ij}}$. Then $\mathcal{V}:= \{ ( V_{ij}, X'_{ij}) \}_{i,j}$ is a system of local embeddings for $Y_1$. For the projection $f_{ij}: X_{ij}' = Z_{ij} \times X_i \to X_i$, we have $f_{ij}|_{V_{ij}} = g_{ij} = g|_{V_{ij}}$. Thus $\mathcal{V}$ is associated to $\mathcal{U}$ by $g$.
\end{proof}
 
 \begin{lem}\label{lem:Cech functor}
 Let $g: Y_1 \to Y_2$ be a morphism in $\Sch_k$ and let $\mathcal{U}= \{ (U_i, X_i)\}_{ i \in \Lambda}$ be a system of local embeddings for $Y_2$. Let $\mathcal{V}= \{ (V_j, X_j')\}_{j \in \Lambda'}$ be a system of local embeddings for $Y_1$, that is associated to $\mathcal{U}$ by $g$ as in Lemma \ref{lem:ass system}.
 
 Then there exists a morphism of cosimplicial objects
 \begin{equation}\label{eqn:Cech functoriality 0-1}
 g_{\mathcal{U}, \mathcal{V}}^* : \widehat{\mathcal{G}} ^q _{\mathcal{U}} \to \widehat{\mathcal{G}} ^q _{\mathcal{V}}
 \end{equation}
 in the homotopy category. In particular, we have a morphism
 \begin{equation}\label{eqn:Cech functoriality 0-2}
 g_{\mathcal{U}, \mathcal{V}}^* : \check{\mathbb{H}}^{\cdot} (\mathcal{U}, \widehat{\mathcal{G}} ^q) \to  \check{\mathbb{H}}^{\cdot} (\mathcal{V}, \widehat{\mathcal{G}} ^q) 
 \end{equation}
 in the homotopy category.
 \end{lem}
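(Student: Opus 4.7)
The plan is to build $g^*_{\mathcal{U},\mathcal{V}}$ level-by-level on the cosimplicial object $\widehat{\mathcal{G}}^{q,\bullet}_{\mathcal{U}}$ via Theorem \ref{thm:moving pull-back mod Y}, then verify compatibility with cofaces and codegeneracies using the transitivity in Lemma \ref{lem:moving pull-back composed mod}, and finally apply $\underset{\Delta}{\holim}$ to deduce \eqref{eqn:Cech functoriality 0-2}.

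For each $p \geq 0$ and each $J=(j_0,\ldots,j_p)\in (\Lambda')^{p+1}$, set $\lambda(J):=(\lambda(j_0),\ldots,\lambda(j_p))\in \Lambda^{p+1}$ and let $f_J:= f_{j_0}\times\cdots\times f_{j_p}:X'_J\to X_{\lambda(J)}$. Since $g(V_{j_k})\subset U_{\lambda(j_k)}$ for each $k$, we have $g(V_J)\subset U_{\lambda(J)}$, and a direct check on the diagonal closed immersions $V_J\hookrightarrow X'_J$ and $U_{\lambda(J)}\hookrightarrow X_{\lambda(J)}$ shows that the square
$$
\xymatrix{
X'_J \ar[r]^{f_J} & X_{\lambda(J)} \\
V_J \ar[r]^{g|_{V_J}} \ar@{^{(}->}[u] & U_{\lambda(J)} \ar@{^{(}->}[u]
}
$$
commutes. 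By Theorem \ref{thm:moving pull-back mod Y} this yields a morphism $\widehat{f}_J^{\,*}: \mathcal{G}^q(\widehat{X}_{\lambda(J)}, U_{\lambda(J)}) \to \mathcal{G}^q(\widehat{X}'_J, V_J)$ in the homotopy category. Define the $p$-th level of $g^*_{\mathcal{U},\mathcal{V}}$ so that its $J$-factor is the composition of the projection onto the $\lambda(J)$-factor followed by $\widehat{f}_J^{\,*}$, in exact analogy with the construction of the coface $\delta_j$ in \eqref{eqn:coface 2} and the refinement map in Lemma \ref{lem:refinement induce}.

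To see that these level maps assemble into a morphism of cosimplicial objects in the homotopy category, one must check that for each coface and codegeneracy of \eqref{eqn:complete cech K} the resulting square commutes up to homotopy. Both structure maps are built, factor-wise, from pull-backs of the type provided by Theorem \ref{thm:moving pull-back mod Y}: the cofaces from open inclusions $V_{J'}\hookrightarrow V_J$ together with the projections $X'_{J'}\to X'_J$ (and analogously on the $\mathcal{U}$-side), and the codegeneracies from diagonal closed immersions. On each factor, the two routes around the square arise from two factorizations of a single commutative diagram of smooth $k$-schemes with compatible closed immersions of the $V_J$ and $U_{\lambda(J)}$. Hence the two composite pull-backs coincide in the homotopy category by Lemma \ref{lem:moving pull-back composed mod}. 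This establishes \eqref{eqn:Cech functoriality 0-1}, and applying $\underset{\Delta}{\holim}$ gives \eqref{eqn:Cech functoriality 0-2}.

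The main obstacle is precisely this compatibility verification: because each $\widehat{f}_J^{\,*}$ exists only in the homotopy category, one cannot hope for strict commutativity with the cosimplicial structure maps, and the argument is a systematic invocation of Lemma \ref{lem:moving pull-back composed mod} factor-by-factor, keyed to the multi-index bookkeeping on $\Lambda$ and $\Lambda'$. All other ingredients (the well-definedness of the diagonal immersions $V_J\hookrightarrow X'_J$, the factorization of $f_J\circ(V_J\hookrightarrow X'_J)$ through $U_{\lambda(J)}$, and the existence of $\holim_{\Delta}$ for homotopy cosimplicial objects) are either immediate from the assumptions or standard.
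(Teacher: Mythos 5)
Your construction is essentially the same as the paper's: you build the $p$-th level factor-by-factor by projecting to the $\lambda(J)$-component and applying the pull-back furnished by Theorem~\ref{thm:moving pull-back mod Y}, exactly as in the paper's \eqref{eqn:Cech functoriality 1}--\eqref{eqn:Cech functoriality 2}. Where you go further is in unpacking the compatibility with cofaces and codegeneracies, which the paper dismisses as ``immediate'': your observation that each square compares two factorizations of one underlying commutative diagram of smooth schemes, and hence commutes by Lemma~\ref{lem:moving pull-back composed mod}, is indeed the correct justification and a welcome degree of detail. One small slip: when describing the cofaces, the inclusion/projection arrows should point the other way (e.g.\ $V_J\hookrightarrow V_{J'}$ and $X'_J\to X'_{J'}$ if $J'$ denotes the shorter multi-index, matching the paper's $U_I\hookrightarrow U_{I_j'}$ and $X_I\to X_{I_j'}$), though this is purely notational and does not affect the substance of the argument.
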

 
 \begin{proof}
 Let $p \geq 0$ be an integer. Let $J \in (\Lambda')^{p+1}$. Then we have the commutative diagram
 $$
 \xymatrix{
 X_J ' \ar[r]^{f_J} & X_{\lambda (J)} \\
 V_J \ar[r] ^{g|_{V_J}} \ar@{^{(}->}[u] & U_{\lambda (J)}.  \ar@{^{(}->}[u] }
 $$
 By Theorem \ref{thm:moving pull-back mod Y}, this diagram induces a morphism
 \begin{equation}\label{eqn:Cech functoriality 1}
 \mathcal{G}^q (\widehat{X}_{\lambda (J)}, U_{\lambda (J)}) \to \mathcal{G}^q (\widehat{X}_J ', V_J)
 \end{equation}
  in the homotopy category.
 These induce morphisms in the homotopy category for $p \geq 0$
 \begin{equation}\label{eqn:Cech functoriality 2}
 \prod_{I \in \Lambda^{p+1}} \mathcal{G}^q (\widehat{X}_I, U_I ) \to \prod_{J \in (\Lambda')^{p+1}} \mathcal{G}^q (\widehat{X}_J', V_J)
 \end{equation}
 defined as follows: for the $J$-th factor on the right hand side of \eqref{eqn:Cech functoriality 2}, we first project the left hand side to the factor over $\lambda (J) \in \Lambda^{p+1}$, and then apply \eqref{eqn:Cech functoriality 1}. This gives \eqref{eqn:Cech functoriality 2}. That they are compatible with the cofaces and codegeneracies are immediate, and we thus have a morphism of the cosimplicial objects \eqref{eqn:Cech functoriality 0-1}. By taking the homotopy limits over $\Delta$, we deduce \eqref{eqn:Cech functoriality 0-2}.
 \end{proof}
 
 We want to take homotopy colimits over $\mathcal{V}$ and $\mathcal{U}$ in \eqref{eqn:Cech functoriality 0-2}. For this, we need:
 
 \begin{lem}\label{lem:strrefine}
 Let $g: Y_1 \to Y_2$ be a morphism in $\Sch_k$. Let $\mathcal{U} =\{ (U_i, X_i) \}_{i \in \Lambda}$ be a system of local embeddings for $Y_2$ and let $\mathcal{V} = \{ (V_j, X_j ') \}_{ j \in \Lambda'}$ be a system for $Y_1$.
 
 Then there exists a system $\mathcal{V}'$ of local embeddings for $Y_1$, which is 
 \begin{enumerate}
 \item a refinement of $\mathcal{V}$ and
 \item associated to $\mathcal{U}$ by $g$.
 \end{enumerate}
 \end{lem}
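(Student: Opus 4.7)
The plan is to build $\mathcal{V}'$ by simultaneously intersecting the opens of $\mathcal{V}$ with the preimages $g^{-1}(U_i)$, then passing to affine subcovers and producing ambient smooth schemes via a product construction, in the same spirit as the proofs of Lemma \ref{lem:comref} and Lemma \ref{lem:ass system}.

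\smallskip

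\emph{First step: the open cover.} For each $(j,i)\in\Lambda'\times\Lambda$, let $W_{ji}:=V_j\cap g^{-1}(U_i)$, which is open in the affine scheme $V_j$, hence quasi-affine. Choose a finite affine open cover $\{W_{ji,k}\}_{k\in K_{ji}}$ of $W_{ji}$. Let $\Lambda'':=\{(j,i,k)\mid j\in\Lambda',\ i\in\Lambda,\ k\in K_{ji}\}$, with set maps $\mu:\Lambda''\to\Lambda'$, $(j,i,k)\mapsto j$, and $\lambda:\Lambda''\to\Lambda$, $(j,i,k)\mapsto i$. Because $\{V_j\}_{j\in\Lambda'}$ covers $Y_1$ and $\{g^{-1}(U_i)\}_{i\in\Lambda}$ covers $Y_1$, the collection $\{W_{ji,k}\}_{(j,i,k)\in\Lambda''}$ is an affine open cover of $Y_1$.

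\smallskip

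\emph{Second step: the ambient smooth schemes.} Fix $(j,i,k)\in\Lambda''$. Since $W_{ji,k}$ is open in $V_j$ and the topology of $V_j$ agrees with the subspace topology induced by the closed immersion $V_j\hookrightarrow X_j'$, there exists an open subscheme $Z_{ji,k}\subset X_j'$ with an induced closed immersion $\iota_{ji,k}:W_{ji,k}\hookrightarrow Z_{ji,k}$. Now set $X''_{ji,k}:=Z_{ji,k}\times_k X_i$, which is equidimensional and smooth over $k$. Writing $g_{ji,k}:W_{ji,k}\to X_i$ for the composition of $g|_{W_{ji,k}}:W_{ji,k}\to U_i$ with $U_i\hookrightarrow X_i$, consider
\[
W_{ji,k}\ \xrightarrow{\ gr_{g_{ji,k}}\ }\ W_{ji,k}\times_k X_i\ \hookrightarrow\ Z_{ji,k}\times_k X_i\ =\ X''_{ji,k}.
\]
The first arrow is a closed immersion because $X_i$ is separated over $k$, and the second is a closed immersion because $W_{ji,k}\hookrightarrow Z_{ji,k}$ is. Thus the composite is a closed immersion, and $\mathcal{V}':=\{(W_{ji,k},X''_{ji,k})\}_{(j,i,k)\in\Lambda''}$ is a system of local embeddings for $Y_1$.

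\smallskip

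\emph{Third step: verifying the two properties.} For $\mu$, the first projection $\mathrm{pr}_1:X''_{ji,k}=Z_{ji,k}\times_k X_i\to Z_{ji,k}\hookrightarrow X_j'$ restricts on $W_{ji,k}$ to $\iota_{ji,k}$ followed by the open immersion $Z_{ji,k}\hookrightarrow X_j'$, which is exactly the inclusion $W_{ji,k}\hookrightarrow V_j\hookrightarrow X_j'$. Together with $W_{ji,k}\subset V_j$ this verifies condition (1) of Definition \ref{defn:refine}, so $\mathcal{V}'$ refines $\mathcal{V}$ via $\mu$. For $\lambda$, the second projection $\mathrm{pr}_2:X''_{ji,k}=Z_{ji,k}\times_k X_i\to X_i$ restricts on $W_{ji,k}$ to $g_{ji,k}$, which factors through $U_{\lambda(j,i,k)}=U_i$, and $g(W_{ji,k})\subset U_i$ by construction of $W_{ji}$. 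This verifies the conditions of Definition \ref{defn:funrefine}, so $\mathcal{V}'$ is associated to $\mathcal{U}$ by $g$.

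\smallskip

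There is no real obstacle here; the construction is routine and parallels the existing arguments for refinements and associated systems. The only mild subtlety is ensuring that after intersecting $V_j$ with the (possibly non-affine) preimage $g^{-1}(U_i)$ one still obtains affine opens, which is handled by choosing an affine subcover in the first step.
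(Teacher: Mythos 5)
Your proof is correct, but it takes a different route from the paper's. The paper treats this lemma as a formal consequence of two earlier results: it first invokes Lemma~\ref{lem:ass system} to produce some system $\mathcal{W}$ for $Y_1$ associated to $\mathcal{U}$ by $g$, then invokes Lemma~\ref{lem:comref} to produce a common refinement $\mathcal{V}'$ of $\mathcal{V}$ and $\mathcal{W}$, and finishes by observing that any refinement of a system associated to $\mathcal{U}$ by $g$ is again associated to $\mathcal{U}$ by $g$. You instead give a single direct construction that in effect inlines and merges those two arguments. A consequence of your merging is that your ambient schemes are two-factor products $Z_{ji,k}\times_k X_i$ with $Z_{ji,k}$ an \emph{open subscheme of the given} $X_j'$, whereas unwinding the paper's modular argument would produce three-factor ambients (an open of $X_j'$, an open of an auxiliary freely chosen embedding for each $g^{-1}(U_i)$-piece, and $X_i$). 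So your construction is a bit more economical, eliminating a redundant choice of embedding, at the cost of redoing by hand what the paper delegates to Lemmas~\ref{lem:comref} and \ref{lem:ass system}; the paper's version is shorter and emphasizes the stability of ``associated to $\mathcal{U}$ by $g$'' under refinement, which is worth stating since it is used again implicitly in the colimit argument just after the lemma. Each step of your argument checks out: the passage to an affine subcover of $V_j\cap g^{-1}(U_i)$, the extraction of an open $Z_{ji,k}\subset X_j'$ meeting $V_j$ in $W_{ji,k}$, the graph trick (valid since the $X_i$ are separated over $k$, as in the proof of Lemma~\ref{lem:ass system}), and the verification that the two projections furnish the morphisms required by Definitions~\ref{defn:refine} and \ref{defn:funrefine}.
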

 
 \begin{proof}
 First choose a system $\mathcal{W}$ of local embeddings for $Y_1$, that is associated to $\mathcal{U}$ by $g$, by Lemma \ref{lem:ass system}. 
 
 If the system $\mathcal{W}$ is already a refinement of $\mathcal{V}$, then take $\mathcal{V}'= \mathcal{W}$. 
 
 If not, then by Lemma \ref{lem:comref}, find a common refinement $\mathcal{V}'$ of both systems $\mathcal{V}$ and $\mathcal{W}$ for $Y_1$. Note that when $\mathcal{W}$ is associated to $\mathcal{U}$ by $g$, then so is any refinement of $\mathcal{W}$. Hence $\mathcal{V}'$ satisfies the above two conditions of the lemma. 
 \end{proof}
 
 \begin{lem}\label{lem:double system compatible}
 Let $g: Y_1 \to Y_2$ be a morphism in $\Sch_k$. Let $\mathcal{U}, \mathcal{U}'$ be systems of local embeddings for $Y_2$ and $\mathcal{V}, \mathcal{V}'$ be systems for $Y_1$, such that 
 \begin{enumerate}
 \item $\mathcal{V}$ is associated to $\mathcal{U}$ by $g$,
 \item $\mathcal{V}'$ is associated to $\mathcal{U}'$ by $g$,
 \item $\mathcal{U}'$ is a refinement of $\mathcal{U}$,
 \item $\mathcal{V}'$ is a refinement of $\mathcal{V}$.
 \end{enumerate}
 Then the following diagram
 $$
 \xymatrix{
 \check{\mathbb{H}}^{\cdot} (\mathcal{U}, \widehat{\mathcal{G}}^q) \ar[r] ^{g_{\mathcal{U}, \mathcal{V}}^*} \ar[d] &  \check{\mathbb{H}}^{\cdot} (\mathcal{V}, \widehat{\mathcal{G}}^q)  \ar[d] \\
  \check{\mathbb{H}}^{\cdot} (\mathcal{U}', \widehat{\mathcal{G}}^q) \ar[r] ^{g_{\mathcal{U}', \mathcal{V}'}^*} &  \check{\mathbb{H}}^{\cdot} (\mathcal{V}', \widehat{\mathcal{G}}^q) }
  $$
 in the homotopy category of spaces commutes, where the horizontal maps are supplied by Lemma \ref{lem:Cech functor} and the vertical maps are the refinement morphisms of Lemma \ref{lem:refinement induce}.
 \end{lem}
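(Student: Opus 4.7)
The plan is to reduce the commutativity of the diagram of spaces, which lives in the homotopy category after applying $\holim_\Delta$, to a homotopy-commutativity statement at the level of cosimplicial objects $\widehat{\mathcal{G}}^{q,\bullet}_{(-)}$, and then to verify the latter by using the transitivity of pull-backs in the homotopy category (Lemma \ref{lem:moving pull-back composed mod}) together with the index-independence principle already established for refinements in Lemma \ref{lem:refinement well-defined}.

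First, because $\holim_\Delta$ is a functor on cosimplicial objects of the homotopy category, it suffices to prove that the two compositions of morphisms of cosimplicial objects
\[
\widehat{\mathcal{G}}^{q,\bullet}_{\mathcal U} \longrightarrow \widehat{\mathcal{G}}^{q,\bullet}_{\mathcal V'}
\]
agree in the homotopy category of cosimplicial spaces. At cosimplicial degree $p$, each composition is a morphism
\[
\prod_{I \in \Lambda^{p+1}} \mathcal{G}^q(\widehat{X}_I, U_I) \longrightarrow \prod_{J' \in (\Lambda'_{\mathcal V'})^{p+1}} \mathcal{G}^q(\widehat{X'}_{J'}, V'_{J'}),
\]
whose $J'$-th coordinate is obtained by projecting to a single factor of the source indexed by some element of $\Lambda^{p+1}$ and then applying a pull-back built from the apparatus of Theorem \ref{thm:moving pull-back mod Y}. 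For the upper-right route, the selected index is $\lambda \circ \nu(J')$ (for the set maps $\lambda:\Lambda'\to\Lambda$ and $\nu:\Lambda'_{\mathcal V'}\to\Lambda'$); for the lower-left route, it is $\mu\circ\kappa(J')$ (for the refinement map $\mu:\Lambda_{\mathcal U'}\to\Lambda$ and the associated map $\kappa:\Lambda'_{\mathcal V'}\to\Lambda_{\mathcal U'}$).

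Next, I would observe that in both cases the $J'$-th component is a pull-back along a chain of morphisms of equidimensional smooth $k$-schemes whose restriction to the relevant open subschemes recovers the morphism $g|_{V'_{J'}}$ together with inclusions and diagonals; by Lemma \ref{lem:moving pull-back composed mod}, this chain determines, in the homotopy category, a pull-back morphism $\mathcal{G}^q(\widehat{X}_{I_\bullet}, U_{I_\bullet}) \to \mathcal{G}^q(\widehat{X'}_{J'}, V'_{J'})$ that depends only on the underlying chain of scheme morphisms and not on the particular factorization. Thus $\mathcal V'$ can be regarded as associated to $\mathcal U$ by $g$ via either of the two set maps $\lambda\circ\nu$ and $\mu\circ\kappa$.

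The main obstacle is therefore showing that two associated set maps $\Lambda'_{\mathcal V'}\to\Lambda$ produce homotopic induced morphisms of cosimplicial objects. This is exactly the content of Lemma \ref{lem:refinement well-defined} in the refinement case, and its standard proof (the explicit cosimplicial prism-style homotopy) extends verbatim to the present setting of systems associated by $g$, since all that is used is the existence of compatible morphisms $X'_{J'}\to X_i$ extending $g|_{V'_{J'}}\subset U_i$ for each choice of index. After applying $\holim_\Delta$, the resulting homotopy becomes an equality in the homotopy category, completing the proof of the commutativity of the diagram.
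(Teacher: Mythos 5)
The paper itself provides no proof of this lemma: it writes only ``It is elementary. We omit it.'' So there is no paper argument to compare against, and your proposal is a genuine contribution of the missing details. Your outline is essentially the correct one: reduce everything to the cosimplicial level where the morphisms in $\widehat{\mathcal G}^{q,\bullet}_{(-)}$ live; observe that each component in codegree $p$ is a projection onto a single factor followed by a pull-back morphism supplied by Theorem~\ref{thm:moving pull-back mod Y}; collapse the two composite chains into single pull-backs via Lemma~\ref{lem:moving pull-back composed mod}; and then invoke the prism-homotopy independence argument of Lemma~\ref{lem:refinement well-defined} to identify the two resulting cosimplicial maps after passing to $\holim_\Delta$.

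One issue is under-acknowledged. You frame the remaining obstacle as showing that two index maps $\Lambda'_{\mathcal V'}\to\Lambda$ give homotopic morphisms, but the two routes in the square differ not merely in the set maps $\lambda\circ\nu$ and $\mu\circ\kappa$ but also in the auxiliary \emph{lifts} to morphisms between the ambient smooth schemes: the upper-right route pulls back along a factorization through the $X'_j$ of $\mathcal V$, while the lower-left route pulls back along a factorization through the $X_a$ of $\mathcal U'$. Even when $\lambda\circ\nu(J')=\mu\circ\kappa(J')$, these are different morphisms of equidimensional smooth $k$-schemes (they only agree after restriction to $V'_{J'}$), and Lemma~\ref{lem:moving pull-back composed mod} by itself does not imply the resulting pull-backs coincide. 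What you actually need is that the induced morphism in the homotopy category depends, \emph{up to homotopy after $\holim_\Delta$}, only on the underlying scheme map $g|_{V'_{J'}}:V'_{J'}\to U_{(\cdot)}$ and not on the chosen lift to the ambient smooth schemes; the prism-homotopy device does furnish this (via an interleaving through a product of the two ambient schemes), but you should state this as the required intermediate claim rather than reducing it to independence of the set map alone. With that clarification the argument closes the gap correctly and is consistent in spirit with what the paper treats as evident.
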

 
 \begin{proof}
 It is elementary. We omit it.
 \end{proof}
 
 For a morphism $g: Y_1 \to Y_2$ in $\Sch_k$, choose a system $\mathcal{U}$ of local embeddings for $Y_2$ and choose a system $\mathcal{V}$ for $Y_1$ associated to $\mathcal{U}$ by $g$ by Lemma \ref{lem:ass system}, so that we have the map $g_{\mathcal{U}, \mathcal{V}}: \check{\mathbb{H}}^{\cdot} (\mathcal{U}, \widehat{\mathcal{G}}^q) \to \check{\mathbb{H}}^{\cdot} (\mathcal{V}, \widehat{\mathcal{G}}^q)$. 
 
Note that a refinement $\mathcal{V}'$ of $\mathcal{V}$ again gives a system for $Y_1$ associated to $\mathcal{U}$ by $g$. Hence we have the induced morphism in the homotopy category
\begin{equation}\label{eqn:g_U}
g_{\mathcal{U}, \cdot}^*:= \underset{\mathcal{V}}{\hocolim} \ g_{\mathcal{U}, \mathcal{V}}^*: \check{\mathbb{H}}^{\cdot} (\mathcal{U}, \widehat{\mathcal{G}}^q) \to \widehat{\mathcal{G}}_{Y_1} ^q.
\end{equation}

On the other hand, by Lemmas \ref{lem:strrefine} and \ref{lem:double system compatible}, for a refinement $\mathcal{U}'$ of 
$\mathcal{U}$, the morphisms \eqref{eqn:g_U} for $\mathcal{U}$ and $\mathcal{U}'$ satisfy the commutative diagram
$$
\xymatrix{
\check{\mathbb{H}}^{\cdot} (\mathcal{U}, \widehat{\mathcal{G}} ^q)  \ar[rr]^{g_{\mathcal{U}, \cdot}^*} \ar[d] & & \widehat{\mathcal{G}}^q _{Y_1}  \\
\check{\mathbb{H}}^{\cdot} (\mathcal{U}', \widehat{\mathcal{G}}^q), \ar[rru] _{g_{\mathcal{U}', \cdot}^*} & & }
$$
where the vertical arrow is the morphism induced by the refinement. Hence this induces
\begin{equation}\label{eqn:pull-back}
g^*:=\underset{\mathcal{U}}{\hocolim}  \ g_{\mathcal{U},\cdot}^*: \widehat{\mathcal{G}}_{Y_2} ^q \to \widehat{\mathcal{G}}_{Y_1} ^q,
\end{equation}
which fits into the following commutative diagram in the homotopy category
\begin{align} \label{eqn:pull-back mtower}
\begin{array}{c}
\xymatrix{
\cdots \ar[r] & \widehat{\mathcal{G}} ^q_{Y_2} \ar[r] \ar[d]_-{g^\ast}& \widehat{\mathcal{G}} ^{q-1} _{Y_2} \ar[r] \ar[d]_-{g^\ast}& \cdots \ar[r] & \widehat{\mathcal{G}} ^0 _{Y_2} \ar[r] \ar[d]_-{g^\ast}&  \mathcal{K} (Y_2) \ar[d]_-{g^\ast}\\
\cdots \ar[r] & \widehat{\mathcal{G}} ^q_{Y_1} \ar[r]&  \widehat{\mathcal{G}} ^{q-1} _{Y_1} \ar[r]& \cdots \ar[r]& \widehat{\mathcal{G}} ^0 _{Y_1} \ar[r]& \mathcal{K} (Y_1),}
\end{array}
\end{align}
because the maps in \eqref{eqn:Cech functoriality 1}-\eqref{eqn:Cech functoriality 2} commute with the corresponding maps $\mathcal{G}^q (\widehat{X}_I, U_I )\rightarrow  \mathcal{G}^{q-1} (\widehat{X}_I, U_I )$ induced by the inclusion of supports $\mathcal{D}_{\coh} ^q (\mathfrak X,n)\subseteq \mathcal{D}_{\coh} ^{q-1} (\mathfrak X,n)$.

Notice that the arguments in Lemmas \ref{lem:Cech functor}, \ref{lem:double system compatible} apply \emph{mutatis mutandis} for the tower \eqref{final uscnv fil}, thus for each $n\geq 0$ we obtain the following commutative diagram in the homotopy category:

\begin{align} \label{eqn:pull-back tower}
\begin{array}{c}
\xymatrix{
\cdots \ar[r] & \widehat{\mathcal{G}} ^q_{Y_2,n} \ar[r] \ar[d]_-{g^\ast}& \widehat{\mathcal{G}} ^{q-1} _{Y_2,n} \ar[r] \ar[d]_-{g^\ast}& \cdots \ar[r] & \widehat{\mathcal{G}} ^0 _{Y_2,n} \ar[r] \ar[d]_-{g^\ast}&  \mathcal{K} (Y_2) \ar[d]_-{g^\ast}\\
\cdots \ar[r] & \widehat{\mathcal{G}} ^q_{Y_1,n} \ar[r]&  \widehat{\mathcal{G}} ^{q-1} _{Y_1,n} \ar[r]& \cdots \ar[r]& \widehat{\mathcal{G}} ^0 _{Y_1,n} \ar[r]& \mathcal{K} (Y_1).}
\end{array}
\end{align}

\begin{defn}\label{defn:pb filtrations}
 The diagram \eqref{eqn:pull-back tower} with $n=0$ and the diagram \eqref{eqn:pull-back mtower}, respectively, induce the homomorphisms
 $$
 \tuborg 
  g^*: F_{\cnv} ^q K_n (Y_2) \to F_{\cnv} ^q K_n (Y_1), \\
 g^*: F_{\mcnv} ^q K_n (Y_2) \to F_{\mcnv} ^q K_n (Y_1),
\sluttuborg
 $$
 of abelian groups defined in Definition \ref{defn:cnv cech}. One notes that they give homomorphisms of filtered abelian groups.
 \qed
 \end{defn}
 
 \medskip
 
 To see the above $g^*$ in Definition \ref{defn:pb filtrations} is functorial, let $g_1: Y_1 \to Y_2$ and $g_2: Y_2 \to Y_3$ be two morphisms. A question to check is whether $(g_2 \circ g_1)^* = g_1^* \circ g_2^*$. To see this, first choose an arbitrary system $\mathcal{U}_3$ of local embeddings for $Y_3$. By Lemma \ref{lem:ass system}, there exists a system $\mathcal{U}_2$ for $Y_2$ associated to $\mathcal{U}_3$ by $g_2$, and applying Lemma \ref{lem:ass system} again, there exists a system $\mathcal{U}_1$ for $Y_1$ associated to $\mathcal{U}_2$ by $g_1$. By definition, one sees that $\mathcal{U}_1$ is associated to $\mathcal{U}_3$ by $g_2 \circ g_1$.
 
It follows from Lemma \ref{lem:moving pull-back composed} that $ (g_2 \circ g_1)_{\mathcal{U}_1, \mathcal{U}_3} ^* = (g_1)_{\mathcal{U}_1, \mathcal{U}_2} ^* \circ (g_2)_{\mathcal{U}_2, \mathcal{U}_3}^*$. Thus that $(g_2 \circ g_1)^* = g_1^* \circ g_2^*$ follows by taking homotopy colimits over $\mathcal{U}_1,\mathcal{U}_2, \mathcal{U}_3$, consecutively. We leave out details.

These discussions summarize into the following:

\begin{thm}\label{thm:tower functoriality}
The towers \eqref{final mcnv fil}, \eqref{final uscnv fil} in the homotopy category are functorial. More precisely, we have the following:
\begin{enumerate}
\item For each morphism $g: Y_1 \to Y_2 \in \Sch_k$, we have the 
commutative diagrams of towers in the homotopy category for $n\geq 0$
$$
\xymatrix{
\widehat{\mathcal{G}} ^\bullet _{Y_2,n } \ar[r] \ar[d]_-{g^\ast} &  \mathcal{K} (Y_2) \ar[d]_-{g^\ast} & \widehat{\mathcal{G}} ^\bullet _{Y_2} \ar[r] \ar[d]_-{g^\ast} &  \mathcal{K} (Y_2) \ar[d]_-{g^\ast}\\
\widehat{\mathcal{G}} ^\bullet _{Y_1,n} \ar[r]& \mathcal{K} (Y_1), & \widehat{\mathcal{G}} ^\bullet _{Y_1} \ar[r]& \mathcal{K} (Y_1).}
$$

\item For two morphisms $g_1: Y_1 \to Y_2$ and $g_2: Y_2 \to Y_3$ in $\Sch_k$, we have the equality in the homotopy category for $n\geq 0$
$$
\tuborg
(g_2 \circ g_1)^* = g_1 ^* \circ g_2 ^*: \widehat{\mathcal{G}} ^\bullet _{Y_3,n}  \to \widehat{\mathcal{G}} ^\bullet _{Y_1,n}, \\
(g_2 \circ g_1)^* = g_1 ^* \circ g_2 ^*: \widehat{\mathcal{G}} ^\bullet _{Y_3}  \to \widehat{\mathcal{G}} ^\bullet _{Y_1}.
 \sluttuborg
$$
\end{enumerate}

\end{thm}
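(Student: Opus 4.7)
The plan is to view the theorem as a bookkeeping summary of the material developed in Lemmas \ref{lem:Cech functor}--\ref{lem:double system compatible} and Lemma \ref{lem:moving pull-back composed mod}: no new geometric input is needed, and the argument reduces to assembling these pieces correctly in the homotopy category.

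For part (1), given $g \colon Y_1 \to Y_2$, I would build $g^\ast$ in three successive homotopy colimits. First, invoke Lemma \ref{lem:ass system} to choose, for each system $\mathcal U$ of local embeddings for $Y_2$, some system $\mathcal V$ for $Y_1$ associated to $\mathcal U$ by $g$; Lemma \ref{lem:Cech functor} then produces a morphism of cosimplicial objects $g^\ast_{\mathcal U,\mathcal V}$ in the homotopy category, hence a map $\check{\mathbb H}^\cdot(\mathcal U,\widehat{\mathcal G}^q) \to \check{\mathbb H}^\cdot(\mathcal V,\widehat{\mathcal G}^q)$ after applying $\underset{\Delta}{\holim}$. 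Second, Lemma \ref{lem:strrefine} ensures that associated systems are cofinal among all systems for $Y_1$, so I can pass to the homotopy colimit over $\mathcal V$ to obtain $g^\ast_{\mathcal U,\cdot}$ as in \eqref{eqn:g_U}. Third, Lemma \ref{lem:double system compatible} certifies that these are compatible with the refinement morphisms of Lemma \ref{lem:refinement induce} on the $Y_2$ side, so a further homotopy colimit over $\mathcal U$ gives the desired $g^\ast \colon \widehat{\mathcal G}^q_{Y_2} \to \widehat{\mathcal G}^q_{Y_1}$, as in \eqref{eqn:pull-back}. Commutativity with the tower maps $\widehat{\mathcal G}^q \to \widehat{\mathcal G}^{q-1}$ and with the map to $\mathcal K$ is already recorded in \eqref{eqn:pull-back mtower}: it is inherited from the evident commutativity at the level of each $\mathcal G^q(\widehat X_I, U_I) \to \mathcal G^{q-1}(\widehat X_I, U_I) \to \mathcal K(U_I)$, because our Gysin-type pull-backs in Theorem \ref{thm:moving pull-back mod Y} are induced by functors on the triangulated subcategories that factor through the relevant inclusions of supports. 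The argument for the $n$-indexed tower \eqref{final uscnv fil} is identical, replacing $\mathcal G^q(\widehat X_I, U_I)$ by $\mathcal G^q(\widehat X_I, U_I, n)$ throughout.

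For part (2), I would fix composable morphisms $g_1 \colon Y_1 \to Y_2$ and $g_2 \colon Y_2 \to Y_3$, pick an arbitrary system $\mathcal U_3$ of local embeddings for $Y_3$, then apply Lemma \ref{lem:ass system} twice to choose a system $\mathcal U_2$ for $Y_2$ associated to $\mathcal U_3$ by $g_2$, and a system $\mathcal U_1$ for $Y_1$ associated to $\mathcal U_2$ by $g_1$. Unwinding Definition \ref{defn:funrefine} shows $\mathcal U_1$ is then automatically associated to $\mathcal U_3$ by $g_2 \circ g_1$. Applied multi-index by multi-index $J$, Lemma \ref{lem:moving pull-back composed mod} yields the pointwise equality $(\widehat{g_2 \circ g_1})^\ast = \widehat{g_1}^\ast \circ \widehat{g_2}^\ast$ on each $\mathcal G^q(\widehat X_J, -)$, and these assemble to the equality $(g_2 \circ g_1)^\ast_{\mathcal U_1, \mathcal U_3} = (g_1)^\ast_{\mathcal U_1, \mathcal U_2} \circ (g_2)^\ast_{\mathcal U_2, \mathcal U_3}$ of morphisms of cosimplicial objects in the homotopy category. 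Taking $\underset{\Delta}{\holim}$ preserves this equality, and then the three successive homotopy colimits over $\mathcal U_1$, $\mathcal U_2$, $\mathcal U_3$ can be carried out compatibly by repeated invocation of Lemma \ref{lem:double system compatible}, yielding $(g_2 \circ g_1)^\ast = g_1^\ast \circ g_2^\ast$ on the towers.

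The only genuine technical point, and the part I expect to require the most care, is that all intermediate constructions live in the homotopy category rather than on the nose, so one must be attentive that the commutativity in Lemmas \ref{lem:Cech functor}, \ref{lem:double system compatible}, and \ref{lem:moving pull-back composed mod} is genuine in the homotopy category, and that the triple homotopy colimit over systems can be arranged so that the three refinement orderings interact coherently. Once this bookkeeping is discharged—which should follow by repeating the two-variable argument of Lemma \ref{lem:double system compatible} in the three-variable setting, or equivalently by indexing over the evident product poset of compatible triples $(\mathcal U_1, \mathcal U_2, \mathcal U_3)$—the theorem follows. No new ingredient beyond the lemmas already proved is required.
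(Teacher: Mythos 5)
Your proposal is correct and reproduces the paper's own (lightly sketched) argument: it assembles Lemmas \ref{lem:ass system}, \ref{lem:Cech functor}, \ref{lem:strrefine}, \ref{lem:double system compatible}, and the composition lemma exactly as the paper does, forming $g^\ast$ by successive homotopy colimits over associated refinement systems and checking compatibility with the tower maps and with composition of morphisms. Your citation of Lemma \ref{lem:moving pull-back composed mod} for the pointwise composition identity on the $\mathcal{G}^q(\widehat X_I,U_I)$ is in fact the more precise reference for the mod-$Y$ spaces appearing in the \v{C}ech construction (the paper cites its non-mod precursor, Lemma \ref{lem:moving pull-back composed}, from which the mod version follows immediately).
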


Taking homotopy groups we conclude:

\begin{thm}\label{thm:cech functoriality}
The coniveau filtration (resp. motivic coniveau filtration) of Definition \ref{defn:cnv cech} on the algebraic $K$-theory of schemes in  $\Sch_k$ is functorial. More precisely, we have the following:
\begin{enumerate}
\item For each morphism $g: Y_1 \to Y_2 \in \Sch_k$, we have homomorphisms of filtered abelian groups
$$
\tuborg
g^*: F_{\cnv} ^{\bullet} K_n (Y_2) \to F_{\cnv} ^{\bullet} K_n (Y_1), \\
g^*: F_{\mcnv} ^{\bullet} K_n (Y_2) \to F_{\mcnv} ^{\bullet} K_n (Y_1).
\sluttuborg
$$
\item For two morphisms $g_1: Y_1 \to Y_2$ and $g_2: Y_2 \to Y_3$ in $\Sch_k$, we have the equalities
$$
\tuborg
(g_2 \circ g_1)^* &= g_1 ^* \circ g_2 ^*: F_{\cnv} ^{\bullet} K_n (Y_3) \to F_{\cnv} ^{\bullet} K_n (Y_1), \\
(g_2 \circ g_1)^* &= g_1 ^* \circ g_2 ^*: F_{\mcnv} ^{\bullet} 
K_n (Y_3) \to F_{\mcnv} ^{\bullet} K_n (Y_1).
\sluttuborg
$$
\end{enumerate}

\end{thm}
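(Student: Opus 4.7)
The plan is to deduce Theorem \ref{thm:cech functoriality} directly from Theorem \ref{thm:tower functoriality}, which has already done all of the substantive work at the level of towers of spectra in the homotopy category. Once the tower version is in hand, passing to homotopy groups is a purely formal operation, and the only content that needs to be extracted is the restriction of the induced maps to the respective filtration images in Definition \ref{defn:cnv cech}.

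For part (1), fix $g: Y_1 \to Y_2$ in $\Sch_k$ and $n \geq 0$. I would first apply $\pi_n$ to the commutative square of spaces with vertical arrows $g^*$ provided by Theorem \ref{thm:tower functoriality}(1), namely
\[
\xymatrix{
\widehat{\mathcal{G}}^q_{Y_2,0} \ar[r]\ar[d]_-{g^*} & \mathcal{K}(Y_2) \ar[d]^-{g^*} \\
\widehat{\mathcal{G}}^q_{Y_1,0} \ar[r] & \mathcal{K}(Y_1).
}
\]
By Definition \ref{defn:cnv cech}, $F^q_{\cnv} K_n(Y_i)$ is precisely the image of $\pi_n$ of the top (resp.~bottom) horizontal map, so commutativity of the resulting square of abelian groups forces the right vertical map $g^* : K_n(Y_2) \to K_n(Y_1)$ to carry $F^q_{\cnv} K_n(Y_2)$ into $F^q_{\cnv} K_n(Y_1)$. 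Letting $q$ vary, the induced map is a homomorphism of filtered abelian groups. The identical argument with $\widehat{\mathcal{G}}^q_{Y_i}$ in place of $\widehat{\mathcal{G}}^q_{Y_i,0}$ handles the motivic coniveau filtration $F^{\bullet}_{\mcnv}$.

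For part (2), given $g_1: Y_1 \to Y_2$ and $g_2: Y_2 \to Y_3$ in $\Sch_k$, Theorem \ref{thm:tower functoriality}(2) provides equalities $(g_2 \circ g_1)^* = g_1^* \circ g_2^*$ of morphisms $\widehat{\mathcal{G}}^\bullet_{Y_3,0} \to \widehat{\mathcal{G}}^\bullet_{Y_1,0}$ and $\widehat{\mathcal{G}}^\bullet_{Y_3} \to \widehat{\mathcal{G}}^\bullet_{Y_1}$ in the homotopy category of spectra. Since $\pi_n$ is a well-defined functor on that homotopy category, these equalities descend to equalities of homomorphisms of abelian groups, and by part (1) they are compatible with the filtrations on both sides. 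Combined with the classical functoriality of algebraic $K$-theory on $\Sch_k$ encoded in the right-hand vertical arrows $\mathcal{K}(Y_3) \to \mathcal{K}(Y_1)$, this yields both compatibilities for $F^{\bullet}_{\cnv}$ and $F^{\bullet}_{\mcnv}$.

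The only genuine difficulty is upstream, in Theorem \ref{thm:tower functoriality} itself, whose proof has already been carried out using the moving lemma (Proposition \ref{prop:D ess}), the general pull-back constructions (Theorems \ref{thm:moving pull-back} and \ref{thm:moving pull-back mod Y}), and the homotopy-coherence of refinements via Lemmas \ref{lem:refinement well-defined}, \ref{lem:double system compatible}, and \ref{lem:strrefine}. Granted those inputs, no new estimate, no new cycle-theoretic moving, and no new patching argument is needed here; the present theorem is obtained by a routine application of $\pi_n$, and there is no serious obstacle to overcome.
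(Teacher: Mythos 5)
Your argument is exactly what the paper does: Theorem \ref{thm:cech functoriality} is obtained directly from Theorem \ref{thm:tower functoriality} by applying $\pi_n$, and the paper makes this explicit with the line ``Taking homotopy groups we conclude'' immediately before the statement. Your unpacking of how commutativity of the square transports the image-subgroup $F^q K_n(Y_2)$ into $F^q K_n(Y_1)$, and how functoriality of $\pi_n$ gives the composition law, is correct and matches the intended proof.
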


Recall that for a filtered abelian group $F^{\bullet} A$, its $n$-th graded associated is the group $gr_F^n A:= F^n A/ F^{n+1} A$. Its direct sum is written $gr_F ^{\bullet} A: = \bigoplus_n gr_F ^n A$. From Theorem \ref{thm:cech functoriality}, we deduce immediately:

\begin{thm}\label{thm:cech gr functoriality}
The graded associated of the coniveau filtration (resp. motivic coniveau filtration) on the algebraic $K$-theory of schemes in $\Sch_k$ is functorial. More precisely, we have the following:
\begin{enumerate}
\item For each morphism $g: Y_1 \to Y_2 \in \Sch_k$, we have homomorphisms of filtered abelian groups
$$
\tuborg
g^*: gr_{\cnv} ^{\bullet} K_n (Y_2) \to gr_{\cnv} ^{\bullet} K_n (Y_1), \\
g^*: gr_{\mcnv} ^{\bullet} K_n (Y_2) 
\to gr_{\mcnv} ^{\bullet} K_n (Y_1).
\sluttuborg
$$
\item For two morphisms $g_1: Y_1 \to Y_2$ and $g_2: Y_2 \to Y_3$ in $\Sch_k$, we have the equalities
$$
\tuborg
(g_2 \circ g_1)^*= g_1 ^* \circ g_2 ^*: gr_{\cnv} ^{\bullet} K_n (Y_3) \to gr_{\cnv} ^{\bullet} K_n (Y_1), \\
(g_2 \circ g_1)^*= g_1 ^* \circ g_2 ^*: gr_{\mcnv} ^{\bullet}  K_n (Y_3) \to gr_{\mcnv} ^{\bullet} K_n (Y_1).
\sluttuborg
$$
\end{enumerate}
\end{thm}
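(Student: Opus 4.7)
The plan is to deduce Theorem \ref{thm:cech gr functoriality} as a direct formal consequence of Theorem \ref{thm:cech functoriality}, using only the fact that passage to the graded associated $gr^{\bullet}_F A := \bigoplus_n F^n A/F^{n+1} A$ is a functor from the category of filtered abelian groups (with filtration-preserving homomorphisms) to graded abelian groups.

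First, given a morphism $g : Y_1 \to Y_2$ in $\Sch_k$, Theorem \ref{thm:cech functoriality}-(1) supplies homomorphisms
\[
g^\ast : F_{\cnv}^{\bullet} K_n(Y_2) \to F_{\cnv}^{\bullet} K_n(Y_1), \qquad g^\ast : F_{\mcnv}^{\bullet} K_n(Y_2) \to F_{\mcnv}^{\bullet} K_n(Y_1)
\]
of filtered abelian groups. By definition of a morphism of filtered abelian groups, $g^\ast (F_{\cnv}^q K_n(Y_2)) \subset F_{\cnv}^q K_n(Y_1)$ and $g^\ast(F_{\cnv}^{q+1} K_n(Y_2)) \subset F_{\cnv}^{q+1} K_n(Y_1)$, so $g^\ast$ descends to the quotient to give a homomorphism $gr_{\cnv}^q g^\ast : gr_{\cnv}^q K_n(Y_2) \to gr_{\cnv}^q K_n(Y_1)$, and analogously for the motivic coniveau case. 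Taking the direct sum over $q$ yields part (1) of the theorem.

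For part (2), let $g_1 : Y_1 \to Y_2$ and $g_2 : Y_2 \to Y_3$ be morphisms in $\Sch_k$. Theorem \ref{thm:cech functoriality}-(2) gives the equality $(g_2 \circ g_1)^\ast = g_1^\ast \circ g_2^\ast$ as homomorphisms of filtered abelian groups between $F_{\cnv}^{\bullet} K_n(Y_3)$ and $F_{\cnv}^{\bullet} K_n(Y_1)$ (and similarly for $F_{\mcnv}^{\bullet}$). Since two equal homomorphisms induce the same homomorphism on the graded associated, we obtain $(g_2 \circ g_1)^\ast = g_1^\ast \circ g_2^\ast$ at the level of $gr_{\cnv}^{\bullet}$ and $gr_{\mcnv}^{\bullet}$, proving the claim.

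Since the argument is purely formal and relies entirely on the already-established functoriality at the level of the filtered groups (Theorem \ref{thm:cech functoriality}), there is no real obstacle; in particular, no additional moving lemma, refinement argument, or manipulation of the towers \eqref{final mcnv fil}, \eqref{final uscnv fil} is required here. The genuine content has already been absorbed into Theorem \ref{thm:cech functoriality} (and ultimately into Theorems \ref{thm:moving pull-back} and \ref{thm:moving pull-back mod Y}); Theorem \ref{thm:cech gr functoriality} is merely its graded shadow.
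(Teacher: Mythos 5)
Your proposal is correct and matches the paper's own reasoning: the paper derives Theorem \ref{thm:cech gr functoriality} ``immediately'' from Theorem \ref{thm:cech functoriality}, exactly via the observation that passing to the graded associated is a functor on filtered abelian groups, so a filtration-preserving map descends to the graded pieces and composition is preserved. You have simply spelled out the formal steps the paper leaves implicit.
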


\begin{remk}

Note that we can apply Theorem \ref{thm:cech functoriality} to morphisms $g: Y_1 \to Y_2$ between smooth irreducible quasi-projective $k$-schemes. We saw that the classical coniveau filtration and our new coniveau filtration coincide (Proposition \ref{prop:sm cnv}) for each $Y_j$, $j=1,2$, so that the theorem shows 
that the pull-back $g^*: K_i (Y_2) \to K_i (Y_1)$ respects the classical coniveau filtration.

This is already known, e.g. see H. Gillet \cite[Theorem 83, Lemma 84, p.283]{Gillet}, which is based on the technique of deformation to the normal cone and $\mathbb{A}^1$-invariance of the algebraic $K$-theory of regular schemes.

Our proof given in this article via formal schemes and a generalized \v{C}ech machine offers a new argument even for the classical smooth case, and it does not explicitly rely on the $\mathbb{A}^1$-invariance.\qed
\end{remk}

More about the connections between cycles in terms of the \emph{new} higher Chow groups of \cite{P general} and the above motivic coniveau filtration on $K_i (Y)$ will be pursued in follow-up papers.

 \bigskip

\noindent\emph{Acknowledgments.} 
Some initial ideas were conceived when the authors met at the Special Session on Algebraic Geometry of the third PRIMA Congress at Oaxaca, M\'exico in the summer 2017, and also at the ICM 2018 Satellite Conference on $K$-theory at the University of Buenos Aires in Argentina. 
The authors thank the organizers of the conferences, Guillermo Corti{\~{n}}as, Pedro L. del \'Angel, Kenichiro Kimura, and Gisela Tartaglia for their kind invitations, that allowed the opportunities.

This work was supported by Samsung Science and Technology Foundation under Project Number SSTF-BA2102-03.

\end{document}